\newcommand\BibTeX{{\rmfamily B\kern-.05em \textsc{i\kern-.025em b}\kern-.08em
		T\kern-.1667em\lower.7ex\hbox{E}\kern-.125emX}}
\begin{document}
	
	\title{Nonlocality, Nonlinearity, and Time Inconsistency in Stochastic Differential Games}

\author[1]{Qian Lei}

\author[1]{Chi Seng Pun*}


\authormark{Lei and Pun}

\address[1]{\orgdiv{School of Physical and Mathematical Sciences}, \orgname{Nanyang Technological University}, \orgaddress{\country{Singapore}}}



\corres{*Chi Seng Pun, 21 Nanyang Link, Singapore 637371. \email{cspun@ntu.edu.sg}}


\abstract[Abstract]{This paper studies the well-posedness of a class of nonlocal fully nonlinear parabolic systems, which nest the equilibrium Hamilton--Jacobi--Bellman (HJB) systems that characterize the time-consistent Nash equilibrium point of a stochastic differential game (SDG) with time-inconsistent (TIC) preferences. The nonlocality of the parabolic systems stems from the flow feature (controlled by an external temporal parameter) of the systems. This paper proves the existence and uniqueness results as well as the stability analysis for the solutions to such systems. We first obtain the results for the linear cases for an arbitrary time horizon and then extend them to the quasilinear and fully nonlinear cases under some suitable conditions. {\color{black}Two examples of TIC SDG are provided to illustrate financial applications} with global solvability.
	Moreover, with the well-posedness results, we establish a general multidimensional Feynman--Kac formula in the presence of nonlocality (time inconsistency).

\keywords{Stochastic Differential Games; Time inconsistency; Existence and Uniqueness; Nonlocal Nonlinear Parabolic Systems; Feynman--Kac Formula; Mathematics of Behavioral Economics}}

\jnlcitation{\cname{%
	\author{Q. Lei} and
	\author{C. S. Pun}} (\cyear{2023}), 
\ctitle{Nonlocality, Nonlinearity, and Time Inconsistency in Stochastic Differential Games}, \cjournal{Math. Finance}, \cvol{2023;00:1--47}.}

\maketitle


\section{Introduction} \label{sec:intro}
The aim of this paper is to study the well-posedness of nonlocal fully nonlinear higher-order systems for the unknown $\bm{u}$ of the form 
\begin{equation} \label{Nonlocal fully nonlinear system}
\left\{
\begin{array}{rcl}
	\bm{u}_s(t,s,y) & = & \bm{F}\big(t,s,y,\left(\partial_I\bm{u}\right)_{|I|\leq 2r}(t,s,y),  \left(\partial_I\bm{u}\right)_{|I|\leq 2r}(s,s,y)\big), \\
	\bm{u}(t,0,y) & = & \bm{g}(t,y),\hfill 0\leq s\leq t\leq T,\quad y\in\mathbb{R}^d,
\end{array}
\right. 
\end{equation} 
where $\bm{u},\bm{F},\bm{g}$ are $m$-dimensional real-valued, $r$ is a positive integer, the nonlinearity $\bm{F}$ could be nonlinear with respect to its all arguments, and both $s$ and $y$ are dynamical variables while $t$ should be considered as an external parameter. 
Here, $I=(i_1,\ldots,i_j)$ is a multi-index with $j=|I|$, and $\partial_I\bm{u}:=\frac{\partial^{|I|}\bm{u}}{\partial y_{i_1}\cdots\partial y_{i_j}}$. To clarify, the system \eqref{Nonlocal fully nonlinear system} consists of $m$ coupled $\mathbb{R}$-valued nonlocal fully nonlinear equations $\bm{u}^a$ $(a=1,\ldots,m)$: 
\begin{equation} \label{eq:indnonlocaleq}   
\left\{
\begin{array}{lr}
	\bm{u}^a_s(t,s,y)=\bm{F}^a\Big(t,s,y,\bm{u}^1(t,s,y),\ldots,\bm{u}^m(t,s,y),\frac{\partial}{\partial y_1}\bm{u}^1(t,s,y),\ldots,\frac{\partial^{2r}}{\partial y_d\cdots\partial y_d}\bm{u}^m(t,s,y), \\
	\qquad\qquad\qquad\qquad\qquad \bm{u}^1(s,s,y),\ldots,\bm{u}^m(s,s,y),\frac{\partial}{\partial y_1}\bm{u}^1(s,s,y),\ldots,\frac{\partial^{2r}}{\partial y_d\cdots\partial y_d}\bm{u}^m(s,s,y)\Big),  \\
	\bm{u}^a(t,0,y)=\bm{g}^a(t,y),\hfill 0\leq s\leq t\leq T,\quad y\in\mathbb{R}^d, \quad a=1,\ldots,m,  
\end{array}
\right. 
\end{equation}
where the superscripts $a$ of $\bm{u},\bm{F},\bm{g}$ represent the $a$-th entry of the corresponding vector functions. The systems above characterize a series of systems indexed by $t$, which are connected via their dependence on $\left(\partial_I\bm{u}\right)_{|I|\leq 2r}(s,s,y)$. The diagonal dependence, referring to that $\left(\partial_I\bm{u}\right)_{|I|\leq 2r}$ of $\bm{F}$ are evaluated not only at $(t,s,y)$ but also at $(s,s,y)$, directly results in nonlocality. When $r=1$ and $m=1$, the system \eqref{Nonlocal fully nonlinear system} or \eqref{eq:indnonlocaleq} is reduced to nonlocal fully nonlinear parabolic partial differential equations (PDEs) studied in \cite{Lei2023}.

The diagonal dependence is an inevitable consequence when we look for an equilibrium solution for a time-inconsistent (TIC) dynamic choice problem. The TIC problem was first seriously studied in \cite{Strotz1955} from an economic perspective, in which a consistent planning of control policies, characterized by a Nash equilibrium (NE), is considered as a suitable solution; see \cite{Pollak1968}. With the popularity of behavioral economics, it has been recognized that the time inconsistency (also abbreviated as TIC) or dynamic inconsistency is prevalent when we consider behavioral factors in dynamic choice problems; see \cite{Thaler1981} for some empirical evidence, \cite{Laibson1997,Frederick2002} for non-exponential discounting, \cite{Kahneman1979,Tversky1992} for (cumulative) prospect theory. However, a rigorous treatment in continuous-time settings was only available a decade ago; see \cite{He2022} for a review. \cite{Basak2010,Pun2018a} adopt a recursive approach, originally suggested in \cite{Strotz1955}, to derive a time-consistent (TC) portfolio strategy for the mean-variance (MV) investor in continuous time. The MV analysis, pioneered by \cite{Markowitz1952}, is free of TIC in the static setting (single period) but the dynamic choice problem under the MV criterion is TIC due to the nonlinearity of the variance operator. Subsequently, \cite{Bjoerk2014,Bjoerk2017} establishes heuristic analytical frameworks for discrete- and continuous-time TIC stochastic control problems, which can successfully address the state-dependence issue (a type of TIC) of risk aversion in portfolio selection \cite{Bjoerk2014a}, but they leave behind many open problems, including the existence and uniqueness of the solution to their deduced Hamilton--Jacobi--Bellman (HJB) equation system.

Using a discretization approach, \cite{Yong2012,Wei2017} derive a so-called equilibrium HJB equation, which accords with the HJB system in \cite{Bjoerk2017}, to characterize the equilibrium solutions to TIC stochastic control problems. The equilibrium HJB equation is a nonlocal PDE, whose nonlocality comes from the diagonal dependence, and is a special case of \eqref{Nonlocal fully nonlinear system}. \cite{Wei2017} established the existence and uniqueness results for the equation in a quasilinear setting, which requires the linear dependence on the second-order derivative at local point $(t,s,y)$ and the removal of the second-order derivative at diagonal $(s,s,y)$, such that the diffusion term of the state process was restricted to be uncontrolled. From the perspective of stochastic differential equations (SDEs), \cite{Wang2019,Hamaguchi2020,Wang2020,Hamaguchi2021,Wang2021} made attempts to the TIC problem with a flow of forward-backward SDEs (FBSDEs) or backward stochastic Volterra integral equation (BSVIE) but their results are still subject to the same restriction as in the PDE theory or limitation to the first-order dependence. However, their work has converted the key open problem in \cite{Bjoerk2017} to another open problem in PDE or SDE theory. Recently, \cite{Lei2023} has shown the existence and uniqueness of a nonlocal fully nonlinear parabolic PDE in a small-time setting.

Following \cite{Lei2023}, this paper extends the well-posedness results from a nonlocal fully nonlinear second-order PDE to a system of coupled nonlocal fully nonlinear higher-order PDEs, from small-time (in the sense of maximally defined regularity) to global settings, {\color{black}and from the conventional space of bounded functions to a weighted space with exponential growth functions}. Similarly, the essential difficulty for constructing a desired contraction (to use fixed-point arguments) is the presence of the highest-order diagonal term $\left(\partial_{I}\bm{u}\right)_{|I|=2r}(s,s,y)$.
To see this, we discuss an intuitive attempt heuristically, from which we outline the distinct feature of our problem. To show the existence and uniqueness of \eqref{Nonlocal fully nonlinear system}, it is intuitive to consider a mapping from $\bm{u}$ to $\bm{w}$ that satisfies
\begin{equation} \label{Mapping from u to w} 
\left\{
\begin{array}{lr}
	\bm{w}_s(t,s,y)=\bm{F}\big(t,s,y,\left(\partial_I\bm{w}\right)_{|I|\leq 2r}(t,s,y),  \left(\partial_I\bm{u}\right)_{|I|\leq R}(s,s,y)\big), \\
	\bm{w}(t,0,y)=\bm{g}(t,y),\hfill 0\leq s\leq t\leq T,\quad y\in\mathbb{R}^d.
\end{array}
\right. 
\end{equation}
Thanks to the classical theory of parabolic systems \cite{Solonnikov1965,Ladyzhanskaya1968,Eidelman1969}, the mapping is well-defined. By replacing the intractable diagonal term with a known vector-valued function $\bm{u}$, the well-posedness of the higher-order system \eqref{Mapping from u to w} parameterized by $t$ promises the existence and uniqueness of the solution $\bm{w}$. Moreover, if $R=2r$, it is clear that the fixed point solves the original system \eqref{Nonlocal fully nonlinear system}. However, for the case of $R=2r$, since the input $\bm{u}$ is of the same order of the output $\bm{w}$, it is not immediate to show the contraction. The curse has limited all aforementioned works to a restricted case of $R=1$ and $r=1$ except for \cite{Lei2023} that extends the study to the case of $R=2$ and $r=1$. This paper leverages on the techniques developed in \cite{Lei2023} to further extend the study to $R=2r$ with an arbitrary positive integer $r$.

The mathematical extension achieved in this paper has two immediate implications, namely the establishments of mathematical foundation of TIC stochastic differential games (SDGs) and a general multidimensional Feynman--Kac formula under the framework of nonlocality.
\begin{enumerate}
\item Some specific types of TIC SDGs are studied in \cite{Wei2018,Lazrak2023} for zero-sum games and \cite{Wang2021b} for nonzero-sum games but they still left behind the existence and uniqueness results. In this paper, we will show the relevance of the system \eqref{Nonlocal fully nonlinear system} by introducing a general formulation of nonzero-sum TIC SDGs. Under some regularity assumptions as in \cite{Friedman1972,Friedman1976,Bensoussan2000}, we yield the parabolic systems for the TIC SDGs as a special case of \eqref{Nonlocal fully nonlinear system}.
\item The extended version of Feynman--Kac formula paves a new path for the SDE theory to study a flow of the multidimensional second-order FBSDEs (or 2FBSDEs), which is also called the multidimensional second-order BSVIEs (or 2BSVIEs), where both forward and backward SDEs are multidimensional. The 2BSDEs were first introduced in \cite{Cheridito2007} to provide a probabilistic interpretation of a fully nonlinear parabolic PDE. 
\end{enumerate}

To clarify, our paper considers only the TIC caused by the initial-\textit{time}-dependence in the control/game problems, and thus the parabolic systems of our interest \eqref{Nonlocal fully nonlinear system} only involve the nonlocality with a two-time-variable structure. It is noteworthy that
the initial-\textit{state}-dependence and nonlinearity of conditional expectations also form the sources of TIC and there exist similar arguments to convert the control/game problem into a parabolic PDE systems with nonlocality in state; see \cite{Bjoerk2017,Landriault2018,Hu2012,Hu2017,Hernandez2023,Hernandez2021,Yan2019}. We do not attempt the initial-state-dependence in this paper as it {\color{black}poses} technical challenges. Its key difference from our consideration is that the state variable is multidimensional and unrestricted, whereas the time variable is naturally bounded especially in a finite-time framework.

This paper contributes to the theories of PDE, SDE, and SDG, especially for the treatment of nonlocality in the multidimensional setting. Specifically,
\begin{description}
\item[Section \ref{sec:sdg} (SDG aspect)] formulates TIC SDGs that incorporate with TIC behavioral factors, which facilitate developments of many studies in financial economics including robust stochastic controls and games under relative performance concerns. We heuristically derive the associate equilibrium HJB systems and reveal its relation with the TIC SDGs. Our focus is then placed on the well-posedness of such nonlocal systems as it serves as the prerequisite of using its solution to characterize the solution to the TIC SDGs. Noteworthy is that our study allows the diffusion of the state process to be controllable{\color{black}, which breaks through the existing bottleneck of time-inconsistent stochastic control problems.}
\item[Section \ref{sec:mainresults} (PDE aspect)] presents our main results of well-posedness of nonlocal higher-order systems in linear, quasilinear, and fully nonlinear settings individually. Our results generalize the existing studies while potential extensions are discussed. {\color{black}To our best knowledge, our well-posedness results in a larger function space that accommodate more complex research objects over a longer time horizon open the frontier of the existing literature on nonlocal PDEs/systems.}  
\item[Section \ref{Analysis of equilbirum HJB systems} (SDG and PDE)] analyzes the solvability of the equilibrium HJB systems in Section \ref{sec:sdg} with the well-posedness results in Section \ref{sec:mainresults}. Moreover, we illustrate {\color{black}two financial examples of TIC SDG that are globally solvable.} 
\item[Section \ref{sec:stochrep} (SDE and PDE)] provides a \textit{nonlocal Feynman--Kac formula} linking the solution to a flow of multidimensional 2FBSDEs to that of a nonlocal fully nonlinear parabolic system.
\item[Section \ref{sec:conclusion}] concludes.
\end{description}

\section{Nonzero-Sum Time-Inconsistent Stochastic Differential Games} \label{sec:sdg}
In this section, we follow the frameworks of \cite{Friedman1972,Friedman1976,Bensoussan2000} to formulate general $m$-player nonzero-sum TIC SDGs, where preferences and utility functions for each player are time-varying.

Let $\left(\Omega,\mathcal{F},\mathbb{F},\mathbb{P}\right)$ be a completed filtered probability space on which a $k$-dimensional standard Brownian motion $\{\bm{W}(\tau)\}_{\tau\geq 0}$ with the natural filtration $\mathbb{F}=\left\{\mathcal{F}_\tau\right\}_{\tau\geq 0}$ augmented by all the $\mathbb{P}$-null sets in $\mathcal{F}$ is well-defined. Let $\{\bm{X}(\tau)\}_{\tau\in[s,T]}$ be the controlled $d$-dimensional state process driven by the forward SDE (FSDE): 
\begin{equation} \label{State equation}
	\left\{
	\begin{array}{rcl}
		d\bm{X}(\tau) & = & b\big(\tau,\bm{X}(\tau),\bm{\alpha}(\tau)\big)d\tau+\sigma\big(\tau,\bm{X}(\tau),\bm{\alpha}(\tau)\big)d\bm{W}(\tau), \quad \tau\in[s,T], \\
		\bm{X}(s) & = & \bm{\xi}, \quad \bm{\xi}\in\mathcal{L}^2_{\mathcal{F}_s}(\Omega;\mathbb{R}^d),
	\end{array}
	\right.
\end{equation}
where $\mathcal{L}^2_{\mathcal{F}_s}(\Omega;\mathbb{R}^d)$ is the set of $\mathbb{R}^d$-valued, $\mathcal{F}_s$-measurable, and square-integrable random variables
and $\bm{\alpha}(\cdot):[s,T]\times\Omega\to U$ with $U\subseteq\mathbb{R}^p$ is the aggregated control process that consists of all $m$ players' controls characterized by $\{\bm{\alpha}^a\}_{a=1}^m$, i.e. $\bm{\alpha}=((\bm{\alpha}^1)^\top,\ldots,(\bm{\alpha}^m)^\top)^\top$, with $\bm{\alpha}^a(\cdot):[s,T]\times\Omega\to U^a\subseteq\mathbb{R}^{p^a}$ and $\sum_{a=1}^mp^a=p$. Here, $k,d,m,p^a$ are arbitrary positive integers. Hereafter, we follow the notations in \cite{Han2021} for the control policies that the left subscript and superscript denote the time bounds of truncated control policies, i.e. ${}_{lb}^{ub}\bm{\alpha}=\{\bm{\alpha}_\tau\}_{\tau\in [lb,ub]}$ (they are suppressed when they are 0 and $T$, respectively), while the right subscript indicates the control at specific time point. Denote by $\bm{\alpha}^{-a}$ the aggregated controls except for $\bm{\alpha}^a$ such that $\bm{\alpha}$ consists of $\bm{\alpha}^a$ and $\bm{\alpha}^{-a}$ for any $a$, denoted by $\bm{\alpha}=\bm{\alpha}^a \oplus \bm{\alpha}^{-a}$. {\color{black}Moreover, it is useful to introduce $\mathbb{X}^{s,\bm{\xi},\bm{\alpha}}_\tau$ (or $\mathbb{X}_\tau$ for short) the set of reachable states at time $\tau$ from the time-state $(s,\bm{\xi})$ with the strategy $\bm{\alpha}$, which is defined by 
	$$\mathbb{X}^{s,\bm{\xi},\bm{\alpha}}_\tau:=\mathrm{Int}\overline{\mathbb{X}}^{s,\bm{\xi},\bm{\alpha}}_\tau\cup\big\{y\in\partial\overline{\mathbb{X}}^{s,\bm{\xi},\bm{\alpha}}_\tau:\mathbb{P}\big(\bm{X}(\tau)\in\partial\overline{\mathbb{X}}^{s,\bm{\xi},\bm{\alpha}}_\tau\cap B(y,\delta)\big)>0~\forall\delta>0\big\},$$
	where $\overline{\mathbb{X}}^{s,\bm{\xi},\bm{\alpha}}_\tau$ is the support of the distribution of $X(\tau)$ of \eqref{State equation}, the interior and the boundary of which in $\mathbb{R}^d$ are denoted by $\mathrm{Int}\overline{\mathbb{X}}^{s,\bm{\xi},\bm{\alpha}}_\tau$ and $\partial\overline{\mathbb{X}}^{s,\bm{\xi},\bm{\alpha}}_\tau$, respectively, and $B(y,\delta)$ denotes the ball centered at $y$ with radius $\delta$. We refer the readers to Section 3 of \cite{He2021} for more details. Next, let} $\{(\bm{Y}(\tau),\bm{Z}(\tau))\}_{\tau\in[s,T]}\equiv\{(\bm{Y}(\tau;s,\bm{\xi},{}_s\bm{\alpha}),\bm{Z}(\tau;s,\bm{\xi},{}_s\bm{\alpha}))\}_{\tau\in[s,T]}$ be the adapted solution (see \cite[Proposition 3.3]{Ma1999} for the solvability) to the following backward SDE (BSDE): 
\begin{equation} \label{Backward SDE} 
	\left\{
	\begin{array}{rcl}
		d\bm{Y}(\tau) & = & -\bm{h}\big(s,\tau,\bm{X}(\tau),\bm{\alpha}(\tau),\bm{Y}(\tau),\bm{Z}(\tau)\big)d\tau+\bm{Z}(\tau)d\bm{W}(\tau), \quad \tau\in[s,T], \\
		\bm{Y}(T) & = & \bm{g}\big(s,\bm{X}(T)\big),
	\end{array}
	\right.
\end{equation}
where $\bm{X}$ satisfies \eqref{State equation} and for $\Psi=\bm{Y},\bm{h}$, or $\bm{g}$, $\Psi=(\Psi^1,\ldots,\Psi^m)^\top$ and $\bm{Z}$ is $\mathbb{R}^{m\times k}$-valued. Equations \eqref{State equation} and \eqref{Backward SDE} jointly form forward-backward SDEs (FBSDEs). 

We presume that each Player $a$ ($a=1,\ldots,m$) aims to choose her control $\bm{\alpha}^a$ to minimize the following cost functional:
\begin{equation} \label{Cost functional}
	\bm{J}^a\left(s,\bm{\xi};{}_s\bm{\alpha}^a \oplus {}_s\bm{\alpha}^{-a}\right):=\bm{Y}^a(s;s,\bm{\xi},{}_s\bm{\alpha}).
\end{equation}
With the similar arguments in \cite{Karoui1997}, it turns out that under some mild conditions, the cost functional $\bm{J}^a$ may be expressed as
\begin{equation*} 
	\bm{J}^a\left(s,\bm{\xi};{}_s\bm{\alpha}^a \oplus {}_s\bm{\alpha}^{-a}\right)=\mathbb{E}\left[\left.\int^T_s \bm{h}^a\big(s,\tau,\bm{X}(\tau),\bm{\alpha}(\tau),\bm{Y}(\tau),\bm{Z}(\tau)\big)d\tau
	+\bm{g}^a\big(s,\bm{X}(T)\big)\right|\mathcal{F}_s\right].
\end{equation*}
Note that when $m=1$, the problem is reduced to the TIC problem with recursive cost functional considered in \cite{Wei2017,Yan2019}. Moreover, if both $\bm{h}$ and $\bm{g}$ are independent of the initial time $s$, then it is further reduced to a TC problem with a recursive utility, considered in \cite{Karoui2001}. Furthermore, when $\bm{h}$ depends on neither $s$ nor $(\bm{Y}(\cdot),\bm{Z}(\cdot))$, the cost functional reduces to the classical one; see \cite{Yong1999}.

A typical example of such initial-time-dependent cost functionals adopts non-exponential or hyperbolic discounting factors; see \cite{Laibson1997,Frederick2002}. For illustration, we assume a Markovian framework and that all the coefficient and objective functions, $b:[0,T]\times\mathbb{R}^d\times U\to\mathbb{R}^d$, $\sigma:[0,T]\times\mathbb{R}^d\times U\to\mathbb{R}^{d\times k}$, $\bm{h}^a:\nabla[0,T]\times\mathbb{R}^d\times U\times\mathbb{R}^m\times(\mathbb{R}^d)^m\to\mathbb{R}$ and $\bm{g}^a:[0,T]\times\mathbb{R}^d\to\mathbb{R}$ are deterministic, where $\nabla[0,T]:=\{(\tau_1,\tau_2)\in[0,T]^2:~0\le \tau_1\le \tau_2 \le T\}$. Moreover, we define the set of all admissible control processes on $[s,T]$ as follows:
\begin{equation*}
	{}^{s^\prime}_s\bm{\mathcal{A}}=\left\{\bm{\alpha}:[s,s^\prime]\times\Omega\to U:\bm{\alpha}(\cdot) \text{~is~}\mathbb{F}\text{-progressively measurable with~} \mathbb{E}\int^{s^\prime}_s|\bm{\alpha}(\tau)|^2d\tau<\infty\right\}.
\end{equation*}
Similarly, we define the admissible set ${}^T_s\bm{\mathcal{A}}^a$ for each player $a$ by replacing $U$ with $U^a$. Under some mild conditions (see \cite[Proposition 3.3]{Ma1999}), for any $(s,\bm{\xi})\in[0,T]\times\mathcal{L}^2_{\mathcal{F}_s}(\Omega;\mathbb{R}^d)$ and ${}^T_s\bm{\alpha}\in{}^T_s\bm{\mathcal{A}}$, the controlled FBSDEs \eqref{State equation}-\eqref{Backward SDE} admit a unique $\mathbb{F}$-adapted solution $\{\bm{X}(\tau),\bm{Y}(\tau),\bm{Z}(\tau)\}_{\tau\in[s,T]}$.  

The $m$-player game is formed, attributed to the common state processes and the recursion of the cost functionals on the aggregated $(\bm{Y}(\tau),\bm{Z}(\tau))$. Each player wants to minimize her own cost functional, naturally resulting in a Nash equilibrium (NE) point. However, since the cost functions $\bm{h}^a$ and $\bm{g}^a$ in \eqref{Cost functional} are dependent on the initial time $s$, we will observe TIC of the decision-making. In other words, the NE point found at time $t$ may not be the NE point when we evaluate again the SDG \eqref{State equation} with \eqref{Cost functional} at time $s>t$. To deal with the TIC, we introduce the concept of \textit{time-consistent NE (TC-NE)} point below, in line with the initiative of \cite{Strotz1955}.

\subsection{Time-Consistent Nash Equilibrium Point} \label{sec:TCNE}
Heuristically, we are treating the TIC SDGs as ``games in subgames" while the similar concept is first proposed in \cite{Pun2018} for robust TIC stochastic controls, where the problem is recast as a (two-player) nonzero-sum TIC SDG played by the agent and the nature. A TC-NE point of TIC SDG \eqref{State equation}-\eqref{Backward SDE} with \eqref{Cost functional} finds the NE point over $[s,T]$ given that the players adopt the predetermined NE points over $[s+\epsilon,T]$ for a small time elapse $\epsilon>0$ and $s\in[0,T]$. In light of this search, the NE points identified backwardly are subgame perfect equilibrium (SPE). Note that the SPE concept is concerned about the (aggregated) controls across time and it implies the so-called time consistency of the NE points. We give the formal definition of TC-NE point as follows.

\begin{definition}[Time-Consistent Nash Equilibrium (TC-NE) Point] \label{def:TCNEpt}
	Let $U$ be a non-empty set of $\mathbb{R}^p$ and $U^a\subseteq\mathbb{R}^{p^a}$ for $a=1,\ldots,m$ be the control set for the player $a$. A continuous map $\overline{\bm{\alpha}}:[0,T]\times\mathbb{R}^d\to U$ is called a closed-loop TC-NE point of the nonzero-sum TIC SDG \eqref{State equation}-\eqref{Backward SDE} with \eqref{Cost functional} if the following two conditions hold: 
	\begin{enumerate}
		\item For any {\color{black}$(t,y)\in[0,T]\times\mathbb{R}^d$}, the state equation 
		\begin{equation*} 
			\left\{
			\begin{array}{rcl}
				d\overline{\bm{X}}{\color{black}^{t,y}}(\tau) & = & b\big(\tau,\overline{\bm{X}}{\color{black}^{t,y}}(\tau),\overline{\bm{\alpha}}(\tau,\overline{\bm{X}}{\color{black}^{t,y}}(\tau))\big)d\tau+\sigma\big(\tau,\overline{\bm{X}}{\color{black}^{t,y}}(\tau),\overline{\bm{\alpha}}(\tau,\overline{\bm{X}}{\color{black}^{t,y}}(\tau))\big)d\bm{W}(\tau), \quad \tau\in[{\color{black}t},T], \\
				\overline{\bm{X}}{\color{black}^{t,y}}({\color{black}t}) & = & y, \quad y\in\mathbb{R}^d,
			\end{array}
			\right.
		\end{equation*}
		admits a unique solution $\{\overline{\bm{X}}{\color{black}^{t,y}}(\tau)\}_{\tau\in[{\color{black}t},T]}$; 
		\item For any $\big(a,s,{\color{black}\bm{\alpha}^a,x}\big)\in\{1,\ldots,m\}\times[{\color{black}t},T)\times {\color{black}U^a\times\mathbb{X}^{t,y,\{\overline{\alpha}(\tau,\overline{\bm{X}}^{t,y}(\tau))\}_{\tau\in[t,s]}}_s}$, let {\color{black}$\{\widetilde{\bm{X}}^{s,x}(\tau)\}_{\tau\in[s,T]}$} solves 
		\begin{equation*}
			\left\{
			\begin{array}{rcll}
				d{\color{black}\widetilde{\bm{X}}^{s,x}(\tau)} & = & b\big(\tau,{\color{black}\widetilde{\bm{X}}^{s,x}(\tau)},{\color{black}\bm{\alpha}^a}\oplus\overline{\bm{\alpha}}^{-a}(\tau,{\color{black}\widetilde{\bm{X}}^{s,x}(\tau)})\big)d\tau+\sigma\big(\tau,{\color{black}\widetilde{\bm{X}}^{s,x}(\tau)},{\color{black}\bm{\alpha}^a}\oplus\overline{\bm{\alpha}}^{-a}(\tau,{\color{black}\widetilde{\bm{X}}^{s,x}(\tau)})\big)d\bm{W}(\tau), & \tau\in[s,s+\epsilon), \\
				d{\color{black}\widetilde{\bm{X}}^{s,x}(\tau)} & = & b\big(\tau,{\color{black}\widetilde{\bm{X}}^{s,x}(\tau)},\overline{\bm{\alpha}}(\tau,{\color{black}\widetilde{\bm{X}}^{s,x}(\tau)})\big)d\tau+\sigma\big(\tau,{\color{black}\widetilde{\bm{X}}^{s,x}(\tau)},\overline{\bm{\alpha}}(\tau,{\color{black}\widetilde{\bm{X}}^{s,x}(\tau)})\big)d\bm{W}(\tau), & \tau\in[s+\epsilon,T], \\
				{\color{black}\widetilde{\bm{X}}^{s,x}(s)} & = & {\color{black}x}, &
			\end{array}
			\right. 
		\end{equation*} 
		then the following inequality holds: 
		\begin{equation} \label{Local optimality} 
			\underset{\epsilon\downarrow 0}{\underline{\lim}}\frac{\bm{J}^a\Big(s,{\color{black}x};{}_s\widetilde{\bm{\alpha}}^{a,\epsilon{\color{black},\bm{\alpha}^a}}\oplus\{\overline{\bm{\alpha}}^{-a}(\tau,{\color{black}\widetilde{\bm{X}}^{s,x}(\tau)})\}_{\tau\in[s,T]}\Big)-\bm{J}^a\Big(s,{\color{black}x};\{\overline{\bm{\alpha}}(\tau,\overline{\bm{X}}{\color{black}^{s,x}}(\tau))\}_{\tau\in[s,T]}\Big)}{\epsilon}\geq 0, 
		\end{equation} 
		where  
		\begin{equation} \label{PiecewiseStra}
			{}_s\widetilde{\bm{\alpha}}^{a,\epsilon{\color{black},\bm{\alpha}^a}}(\tau):={\color{black}\bm{\alpha}^a}\cdot\bm{1}_{[s,s+\epsilon)}(\tau)\otimes\overline{\bm{\alpha}}^a\big(\tau,{\color{black}\widetilde{\bm{X}}^{s,x}(\tau)}\big)\cdot\bm{1}_{[s+\epsilon,T]}(\tau) = \left\{
			\begin{array}{lr}
				{\color{black}\bm{\alpha}^a}, \hfill \tau\in[s,s+\epsilon), \\
				\overline{\bm{\alpha}}^a\big(\tau,{\color{black}\widetilde{\bm{X}}^{s,x}(\tau)}\big), \quad \tau\in[s+\epsilon,T]. 
			\end{array}
			\right. 
		\end{equation} 
	\end{enumerate} 
	Furthermore, $\{\overline{\bm{X}}{\color{black}^{t,y}}(\tau)\}_{\tau\in[{\color{black}t},T]}$ and $\bm{V}^a(t,y)\equiv \bm{J}^a\big(t,y;\{\overline{\bm{\alpha}}(\tau,\overline{\bm{X}}{\color{black}^{t,y}}(\tau))\}_{\tau\in[{\color{black}t},T]}\big)$ for $a=1,\ldots,m$ and $t\in[0,T]$ are called the TC-NE state process and the TC-NE value functions, respectively.  
\end{definition}

\begin{remark} \label{VariousES}
	{\color{black}For the local optimality condition \eqref{Local optimality} and the piecewise-defined strategy \eqref{PiecewiseStra} in Definition \ref{def:TCNEpt}, \cite{He2021} conducts in-depth studies on the choice of reference points $x$ and perturbations $\bm{\alpha}^a$ in a small time period of the length $\epsilon$. It summarizes a variety of similar but different concepts of closed-loop equilibrium strategies in the existing literature (see, for instance, \cite{Ekeland2006,Ekeland2010,Basak2010,Bjoerk2010,Bjoerk2014a,Dai2021} where the perturbations of \eqref{PiecewiseStra} are chosen from a set of all constant strategies; \cite{Ekeland2012,Ekeland2008,Bjoerk2017} where the perturbed strategies of \eqref{PiecewiseStra} are constructed by pasting two feasible deterministic feedback strategies). One main result of \cite{He2021} is to show the equilibrium strategy is independent of whether the alternative strategies are constant or deterministic strategies. In other words, $\bm{\alpha}^a$ can be taken as a deterministic feedback strategy $\bm{\alpha}^a=\bm{\alpha}^a(\cdot,\cdot)$, which would facilitate later analyses in Subsection \ref{sec:derHJB}. Another key contribution of \cite{He2021} is to elaborate the set of $x$ and to show the advantage of replacing the whole set $\mathbb{R}^d$ with the set $\mathbb{X}_s$ of reachable states in \eqref{Local optimality}. We assume that $\mathbb{X}_s=\mathbb{R}^d$ for all $s$ throughout our paper, except for Subsection \ref{Sec: PowerU} where we consider the power-utility model with $\mathbb{X}_s=(0,\infty)$. In addition to the closed-loop strategies, the existing literature also define so-called open-loop equilibrium policies (see, \cite{Hu2012,Hu2017,Yan2019}).         
		In this paper, we handle TIC problem by the means of closed-loop strategies within a game-theoretical framework.} When $m=1$, the TC-NE point of the nonzero-sum TIC SDG is reduced to the SPE of the corresponding TIC control problem; see \cite{Wei2017,Yong2012,Yan2019}. When the TIC sources are eliminated, it is clear that the local optimality described by \eqref{Local optimality} agrees with the conventional dynamic optimality.  
\end{remark}

The inequality \eqref{Local optimality} implies that each player is locally optimal in minimizing the cost functional $\bm{J}^a$ over $[s,s+\epsilon)$ in a proper sense and no player can do better by unilaterally changing their strategy. The basic idea is illustrated in Figure \ref{fig:equilibrium strategy}, which also clarifies the notations we used.
\begin{figure}[!ht]
	\centering
	\includegraphics[width=0.4\textwidth]{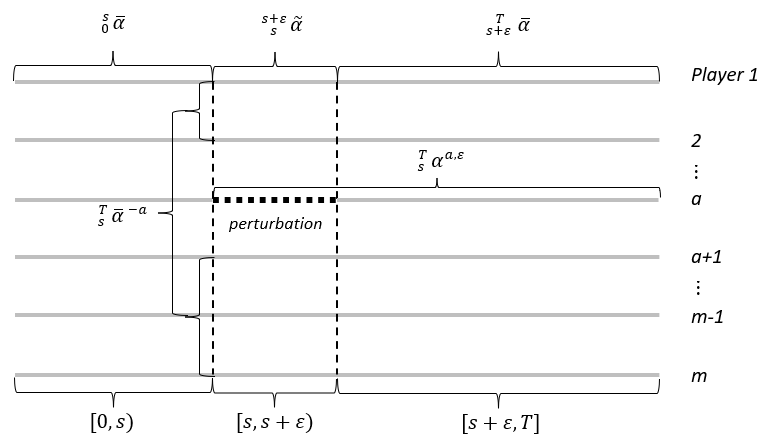}
	\caption{Time-Consistent Nash Equilibrium (TC-NE) Point} 
	\label{fig:equilibrium strategy}
\end{figure}


In the next subsection, we shall characterize the TC-NE point as well as the TC-NE value function with a differential equation approach. While a single equilibrium Hamilton--Jacobi--Bellman (HJB) equation is used to characterize the SPE of the TIC control problem in \cite{Bjoerk2017,Wei2017}, it can be imagined that a system of equilibrium HJB equations is needed for our case. Prior to its derivation,
we first introduce some notations and make an assumption as with \cite{Friedman1972,Friedman1976,Bensoussan2000}.

For $(t,s,y,\alpha^a\oplus \alpha^{-a},u,p,q^a)\in \nabla[0,T]\times\mathbb{R}^d\times U\times\mathbb{R}^m\times(\mathbb{R}^d)^m\times\mathbb{S}^{d}$, $a=1,\ldots,m$, where $\mathbb{S}^d$ is the set of all $d\times d$ symmetric matrices, we denote the Hamiltonian by a $\mathbb{R}^m$-valued function $\bm{\mathcal{H}}(t,s,y,\alpha,u,p,q)=(\bm{\mathcal{H}}^1(t,s,y,\alpha^1,\alpha^{-1},u,p,q^1),\cdots,\bm{\mathcal{H}}^m(t,s,y,\alpha^m,\alpha^{-m},u,p,q^m))^\top$ with $q=\{q^a\}_{a=1}^m$ and $p=\{p^a\}_{a=1}^m$, and $\bm{\mathcal{H}}^a$ defined by 
\begin{equation} \label{Hamiltonian} 
	\begin{split}
		\bm{\mathcal{H}}^a(t,s,y,\alpha^a,\alpha^{-a},u,p,q^a) & =\frac{1}{2}\mathrm{tr}\left[q^a\cdot (\sigma\sigma^\top)\big(s,y,\alpha^a\oplus\alpha^{-a}\big)\right]+(p^a)^\top b\big(s,y,\alpha^a\oplus\alpha^{-a}\big) \\
		& \qquad +\bm{h}^a\big(t,s,y,\alpha^a\oplus\alpha^{-a},u,\{(p^a)^\top\sigma\big(s,y,\alpha^a\oplus\alpha^{-a}\big)\}^m_{a=1}\big).
	\end{split}
\end{equation} 
\begin{assumption}[Generalized minimax condition] \label{assumption} 
	There exist functions $\bm{\phi}^a(t,s,y,u,p,q):~\nabla[0,T]\times\mathbb{R}^d\times\mathbb{R}^m\times(\mathbb{R}^d)^m\times(\mathbb{S}^d)^m\to\mathbb{R}^{p^a}$ for $a=1,\ldots,m$ with needed regularity such that 
	\begin{enumerate}
		\item for any $(t,s,y,u,p,q)\in \nabla[0,T]\times\mathbb{R}^d\times\mathbb{R}^m\times(\mathbb{R}^d)^m\times(\mathbb{S}^d)^m$, $\bm{\phi}^a(t,s,y,u,p,q)\in U^a$; 
		\item for any $(t,s,y,u,p,q)\in \nabla[0,T]\times\mathbb{R}^d\times\mathbb{R}^m\times(\mathbb{R}^d)^m\times(\mathbb{S}^d)^m$, 
		\begin{equation*}
			\min\limits_{\alpha^a\in U^a}\bm{\mathcal{H}}^a\big(t,s,y,\alpha^a,\bm{\phi}^{-a}(t,s,y,u,p,q),u,p,q^a\big)= \bm{\mathcal{H}}^a\big(t,s,y,\bm{\phi}^a(t,s,y,u,p,q),\bm{\phi}^{-a}(t,s,y,u,p,q),u,p,q^a\big).
		\end{equation*}  
	\end{enumerate}
\end{assumption} 
\noindent This generalized minimax condition has implied the existence of the NE point at each $(t,s)\in \nabla[0,T]$ in the sense that all $\phi^a$ are found simultaneously. It is desirable as we are discussing about a general setting and it is normally equivalent to model assumptions on $b,\sigma,$ and ${\bm h}^a$. 

\subsection{Heuristic Derivation of Equilibrium HJB System} \label{sec:derHJB}
In this subsection, we derive the system of equilibrium HJB equations, characterizing the TC-NE point in Definition \ref{def:TCNEpt}, from which we reveal that
it is a special case of our nonlocal parabolic system \eqref{Nonlocal fully nonlinear system}. Since the focus of our paper is on the well-posedness of \eqref{Nonlocal fully nonlinear system} and the nested HJB system rather than the latter's origination, a heuristic derivation in the similar fashion of \cite{Bjoerk2014,Bjoerk2017,Bjoerk2010} will be in place. For simplicity, we show only where the nonlocal terms $\left(\partial_I\bm{u}\right)_{|I|\leq 2}(s,s,y)$ come from and the linkage with the classical HJB equations. For a rigorous derivation, one can follow the discretization approach in \cite{Yong2012,Wei2017} {\color{black}or a rigorous argument in \cite{He2021}} to derive the nonlocal parabolic system but it is too lengthy and thus not adopted here. 

In light of the methodology in \cite{Bjoerk2014,Bjoerk2017,Bjoerk2010}, there are three main steps (\textit{Step 1}-\textit{Step 3}) to obtain the equilibrium HJB system of a multiplayer nonzero-sum TIC SDGs. {\color{black}For the sake of simplification of the heuristic derivation, we assume that $\bm{\xi}=y\in\mathbb{R}^d$ and $\bm{h}=0$ in \eqref{State equation}-\eqref{Cost functional}, and adopt the deterministic feedback-type controls throughout this subsection. Next, let us} consider  
\begin{equation*}
	\bm{J}^a(s,y;{\color{black}\bm{\alpha}(\cdot,\cdot)})=\mathbb{E}_{s,y}\left[\bm{g}^a\left(s,\bm{X}(T;s,y,{\color{black}\bm{\alpha}(\cdot,\cdot)})\right)\right], \quad a=1,\ldots,m, 
\end{equation*}  
where $\mathbb{E}_{s,y}$ is the conditional expectation under $\bm{X}(s)=y$ and $\bm{X}(\cdot;s,y,{\color{black}\bm{\alpha}(\cdot,\cdot)})$ (or $\bm{X}^{\bm{\alpha}}(\cdot)$ for short) is the unique adapted solution to \eqref{State equation} on $[s,T]$ with {\color{black}$\bm{\alpha}(\cdot,\cdot):[0,T]\times\mathbb{R}^d\to U$} and $\bm{X}(s)=y$. {\color{black}The set of feasible feedback strategies is denoted by $\mathbb{U}$, which can be roughly understood as the class of deterministic functions that are regular enough to promise the well-posedness of the state process \eqref{State equation} and the cost functional \eqref{Cost functional}. We refer the readers to Definition 2.2 of \cite{Bjoerk2017} and Definition 2.1 of \cite{He2021} for more details.}

\begin{definition} \label{Definition of u}
	{\color{black}Given a feasible feedback strategy $\bm{\alpha}\in\mathbb{U}$}, we define $\bm{u}(t,s,y;\bm{\alpha}):\nabla[0,T]\times\mathbb{R}^d\to\mathbb{R}^m$ by
	\begin{equation} \label{eq:u}
		\bm{u}(t,s,y;\bm{\alpha})=\mathbb{E}_{s,y}\left[\bm{g}\left(t,\bm{X}^{\bm{\alpha}}(T)\right)\right], 
	\end{equation}
	i.e., its component $\bm{u}^a(t,s,y;\bm{\alpha})=\mathbb{E}_{s,y}\left[\bm{g}^a\left(t,\bm{X}^{\bm{\alpha}}(T)\right)\right]$ for $a=1,\ldots,m$. 
\end{definition}

For any $t\in[0,T]$ and {\color{black}$\bm{\alpha}\in\mathbb{U}$}, the process $\bm{u}^a(t,s,\bm{X}^{\bm{\alpha}}(s);\bm{\alpha})$ is a martingale and \eqref{eq:u} satisfies 
\begin{equation} \label{eq:uPDE}
	\left\{
	\begin{array}{rcl}
		\mathcal{A}^{\bm{\alpha}} \bm{u}^a(t,s,y;\bm{\alpha}) & = & 0, \qquad\qquad t\leq s\leq T, \\
		\bm{u}(t,T,y;\bm{\alpha}) & = & \bm{g}(t,y), \hfill y\in\mathbb{R}^d,  
	\end{array} 
	\right. 
\end{equation} 
where $\mathcal{A}^{\bm{\alpha}}$ is the controlled infinitesimal generator of the FSDE \eqref{State equation}: 
\begin{equation*}
	\mathcal{A}^{\bm{\alpha}}=\frac{\partial}{\partial s}+\frac{1}{2}\sum^d_{i,j=1}(\sigma\sigma^\top)_{ij}\big(s,y,\bm{\alpha}\big)\frac{\partial^2}{\partial y_i\partial y_j}+\sum^d_{i=1}b_i\big(s,y,\bm{\alpha}\big)\frac{\partial}{\partial y_i}. 
\end{equation*}

Similar to the classical dynamic programming principle in \cite{Yong1999}, we need to first derive a recursive relation between cost functionals/value functions evaluated at two different initial points $(s,y)$ and $(s+\epsilon,\bm{X}^{\bm{\alpha}}(s+\epsilon))$. Then, by sending the mesh size of the time interval partition $\epsilon$ to zero, a nonlocal system of parabolic type is derived, through which a closed-loop TC-NE point can be identified and the TC-NE value function can be obtained.

\begin{description}
	\item[\textbf{\underline{\textit{Step 1: The Recursion for Cost functionals $\bm{J(s,y;\alpha)}$}}.}] From the Markovian structure and Definition \ref{Definition of u}, we have $\mathbb{E}_{s+\epsilon,\bm{X}^{\bm{\alpha}}_{s+\epsilon}}\left[\bm{g}^a(s+\epsilon,\bm{X}^{\bm{\alpha}}(T))\right]=\bm{u}^a\left(s+\epsilon,s+\epsilon,\bm{X}^{\bm{\alpha}}(s+\epsilon);\bm{\alpha}\right)$, which yields $\bm{J}^a(s+\epsilon,\bm{X}^{\bm{\alpha}}(s+\epsilon);\bm{\alpha})=\bm{u}^a\left(s+\epsilon,s+\epsilon,\bm{X}^{\bm{\alpha}}(s+\epsilon);\bm{\alpha}\right)$. Taking conditional expectation at $(s,y)$ on both sides of the latter equation, we have 
	\begin{equation*} \label{Recursion for J 1}
		\mathbb{E}_{s,y}\left[\bm{J}^a(s+\epsilon,\bm{X}^{\bm{\alpha}}(s+\epsilon);\bm{\alpha})\right]=\bm{J}^a(s,y;\bm\alpha)+\mathbb{E}_{s,y}\left[\bm{u}^a\left(s+\epsilon,s+\epsilon,\bm{X}^{\bm{\alpha}}(s+\epsilon);\bm{\alpha}\right)\right]-\mathbb{E}_{s,y}\left[\bm{g}^a\left(s,\bm{X}^{\bm{\alpha}}(T)\right)\right].
	\end{equation*}
	Moreover, by the tower rule of conditional expectations in the last term,
	we obtain the recursive equation for $\bm{J}^a(s,y;\bm{\alpha})$ as follows: 
	\begin{equation}
		\mathbb{E}_{s,y}\left[\bm{J}^a(s+\epsilon,\bm{X}^{\bm{\alpha}}(s+\epsilon);\bm{\alpha})\right]= \bm{J}^a(s,y;\bm\alpha)+\mathbb{E}_{s,y}\left[\bm{u}^a\left(s+\epsilon,s+\epsilon,\bm{X}^{\bm{\alpha}}(s+\epsilon);\bm{\alpha}\right)\right]-\mathbb{E}_{s,y}\left[\bm{u}^a\left(s,s+\epsilon,\bm{X}^{\bm{\alpha}}(s+\epsilon);\bm{\alpha}\right)\right]. \label{Recursion for J} 
	\end{equation}
	
	\item[\textbf{\underline{\textit{Step 2: The Recursion for TC-NE Value functions $\bm{V(s,y)}$}}.}] Based on \eqref{Recursion for J}, we aim to derive a recursive equation for $\bm{V}^a$. We first define {\color{black}a perturbed feedback strategy $\widetilde{\bm{\alpha}}^{a,s,\epsilon,\bm{\alpha}^a}(\tau,y)$ such that $\widetilde{\bm{\alpha}}^{a,s,\epsilon,\bm{\alpha}^a}(\tau,y):=\bm{\alpha}^a(\tau,y)$ for $\tau\in[s,s+\epsilon)$ and $\widetilde{\bm{\alpha}}^{a,s,\epsilon,\bm{\alpha}^a}(\tau,y):=\overline{\bm{\alpha}}^a(\tau,y)$ for $\tau\in[s+\epsilon,T]$, where $\bm{\alpha}^a$ is an arbitrary element in $\mathbb{U}^a$ that consists of the $a$-th component of feasible controls in $\mathbb{U}$ and $\overline{\bm{\alpha}}$ can be viewed as a candidate equilibrium strategy. Note that $\widetilde{\bm{\alpha}}^{a,s,\epsilon,\bm{\alpha}^a}(\tau,y)$ is a function rather than a process ${}_s\widetilde{\bm{\alpha}}^{a,\epsilon{\color{black},\bm{\alpha}^a}}$ in Definition \ref{def:TCNEpt} while they have similar roles. Noteworthy is that the perturbed strategy $\widetilde{\bm{\alpha}}^{a,s,\epsilon,\bm{\alpha}^a}(\tau,y)$ is constructed with two feedback strategies $\bm{\alpha}^a\in\mathbb{U}^a$ and $\overline{\bm{\alpha}}^a$ rather than by pasting a constant strategy $\bm{\alpha}^a\in U^a$ and a feedback one $\overline{\bm{\alpha}}^a$ (as in \eqref{PiecewiseStra}). However, Remark \ref{VariousES} illustrates that the slight difference does not affect our characterization of the equilibrium point and its associated HJB equations/systems.} 
	Then, Definition \ref{def:TCNEpt} implies that for $a=1,\ldots,m$,
	\begin{eqnarray}  
		\bm{J}^a(s+\epsilon,\bm{X}{\color{black}^{\widetilde{\bm{\alpha}}}}(s+\epsilon);{\color{black}\widetilde{\bm{\alpha}}^{a,s,\epsilon,\bm{\alpha}^a}\oplus\overline{\bm{\alpha}}^{-a}}) & = & \bm{V}^a(s+\epsilon,\bm{X}{\color{black}^{\widetilde{\bm{\alpha}}}}(s+\epsilon)), \label{V and u 1} \\
		\bm{u}^a(t,s+\epsilon,\bm{X}{\color{black}^{\widetilde{\bm{\alpha}}}}(s+\epsilon);{\color{black}\widetilde{\bm{\alpha}}^{a,s,\epsilon,\bm{\alpha}^a}\oplus\overline{\bm{\alpha}}^{-a}}) & = & \bm{u}^a(t,s+\epsilon,\bm{X}{\color{black}^{\widetilde{\bm{\alpha}}}}(s+\epsilon)), \label{V and u 2}  
	\end{eqnarray}
	where {\color{black}$\bm{X}^{\widetilde{\bm{\alpha}}}(\cdot)$ represents $\bm{X}(\cdot;s,y,\widetilde{\bm{\alpha}}^{a,s,\epsilon,\bm{\alpha}^a}\oplus\overline{\bm{\alpha}}^{-a}))$ and} 
	the function $\bm{u}(t,\cdot,\cdot)$ is defined by \eqref{eq:u} with $\bm{\alpha}$ replaced by $\overline{\bm{\alpha}}$. {\color{black}Next, inspired by the discrete setting of TIC stochastic control problem in \cite{Bjoerk2014,Bjoerk2010}, it is anticipated from \eqref{Local optimality} that 
		\begin{equation} \label{Sentence}
			\text{``}\bm{J}^a\big(s,y; {\color{black}\widetilde{\bm{\alpha}}^{a,s,\epsilon,\bm{\alpha}^a}\oplus\overline{\bm{\alpha}}^{-a}}\big)\geq \bm{V}^a(s,y)\text{~~for~~} \forall \bm{\alpha}^a\in \mathbb{U}^a \text{~~with the equality holds when~~} \bm{\alpha}^a(s,y)=\overline{\bm{\alpha}}^a(s,y).\text{''}
		\end{equation} 
		However, for a continuous-time model, this statement is not always true since it is still possible that $\bm{J}^a\big(s,y; {\color{black}\widetilde{\bm{\alpha}}^{a,s,\epsilon,\bm{\alpha}^a}\oplus\overline{\bm{\alpha}}^{-a}}\big)<\bm{V}^a(s,y)$ for sufficiently small $\epsilon$ and certain $\bm{\alpha}^a\in \mathbb{U}^a$; see \cite{Bjoerk2017,He2021}. Hence, the following analyses of \eqref{InformalJE} and \eqref{Pre-Recursion V} are rather heuristic, while they are included in our derivation of equilibrium HJB systems as they inspire us on how to investigate continuous-time TIC problems via the lens of discrete-time setting; see \cite{Bjoerk2017,Bjoerk2010} for the similar heuristic arguments.
		For a formal and rigorous proof, readers are suggested to refer to Theorem 3.3 of \cite{He2021} and Section 4 of \cite{Wei2017}. Note that no matter whether the argument is formal or not, one always obtains the same HJB equations.}

Next, we find that \eqref{Sentence} and \eqref{Recursion for J} indicate that 
\begin{equation} \label{InformalJE}
	\begin{split}
		&\inf\limits_{\bm{\alpha}^a~\in~\mathbb{U}^a}\Big\{\mathbb{E}_{s,y}\left[\bm{J}^a\left(s+\epsilon,\bm{X}^{\widetilde{\bm{\alpha}}}(s+\epsilon);{\color{black}\widetilde{\bm{\alpha}}^{a,s,\epsilon,\bm{\alpha}^a}\oplus\overline{\bm{\alpha}}^{-a}}\right)\right]-\bm{V}^a(s,y) \\
		& \quad\quad -\mathbb{E}_{s,y}\left[\bm{u}^a\left(s+\epsilon,s+\epsilon,\bm{X}^{\widetilde{\bm{\alpha}}}(s+\epsilon);{\color{black}\widetilde{\bm{\alpha}}^{a,s,\epsilon,\bm{\alpha}^a}\oplus\overline{\bm{\alpha}}^{-a}}\right)\right]+\mathbb{E}_{s,y}\left[\bm{u}^a\left(s,s+\epsilon,\bm{X}^{\widetilde{\bm{\alpha}}}(s+\epsilon);{\color{black}\widetilde{\bm{\alpha}}^{a,s,\epsilon,\bm{\alpha}^a}\oplus\overline{\bm{\alpha}}^{-a}}\right)\right]\Big\}=0, 
	\end{split}
\end{equation}
By the expressions of \eqref{V and u 1}-\eqref{V and u 2}, we obtain the following recursion for $V$: 
\begin{equation} \label{Pre-Recursion V}
	\begin{split}
		& \inf\limits_{\bm{\alpha}^a~\in~\mathbb{U}^a}\Big\{\mathbb{E}_{s,y}\left[\bm{V}^a(s+\epsilon,\bm{X}^{\widetilde{\alpha}}(s+\epsilon))\right]-\bm{V}^a(s,y) \\
		& \qquad \qquad \quad -\big(\mathbb{E}_{s,y}\left[\bm{u}^a\left(s+\epsilon,s+\epsilon,\bm{X}^{\widetilde{\bm{\alpha}}}(s+\epsilon)\right)\right]-\mathbb{E}_{s,y}\left[\bm{u}^a\left(s,s+\epsilon,\bm{X}^{\widetilde{\bm{\alpha}}}(s+\epsilon)\right)\right]\big)\Big\}=0,
	\end{split}
\end{equation}
the first line of which can be approximated by $\mathbb{E}_{s,y}\left[\bm{V}^a(s+\epsilon,\bm{X}^{\widetilde{\alpha}}(s+\epsilon))\right]-\bm{V}^a(s,y)\approx\bm{\mathcal{A}}^{\widetilde{\bm{\alpha}}} \bm{V}^a(s,y)\epsilon$ and the second line of which can be expressed as:
\begin{equation*}  
	\begin{split}
		&\mathbb{E}_{s,y}\left[\bm{u}^a\left(s+\epsilon,s+\epsilon,\bm{X}^{\widetilde{\bm{\alpha}}}(s+\epsilon)\right)\right]-\mathbb{E}_{s,y}\left[\bm{u}^a\left(s,s+\epsilon,\bm{X}^{\widetilde{\bm{\alpha}}}(s+\epsilon)\right)\right] \\
		=&~\mathbb{E}_{s,y}\left[\bm{u}^a\left(s+\epsilon,s+\epsilon,\bm{X}^{\widetilde{\bm{\alpha}}}(s+\epsilon)\right)\right]-\bm{u}^a(s,s,y)-\left(\mathbb{E}_{s,y}\left[\bm{u}^a\left(s,s+\epsilon,\bm{X}^{\widetilde{\bm{\alpha}}}(s+\epsilon)\right)\right]-\bm{u}^a(s,s,y)\right) \\
		\approx&~\big[\bm{\mathcal{A}}^{\widetilde{\bm{\alpha}}} \bm{u}^a(s,s,y)-\big(\bm{\mathcal{A}}^{\widetilde{\bm{\alpha}}} \bm{u}^a(t,s,y)\big)\big|_{t=s}\big]\epsilon
	\end{split}
\end{equation*}

\item[\textbf{\underline{\textit{Step 3: Equilibrium HJB System}}.}] Letting $\epsilon\to 0$ in \eqref{Pre-Recursion V} gives a deterministic system:
\begin{equation*}  \label{HJB system 2}
	\inf\limits_{\alpha^a\in U^a}\Big\{A^\alpha \bm{V}^a(s,y)-A^\alpha \bm{u}^a(s,s,y)+\left.\left(A^\alpha\bm{u}^a(t,s,y)\right)\right|_{t=s}\Big\} = 0, \quad {\color{black}a=1,2,\cdots,m,}  
\end{equation*}
with boundary conditions $\bm{V}(T,y)=\bm{g}(T,y)$, where 
$$
A^{\alpha}=\frac{\partial}{\partial s}+\frac{1}{2}\sum^d_{i,j=1}(\sigma\sigma^\top)_{ij}\big(s,y,\alpha\big)\frac{\partial^2}{\partial y_i\partial y_j}+\sum^d_{i=1}b_i\big(s,y,\alpha\big)\frac{\partial}{\partial y_i}
$$
for any $\alpha\in U$. Note that the key difference between the operators $\mathcal{A}^{\bm \alpha}$ and $A^{\alpha}$ is that the former corresponds to a {\color{black}function ${\bm \alpha}\in\mathbb{U}$} while the latter corresponds to a point $\alpha\in U$. By the generalized minimax condition in Assumption \ref{assumption} and noting that $\bm{V}^a(s,\cdot)=\bm{u}^a(s,s,\cdot)$, we know that the infimum above is achievable and the minimum is expressed by
\begin{equation} \label{Equilibrium strategy} 
	\alpha^{*a}(s,y)=\bm{\phi}^a(s,s,y,\bm{u}(s,s,y),\bm{u}_y(s,s,y),\bm{u}_{yy}(s,s,y)), \quad a=1,\ldots,m.
\end{equation} 
By the earlier discussion in Step 2, we must have $\overline{\bm \alpha}^a(\cdot,\cdot)=\alpha^{*a}(\cdot,\cdot)$ to form a closed-loop TC-NE point. With the representation of $\overline{\bm \alpha}$, we can then solve for $\bm{u}^a(\cdot,\cdot,\cdot)$ from \eqref{eq:uPDE} with ${\bm \alpha}$ replaced by $\overline{\bm \alpha}$, which is the equilibrium HJB system we look for, but we present only the system for the general case below to save space. It is noteworthy that even we can focus on solving $\bm{u}$ hereafter, the derivation uses the definition of ${\bm V}$ and thus we require $\bm{u}(t,s,y)$ to be first-order differentiable in $t$. 

\item[\underline{General Case}.]
{\color{black}Even if the running cost functional ${\bm h}$ is non-zero} and ${\bm \xi}$ is a random variable, the heuristic derivation above is almost identical except for more tedious expressions, i.e., we can obtain a generalized HJB system:
\begin{equation} \label{HJB system}
	\inf\limits_{\alpha^a\in U^a}\Big\{A^\alpha \bm{V}^a(s,y)-A^\alpha \bm{u}^a(s,s,y)+\left.\left(A^\alpha\bm{u}^a(t,s,y)\right)\right|_{t=s}+\bm{h}^a\big(s,s,y,\alpha,\bm{u}(s,s,y),\bm{u}^\top_y(s,s,y)\sigma(s,y,\alpha)\big)\Big\} = 0
\end{equation}
and the same expression of $\overline{\bm \alpha}^a$ in \eqref{Equilibrium strategy}. Plugging \eqref{Equilibrium strategy} into a modified version of \eqref{eq:uPDE} (with $\bm{h}$ as a non-homogeneous term), we obtain
\begin{equation*} 
	\left\{
	\begin{array}{lr}
		\bm{u}^a_s(t,s,y) +  \frac{1}{2}\mathrm{tr}\Big[(\sigma\sigma^\top)\big(s,y,\bm{\phi}(s,s,y,\bm{u}(s,s,y),\bm{u}_y(s,s,y),\bm{u}_{yy}(s,s,y))\big)\bm{u}^a_{yy}(t,s,y)\Big] \\
		\qquad
		+  \Big\langle b\big(s,y,\bm{\phi}(s,s,y,\bm{u}(s,s,y),\bm{u}_y(s,s,y),\bm{u}_{yy}(s,s,y))\big),\bm{u}^a_y\Big\rangle \\
		\qquad 
		+  \bm{h}^a\big(t,s,y,\bm{\phi}(s,s,y,\bm{u}(s,s,y),\bm{u}_y(s,s,y),\bm{u}_{yy}(s,s,y)), \\
		\qquad\qquad   
		\bm{u}(t,s,y),\bm{u}_y(t,s,y)^\top\sigma\big(s,y,\bm{\phi}(s,s,y,\bm{u}(s,s,y),\bm{u}_y(s,s,y),\bm{u}_{yy}(s,s,y))\big)\big) =  0, \\
		\bm{u}(t,T,y)  =  \bm{g}(t,y), \quad 0\leq t\leq s\leq T, \quad y\in\mathbb{R}^d, \quad a=1,\ldots,m.   
	\end{array}
	\right. 
\end{equation*}

Similar to the notations $\bm{\mathcal{H}}$ and $\bm{\mathcal{H}}^a$, for $(t,s,y,u,p,q,l,m,n)\in \nabla[0,T]\times\mathbb{R}^d\times\mathbb{R}^m\times(\mathbb{R}^d)^m\times(\mathbb{S}^{d})^m\times\mathbb{R}^m\times(\mathbb{R}^d)^m\times(\mathbb{S}^{d})^m$, we denote the nonlinearity $\bm{H}$ by a $\mathbb{R}^m$-valued function $\bm{H}(t,s,y,u,p,q,l,m,n)=(\bm{H}^1(t,s,y,u,p,q,l,m,n),\cdots,\bm{H}^m(t,s,y,u,p,q,l,m,n))^\top$ with $\bm{H}^a(t,s,y,u,p,q,l,m,n)=\bm{\mathcal{H}}^a(t,s,y,\phi(s,s,y,l,m,n),u,p,q^a)$ for $a=1,2,\cdots,m$. Then, we can simplify the system above as a nonlocal parabolic system for ${\bm u}$ of the form 
\begin{equation} \label{Equilibrium HJB system}  
	\left\{
	\begin{array}{l}
		\bm{u}_s(t,s,y)+\bm{H}\big(t,s,y,\bm{u}(t,s,y),\bm{u}_y(t,s,y),\bm{u}_{yy}(t,s,y),\bm{u}(s,s,y),\bm{u}_y(s,s,y),\bm{u}_{yy}(s,s,y)\big) =  0, \\
		\bm{u}(t,T,y)=\bm{g}(t,y), \quad 0\leq t\leq s\leq T, \quad y\in\mathbb{R}^d.   
	\end{array}
	\right. 
\end{equation}
\end{description}

\begin{remark}
Though the derivations above are heuristic, we can similarly develop the system-version results of \cite{Yong2012,Wei2017,Yan2019}, following their discretization approach {\color{black}or the argument in Theorem 3.3 of \cite{He2021}}, and we will also end up with the equilibrium HJB equation/system \eqref{Equilibrium HJB system} as well as a closed-loop TC-NE point \eqref{Equilibrium strategy} and the associated TC-NE value functions $\bm{V}(s,y):=\bm{u}(s,s,y)$ in the sense of Definition \ref{def:TCNEpt}. The relationship between \eqref{HJB system} and \eqref{Equilibrium HJB system} is discussed in \cite{Hernandez2021}. {\color{black}While it is not the focus of this paper, we summarize the mathematical claims/conjectures about the connection between solutions to \eqref{Equilibrium HJB system} and TIC SDGs in Section \ref{sec:relation}. Moreover, an interesting fact is that we only need to study the HJB system \eqref{Equilibrium HJB system} in the set of reachable states rather than in $\mathbb{R}^d$; see Section 2.3 of \cite{He2021} and the example in Subsection \ref{Sec: PowerU}. }
\end{remark}

From the derivations in this subsection, the inclusion of the nonlocal terms $\left(\partial_I\bm{u}\right)_{|I|\leq 2}(s,s,y)$ is rationalized: the characterizations of $\alpha^{*}$ (or $\overline{\bm \alpha}$) and ${\bm u}$ are coupled. It is also easy to see that in the TC case (independent of the initial time point), i.e., $\bm{u}(t,s,y)=\bm{u}(s,y)=\bm{V}(s,y)$, the HJB system \eqref{HJB system} or equilibrium HJB system \eqref{Equilibrium HJB system} reduce to the classicial ones (see \cite{Yong1999}). Proposition \ref{Equivalence between systems} below tells us that \eqref{Equilibrium HJB system} is a special case of \eqref{Nonlocal fully nonlinear system} with $r=1$. Moreover, a closely related topic is robust TIC stochastic controls via the formulation of nonzero-sum TIC SDGs. From which, we can observe many solvable examples in finance and insurance; see \cite{Pun2018,Han2021,Han2022,Yan2020,Lei2020}. By carefully choosing cost functionals of nonzero-sum SDGs, we can model the relative performance concerns among multiple agents in decision-making and thus our theory can extend the related works of \cite{Espinosa2015,Pun2016,Pun2016a,Lacker2019} by introducing TIC or behavioral factors.

\subsection{The Relation between the Equilibrium HJB System and TIC SDGs} \label{sec:relation}
The derivation of the equilibrium HJB system alone does not justify its mathematical connection with the stochastic control/game problem \eqref{State equation}-\eqref{Cost functional}. We need to show two aspects of the connection, namely \textbf{sufficiency} and \textbf{necessity}, constituting two conjectures below:
\begin{description}
\item[(\textbf{Sufficiency/Verification theorem}):] \textit{the solutions to \eqref{Equilibrium strategy} and \eqref{Equilibrium HJB system} indeed give a TC-NE point and a TC-NE value function}. Mathematically, we assume that $\bm{u}(t,s,y)\in C^{1,1,2}$ solves \eqref{Equilibrium HJB system} and that the infimum of \eqref{HJB system} is attained for every $(s,y)$ in the sense of NE. Then, the minimizer of $\bm{\mathcal{H}}$ (under Assumption \ref{assumption}),
given in \eqref{Equilibrium strategy}, 
is a closed-loop TC-NE point and the function $\bm{V}(s,y):=\bm{u}(s,s,y)$ is the TC-NE value function as in Definition \ref{def:TCNEpt};
\item[(\textbf{Necessity}):] \textit{every TC-NE point must minimize the Hamiltonian associated to TIC problem \eqref{State equation}-\eqref{Cost functional} and the corresponding value function solves the HJB system \eqref{HJB system}}. Mathematically, we assume that there exist a closed-loop TC-NE point $\overline{\bm{\alpha}}(s,y)$ and the corresponding value function $\bm{V}(s,y)\in C^{1,2}$ and we define $\bm{u}(t,s,y):=\overline{\bm{Y}}^t(s;s,y,\overline{\bm{\alpha}})$, where $\overline{\bm{Y}}^t$ comes from $\{(\overline{\bm{X}}(\tau;s,y,\overline{\bm{\alpha}}),\overline{\bm{Y}}^t(\tau;s,y,\overline{\bm{\alpha}}),\overline{\bm{Z}}^t(\tau;s,y,\overline{\bm{\alpha}})\}_{\tau\in[s,T]}$ being the adapted solution of the family of FBSDEs parameterized by $t$:
\begin{equation*} 
	\left\{
	\begin{array}{rcl}
		d\overline{\bm{X}}(\tau) & = & b\big(\tau,\overline{\bm{X}}(\tau),\overline{\bm{\alpha}}(\tau,\overline{\bm{X}}(\tau))\big)d\tau+\sigma\big(\tau,\overline{\bm{X}}(\tau),\overline{\bm{\alpha}}(\tau,\overline{\bm{X}}(\tau))\big)d\bm{W}(\tau),~\tau\in[s,T], \\
		d\overline{\bm{Y}}^t(\tau) & = & -\bm{h}\big(t,\tau,\overline{\bm{X}}(\tau),\overline{\bm{\alpha}}(\tau,\overline{\bm{X}}(\tau)),\overline{\bm{Y}}^t(\tau),\overline{\bm{Z}}^t(\tau)\big)d\tau+\overline{\bm{Z}}^t(\tau)d\bm{W}(\tau),~\tau\in[s,T], \\
		\overline{\bm{X}}(s) & = & y, \quad \overline{\bm{Y}}^t(T) ~ = ~ \bm{g}\big(t,\overline{\bm{X}}(T)\big), \quad (t,s)\in\nabla[0,T], \quad y\in\mathbb{R}^d. 
	\end{array}
	\right.
\end{equation*}
Then, $(\bm{V}(s,y),\bm{u}(t,s,y))$ solves \eqref{HJB system} and \eqref{Equilibrium HJB system} while $\overline{\bm{\alpha}}(s,y)$ realizes the infimum of \eqref{HJB system}.
\end{description}


By the derivations in the previous subsection and using the similar arguments in \cite{Bjoerk2017}, it is easy to establish the verification theorem for the Markovian setting while we omit the straightforward proof. It should be noted that the equilibrium HJB equation and the verification theorem for the non-Markovian setting are attempted in \cite{Hernandez2023}. The necessity issue is a difficult problem, while we refer the readers to the latest progress alone this line, such as \cite{Lindensjoe2019,Hernandez2023,He2021,Hamaguchi2021} and a comprehensive literature review of the field \cite{He2022}. Specifically, \cite{Hernandez2023} proves the necessity for the scalar case (stochastic control problem) in a general non-Markovian setting. With a more mathematically rigorous definition of the SPE/TC solution (similar to Definition \ref{def:TCNEpt}) and a discretization approach, \cite{Wei2017,Yan2019,Wang2022,Wang2021} show intuitively the desired mathematical connection between TIC stochastic control problem and the associated equilibrium HJB equation, given that the latter is well-posed. Even we consider TIC SDGs with higher dimensions, one could expect that the sufficiency and the necessity above are provable.

However, they are not the focus of this paper while the well-posedness of the equilibrium HJB system appears to be the core of the concerns above. It is noteworthy that the solvability of TIC SDGs has not been provided by the works on the sufficiency and necessity. The desired mathematical connection between the equilibrium HJB equations/systems and TIC stochastic controls/SDGs is meaningless if the equilibrium HJB equations/systems are not well-posed; see the assumptions in the two conjectures above. Moreover, though some works have proved the sufficiency and necessity, they uniformly assumed that the volatility $\sigma$ of \eqref{State equation} is free of control such that the nonlocal second-order term in \eqref{Equilibrium HJB system} vanishes; see \cite{Wei2017,Hernandez2023}. Hence, our well-posedness results, which get rid of this bottleneck, benefit the studies on more general problems of TIC SDGs as well as the previous studies on TIC stochastic controls. While there may not be a specific order of studying the sufficiency, the necessity, and the well-posedness of the nonlocal parabolic system, this paper addresses the last one and based on which, the other two are relatively simple given the extensive related studies in the literature. By establishing the existence, uniqueness, stability, and regularities of solutions of \eqref{Equilibrium HJB system}, our PDE results directly imply the existence and uniqueness of TIC problems where both the drift and the volatility are controlled.

\section{Well-posedness of Nonlocal Parabolic System} \label{sec:mainresults}
In this section, we present our main results about the well-posedness issues of the nonlocal higher-order systems \eqref{Nonlocal fully nonlinear system}. The overall idea is to first study the case with a linear operator, which will then be used to infer the well-posedness results for the system \eqref{Nonlocal fully nonlinear system} with a general nonlinear operator, together with the linearization method. All the proofs are deferred to \ref{app:pf}.

While the SDG and the Feynman--Kac formula are usually formulated in a backward setting, we first show the equivalence between the solvabilities of the nonlocal backward (terminal-value) problems, 
\begin{equation} \label{Backward system} 
\left\{
\begin{array}{lr}
	\bm{u}_s(t,s,y)+\bm{F}\big(t,s,y,\left(\partial_I\bm{u}\right)_{|I|\leq 2r}(t,s,y),  \left(\partial_I\bm{u}\right)_{|I|\leq 2r}(s,s,y)\big)=0, \\
	\bm{u}(t,T,y)=\bm{g}(t,y),\hfill 0\leq t\leq s\leq T,\quad y\in\mathbb{R}^d,
\end{array}
\right. 
\end{equation}
and the nonlocal forward (initial-value) problems \eqref{Nonlocal fully nonlinear system}.
There are a few noteworthy differences between the two systems: first, for the backward problem \eqref{Backward system}, if we move the $\bm{F}$ to the right-hand side, we will have a negative sign $-\bm{F}$, compared to the forward problem \eqref{Nonlocal fully nonlinear system}; second,
the ordering between $t$ and $s$ are the opposite of one another. The symmetry between \eqref{Nonlocal fully nonlinear system} and \eqref{Backward system} is shown in Figure \ref{fig:backward problems}. Notation-wise, we use the time region $\Delta[0,T]:=\{(\tau_1,\tau_2)\in[0,T]^2:~0\le \tau_2\le \tau_1 \le T\}$ for forward problems to distinguish from $\nabla[0,T]$ for backward problems.

\begin{figure}[!ht]
\centering
\includegraphics[width=0.3\textwidth]{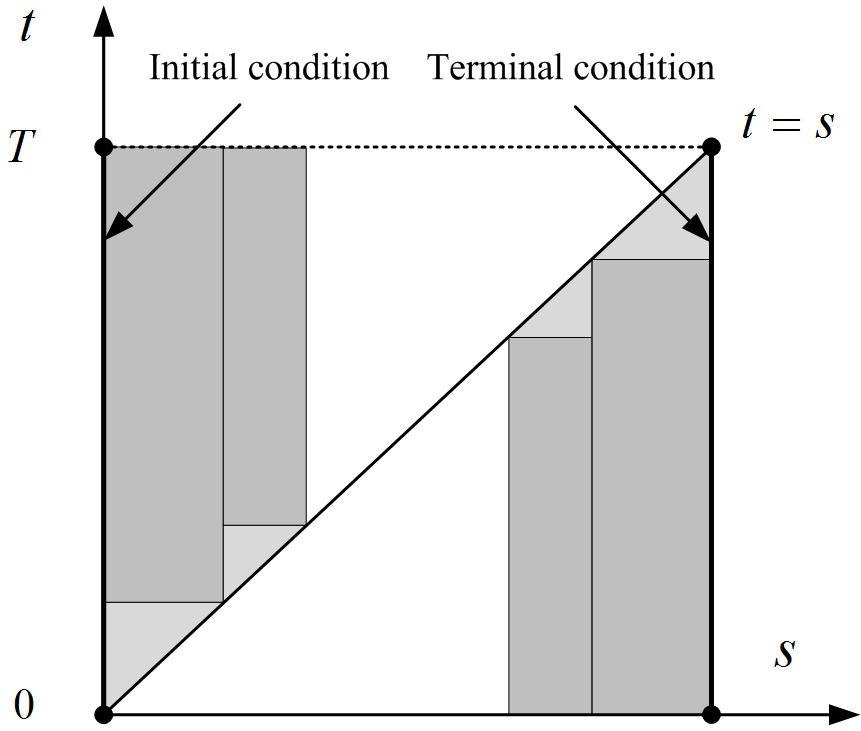}
\caption{Symmetry between forward and backward problems}
\label{fig:backward problems} 
\end{figure}

\begin{proposition}\label{Equivalance between forward probelms and backward problems} 
The solvabilities of Problems \eqref{Nonlocal fully nonlinear system} and  \eqref{Backward system} are equivalent.
\end{proposition}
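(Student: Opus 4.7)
The plan is to establish the equivalence via a simple time-reversal change of variables. Set $\tilde{t}=T-t$ and $\tilde{s}=T-s$. This map is an involution on $[0,T]^2$, it sends $\nabla[0,T]=\{0\le t\le s\le T\}$ bijectively onto $\Delta[0,T]=\{0\le \tilde s\le \tilde t\le T\}$, it interchanges the terminal time $s=T$ with the initial time $\tilde s=0$, and, most importantly for the nonlocal structure, it preserves the diagonal $\{t=s\}$, sending it to $\{\tilde t=\tilde s\}$. The preservation of the diagonal is what allows the nonlocal term $(\partial_I\bm u)_{|I|\le 2r}(s,s,y)$ to transform covariantly with $\bm u$ itself, rather than into some more complicated expression.

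Given a candidate solution $\bm u$ of \eqref{Backward system}, I would define
\begin{equation*}
\tilde{\bm u}(\tilde t,\tilde s,y):=\bm u(T-\tilde t,T-\tilde s,y),\quad \tilde{\bm g}(\tilde t,y):=\bm g(T-\tilde t,y),\quad \tilde{\bm F}(\tilde t,\tilde s,y,\,\cdot\,,\,\cdot\,):=\bm F(T-\tilde t,T-\tilde s,y,\,\cdot\,,\,\cdot\,),
\end{equation*}
and verify by the chain rule that $\tilde{\bm u}_{\tilde s}(\tilde t,\tilde s,y)=-\bm u_s(T-\tilde t,T-\tilde s,y)$ while the $y$-derivatives of $\bm u$ pass directly to $y$-derivatives of $\tilde{\bm u}$ at the transformed points. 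Since $\bm u(s,s,y)=\tilde{\bm u}(\tilde s,\tilde s,y)$ and likewise for every $\partial_I\bm u$ evaluated on the diagonal, the equation
\begin{equation*}
\bm u_s+\bm F\bigl(t,s,y,(\partial_I\bm u)_{|I|\le 2r}(t,s,y),(\partial_I\bm u)_{|I|\le 2r}(s,s,y)\bigr)=0
\end{equation*}
is equivalent to
\begin{equation*}
\tilde{\bm u}_{\tilde s}=\tilde{\bm F}\bigl(\tilde t,\tilde s,y,(\partial_I\tilde{\bm u})_{|I|\le 2r}(\tilde t,\tilde s,y),(\partial_I\tilde{\bm u})_{|I|\le 2r}(\tilde s,\tilde s,y)\bigr),
\end{equation*}
and the terminal condition $\bm u(t,T,y)=\bm g(t,y)$ becomes the initial condition $\tilde{\bm u}(\tilde t,0,y)=\tilde{\bm g}(\tilde t,y)$. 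Hence $\bm u$ solves \eqref{Backward system} with data $(\bm F,\bm g)$ on $\nabla[0,T]$ if and only if $\tilde{\bm u}$ solves \eqref{Nonlocal fully nonlinear system} with data $(\tilde{\bm F},\tilde{\bm g})$ on $\Delta[0,T]$.

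Finally, I would note that the transformation $(\bm F,\bm g)\mapsto(\tilde{\bm F},\tilde{\bm g})$ is an involution, so the same argument reversed turns any solution of \eqref{Nonlocal fully nonlinear system} with arbitrary data back into a solution of \eqref{Backward system}; in particular the existence, uniqueness, and regularity classes transfer intact in both directions. This gives a bijection between the solution sets and hence equivalence of solvabilities. There is no real obstacle here beyond bookkeeping: the only substantive point to check, which is the only place where the argument could conceivably fail, is the covariance of the diagonal nonlocal term under the reversal, and this follows immediately from the fact that $\{t=s\}$ is fixed setwise by $(t,s)\mapsto(T-t,T-s)$.
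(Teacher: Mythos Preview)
Your proposal is correct and follows essentially the same approach as the paper: both use the time-reversal change of variables $(t,s)\mapsto(T-t,T-s)$ to transform between the forward and backward problems, noting that the sign flip in $\partial_s$ absorbs the minus sign and that the terminal/initial conditions interchange. Your explicit emphasis on the preservation of the diagonal $\{t=s\}$ under the reversal makes the covariance of the nonlocal term more transparent than in the paper's write-up, but the argument is the same.
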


Given Proposition \ref{Equivalance between forward probelms and backward problems}, we only study the forward problem \eqref{Nonlocal fully nonlinear system} in this section as it can simplify the notations. To this end, we introduce some norms and the induced Banach spaces for the problems of our interest. {\color{black} The solvability of \eqref{Nonlocal fully nonlinear system} will be first investigated in the usual space of bounded and continuous functions in Subsection \ref{sec:normandBspace}-\ref{sec:nonlinear}. Subsequently, in order to meet practical needs for more financial applications, it is necessary to extend the well-posedness results in an exponentially weighted space of growth functions in Subsection \ref{Sec:WeightedNormsSpaces}. }

\subsection{Norms and Banach Spaces} \label{sec:normandBspace}
For a $m$-dimensional real-valued array $\bm{x}=(\bm{x}^1,\bm{x}^2,\cdots,\bm{x}^m)$, $|\bm{x}|:=\left(\sum^m_{i=1}(\bm{x}^i)^2\right)^\frac{1}{2}$.
Given $0\leq a\leq b\leq T$, we denote by $C([a,b]\times\mathbb{R}^d;\mathbb{R}^m)$ the set of all the continuous and bounded $\mathbb{R}^m$-valued functions in $[a,b]\times\mathbb{R}^d$ endowed with the supremum norm $|\cdot|^{\infty}_{[a,b]\times\mathbb{R}^d}:=\sup_{[a,b]\times\mathbb{R}^d}|\cdot|$. Wherever no confusion arises, we write $|\cdot|^\infty$ instead of $|\cdot|^{\infty}_{[a,b]\times\mathbb{R}^d}$. 
Then, we revisit the definition of ``parabolic" H\"{o}lder spaces, which is commonly adopted in the studies of local parabolic equations, including \cite{Eidelman1969,Lei2023}. Let $C^{\frac{l}{2r},l}({[a,b]\times\mathbb{R}^d};\mathbb{R})$ be the Banach space of the functions $\varphi(s,y)$ such that $\varphi(s,y)$ is continuous in $[a,b]\times\mathbb{R}^d$, its derivatives of the form $\partial^h_s\partial^j_y\varphi$ for $2rh+j<l$ exist, and it has a finite norm defined by
\begin{equation*}
|\varphi|^{(l)}_{[a,b]\times\mathbb{R}^d}= \sum_{k\leq\lfloor l\rfloor}\sum_{2rh+j=k}\left|\partial^h_s\partial^j_y\varphi\right|^\infty+\sum_{2rh+j=\lfloor l\rfloor}\big\langle \partial^h_s\partial^j_y\varphi\big\rangle^{(l-\lfloor l\rfloor)}_y+\sum_{0<l-2rh-j<2r}\big\langle \partial^h_s\partial^j_y\varphi\big\rangle^{\big(\frac{l-2rh-j}{2r}\big)}_s, 
\end{equation*}
where $r$ is always a positive integer, $l$ is a non-integer positive number and $\lfloor \cdot\rfloor$ is the floor function, $\partial^h_s\partial^j_y\varphi$ represents the $d^j$-dimensional array, the entries of which are the \textit{j}-th-order mixed partial derivatives of $\frac{\partial^{h}\varphi}{\partial s\cdots\partial s}$ in $y$, i.e. $\frac{\partial^{h+j}\varphi}{\partial s\cdots\partial s\partial y_{i_1}\cdots\partial y_{i_j}}$.
Moreover, for $0<\alpha<1$ {\color{black}and $\rho_0>0$}, 
\begin{equation*}
\langle\varphi\rangle^{(\alpha)}_y:=\sup\limits_{\begin{subarray}{c} {\color{black}s\in[a,b],y,y^\prime\in\mathbb{R}^d} \\ {\color{black}0<|y-y^\prime|\leq \rho_0} \end{subarray}}\frac{|\varphi(s,y)-\varphi(s,y^\prime)|}{|y-y^\prime|^\alpha}, ~ \langle\varphi\rangle^{(\alpha)}_s:=\sup\limits_{\begin{subarray}{c} {\color{black}s,s^\prime\in[a,b],y\in\mathbb{R}^d} \\ {\color{black}0<|s-s^\prime|\leq \rho_0} \end{subarray}}\frac{|\varphi(s,y)-\varphi(s^\prime,y)|}{|s-s^\prime|^\alpha}.
\end{equation*}
{\color{black}The defined norms depend on $\rho_0$ but indeed for different $\rho_0>0$ they are equivalent. Hence, we suppress the dependence on $\rho_0$ will be not noted unless otherwise specified.} Moreover, wherever no confusion arises, we do not distinguish between $|\varphi|^{(l)}_{[a,b]\times\mathbb{R}^d}$ and $|\varphi|^{(l)}_{\mathbb{R}^d}$ for functions $\varphi(y)$ independent of $s$. 

Now, we are ready to define the norms and Banach spaces for nonlocal systems of unknown vector-valued functions $\bm{u}(t,s,y)=(\bm{u}^1(t,s,y),\bm{u}^2(t,s,y),\cdots,\bm{u}^m(t,s,y))^\top$. For any $t$ and $\delta$ such that $0\leq t\leq\delta\leq T$, we introduce the following norms:
\begin{eqnarray*}
~[\bm{u}]^{(l)}_{[0,\delta]} & := &  \sup\limits_{t\in[0,\delta]}\left\{|\bm{u}(t,\cdot,\cdot)|^{(l)}_{[0,t]\times\mathbb{R}^d}\right\}, \\
\|\bm{u}\|^{(l)}_{[0,\delta]} & := &  \sup\limits_{t\in[0,\delta]}\left\{|\bm{u}(t,\cdot,\cdot)|^{(l)}_{[0,t]\times\mathbb{R}^d}+|\bm{u}_t(t,\cdot,\cdot)|^{(l)}_{[0,t]\times\mathbb{R}^d}\right\},
\end{eqnarray*}
where $\left|\bm{u}(t,\cdot,\cdot)\right|^{(l)}_{[0,t]\times\mathbb{R}^d}:=\sum\limits_{a\leq m}\left[|\bm{u}^a(t,\cdot,\cdot)|^{(l)}_{[0,t]\times\mathbb{R}^d}\right]$. Then, these norms induce the following spaces, respectively,  
\begin{eqnarray*}
\bm{\Theta}^{(l)}_{[0,\delta]} & := & \left\{\bm{u}(\cdot,\cdot,\cdot)\in C(\Delta[0,\delta]\times\mathbb{R}^d;\mathbb{R}^m):~[\bm{u}]^{(l)}_{[0,\delta]}<\infty\right\}, \\
\bm{\Omega}^{(l)}_{[0,\delta]} & := & \left\{\bm{u}(\cdot,\cdot,\cdot)\in C(\Delta[0,\delta]\times\mathbb{R}^d;\mathbb{R}^m):~\Vert\bm{u}\|^{(l)}_{[0,\delta]}<\infty\right\},
\end{eqnarray*}
where $C(\Delta[0,\delta]\times\mathbb{R^d};\mathbb{R}^m)$ is the set of all continuous and bounded $\mathbb{R}^m$-valued functions defined in $\{0\leq s\leq t\leq \delta\}\times\mathbb{R}^d$. It is easy to see that both
$\bm{\Theta}^{(l)}_{[0,\delta]}$ and $\bm{\Omega}^{(l)}_{[0,\delta]}$ are Banach spaces. The definitions above leverage not only the order relation between $t$ and $s$ but also the sufficient regularities in all arguments.


\subsection{Nonlocal Linear Higher Order Parabolic Systems} \label{sec:linear}
Let $\bm{L}$ be a family of nonlocal, linear, and strongly elliptic operator of order $2r$, whose $a$-th entry, $\left(\bm{L}\bm{u}\right)^a$, $a=1,\ldots,m$, takes the form
\begin{equation} \label{eq:Loperator}
\left(\bm{L}\bm{u}\right)^a(t,s,y):=\sum\limits_{|I|\leq 2r,b\leq m}A^{aI}_b(t,s,y)\partial_I\bm{u}^b(t,s,y)+\sum\limits_{|I|\leq 2r,b\leq m}B^{aI}_b(t,s,y)\partial_I\bm{u}^b(s,s,y),
\end{equation}
where the nonlocality stems from the presence of $\partial_I\bm{u}(s,s,y)$ and the strong ellipticity condition implies that there exists some $\lambda>0$ such that
\begin{eqnarray}
(-1)^{r-1}\sum_{a,b,|I|=2r}A^{aI}_b(t,s,y)\xi_{i_1}\cdots\xi_{i_{2r}}v^av^b & \geq & \lambda|\xi|^{2r}|v|^2, \label{Ellipticity 1} \\
(-1)^{r-1}\sum_{a,b,|I|=2r}\left(A^{aI}_b+B^{aI}_b\right)(t,s,y)\xi_{i_1}\cdots\xi_{i_{2r}}v^av^b & \geq & \lambda|\xi|^{2r}|v|^2 \label{Ellipticity 2} 
\end{eqnarray} 
uniformly for any $(t,s)\in\Delta[0,T]$, $y,\xi\in\mathbb{R}^d$, and $v\in\mathbb{R}^m$. Next we consider a nonlocal linear system: 
\begin{equation} \label{Nonlocal linear system}  
\left\{
\begin{array}{rcl}
\bm{u}_s(t,s,y) & = & \left(\bm{L}\bm{u}\right)(t,s,y)+\bm{f}(t,s,y), \\
\bm{u}(t,0,y) & = & \bm{g}(t,y),\qquad \qquad \hfill 0\leq s\leq t\leq T,\quad y\in\mathbb{R}^d.
\end{array}
\right. 
\end{equation}
where all coefficients $A^{aI}_b$ and $B^{aI}_b$ belong to $\bm{\Omega}^{{(\alpha)}}_{[0,T]}$. Moreover, the inhomogeneous term $\bm{f}\in\bm{\Omega}^{(\alpha)}_{[0,T]}$ and the initial condition $\bm{g}\in\bm{\Omega}^{(2r+\alpha)}_{[0,T]}$.  

Suppose that $\bm{u}(t,s,y)$ is differentiable with respect to $t$, then by differentiating \eqref{Nonlocal linear system} with respect to $t$, the derivative $\frac{\partial\bm{u}}{\partial t}$ satisfies 
\begin{equation} \label{Differenate nonlocal linear system with respect to t}
\left\{
\begin{array}{rcl}
\left(\frac{\partial\bm{u}}{\partial t}\right)^a_s(t,s,y) & = & \sum\limits_{|I|\leq 2r,b\leq m}A^{aI}_b(\cdot)\partial_I \left(\frac{\partial\bm{u}}{\partial t}\right)^b(t,s,y)+\sum\limits_{|I|\leq 2r,b\leq m}\left(\frac{\partial A}{\partial t}\right)^{aI}_b(\cdot)\partial_I \bm{u}^b(t,s,y) \\
&& +\sum\limits_{|I|\leq 2r,b\leq m}\left(\frac{\partial B}{\partial t}\right)^{aI}_b(\cdot)\partial_I \bm{u}^b(s,s,y)+\bm{f}^a_t(\cdot), \hfill a=1,\ldots,m,     \\
\left(\frac{\partial\bm{u}}{\partial t}\right)(t,0,y) & = & \bm{g}_t(t,y), \hfill 0\leq s\leq t\leq T,~y\in\mathbb{R}^d.
\end{array}
\right.
\end{equation} 

By taking advantage of the integral representations:
\begin{equation*}
\partial_I\bm{u}^b(t,s,y)-\partial_I\bm{u}^b(s,s,y)=\int^t_s\partial_I\left(\frac{\partial\bm{u}}{\partial t}\right)^b(\theta,s,y)d\theta, \quad \text{for } |I|\leq 2r,~b\leq m, 
\end{equation*}
it is clear that $\left(\bm{u},\frac{\partial\bm{u}}{\partial t}\right)$, denoted by $(\bm{u},\bm{v})$, satisfies the following system of $2m$ equations: 
\begin{equation} \label{Induced nonlocal linear PDE system}  
\left\{
\begin{array}{rcl}
\bm{u}^a_s(t,s,y) & = & \sum\limits_{|I|\leq 2r,b\leq m}\left(A+B\right)^{aI}_b(\cdot)\partial_I\bm{u}^b(t,s,y)-\sum\limits_{|I|\leq 2r,b\leq m}B^{aI}_b(\cdot)\int^t_s\partial_I \bm{v}^b(\theta,s,y)d\theta \\
&& +\bm{f}^a(\cdot), \hfill a=1,\ldots,m, \\
\bm{v}^a_s(t,s,y) & = & \sum\limits_{|I|\leq 2r,b\leq m}A^{aI}_b(\cdot)\partial_I \bm{v}^b(t,s,y)+\sum\limits_{|I|\leq 2r,b\leq m}\left(\frac{\partial A}{\partial t}+\frac{\partial B}{\partial t
}\right)^{aI}_b(\cdot)\partial_I \bm{u}^b(t,s,y) \\
&& -\sum\limits_{|I|\leq 2r,b\leq m}\left(\frac{\partial B}{\partial t}\right)^{aI}_b(\cdot)\int^t_s \partial_I\bm{v}^b(\theta,s,y)d\theta+\bm{f}^a_t(\cdot), \hfill a=1,\ldots,m, \\
\left(\bm{u},\bm{v}\right)(t,0,y) & = & \left(\bm{g},\bm{g}_t\right)(t,y), \hfill 0\leq s\leq t\leq T,~y\in\mathbb{R}^d.  
\end{array}
\right.
\end{equation} 

The following lemma reveals that problems \eqref{Nonlocal linear system} and \eqref{Induced nonlocal linear PDE system} are equivalent.
\begin{lemma} \label{Equivalence between systems}
\begin{enumerate}
\item If $\bm{u}$ is a solution of \eqref{Nonlocal linear system}, then $(\bm{u},\bm{u}_t)$ solves \eqref{Induced nonlocal linear PDE system}. 
\item Conversely, if \eqref{Induced nonlocal linear PDE system} admits a solution pair $(\bm{u},\bm{v})$, then $\bm{u}$ solves \eqref{Nonlocal linear system}. 
\end{enumerate}
\end{lemma}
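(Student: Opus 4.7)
The plan is to use the fundamental theorem of calculus identity
\begin{equation*}
\partial_I \bm{u}^b(s,s,y) \;=\; \partial_I \bm{u}^b(t,s,y) - \int_s^t \partial_I \bm{u}_t^b(\theta,s,y) \, d\theta
\end{equation*}
as the bridge between the two systems, with the auxiliary variable $\bm{v}$ playing the role of $\bm{u}_t$. Direction (1) is essentially a substitution: given a solution $\bm{u}$ of \eqref{Nonlocal linear system}, set $\bm{v}:=\bm{u}_t$ (which exists and has Hölder regularity by the hypothesized membership of $\bm{u}$ in the space $\bm{\Omega}^{(2r+\alpha)}$) and use the identity above to rewrite the diagonal term $\partial_I\bm{u}^b(s,s,y)$ in \eqref{Nonlocal linear system}; this reproduces the first equation of \eqref{Induced nonlocal linear PDE system}. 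The second equation follows by differentiating \eqref{Nonlocal linear system} in $t$ to obtain \eqref{Differenate nonlocal linear system with respect to t} and then applying the same substitution. The initial condition $\bm{v}(t,0,y)=\bm{g}_t(t,y)$ is immediate from differentiating $\bm{u}(t,0,y)=\bm{g}(t,y)$ in $t$.

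For direction (2), the real content is to verify that the second component $\bm{v}$ of any solution pair $(\bm{u},\bm{v})$ of \eqref{Induced nonlocal linear PDE system} must coincide with $\bm{u}_t$; once this is secured, substituting $\int_s^t \partial_I \bm{v}^b(\theta,s,y) d\theta = \partial_I \bm{u}^b(t,s,y) - \partial_I \bm{u}^b(s,s,y)$ into the first equation of \eqref{Induced nonlocal linear PDE system} collapses it back onto \eqref{Nonlocal linear system}. To prove $\bm{v}=\bm{u}_t$, I would set $\bm{w}:=\bm{u}_t-\bm{v}$, differentiate the first equation of \eqref{Induced nonlocal linear PDE system} with respect to $t$ (using $\tfrac{\partial}{\partial t}\int_s^t \partial_I \bm{v}^b(\theta,s,y)\,d\theta=\partial_I \bm{v}^b(t,s,y)$), and subtract the second equation. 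The coefficient-derivative terms $\bigl(\tfrac{\partial(A+B)}{\partial t}\bigr)^{aI}_b \partial_I \bm{u}^b$ cancel exactly, the two $\bigl(\tfrac{\partial B}{\partial t}\bigr)^{aI}_b$-weighted integrals cancel, and the surviving zero-th-time-derivative pieces collapse into
\begin{equation*}
\bm{w}^a_s(t,s,y) \;=\; \sum_{|I|\leq 2r,\,b\leq m}(A+B)^{aI}_b(t,s,y)\,\partial_I \bm{w}^b(t,s,y), \qquad \bm{w}(t,0,y)=0,
\end{equation*}
where the initial condition follows from $\bm{u}_t(t,0,y)=\bm{g}_t(t,y)=\bm{v}(t,0,y)$.

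The main obstacle is then the uniqueness step: I must conclude $\bm{w}\equiv 0$ from this homogeneous linear initial-value problem, which is \emph{local} in $(s,y)$ with $t$ entering only as a parameter. Thanks to the strong ellipticity condition \eqref{Ellipticity 2} imposed on $A+B$ and the Hölder regularity of the coefficients $A,B\in\bm{\Omega}^{(\alpha)}_{[0,T]}$, together with the regularity of $\bm{w}$ inherited from $\bm{u},\bm{v}$ in the assumed solution space, we are squarely within the hypotheses of the classical existence-and-uniqueness theory for higher-order strongly parabolic linear systems of Eidelman, Solonnikov, and Ladyzhenskaya already cited in the paper. Invoking uniqueness at each fixed $t$ gives $\bm{w}\equiv 0$, hence $\bm{v}=\bm{u}_t$, completing direction (2). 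The remaining work—verifying the cancellations in the subtraction and checking that the pointwise differentiation in $t$ is justified—is purely routine bookkeeping.
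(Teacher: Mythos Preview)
Your proposal is correct and follows essentially the same route as the paper's proof: the paper also treats direction (1) as immediate and, for direction (2), differentiates the first equation of \eqref{Induced nonlocal linear PDE system} in $t$, subtracts the second equation to obtain the homogeneous $(A+B)$-parabolic system for $\bm{u}_t-\bm{v}$ with zero initial data, and invokes classical uniqueness for higher-order strongly parabolic systems (via \eqref{Ellipticity 2}) to conclude $\bm{v}=\bm{u}_t$, after which the integral representation collapses the first equation back onto \eqref{Nonlocal linear system}. Your account of the cancellations is in fact more explicit than the paper's, but the argument is the same.
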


With Lemma \ref{Equivalence between systems}, it makes sense for us to shift our focus to the well-posedness of \eqref{Induced nonlocal linear PDE system}.
\begin{theorem} \label{Well-posedness of u and v}
Let $\bm{L}$ be the nonlocal, linear, and strongly elliptic operator of order $2r$ defined in \eqref{eq:Loperator} with all coefficients belonging to $\bm{\Omega}^{(\alpha)}_{[0,T]}$. If $\bm{f}\in\bm{\Omega}^{(\alpha)}_{[0,T]}$ and $\bm{g}\in\bm{\Omega}^{(2r+\alpha)}_{[0,T]}$, then \eqref{Induced nonlocal linear PDE system} admits a unique solution pair $(\bm{u},\bm{v})\in\bm{\Theta}^{(2r+\alpha)}_{[0,T]}\times\bm{\Theta}^{(2r+\alpha)}_{[0,T]}$ in $\Delta[0,T]\times\mathbb{R}^d$.   
\end{theorem}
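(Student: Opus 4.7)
The plan is to treat the coupled system \eqref{Induced nonlocal linear PDE system} as a family (parameterized by $t$) of local linear higher-order parabolic systems whose only nonlocal coupling occurs through the integral terms $\int_s^t \partial_I \bm{v}^b(\theta,s,y)\,d\theta$, and to run a Banach fixed-point argument isolating this nonlocality. The integration in $\theta$ over an interval of length at most $\delta$ supplies a small factor, yielding contractivity on short time windows, while linearity permits global extension by concatenation in $s$.

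Concretely, I would first define a map $\Phi:(\tilde{\bm{u}}, \tilde{\bm{v}}) \mapsto (\bm{u}, \bm{v})$ on $\bm{\Theta}^{(2r+\alpha)}_{[0,\delta]} \times \bm{\Theta}^{(2r+\alpha)}_{[0,\delta]}$ by plugging $\tilde{\bm{v}}$ into every integral in \eqref{Induced nonlocal linear PDE system} and then, for each fixed $t$, solving the resulting decoupled local linear systems: one for $\bm{u}(t,\cdot,\cdot)$ with principal part $(A+B)^{aI}_b$ (strongly elliptic by \eqref{Ellipticity 2}) and one for $\bm{v}(t,\cdot,\cdot)$ with principal part $A^{aI}_b$ (strongly elliptic by \eqref{Ellipticity 1}). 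With $\bm{f}\in\bm{\Omega}^{(\alpha)}_{[0,T]}$, $\bm{g}\in\bm{\Omega}^{(2r+\alpha)}_{[0,T]}$, and the plugged-in integral terms inheriting $C^{\frac{\alpha}{2r},\alpha}$ regularity from $\tilde{\bm{v}}$, classical Schauder theory for higher-order linear strongly parabolic systems \cite{Solonnikov1965,Ladyzhanskaya1968,Eidelman1969} furnishes a unique solution in $C^{\frac{2r+\alpha}{2r},\,2r+\alpha}([0,t]\times\mathbb{R}^d)$ together with standard Schauder estimates uniform in $t\in[0,\delta]$, so $\Phi$ is well-defined from the product space into itself.

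The second step is to prove $\Phi$ is a contraction: the difference $\Phi(\tilde{\bm{u}},\tilde{\bm{v}}) - \Phi(\tilde{\bm{u}}',\tilde{\bm{v}}')$ solves the same systems with zero initial data and inhomogeneous terms proportional to $\int_s^t \partial_I(\tilde{\bm{v}}^b - \tilde{\bm{v}}'^b)(\theta,s,y)\,d\theta$, and the crucial analytic estimate is that the parabolic $C^{\frac{\alpha}{2r},\alpha}$-norm of such an integral is bounded by $C\delta^{\kappa}\,[\tilde{\bm{v}} - \tilde{\bm{v}}']^{(2r+\alpha)}_{[0,\delta]}$ for some $\kappa>0$. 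Combined with the Schauder estimates, this yields contractivity for $\delta$ small enough and a unique local fixed point via Banach's theorem. The third step is global extension: since the problem is linear and the Schauder constants depend only on the global Hölder norms of the coefficients and of $\bm{f},\bm{g}$ on $[0,T]$, the contraction radius $\delta$ is independent of the left endpoint of the subinterval, so a finite iteration over subintervals of length $\delta$ (using the endpoint values as new initial data and exploiting the forward nature of the problem in $s$) produces the global solution in $\bm{\Theta}^{(2r+\alpha)}_{[0,T]}\times\bm{\Theta}^{(2r+\alpha)}_{[0,T]}$, with uniqueness inherited from each step.

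The main obstacle is the contraction estimate in Step 2: the highest-order derivative $\partial_I\tilde{\bm{v}}$ with $|I|=2r$ has only $C^\alpha$ regularity in $(s,y)$, so no extra smoothness can be pulled out of $\partial_I$ by the integration; the only smallness must come from the time factor $t-s\leq\delta$. Verifying that the $\frac{\alpha}{2r}$-Hölder seminorm in $s$ of $\int_s^t \partial_I\tilde{\bm{v}}^b(\theta,s,y)\,d\theta$ is controlled by $C\delta^\kappa\,[\tilde{\bm{v}}]^{(2r+\alpha)}_{[0,\delta]}$ is delicate because both the integrand and the lower limit of integration depend on $s$; this is the technical core of the proof and generalizes the machinery of \cite{Lei2023} from second-order scalar PDEs to higher-order coupled systems.
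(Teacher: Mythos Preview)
Your proposal is correct and matches the paper's approach: a Banach fixed-point argument on short time windows, with the contraction factor $C\delta^{1-\alpha/(2r)}$ coming from the $\theta$-integral (the estimate you flag as the main obstacle is exactly \eqref{Contraction 2}, drawn from \cite{Lei2023}), followed by linear concatenation in $s$ to reach $\Delta[0,T]$. Two minor remarks: the paper iterates only on the $\bm{v}$-component (your $\tilde{\bm{u}}$ is never actually used by $\Phi$, so this is equivalent), and the local $\bm{u}$- and $\bm{v}$-systems are not fully decoupled but only triangularly so---the $\bm{v}$-equation carries $\partial_I\bm{u}$ as a source---which is harmless since one solves for $\bm{u}$ first.
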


Next, thanks to the equivalence between \eqref{Nonlocal linear system} and \eqref{Induced nonlocal linear PDE system}, we will establish the global well-posedness of nonlocal linear systems. Moreover, we will derive a Schauder-type estimate of solutions of \eqref{Nonlocal linear system}. It not only justifies the stability of the solutions to \eqref{Nonlocal linear system} with respect to the data $(\bm{f},\bm{g})$, but also establishes a foundation for the further analysis of nonlocal fully nonlinear systems \eqref{Nonlocal fully nonlinear system} in the next section.

\begin{theorem} \label{Schauder estimates} 
Suppose that all coefficient functions and $\bm{f}$ of \eqref{Nonlocal linear system} belong to $\bm{\Omega}^{{(\alpha)}}_{[0,T]}$ and assume that $\bm{g}\in\bm{\Omega}^{{(2r+\alpha)}}_{[0,T]}$.Then the nonlocal linear system \eqref{Nonlocal linear system} admits a unique solution $\bm{u}\in\bm{\Omega}^{{(2r+\alpha)}}_{[0,T]}$ in $\Delta[0,T]\times\mathbb{R}^d$. Furthermore, we obtain the following Schauder estimate 
\begin{equation} \label{Estimates of solutions of nonlocal system} 
\| \bm{u}\|^{(2r+\alpha)}_{[0,T]}\leq C\left(\| \bm{f}\|^{(\alpha)}_{[0,T]}+\| \bm{g}\|^{(2r+\alpha)}_{[0,T]}\right). 
\end{equation} 
Consequently, let $\bm{u}$ and $\widehat{\bm{u}}$ be solutions to \eqref{Nonlocal linear system} corresponding to $(\bm{f},\bm{g})$ and $(\widehat{\bm{f}},\widehat{\bm{g}})$, respectively, then  
\begin{equation} \label{Stability analysis of linear system} 
\| \bm{u}-\widehat{\bm{u}}\|^{(2r+\alpha)}_{[0,T]}\leq C\left(\| \bm{f}-\widehat{\bm{f}}\|^{(\alpha)}_{[0,T]}+\| \bm{g}-\widehat{\bm{g}}\|^{(2r+\alpha)}_{[0,T]}\right). 
\end{equation} 
\end{theorem}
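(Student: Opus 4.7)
\medskip

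\noindent\textbf{Proof proposal.} The plan is to piggy-back on the augmented-system result in Theorem \ref{Well-posedness of u and v}, together with the equivalence stated in Lemma \ref{Equivalence between systems}, and then reinterpret the resulting bound in the $\bm{\Omega}^{(2r+\alpha)}_{[0,T]}$-norm.

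\smallskip

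First I would address existence. Given $(\bm{f},\bm{g})\in\bm{\Omega}^{(\alpha)}_{[0,T]}\times\bm{\Omega}^{(2r+\alpha)}_{[0,T]}$, the right-hand sides of the $2m$-equation system \eqref{Induced nonlocal linear PDE system} satisfy the hypotheses of Theorem \ref{Well-posedness of u and v} (note that $\bm{f}_t\in\bm{\Theta}^{(\alpha)}_{[0,T]}$ and $\bm{g}_t\in\bm{\Theta}^{(2r+\alpha)}_{[0,T]}$ precisely because of the presence of $\bm{f}_t$ and $\bm{g}_t$ in the definition of the $\bm{\Omega}$-norm). Hence \eqref{Induced nonlocal linear PDE system} has a unique solution pair $(\bm{u},\bm{v})\in\bm{\Theta}^{(2r+\alpha)}_{[0,T]}\times\bm{\Theta}^{(2r+\alpha)}_{[0,T]}$. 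By Lemma \ref{Equivalence between systems}(2), $\bm{u}$ solves \eqref{Nonlocal linear system}. Moreover, a direct differentiation of \eqref{Nonlocal linear system} in $t$ shows that $\bm{u}_t$ satisfies the same equation as $\bm{v}$ in \eqref{Induced nonlocal linear PDE system}, so uniqueness in Theorem \ref{Well-posedness of u and v} forces $\bm{v}=\bm{u}_t$. Consequently both $\bm{u}$ and $\bm{u}_t$ lie in $\bm{\Theta}^{(2r+\alpha)}_{[0,T]}$, which is exactly the statement $\bm{u}\in\bm{\Omega}^{(2r+\alpha)}_{[0,T]}$.

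\smallskip

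Uniqueness in $\bm{\Omega}^{(2r+\alpha)}_{[0,T]}$ is then immediate: if $\bm{u}_1$ and $\bm{u}_2$ both solve \eqref{Nonlocal linear system} in this space, then by Lemma \ref{Equivalence between systems}(1) the pairs $(\bm{u}_i,\partial_t\bm{u}_i)$, $i=1,2$, both solve \eqref{Induced nonlocal linear PDE system} with the same data, and the uniqueness part of Theorem \ref{Well-posedness of u and v} gives $\bm{u}_1=\bm{u}_2$.

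\smallskip

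For the Schauder estimate \eqref{Estimates of solutions of nonlocal system}, I would extract from the proof of Theorem \ref{Well-posedness of u and v} (which itself must rely on a quantitative linear Schauder bound from the classical theory \cite{Solonnikov1965,Ladyzhanskaya1968,Eidelman1969} applied $t$-by-$t$ to the parabolic system parameterized by $t$) the estimate
\begin{equation*}
[\bm{u}]^{(2r+\alpha)}_{[0,T]}+[\bm{v}]^{(2r+\alpha)}_{[0,T]}\leq C\big(\|\bm{f}\|^{(\alpha)}_{[0,T]}+\|\bm{g}\|^{(2r+\alpha)}_{[0,T]}\big),
\end{equation*}
where the constant $C$ depends only on $T$, the ellipticity constant $\lambda$, and the H\"older norms of the coefficients. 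Because $\bm{v}=\bm{u}_t$, the left-hand side is nothing but $\|\bm{u}\|^{(2r+\alpha)}_{[0,T]}$, which yields \eqref{Estimates of solutions of nonlocal system}. The stability inequality \eqref{Stability analysis of linear system} then follows by linearity: the difference $\bm{u}-\widehat{\bm{u}}$ solves \eqref{Nonlocal linear system} with data $(\bm{f}-\widehat{\bm{f}},\bm{g}-\widehat{\bm{g}})$, and one simply reapplies \eqref{Estimates of solutions of nonlocal system}.

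\smallskip

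The main obstacle is really confined to Theorem \ref{Well-posedness of u and v} rather than here; modulo that result, the present statement is essentially a packaging argument. The only delicate point in the present step is to make sure that the quantitative constants obtained in the augmented system do not degenerate when we pass from $(\bm{u},\bm{v})$ to $(\bm{u},\bm{u}_t)$, which is automatic once the identification $\bm{v}=\bm{u}_t$ is justified via uniqueness. I would also double-check, via the definition of $\bm{\Omega}$, that the Volterra-type integral term $\int_s^t\partial_I\bm{v}^b(\theta,s,y)d\theta$ does not lose regularity: since $\bm{v}\in\bm{\Theta}^{(2r+\alpha)}$, its derivatives up to order $2r$ are $\alpha$-H\"older jointly in $(s,y)$, and the integration in $\theta\in[s,t]$ is a smoothing operation in the $t$-direction, so the resulting term is comfortably in $\bm{\Theta}^{(\alpha)}$ and can be absorbed as part of the right-hand side when the classical Schauder inequality is applied.
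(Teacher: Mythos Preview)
Your overall framework---pass to the augmented system \eqref{Induced nonlocal linear PDE system}, invoke Theorem~\ref{Well-posedness of u and v} for existence and uniqueness of $(\bm{u},\bm{v})$, identify $\bm{v}=\bm{u}_t$ via Lemma~\ref{Equivalence between systems}, and deduce stability from the Schauder bound by linearity---is exactly the paper's route, and your existence/uniqueness and stability paragraphs are fine as written.

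The gap is in the Schauder estimate itself. You write that you would ``extract from the proof of Theorem~\ref{Well-posedness of u and v}'' the global bound $[\bm{u}]^{(2r+\alpha)}_{[0,T]}+[\bm{v}]^{(2r+\alpha)}_{[0,T]}\leq C(\|\bm{f}\|^{(\alpha)}_{[0,T]}+\|\bm{g}\|^{(2r+\alpha)}_{[0,T]})$, but that theorem's proof only yields existence via a contraction on a \emph{small} interval $[0,\delta]$ (with $C\delta^{1-\alpha/2r}\leq\tfrac12$) followed by an extension procedure for the solution; it does not directly produce a global quantitative estimate. The paper does genuine additional work here: it first applies classical parabolic Schauder $t$-by-$t$ to the two equations of \eqref{Induced nonlocal linear PDE system} to obtain coupled local bounds of the form
\[
|\bm{u}(t,\cdot,\cdot)|^{(2r+\alpha)}_{[0,t]\times\mathbb{R}^d}\leq C\Big(\delta^{1-\frac{\alpha}{2r}}\sup_{t}|\bm{u}_t|^{(2r+\alpha)}+\cdots\Big),\qquad
|\bm{u}_t(t,\cdot,\cdot)|^{(2r+\alpha)}_{[0,t]\times\mathbb{R}^d}\leq C\Big(|\bm{u}|^{(2r+\alpha)}+\delta^{1-\frac{\alpha}{2r}}\sup_{t}|\bm{u}_t|^{(2r+\alpha)}+\cdots\Big),
\]
and absorbs the $\delta^{1-\alpha/2r}$ terms for $\delta$ small to get a \emph{local} Schauder estimate on $\Delta[0,\delta]$. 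Then it iterates: update the initial datum by $\bm{u}(\cdot,\delta_n,\cdot)$, get a uniform bound on each strip $[\delta_n,T]\times[\delta_n,t\wedge\delta_{n+1}]$, and---this is the step you are missing---use a triangle-inequality argument on the H\"older seminorms in $s$ (splitting $|s-s'|$ across the partition points $\delta_n$) to pass from finitely many strip estimates to a single global bound on $\Delta[0,T]$. The number $N$ of strips depends only on the coefficients, so the final constant is uniform. Without this piecewise-to-global argument your ``extraction'' is not justified, because the H\"older seminorm in $s$ over $[0,T]$ is not controlled by the suprema of the seminorms over the subintervals alone.
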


\subsection{Nonlocal Fully Nonlinear Higher Order Parabolic Systems} \label{sec:nonlinear}
After studying the solvability of nonlocal linear system, we will adopt the method of linearization to prove the well-posedness of nonlocal fully nonlinear system of the form \eqref{Nonlocal fully nonlinear system}.

To take advantage of the results of nonlocal linear systems in Section \ref{sec:linear}, we require certain regularity assumptions on $\bm{F}$ and $\bm{g}$. Generally speaking, we require the initial condition $\bm{g}\in\bm{\Omega}^{(2r+\alpha)}_{[0,T]}$. The nonlinearity $\bm{F}$ is a vector-valued function $(t,s,y,z)\mapsto \bm{F}(t,s,y,z)$ defined in $\Pi=\Delta[0,T]\times\mathbb{R}^d\times B(\overline{z},R_0)$, where $\overline{z}\in\mathbb{R}^m\times(\mathbb{R}^d)^m\times\cdots\times(\mathbb{R}^{d^{2r}})^m\times\mathbb{R}^m\times(\mathbb{R}^d)^m\times\cdots\times(\mathbb{R}^{d^{2r}})^m$ and $B(\overline{z},R_0)$ is a open ball centered at $\overline{z}$ with a positive radius $R_0$. Denoting by $\bm{\mathcal{O}}$ the open set (ball) in $\bm{\Omega}^{(2r+\alpha)}_{[0,T]}$ consisting of all the functions $\bm{u}$ such that the range of $\big(\left(\partial_I\bm{u}\right)_{|I|\leq 2r}(t,s,y),  \left(\partial_I\bm{u}\right)_{|I|\leq 2r}(s,s,y)\big)$ is contained in the open ball, then the nonlinearity $\bm{F}$ can be regarded as a mapping from $\bm{\mathcal{O}}\to\bm{\Omega}^{(\alpha)}_{[0,T]}$. We require $\bm{F}$ satisfies that 
\begin{enumerate}[label=(\roman*)]
\item \textbf{(Ellipticity condition)} for any $\xi=(\xi_1,\dots,\xi_d)^\top\in\mathbb{R}^d$ and $v=(v^1,\cdots.v^m)^\top\in\mathbb{R}^m$, there exists a $\lambda>0$ such that     
\begin{eqnarray}
(-1)^{r-1}\sum_{a,b,|I|=2r}\partial_I \bm{F}^a_b(t,s,y,z)\xi_{i_1}\cdots\xi_{i_{2r}}v^av^b & \geq & \lambda|\xi|^{2r}|v|^2, \label{Uniform ellipticity condition of F 1} \\
(-1)^{r-1}\sum_{a,b,|I|=2r}\left(\partial_I \bm{F}^a_b+\partial_I \overline{\bm{F}}^a_b\right)(t,s,y,z)\xi_{i_1}\cdots\xi_{i_{2r}}v^av^b & \geq & \lambda|\xi|^{2r}|v|^2, \label{Uniform ellipticity condition of F 2} 
\end{eqnarray}
hold uniformly with respect to $(t,s,y,z)\in\Pi$.; 
\item \textbf{(H\"{o}lder continuity)} there exists a positive constant $K$ such that 
\begin{equation} \label{Holder continuity of F}
K:=\sup\limits_{t\in[0,\delta],z\in B(\overline{z},R_0)}|\bm{\mathcal{F}}(t,\cdot,\cdot,z)|^{(\alpha)}_{[0,t]\times\mathbb
	{R}^d}<\infty; 
\end{equation} 
\item \textbf{(Lipschitz continuity)} there exists a $L>0$ such that for any $(t,s,y,z_1)$, $(t,s,y,z_2)\in\Pi$,  
\begin{equation} \label{Lipschitz continuity of F} 
|\bm{\mathcal{F}}(t,s,y,z_1)-\bm{\mathcal{F}}(t,s,y,z_2)|\leq L|z_1-z_2|, 
\end{equation}
\end{enumerate}
where $\partial_I \bm{F}^a_b$ denotes the derivative of $\bm{F}^a$ with respect to its argument $\partial_I\bm{u}^b(t,s,y)$ while $\partial_I \overline{\bm{F}}^a_b$ denotes the derivative of $\bm{F}^a$ with respect to its argument $\partial_I\bm{u}^b(s,s,y)$ and the generic notation $\bm{\mathcal{F}}$ represents $\bm{F}$ itself and some of its first- and second-order derivatives, whose variables to be differentiated are indicated by ``$\surd$" in Tables \ref{tab:table1} and \ref{tab:table2}. Hereafter, we also adopt the similar notations for second-order derivatives of $\bm{F}$: $\partial^2_{IJ}\bm{F}^a_{bc}$ denotes the derivative of $\partial_I\bm{F}^a_b$ with respect to its argument $\partial_J\bm{u}^c(t,s,y)$ and $\partial^2_{IJ}\overline{\bm{F}}^a_{bc}$ denotes the derivative of $\partial_I\overline{\bm{F}}^a_b$ with respect to its argument $\partial_J\bm{u}^c(t,s,y)$. In fact, for a simple check, the assumptions above over $(t,s)$
and $z$ are satisfied if $\bm{F}$ is thrice continuously differentiable with respect to its corresponding arguments. 

\begin{table}[!ht] 
\centering
\begin{tabular}{c| c c c c c c c c c}
\hline
$\mathcal{X}$ & $t$ & $s$ & $y$ & $\partial_I\bm{u}^b(t,s,y)$ & $\partial_I\bm{u}^b(s,s,y)$ \\ 
\hline 
$\bm{F}_\mathcal{X}$ & $\surd$ & & & $\surd$ & $\surd$ \\ 
\hline 
\end{tabular}
\caption{First-order derivatives of $\bm{F}$ required to be H\"{o}lder and Lipschitz continuous}
\label{tab:table1}
\end{table} 

\begin{table}[!ht] 
\centering
\begin{tabular}{c| c c c c c c c c c}
\hline
\diagbox{$\mathcal{X}$}{$\bm{F}_{\mathcal{X}\mathcal{Y}}$}{$\mathcal{Y}$} & $t$ & $s$ & $y$ & $\partial_I\bm{u}^b(t,s,y)$ & $\partial_I\bm{u}^b(s,s,y)$ \\ 
\hline 
$t$ &  &  &  & $\surd$ & $\surd$ \\ 
$s$ &  &  &  &  & \\
$y$ &  &  &  &  & \\
$\partial_I\bm{u}^b(t,s,y)$ & $\surd$ &  &  & $\surd$ & $\surd$ \\
$\partial_I\bm{u}^b(s,s,y)$ & $\surd$ &  &  & $\surd$ & \\
\hline 
\end{tabular}
\caption{Second-order derivatives of $\bm{F}$ required to be H\"{o}lder and Lipschitz continuous}
\label{tab:table2}
\end{table} 

\subsubsection{Small-time Well-posedness of Nonlocal Fully Nonlinear Nonlinear Systems}
Before we present our main result, we stress that the standard linearization methods are not applicable for the nonlocal case. In the setting of local parabolic systems, \cite{Eidelman1969} introduced a so-called ``quasi-linearization method" and studied local existence for fully nonlinear parabolic problems by transforming fully nonlinear systems into quasi-linear systems. Noteworthily, \cite{Khudyaev1963,Sopolov1970} utilized a variant of this method to investigate fully nonlinear PDEs or systems. The linearization method, which we propose to prove for Theorem \ref{Well-posedness of fully nonlinear system} below, is substantially inspired by \cite{Kruzhkov1975,Lunardi1995}. Although there are some previous works on how to linearize nonlinear equations or systems, it is still difficult to extend the existing methods from a local setting to a nonlocal setting. In fact, even for a nonlocal linear system \eqref{Nonlocal linear system}, the literature is lack of its mathematical analysis, not to mention the nonlinear case and the conversion from nonlinearity to linearity.

Theorem \ref{Well-posedness of fully nonlinear system} below is our main innovative result, which shows the (small-time) well-posedness of nonlocal fully nonlinear systems \eqref{Nonlocal fully nonlinear system}.

\begin{theorem}\label{Well-posedness of fully nonlinear system}
Let $\bm{F}$ satisfies the conditions \eqref{Uniform ellipticity condition of F 1}-\eqref{Lipschitz continuity of F}. Suppose that $\bm{g}\in\bm{\Omega}^{{(2r+\alpha)}}_{[0,T]}$ and that the range of $\big(\left(\partial_I\bm{g}\right)_{|I|\leq 2r}(t,y),\left(\partial_I\bm{g}\right)_{|I|\leq 2r}(s,y)\big)$ is contained in the ball centered at $\overline{z}$ with radius $R_0/2$. Then, there exist $\delta>0$ and a unique $\bm{u}\in\bm{\Omega}^{{(2r+\alpha)}}_{[0,\delta]}$ satisfying \eqref{Nonlocal fully nonlinear system} in $\Delta[0,\delta]\times\mathbb{R}^d$. 
\end{theorem}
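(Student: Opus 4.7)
The plan is to establish small-time well-posedness via a Banach fixed-point argument applied to a linearization of \eqref{Nonlocal fully nonlinear system}, built on top of the Schauder-type estimate from Theorem \ref{Schauder estimates}. The key move is that, although $\bm F$ is fully nonlinear, we can reduce locally to a nonlocal \emph{linear} problem of the form \eqref{eq:Loperator}--\eqref{Nonlocal linear system} in which both the local slot $\partial_I \bm u(t,s,y)$ and the diagonal slot $\partial_I \bm u(s,s,y)$ appear, and for which the Schauder theory is already available.

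Concretely, I would set $\bm\varphi:=\bm u-\bm g$, which satisfies $\bm\varphi(t,0,y)=0$, and work on a closed ball $\mathcal{B}_R\subset\bm\Omega^{(2r+\alpha)}_{[0,\delta]}$ centered at $\bm g$ with radius $R<R_0/2$, chosen so that the arguments of $\bm F$ stay inside $B(\overline{z},R_0)$. For a fixed candidate $\bm u = \bm g+\bm\psi\in\mathcal{B}_R$, Taylor's theorem with integral remainder about the reference point $\bigl((\partial_I\bm g)(t,y),(\partial_I\bm g)(s,y)\bigr)_{|I|\leq 2r}$ yields
\begin{equation*}
\bm F\bigl(t,s,y,(\partial_I\bm u)(t,s,y),(\partial_I\bm u)(s,s,y)\bigr) \;=\; \bm F^0(t,s,y)\;+\;(\bm L^{\bm u}\bm\varphi)(t,s,y),
\end{equation*}
where $\bm F^0(t,s,y):=\bm F\bigl(t,s,y,(\partial_I\bm g)(t,y),(\partial_I\bm g)(s,y)\bigr)$ and $\bm L^{\bm u}$ is the nonlocal linear operator \eqref{eq:Loperator} whose coefficients $A^{aI}_b,B^{aI}_b$ are path integrals of $\partial_I\bm F^a_b$ and $\partial_I\overline{\bm F}^a_b$ along the straight line from $\bm g$ to $\bm u$. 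The H\"older regularity of $\bm F$ (Table \ref{tab:table1}) places $A^{aI}_b,B^{aI}_b$ in $\bm\Omega^{(\alpha)}_{[0,\delta]}$, while hypotheses \eqref{Uniform ellipticity condition of F 1}--\eqref{Uniform ellipticity condition of F 2} transfer (for $R$ small) into the strong ellipticity \eqref{Ellipticity 1}--\eqref{Ellipticity 2} required by Theorem \ref{Schauder estimates}. Define $\mathcal T:\mathcal B_R\to\bm\Omega^{(2r+\alpha)}_{[0,\delta]}$ by $\mathcal T(\bm u):=\bm g+\bm\varphi$, where $\bm\varphi$ is the unique solution, furnished by Theorem \ref{Schauder estimates}, of
\begin{equation*}
\bm\varphi_s \;=\; \bm L^{\bm u}\bm\varphi + \bm F^0 - \bm g_s,\qquad \bm\varphi(t,0,y)=0.
\end{equation*}

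Invariance $\mathcal T(\mathcal B_R)\subseteq\mathcal B_R$ follows by Schauder-bounding $\bm\varphi$ and using the zero initial datum together with interpolation in $s$ to extract a positive power $\delta^{\beta}$ out of the $\bm\Omega^{(2r+\alpha)}$ norm, so the bound on $\|\bm\varphi\|^{(2r+\alpha)}_{[0,\delta]}$ becomes arbitrarily small as $\delta\downarrow 0$. For contraction, $\mathcal T(\bm u_1)-\mathcal T(\bm u_2)$ solves a linear problem with zero initial datum whose inhomogeneity is controlled, through \eqref{Lipschitz continuity of F} and the mixed second-order Lipschitz/H\"older estimates recorded in Table \ref{tab:table2}, by $C\,\|\bm u_1-\bm u_2\|^{(2r+\alpha)}_{[0,\delta]}$ times a further $\delta^{\beta}$ coming from the same interpolation. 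Choosing $\delta$ small then makes $\mathcal T$ a strict contraction, and Banach's theorem produces the unique $\bm u\in\bm\Omega^{(2r+\alpha)}_{[0,\delta]}$.

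The main obstacle is precisely the one flagged after \eqref{Mapping from u to w}: the top-order diagonal term $\partial_I\bm u(s,s,y)$ with $|I|=2r$ lives at the same order as the output, so freezing the diagonal slot at $\bm u$ and solving for $\bm w$ cannot yield a contraction in $\bm\Omega^{(2r+\alpha)}$. My linearization side-steps this by packaging that dependence into the coefficient $B^{aI}_b$ of a genuinely nonlocal linear operator, thereby outsourcing the difficulty entirely to Theorem \ref{Schauder estimates}, which already handles both the local and diagonal top-order slots simultaneously. A second delicate point is showing that $\bm u\mapsto(A^{aI}_b,B^{aI}_b)$ is Lipschitz from $\bm\Omega^{(2r+\alpha)}$ into $\bm\Omega^{(\alpha)}$: this is exactly why the second-order regularity of $\bm F$ with respect to the off-diagonal and diagonal highest-order slots, and their mixed derivatives with $t$ (the checked entries in Table \ref{tab:table2}), is imposed, and it is where the technical work of verifying the fixed-point estimates will be concentrated.
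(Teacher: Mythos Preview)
Your overall strategy—reduce to a nonlocal linear problem via a Taylor linearization and appeal to Theorem~\ref{Schauder estimates}—is sound, and it is close in spirit to what the paper does. But there is one genuine difference and one genuine gap.

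\textbf{The difference.} The paper does \emph{not} use a $\bm u$-dependent operator $\bm L^{\bm u}$. It freezes the linearization once and for all at the initial datum, defining
\[
(\bm L_0\bm u)^a(t,s,y)=\sum_{|I|\le 2r,\,b}\partial_I\bm F^a_b\bigl(t,0,y,\bm\theta_0(t,y)\bigr)\,\partial_I\bm u^b(t,s,y)+\sum_{|I|\le 2r,\,b}\partial_I\overline{\bm F}^a_b\bigl(t,0,y,\bm\theta_0(t,y)\bigr)\,\partial_I\bm u^b(s,s,y),
\]
with $\bm\theta_0(t,y)=\bigl((\partial_I\bm g)(t,y),(\partial_I\bm g)(0,y)\bigr)$, and sets $\bm\Lambda(\bm u)=\bm U$ where $\bm U_s=\bm L_0\bm U+\bm F(\bm u)-\bm L_0\bm u$, $\bm U(t,0,y)=\bm g(t,y)$. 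All the $\bm u$-dependence sits in the inhomogeneity $\bm F(\bm u)-\bm L_0\bm u$. The contraction then comes from the pointwise bound
\[
\bigl|\partial_I\bm F^a_b\bigl(t,s,y,\bm\theta_\sigma(t,s,y)\bigr)-\partial_I\bm F^a_b\bigl(t,0,y,\bm\theta_0(t,y)\bigr)\bigr|\le C(R)\,\delta^{\alpha/2r},
\]
which is small because $s\le\delta$ and everything is $s$-H\"older. This is simpler than your scheme: the Schauder constant is fixed (it depends only on $\bm L_0$), and you never have to track how the linear operator varies with $\bm u$. Your path-integral operator $\bm L^{\bm u}$ can be made to work, but it forces the Schauder constant to depend on $R$ and pushes the smallness into $(\bm L^{\bm u_1}-\bm L^{\bm u_2})\bm\varphi_2$, where you must separately exploit that $\bm u_1-\bm u_2$ (not just $\bm\varphi_2$) vanishes at $s=0$.

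\textbf{The gap.} Your invariance argument is wrong as stated. You claim that, because $\bm\varphi(t,0,\cdot)=0$, ``interpolation in $s$'' extracts a factor $\delta^\beta$ and makes $\|\bm\varphi\|^{(2r+\alpha)}_{[0,\delta]}$ arbitrarily small. It does not: the norm $\|\cdot\|^{(2r+\alpha)}$ contains $|\bm\varphi_s|^\infty$ and the $y$-H\"older seminorm of $\partial_I\bm\varphi$ for $|I|=2r$, neither of which vanishes as $\delta\downarrow0$. Concretely, Theorem~\ref{Schauder estimates} gives $\|\bm\varphi\|^{(2r+\alpha)}_{[0,\delta]}\le C\,\|\bm F^0\|^{(\alpha)}_{[0,\delta]}$, and $\|\bm F^0\|^{(\alpha)}$ is a fixed positive number independent of $\delta$ (its sup part alone is bounded below by $|\bm F^0(t,0,y)|$). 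The paper instead argues
\[
\|\bm\Lambda(\bm u)-\bm g\|\le \|\bm\Lambda(\bm u)-\bm\Lambda(\bm g)\|+\|\bm\Lambda(\bm g)-\bm g\|\le \tfrac12 R+C',
\]
and then chooses $R\ge 2C'$; only \emph{after} fixing such $R$ is $\delta$ taken small enough that $C(R)\delta^{\alpha/2r}\le\tfrac12$ and $C\delta^{\alpha/2r}R\le R_0/2$ (so that the range constraint is met—note it is \emph{not} $R<R_0/2$ as you wrote). You should rewrite your invariance step along these lines.
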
 

It should be noted that in the small-time setting, we only require the conditions of \eqref{Uniform ellipticity condition of F 1}-\eqref{Lipschitz continuity of F} of $\bm{F}$ in an open ball $B(\overline{z},R_0)$ while the range of $\big(\left(\partial_I\bm{g}\right)_{|I|\leq 2r}(t,y),\left(\partial_I\bm{g}\right)_{|I|\leq 2r}(s,y)\big)$ is contained in $\bm{\mathcal{O}}$ with a smaller ball $B(\overline{z},R_0/2)$. For a pair $(\bm{F},\bm{g})$ that satisfies their coupled assumptions, Theorem \ref{Well-posedness of fully nonlinear system} provides the local/maximally-defined (see Remark \ref{Maximal interval}) well-posedness of \eqref{Nonlocal fully nonlinear system}. The (relaxed) local assumptions on $\bm{F}$ facilitates a larger class of \eqref{Nonlocal fully nonlinear system}.
From the proof of Theorem \ref{Well-posedness of fully nonlinear system} (see \eqref{Range of u}) and the example provided below, we find that the local solution always exists if {\color{black}$g\in\bm{\mathcal{O}}$ since $\bm{\mathcal{O}}$ is an open set in $\bm{\Omega}^{(2r+\alpha)}_{[0,T]}$}. Hence, to check if \eqref{Nonlocal fully nonlinear system} exists a local solution, it is more convenient to check if the conditions \eqref{Uniform ellipticity condition of F 1}-\eqref{Lipschitz continuity of F} of $\bm{F}$ can be satisfied in a small open ball centered at the range of $\bm{g}$, instead of in $B(\overline{z},R_0)$.
For the global solvability (well-posedness), we will discuss it in Subsection \ref{Large Well-posedness of Nonlocal Nonlinear Systems}.

\begin{remark}[\textbf{Maximally defined solutions}] \label{Maximal interval}
We have proven the local well-posedness of \eqref{Nonlocal fully nonlinear system} in $\Delta[0,\delta]\times\mathbb{R}^d$ and thus the diagonal condition can be determined for $s\in[0,\delta]$. After which, the nonlocal fully nonlinear system \eqref{Nonlocal fully nonlinear system} is reduced to a classical local fully nonlinear systems parameterized by $t$. Then we take $\delta$ as initial time and $\bm{u}(t,\delta,y)$ as initial datum, we can extend the solution to a larger time interval up to the maximal interval. It is analogous to the process of identifying the global solution of nonlocal linear systems in the proof of Theorem \ref{Well-posedness of u and v}. The procedure could be repeated up to a \textit{maximally defined solution} $\bm{u}:\Delta[0,\sigma]\times\mathbb{R}^d\to\mathbb{R}^m$, belonging to $\bm{\Omega}^{(2+\alpha)}_{[0,\sigma]}$ for any $\sigma<\tau$. The time region $\Delta[0,\tau]$ is maximal in the sense that if $\tau<\infty$, then there does not exist any solution of \eqref{Nonlocal fully nonlinear system} belonging to $\bm{\Omega}^{(2+\alpha)}_{[0,\tau]}$; see Figure \ref{fig:extension}. An example of $\tau<T$ can be proposed similarly as in the local case; see \cite[pp. 203]{Lieberman1996}. It is noteworthy that the problem of existence at large for arbitrary initial data is a difficult task even in the classical fully nonlinear case. The difficulty is caused by the fact that a priori estimate in a very high norm $|\cdot|^{(2+\alpha)}_{[a,b]\times\mathbb{R}^d}$ is needed to establish the existence at large. To this end, there will be severe restrictions on the nonlinearities. More details are discussed in \cite{Krylov1987,Lieberman1996}.
\end{remark}

\begin{remark}[\textbf{Stability analysis}] \label{NonlinearSA}
Consider a family of nonlinearities $\bm{F}(t,s,y,z;\lambda)$ parameterized by a parameter $\lambda\in\Lambda$, where $\Lambda$ is a Banach space under $\|\cdot\|_\Lambda$. For any $(t,s,y,\bm{u})\in\Delta[0,\delta]\times\mathbb{R}^d\times\bm{\Omega}^{(2r+\alpha)}_{[0,\delta]}$, it is assumed that 
\begin{equation*}
\|\bm{F}\big(\cdot,\cdot,\cdot,\left(\partial_I\bm{u}\right)_{|I|\leq 2r}(\cdot,\cdot,\cdot),  \left(\partial_I\bm{u}\right)_{|I|\leq 2r}(\cdot,\cdot,\cdot);\lambda\big)-\bm{F}\big(\cdot,\cdot,\cdot,\left(\partial_I\bm{u}\right)_{|I|\leq 2r}(\cdot,\cdot,\cdot),  \left(\partial_I\bm{u}\right)_{|I|\leq 2r}(\cdot,\cdot,\cdot);\widehat{\lambda}\big)\|^{(\alpha)}_{[0,\delta]}\leq \gamma\|\lambda-\widehat{\lambda}\|_{\Lambda}.
\end{equation*}
Suppose $\bm{u}$ and $\widehat{\bm{u}}$ correspond to $(\lambda,\bm{g})$ and $(\widehat{\lambda},\widehat{\bm{g}})$, respectively. We have the following estimate: 
\begin{equation*}
\| \bm{u}-\widehat{\bm{u}}\|^{(2r+\alpha)}_{[0,\delta]}\leq C\left(\|\lambda-\widehat{\lambda}\|_\Lambda+\|\bm{ g}-\widehat{\bm{g}}\|^{(2+\alpha)}_{[0,\delta]}\right), 
\end{equation*} 
which follows directly the proofs of our Theorem \ref{Well-posedness of fully nonlinear system} and Theorem 8.3.2 in \cite{Lunardi1995}. 	
\end{remark}

Before we study the global well-posedness, we provide an example to understand the assumptions on $\bm{F}$ and $\bm{g}$ in Theorem \ref{Well-posedness of fully nonlinear system}. 
Without loss of generality, we assume that $r=m=d=1$ and the coefficients of \eqref{State equation} and \eqref{Cost functional} are 
\begin{equation*} 
\left\{
\begin{array}{rcl}
h(t,s,y,\alpha) & = & C^{(1)}(t,s,y)+\frac{1}{2}C^{(2)}(t,s,y)\alpha^2, \\
b(s,y,\alpha) & = & B^{(1)}(s,y)+B^{(2)}(s,y)\alpha, \\
A(s,y,\alpha): & = & \frac{1}{2}\sigma(s,y,\alpha)\sigma(s,y,\alpha)=A^{(1)}(s,y)+\frac{1}{2}A^{(2)}(s,y)\alpha^2. 
\end{array}
\right. 
\end{equation*}
Then, it is clear that the optimun of the Hamiltonian is attained by $\phi(t,s,y,u,p,q)=-B^{(2)}(s,y)p/(A^{(2)}(s,y)q+C^{(2)}(t,s,y))$. Moreover, according to \eqref{Equilibrium strategy},
the equilibrium control is given by
$$
\overline{\alpha}(s,y)=\frac{-B^{(2)}(s,y)u_y(s,s,y)}{A^{(2)}(s,y)u_{yy}(s,s,y)+C^{(2)}(s,s,y)}.
$$
Consequently, with a variable substitution, the (backward) equilibrium HJB equation can be reformulated forwardly as:
\begin{equation} \label{Equilibrium HJB equation of an example}
\left\{
\begin{array}{lr}
u_s(t,s,y) = \Big(\dot{A}^{(1)}(s,y)+\frac{1}{2}\dot{A}^{(2)}(s,y)\left(\frac{\dot{B}^{(2)}(s,y)u_y(s,s,y)}{\dot{A}^{(2)}(s,y)u_{yy}(s,s,y)+\dot{C}^{(2)}(s,s,y)}\right)^2\Big)u_{yy}(t,s,y) \\
\qquad\qquad\qquad 
+\Big(\dot{B}^{(1)}(s,y) -\frac{\dot{B}^{(2)}(s,y)\dot{B}^{(2)}(s,y)u_y(s,s,y)}{\dot{A}^{(2)}(s,y)u_{yy}(s,s,y)+\dot{C}^{(2)}(s,s,y)}\Big)u_y(t,s,y) \\
\qquad\qquad\qquad\qquad 
+\dot{C}^{(1)}(t,s,y)+\frac{1}{2}\dot{C}^{(2)}(t,s,y)\left(\frac{\dot{B}^{(2)}(s,y)u_y(s,s,y)}{\dot{A}^{(2)}(s,y)u_{yy}(s,s,y)+\dot{C}^{(2)}(s,s,y)}\right)^2, \\
u(t,0,y) = \dot{g}(t,y),\quad 0\leq s \leq t\leq T,\quad y\in\mathbb{R}, 
\end{array}
\right. 
\end{equation}
where $\dot{G}(t,s,y)=G(T-t,T-s,y)$. Finally, by our local well-posedness results (Theorem \ref{Well-posedness of fully nonlinear system}), there exists $\delta>0$ such that \eqref{Equilibrium HJB equation of an example} is solvable in $\Delta[0,\delta]$, if there exists a constant $\epsilon>0$ such that   
\begin{enumerate}
\item $\dot{A}^{(2)}(s,y)\dot{g}_{yy}(s,s,y)+\dot{C}^{(2)}(s,s,y)\geq\epsilon$
\item $\frac{\partial\dot{F}}{\partial q}=\dot{ A}^{(1)}(s,y)+\frac{1}{2}\dot{A}^{(2)}(s,y)\left(\frac{\dot{B}^{(2)}(s,y)\dot{g}_y(s,s,y)}{\dot{A}^{(2)}(s,y)\dot{g}_{yy}(s,s,y)+\dot{C}^{(2)}(s,s,y)}\right)^2\geq\epsilon$ 
\item $\frac{\partial\dot{F}}{\partial q}+\frac{\partial\dot{F}}{\partial n}=\dot{A}^{(1)}(s,y)+\dot{f}(t,s,y,\dot{g}_y(t,s,y),\dot{g}_{yy}(t,s,y),\dot{g}_y(s,s,y),\dot{g}_{yy}(s,s,y))\geq\epsilon$
\end{enumerate}
where $\dot{F}$ is the nonlinearity of \eqref{Equilibrium HJB equation of an example} and $\dot{f}$ represents the remaining terms of $\frac{\partial\dot{F}}{\partial q}+\frac{\partial\dot{F}}{\partial n}$ excluding $\dot{A}^{(1)}$. In general, it is not necessary to identify the properties of nonlinearity $\dot{F}$ in a large ball $B(\overline{z},R_0)$. According to the proof of Theorem \ref{Well-posedness of fully nonlinear system}, the local solution of \eqref{Nonlocal fully nonlinear system} can arbitrarily approach to the initial data $g$ by choosing a small enough $\delta$; see \eqref{Range of u}. Consequently, if the domain, where nonlinearity $\dot{F}$ satisfies these requirements in Theorem \ref{Well-posedness of fully nonlinear system}, contains an open ball centered at $g$ (i.e. $g\in{\color{black}\bm{\mathcal{O}}}$), then there exists a local solution for the nonlocal systems. It is clear that $\dot{F}$ is locally Lipschitz and H\"{o}lder continuous. Moreover, for a large enough $\dot{A}^{(1)}$ and $\dot{C}^{(2)}$, the three inequalities above hold such that \eqref{Equilibrium HJB equation of an example} is solvable at least in a small time interval.


\subsubsection{On the Global Well-posedness of Nonlocal Nonlinear Systems} \label{Large Well-posedness of Nonlocal Nonlinear Systems}
In this subsection, we show that \eqref{Nonlocal fully nonlinear system} is well-posed globally, i.e. $\tau=T$, if a very sharp a priori estimate is available. Moreover, we introduce a class of nonlocal nonlinear system called nonlocal quasilinear system of the form \eqref{Nonlocal quasilinear system} and we establish its global solvability under a growth condition.


In contrast with the nonlocal linear systems \eqref{Nonlocal linear system}, where the (small-time) solution can be extended arbitrarily many times to a global solution over $\Delta[0,T]$ for any $T<\infty$,
it is possible for the nonlinear case \eqref{Nonlocal fully nonlinear system} that the extension procedure is terminated at some $\tau<T$. The dissatisfying result is caused mainly by the fact in the proof of Theorem \ref{Well-posedness of fully nonlinear system} that in order to obtain a $\frac{1}{2}$-contraction from $\bm{u}$ to $\bm{U}$ defined by $\bm{U}_s=\bm{L}_0\bm{U}+\bm{F}(\bm{u})-\bm{L}_0\bm{u}$, we need to strike a balance between $R$ and $\delta$ such that $C(R)\delta^\frac{\alpha}{2r}<\frac{1}{2}$. In the extension procedure in view of Remark \ref{Maximal interval}, it is possible that the solution $\bm{u}$ blows up near $\tau<T$. In this case, both $R$ and $C(R)$ tend to infinity under the norm $\|\cdot\|^{(2r+\alpha)}_{[0,\tau)}$. From this perspective, it becomes clear that the inequality $C(R)\delta^\frac{\alpha}{2r}<\frac{1}{2}$ has restricted $\delta$ to be infinitely small. Consequently, the extension procedure is forced to stop to generate a maximally defined solution over $[0,\tau)$ instead of a global solution over the whole interval $[0,T]$.

In fact, it has been an unavoidable problem in the study of differential equations. 
To extend the maximally defined solution from $[0,\tau)$ to $[0,\tau]$, the key step is to show that the mapping $s\mapsto\bm{u}(\cdot,s,\cdot)$ is uniformly continuous in some sense such that an analytic continuation argument works. Next, inspired by \cite{Lunardi1989,Prato1996}, we show that it is possible to have $\tau=T$ if a very sharp a priori estimate is available. 
\begin{theorem} \label{Global existence of fully nonlineaity} 
Let $\bm{F}$ and $\bm{g}$ satisfy the assumptions of Theorem \ref{Well-posedness of fully nonlinear system} with $\alpha$ replaced by $\alpha^\prime>\alpha$. For a fixed $\bm{g}\in\Omega^{(2r+\alpha^\prime)}_{[0,T]}$, let $\bm{u}$ be the maximally defined solution of problem \eqref{Nonlocal fully nonlinear system} over $[0,\tau)$. Assume further that there exists a finite constant $M>0$ such that
\begin{equation} \label{Upper bounded for global existence}
\| \bm{u}\|^{(2r+\alpha^\prime)}_{[0,\sigma]}\leq M ~ \text{for all} ~ \sigma\in[0,\tau), 
\end{equation}
then we have either $\lim_{s\to\tau}\bm{u}(\cdot,s,\cdot)\in\partial\bm{\mathcal{O}}$ or $\tau=T$. 
\end{theorem}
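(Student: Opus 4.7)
The plan is to argue by contradiction: assume $\tau<T$ and that $\lim_{s\uparrow\tau}\bm{u}(\cdot,s,\cdot)$ stays strictly away from $\partial\bm{\mathcal{O}}$; I will then construct an extension of $\bm{u}$ past $\tau$, violating maximality. The whole argument has two ingredients: (i) use the refined a priori bound \eqref{Upper bounded for global existence}, which is posed in the \emph{stronger} norm $\|\cdot\|^{(2r+\alpha^\prime)}$ with $\alpha^\prime>\alpha$, to extract a limit of $\bm{u}$ at $s=\tau$ in the weaker space $\bm{\Omega}^{(2r+\alpha)}_{[0,\tau]}$; and (ii) restart the local existence machinery of Theorem \ref{Well-posedness of fully nonlinear system} at $s=\tau$ with this limit as initial datum.

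For (i), the control $\|\bm{u}\|^{(2r+\alpha^\prime)}_{[0,\sigma]}\leq M$ uniform in $\sigma<\tau$ means that, for each slice at parameter $t$, the spatial derivatives of $\bm{u}(t,\cdot,\cdot)$ up to order $2r$ and the $s$-derivative $\partial_s\bm{u}$ are uniformly parabolic-Hölder continuous of order $\alpha^\prime$. By interpolation between Hölder scales, one obtains an estimate of the form
\begin{equation*}
\|\bm{u}(\cdot,s_2,\cdot)-\bm{u}(\cdot,s_1,\cdot)\|^{(2r+\alpha)}\leq CM\,|s_2-s_1|^{(\alpha^\prime-\alpha)/(2r)},
\end{equation*}
uniformly in $s_1<s_2<\tau$ and uniformly in the external parameter $t$. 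This makes the net $\{\bm{u}(\cdot,s,\cdot)\}_{s<\tau}$ Cauchy in the $(2r+\alpha)$-norm, so it converges to some $\bm{u}^\ast\in\bm{\Omega}^{(2r+\alpha)}_{[0,\tau]}$; lower semicontinuity carries the $M$-bound in the stronger $(2r+\alpha^\prime)$-norm across the limit.

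For (ii), by the standing hypothesis $\bm{u}^\ast(\cdot,\tau,\cdot)\notin\partial\bm{\mathcal{O}}$, so the range of its relevant derivatives sits in an open ball inside $\bm{\mathcal{O}}$, on which $\bm{F}$ still satisfies \eqref{Uniform ellipticity condition of F 1}--\eqref{Lipschitz continuity of F}. Since $\bm{u}$ is already determined on $\Delta[0,\tau]$, the diagonal term $(\partial_I\bm{u})_{|I|\leq 2r}(s,s,y)$ is \emph{known} for all $s\leq\tau$, and the problem of extending to $s\in[\tau,\tau+\delta^\prime]$ is again a nonlocal fully nonlinear system of the same shape \eqref{Nonlocal fully nonlinear system} with initial time $\tau$ and initial datum $\bm{u}^\ast(\cdot,\tau,\cdot)$. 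Applying Theorem \ref{Well-posedness of fully nonlinear system} to the time-shifted problem furnishes a solution on $[\tau,\tau+\delta^\prime]$ for some $\delta^\prime>0$; gluing it with $\bm{u}$ across $s=\tau$ yields a solution in $\bm{\Omega}^{(2r+\alpha)}_{[0,\tau+\delta^\prime]}$, contradicting the maximality of $\tau$.

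The main obstacle is the interpolation step (i): one must produce $\bm{u}^\ast$ in a norm strong enough to serve as initial datum in Theorem \ref{Well-posedness of fully nonlinear system}, which forces the gap $\alpha^\prime-\alpha>0$ to be exploited carefully. A secondary complication is that the norm $\|\cdot\|^{(2r+\alpha^\prime)}_{[0,\sigma]}$ involves \emph{both} variables $t$ and $s$ on the triangular domain $\Delta[0,\sigma]$, so the usual one-variable interpolation between Hölder scales must be upgraded to hold uniformly in the external parameter $t$ and respect the constraint $s\leq t$. A related subtlety is that the gluing at $s=\tau$ preserves the $(2r+\alpha)$-regularity of the glued function: continuity of $\bm{u}$ and of $\partial_s\bm{u}$ across $s=\tau$ is automatic from the equation (the right-hand side of \eqref{Nonlocal fully nonlinear system} matches on both sides at $\tau$), but higher-order compatibility needs a brief check using the equation.
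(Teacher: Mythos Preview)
Your proposal is correct and follows essentially the same route as the paper: the paper also reduces the problem to showing uniform continuity of $s\mapsto\bm{u}(\cdot,s,\cdot)$ into $C^{2r+\alpha}(\mathbb{R}^d;\mathbb{R}^m)$ via interpolation between the bounds $\bm{u}(t,\cdot,\cdot)\in B([0,\tau);C^{2r+\alpha'})$ and $\bm{u}_s(t,\cdot,\cdot)\in B([0,\tau);C^{\alpha'})$ (invoking an interpolation result of Sinestrari/Lunardi with $\theta=1-\frac{\alpha'-\alpha}{2r}$, which yields exactly your H\"older exponent $\frac{\alpha'-\alpha}{2r}$), then extends $\bm{u}$ to $s=\tau$ and restarts Theorem~\ref{Well-posedness of fully nonlinear system} at $\tau$ to contradict maximality. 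The paper handles your ``triangular-domain'' concern implicitly by first extending the $\alpha'$-estimates from $\Delta[0,\sigma]$ to the trapezoid $[0,T]\times[0,t\wedge\sigma]$ (as in the proof of the Schauder estimate), so your identification of that subtlety is apt.
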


Generally speaking, in the proof of Theorem \ref{Well-posedness of fully nonlinear system}, the $R$ in the $C(R)\delta^\frac{\alpha}{2r}$ depends on $\|\bm{u}\|^{(2r+\alpha)}_{[0,\delta]}$ since the $\frac{1}{2}$-contraction operator $\bm{U}=\bm{\Lambda}(\bm{u})$ via  $\bm{U}_s=\bm{L}_0\bm{U}+\bm{F}(\bm{u})-\bm{L}_0\bm{u}$ defined in a closed set $\bm{\mathcal{U}}$ of $\bm{\Omega}^{(2r+\alpha)}_{[0,\delta]}$. Hence, the prior estimate of $\| \bm{u}\|^{(2r+\alpha)}_{[0,\sigma]}\leq M$ for all $\sigma\in[0,\tau)$ is not enough for the existence in the large. Instead, we need an estimate on the modulus of continuity of $s\mapsto\bm{u}(\cdot,s,\cdot)$. Similar sufficient conditions to obtain a priori estimates like \eqref{Upper bounded for global existence} for the classical PDEs/systems can be found in \cite{Krylov1987,Lieberman1996}. However, it is not straightforward to
express such conditions in terms of coefficients and data of the local and nonlocal fully nonlinear system.

Next, we show that the desired sharp a priori estimate is available for a class of nonlocal nonlinear systems, namely \textit{nonlocal quasilinear systems}, of the form: 
\begin{equation} \label{Nonlocal quasilinear system}  
\left\{
\begin{array}{rcl}
\bm{u}^a_s(t,s,y) & = & \sum\limits_{|I|= 2r,b\leq m}A^{aI}_b(s,y)\partial_I\bm{u}^b(t,s,y)+\bm{Q}^a\big(t,s,y,\left(\partial_I\bm{u}\right)_{|I|\leq 2r-1}(t,s,y),  \left(\partial_I\bm{u}\right)_{|I|\leq 2r-1}(s,s,y)\big), \\
\bm{u}(t,0,y) & = & \bm{g}(t,y),\qquad \qquad \hfill 0\leq s\leq t\leq T,\quad y\in\mathbb{R}^d, \qquad a=1,\ldots,m. 
\end{array}
\right. 
\end{equation} 
Compared with \eqref{Nonlocal fully nonlinear system}, \eqref{Nonlocal quasilinear system} is free of the highest order nonlocal term $(\partial_I\bm{u})_{|I|=2r}(s,s,y)$ and is linear in the highest order local term $(\partial_I\bm{u})_{|I|=2r}(t,s,y)$. It is clear that \eqref{Nonlocal quasilinear system} is a special case of \eqref{Nonlocal fully nonlinear system}. The nonlocal quasilinear systems are relevant from both theoretical and practical viewpoints, since they cover the equilibrium HJB systems of TIC SDG problems, where the diffusion of \eqref{State equation} is uncontrolled, i.e., $\sigma(s,y,\alpha)=\sigma(s,y)$. By leveraging Theorem \ref{Well-posedness of fully nonlinear system} and Theorem \ref{Global existence of fully nonlineaity} of nonlocal fully nonlinear systems \eqref{Nonlocal fully nonlinear system}, we aim to show that \eqref{Nonlocal quasilinear system} is solvable globally under some technical conditions in the theorem below.
In fact, our results have been the best in the existing literature on nonlocal PDEs/systems in terms of the global well-posedness issues.

\begin{theorem} \label{Wellposedness of Quasilinear systems} 
Suppose that all coefficient functions $A^{aI}_b$ of \eqref{Nonlocal quasilinear system} belong to $\bm{\Omega}^{{(\alpha)}}_{[0,T]}$ and satisfy \eqref{Ellipticity 1}, $\bm{g}\in\bm{\Omega}^{{(2r+\alpha)}}_{[0,T]}$, and the nonlinearity $\bm{Q}^a(t,s,y,w,\overline{w})$ has enough regularities required in \eqref{Holder continuity of F} and \eqref{Lipschitz continuity of F}, satisfies a linear growth condition: $|\bm{Q}^a|\leq K\left(1+|w|\right)$, and has its bounded first order partial derivatives with respect to $t$ and $w$. Then, the nonlocal quasilinear system \eqref{Nonlocal quasilinear system} admits a unique solution $\bm{u}\in\bm{\Omega}^{{(2r+\alpha)}}_{[0,T]}$ in $\Delta[0,T]\times\mathbb{R}^d$.
\end{theorem}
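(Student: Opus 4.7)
My plan is to obtain the global solution by combining the small-time existence of Theorem \ref{Well-posedness of fully nonlinear system} with the continuation principle of Theorem \ref{Global existence of fully nonlineaity}, using the structural advantages of the quasilinear form \eqref{Nonlocal quasilinear system} to produce the a priori estimate \eqref{Upper bounded for global existence} on the maximal interval $[0,\tau)$.

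\textbf{Step 1: Local existence and the maximal interval.} The system \eqref{Nonlocal quasilinear system} is a special case of \eqref{Nonlocal fully nonlinear system} whose highest-order nonlocal coefficient is zero, so condition \eqref{Uniform ellipticity condition of F 2} follows automatically from \eqref{Ellipticity 1}. Picking an intermediate exponent $\alpha<\alpha'<1$ under which the hypotheses on $\bm{Q}$ and $\bm{g}$ still hold (or working with $\alpha$ itself, absorbing a mild regularity upgrade coming from the linear part), Theorem \ref{Well-posedness of fully nonlinear system} together with Remark \ref{Maximal interval} provides a maximally defined solution $\bm{u}\in\bm{\Omega}^{(2r+\alpha')}_{[0,\sigma]}$ for every $\sigma<\tau$, with $\tau\in(0,T]$. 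The goal is to prove $\tau=T$.

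\textbf{Step 2: A priori bound.} I will show that there is $M>0$, independent of $\sigma<\tau$, with $\|\bm{u}\|^{(2r+\alpha')}_{[0,\sigma]}\leq M$. The quasilinear structure is what makes this possible: the principal part is linear with $(s,y)$-only coefficients, and the nonlocal dependence sits entirely in lower-order derivatives $(\partial_I\bm{u})_{|I|\leq 2r-1}(s,s,y)$. The argument is a bootstrap.

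\emph{(a) $L^\infty$-bound.} Viewing the right-hand side as a source and treating the diagonal term as ``frozen'', the equation becomes a linear parabolic system with bounded coefficients. The linear growth $|\bm{Q}^a|\le K(1+|w|)$ combined with the maximum principle (or equivalently an $L^\infty$ estimate derived from the fundamental solution) and Gronwall's inequality yields $|\bm{u}|^{\infty}_{[0,\sigma]\times\mathbb{R}^d}\le C(1+|\bm{g}|^\infty)e^{CT}$.

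\emph{(b) Hölder control on lower-order derivatives.} Interior parabolic regularity for the linear principal part, together with the Lipschitz dependence of $\bm{Q}$ on $w$, upgrades the bound from (a) to a uniform estimate for $(\partial_I\bm{u})_{|I|\leq 2r-1}$ in $C^{\alpha'}$. Differentiating \eqref{Nonlocal quasilinear system} in $t$ and applying the same procedure to $\bm{v}=\bm{u}_t$ (using the boundedness of $\partial_t\bm{Q}$ and $\partial_w\bm{Q}$) yields the analogous control of $\bm{u}_t$.

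\emph{(c) Schauder closure.} With lower-order derivatives of $\bm{u}$, evaluated at both $(t,s,y)$ and $(s,s,y)$, uniformly Hölder on $[0,\sigma]$, the composite function $\bm{Q}(\cdot,(\partial_I\bm{u})(t,\cdot),(\partial_I\bm{u})(s,s,\cdot))$ defines an element of $\bm{\Omega}^{(\alpha')}_{[0,\sigma]}$ with norm controlled independently of $\sigma$. Then the linear Schauder estimate \eqref{Estimates of solutions of nonlocal system} of Theorem \ref{Schauder estimates}, applied to \eqref{Nonlocal quasilinear system} seen as a nonlocal linear problem with inhomogeneity $\bm{Q}$, produces $\|\bm{u}\|^{(2r+\alpha')}_{[0,\sigma]}\le M$.

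\textbf{Step 3: Continuation and uniqueness.} With the uniform bound from Step 2, Theorem \ref{Global existence of fully nonlineaity} forces $\tau=T$: the boundary alternative $\lim_{s\to\tau}\bm{u}(\cdot,s,\cdot)\in\partial\bm{\mathcal{O}}$ is vacuous here because $\bm{Q}$ is globally defined in its lower-order slots (no obstruction analogous to \eqref{Uniform ellipticity condition of F 2} fails), so $\bm{\mathcal{O}}$ can be taken as the entire $\bm{\Omega}^{(2r+\alpha')}_{[0,T]}$. Uniqueness in $\bm{\Omega}^{(2r+\alpha)}_{[0,T]}$ follows from a standard Gronwall argument on the difference of two solutions, using the Lipschitz continuity of $\bm{Q}$ in $w$ together with the stability estimate \eqref{Stability analysis of linear system}.

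The main obstacle is the bootstrap in Step~2(b)-(c): I must show that the Hölder norms of $\bm{Q}(\cdot,\bm{u}(t,\cdot),\bm{u}(s,s,\cdot))$ are controlled in terms of those of $\bm{u}$ without the constant inflating to infinity as $\sigma\to\tau$. The linear growth in $w$, the Lipschitz continuity in $w$, and the boundedness of $\partial_t\bm{Q}$ are precisely what prevents such a circular blow-up in the nonlocal setting; were $\bm{Q}$ genuinely superlinear in the diagonal lower-order derivatives, a finite-time singularity $\tau<T$ could and in general would appear, as is classical already in the local quasilinear theory.
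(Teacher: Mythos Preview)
Your overall strategy---local existence, then a priori bound, then continuation---matches the paper's, and Steps~2(a)--(c) are essentially the paper's bootstrap (Gr\"onwall for the $L^\infty$ level, then lower-order H\"older control, then Schauder via Theorem~\ref{Schauder estimates} once $\bm{Q}(\bm{u})\in\bm{\Omega}^{(\alpha)}$). Two points, however, need correction.

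\textbf{The $\alpha'$ gap.} You invoke Theorem~\ref{Global existence of fully nonlineaity} directly, which requires the a priori bound in the $\|\cdot\|^{(2r+\alpha')}$ norm with $\alpha'>\alpha$; but the theorem you are proving only assumes $\bm{g}\in\bm{\Omega}^{(2r+\alpha)}_{[0,T]}$, so your parenthetical ``picking $\alpha<\alpha'<1$ under which the hypotheses still hold'' is not justified. The paper closes this gap by a structural observation you have not made: in the quasilinear case \eqref{Nonlocal quasilinear system}, the contraction constant $C(R)$ from the proof of Theorem~\ref{Well-posedness of fully nonlinear system} depends only on $\|\bm{u}\|^{(2r-1+\alpha)}$, not on $\|\bm{u}\|^{(2r+\alpha)}$, because $\bm{Q}$ sees only derivatives of order $\le 2r-1$. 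Consequently, to continue past $\tau$ one only needs uniform continuity of $s\mapsto\bm{u}(t,s,\cdot)$ into $C^{2r-1+\alpha}(\mathbb{R}^d)$. The bound $\|\bm{u}\|^{(2r+\alpha)}_{[0,\tau)}\le K$ (which your Step~2(c) does deliver) already gives this by interpolation---no extra exponent $\alpha'$ is needed. One then upgrades $\bm{u}(\cdot,\tau,\cdot)$ back to $C^{2r+\alpha}$ via a standard limit lemma (the paper cites \cite[Lemma~8.5.5]{Lunardi1995}).

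\textbf{Minor point on Step~2(a).} Invoking a ``maximum principle'' is imprecise for a higher-order ($r\ge 2$) parabolic \emph{system}; the paper instead obtains the $\|\bm{u}\|^{(2r-1)}$ bound directly from the linear growth condition and Gr\"onwall--Bellman applied to \eqref{Nonlocal quasilinear system} and its $t$-differentiated version, then upgrades to $\|\bm{u}\|^{(2r-1+\alpha)}$ by classical parabolic regularity. Your route via fundamental-solution estimates would also work, but the maximum-principle phrasing should be dropped.
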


As closing remarks of this section, we review our studies on nonlocal linear \eqref{Nonlocal linear system}, quasilinear \eqref{Nonlocal quasilinear system}, and fully nonlinear systems \eqref{Nonlocal fully nonlinear system} in parallel. Our analyses are based on the Banach fixed point arguments to first establish their small-time solvability and then extend the results to a longer time horizon, while the later extension faces different situations for different systems.
In the case of nonlocal linear systems \eqref{Nonlocal linear system}, a contractive mapping can be constructed by choosing a suitably small $\delta$ such that $C\delta^{1-\frac{\alpha}{2r}}\leq\frac{1}{2}$ with a constant $C$ depending only on the data of \eqref{Nonlocal linear system}.
Hence, there is no issue as discussed at the beginning of Section \ref{Large Well-posedness of Nonlocal Nonlinear Systems} and thus the global well-posedness of \eqref{Nonlocal linear system} can be obtained without extra conditions. In contrast, the nonlocal nonlinear systems, including the quasilinear \eqref{Nonlocal quasilinear system} and the fully nonlinear \eqref{Nonlocal fully nonlinear system} systems,
need to balance $R$ and $\delta$ such that $C(R)\delta^\frac{\alpha}{2r}\leq \frac{1}{2}$. Through mathematical analyses, the $R$ for the quasilinear system \eqref{Nonlocal quasilinear system} is quantified by $\|\cdot\|^{(2r-1+\alpha)}_{[a,b]}$ while the fully nonlinear one is quantified by $\|\cdot\|^{(2r+\alpha)}_{[a,b]}$. This observation explains the different levels of difficulty when we establish their global existence; see Theorems \ref{Global existence of fully nonlineaity} and \ref{Wellposedness of Quasilinear systems}. It is an important and promising research direction to rewrite the condition \eqref{Upper bounded for global existence} in terms of the model coefficients of the original problem \eqref{Nonlocal fully nonlinear system}. 

{\color{black}
\subsection{Well-posedness in a Weighted Space} \label{Sec:WeightedNormsSpaces}
In this subsection, we extend the main results in the previous subsections to a weighted space, which allows its elements (functions) as well as their partial derivatives grow exponentially in the spatial variable $y$. Throughout this subsection, we consider the exponential weights, defined by $\varrho(y)=\exp\{1+\langle Sy,y\rangle^{1/2}\}$ for any $y\in\mathbb{R}^d$, $S$ being any symmetric positive-definite matrix with eigenvalues in $[\underline{\lambda},\overline{\lambda}]$ and $\underline{\lambda}>0$. First of all, we introduce the following weighted norms  
\begin{equation} \label{WeightedNorm1}
|\varphi|^{1,(l)}_{\varrho,[a,b]\times\mathbb{R}^d}=|\varphi/\varrho|^{(l)}_{[a,b]\times\mathbb{R}^d},
\end{equation}
\begin{equation} \label{WeightedNorm2}
|\varphi|^{2,(l)}_{\varrho,[a,b]\times\mathbb{R}^d}= \sum_{k\leq\lfloor l\rfloor}\sum_{2rh+j=k}\left|\frac{\partial^h_s\partial^j_y\varphi}{\varrho}\right|^\infty+\sum_{2rh+j=\lfloor l\rfloor}\Big\langle \frac{\partial^h_s\partial^j_y\varphi}{\varrho}\Big\rangle^{(l-\lfloor l\rfloor)}_y+\sum_{0<l-2rh-j<2r}\Big\langle \frac{\partial^h_s\partial^j_y\varphi}{\varrho}\Big\rangle^{\big(\frac{l-2rh-j}{2r}\big)}_s,
\end{equation}
\begin{equation} \label{WeightedNorm3} 
\begin{split}
	|\varphi|^{3,(l)}_{\varrho,[a,b]\times\mathbb{R}^d} = & \sum_{k\leq\lfloor l\rfloor}\sum_{2rh+j=k}\left|\frac{\partial^h_s\partial^j_y\varphi}{\varrho}\right|^\infty+\sum_{0<l-2rh-j<2r}\Big\langle \frac{\partial^h_s\partial^j_y\varphi}{\varrho}\Big\rangle^{\big(\frac{l-2rh-j}{2r}\big)}_s \\
	& + \sup\limits_{\begin{subarray}{c} {\color{black}s\in[a,b],y,y^\prime\in\mathbb{R}^d} \\ {\color{black}0<|y-y^\prime|\leq \rho_0} \end{subarray}}\frac{|\partial^h_s\partial^j_y\varphi(s,y)-\partial^h_s\partial^j_y\varphi(s,y^\prime)|}{|y-y^\prime|^{(l-\lfloor l\rfloor)}}\min\left\{\varrho^{-1}(y),\varrho^{-1}(y^\prime)\right\}.   
\end{split} 
\end{equation} 

Next, before defining weighted spaces, we illustrate the following equivalence property.  
\begin{lemma} \label{EquivalentNorms}
The three norms defined in \eqref{WeightedNorm1}-\eqref{WeightedNorm3} are equivalent. 
\end{lemma}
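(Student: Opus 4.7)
The plan is to prove pairwise equivalence using two algebraic properties of $\varrho$: a Leibniz-type bound for its derivatives, and the Lipschitz character of $\log\varrho$. First I would record these. Since $\log\varrho(y)=1+\|y\|_S$ with $\|y\|_S:=\sqrt{\langle Sy,y\rangle}$, the reverse triangle inequality gives $|\log\varrho(y)-\log\varrho(y')|\leq\sqrt{\overline{\lambda}}\,|y-y'|$; hence $\varrho(y)/\varrho(y')\in[e^{-\sqrt{\overline{\lambda}}\rho_0},e^{\sqrt{\overline{\lambda}}\rho_0}]$ and $|1/\varrho(y)-1/\varrho(y')|\leq C_{\rho_0}|y-y'|/\varrho(y')$ whenever $|y-y'|\leq\rho_0$. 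Second, I would establish $|\partial^I_y\varrho|\leq C_I\varrho$ and $|\partial^I_y(1/\varrho)|\leq C_I/\varrho$ for every multi-index $I$, by Fa\`a di Bruno applied to $\exp\circ(1+\|\cdot\|_S)$: one checks that each derivative of $\|\cdot\|_S$ is bounded (by $\sqrt{\overline{\lambda}}$ for first order, and by computation for higher orders on $\mathbb{R}^d\setminus\{0\}$). If needed, the non-smoothness at the origin is handled either by the observation that sup and H\"older seminorms are insensitive to measure-zero sets, or by passing to the equivalent smooth weight $\tilde\varrho:=\exp\{1+(1+\langle Sy,y\rangle)^{1/2}\}$, for which $\varrho\asymp\tilde\varrho$ with a ratio bounded above and below.

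For $|\varphi|^{1,(l)}_\varrho\asymp|\varphi|^{2,(l)}_\varrho$, I would apply the Leibniz expansion
\begin{equation*}
\partial^h_s\partial^I_y(\varphi/\varrho) \;=\; \sum_{J\leq I}\binom{I}{J}\,\bigl(\partial^h_s\partial^J_y\varphi\bigr)\cdot\partial^{I-J}_y(1/\varrho),
\end{equation*}
which by the weight bound yields $|\partial^h_s\partial^I_y(\varphi/\varrho)|\leq C\sum_{J\leq I}|\partial^h_s\partial^J_y\varphi|/\varrho$, dominated by $|\varphi|^{2,(l)}_\varrho$. Conversely, expanding $\partial^h_s\partial^I_y\varphi=\partial^h_s\partial^I_y\bigl((\varphi/\varrho)\cdot\varrho\bigr)$ and dividing by $\varrho$ gives $|\partial^h_s\partial^I_y\varphi|/\varrho\leq C\sum_{K\leq I}|\partial^h_s\partial^K_y(\varphi/\varrho)|$, bounded by $|\varphi|^{1,(l)}_\varrho$. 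Applying the same two expansions to the difference quotients transfers the H\"older seminorm pieces (both in $s$ and $y$); any residual $L^\infty$-type remainder of the form $C\rho_0^{1-(l-\lfloor l\rfloor)}|\cdot|^\infty$ is absorbed into the sup-norm part.

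For $|\varphi|^{2,(l)}_\varrho\asymp|\varphi|^{3,(l)}_\varrho$, the $L^\infty$ pieces and the $s$-H\"older seminorms of the two norms coincide, so only the $y$-H\"older seminorms need comparison. Setting $\psi:=\partial^h_s\partial^j_y\varphi$, the identity
\begin{equation*}
\frac{\psi(s,y)}{\varrho(y)}-\frac{\psi(s,y')}{\varrho(y')} \;=\; \frac{\psi(s,y)-\psi(s,y')}{\varrho(y)} \;+\; \psi(s,y')\left(\frac{1}{\varrho(y)}-\frac{1}{\varrho(y')}\right),
\end{equation*}
combined with the comparability $\min(\varrho^{-1}(y),\varrho^{-1}(y'))\asymp\varrho^{-1}(y)\asymp\varrho^{-1}(y')$ on $\{|y-y'|\leq\rho_0\}$ and the Lipschitz bound $|1/\varrho(y)-1/\varrho(y')|\leq C_{\rho_0}|y-y'|/\varrho(y')$, lets one pass between the two $y$-H\"older seminorms up to a remainder $|\psi/\varrho|^\infty\cdot|y-y'|^{1-(l-\lfloor l\rfloor)}$, itself controlled by $\rho_0^{1-(l-\lfloor l\rfloor)}$ times the sup-norm part of either norm. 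The main obstacle is the derivative bound $|\partial^I\varrho|\leq C_I\varrho$ from Step 1, because $\|\cdot\|_S$ is only piecewise smooth at the origin; once this technicality is resolved (via either the measure-zero argument or the smoothing $\tilde\varrho$), the remainder of the proof is bookkeeping with the Leibniz rule and the triangle inequality.
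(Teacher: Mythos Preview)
Your proof is correct and follows essentially the same approach as the paper. For the equivalence of \eqref{WeightedNorm2} and \eqref{WeightedNorm3}, both you and the paper use the same add--subtract identity on $\psi/\varrho$ together with the Lipschitz bound $|\log\varrho(y)-\log\varrho(y')|\leq\sqrt{\overline{\lambda}}\,|y-y'|$ to transfer between the two $y$-H\"older seminorms, absorbing the residual term into the sup-norm part. For the equivalence of \eqref{WeightedNorm1} and \eqref{WeightedNorm2}, the paper simply cites Lemma~2.5 of Lorenzi (2000), whereas you spell out the Leibniz/Fa\`a di Bruno argument that underlies that citation; your explicit version is a faithful unpacking of the same mechanism. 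You also correctly flag the non-smoothness of $\|\cdot\|_S$ at the origin and propose the standard fix via the smoothed weight $\tilde\varrho=\exp\{1+(1+\langle Sy,y\rangle)^{1/2}\}$, a point the paper glosses over by deferring to the reference.
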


By Lemma \ref{EquivalentNorms}, the norms defined in \eqref{WeightedNorm1}-\eqref{WeightedNorm3} can be all denoted by an unified notation $|\varphi|^{(l)}_{\varrho,[a,b]\times\mathbb{R}^d}$. Similar to Subsection \ref{sec:normandBspace}, we can then define weighted norms $[\bm{u}]^{(l)}_{\varrho,[0,\delta]}$ and $\|\bm{u}\|^{(l)}_{\varrho,[0,\delta]}$ and weighted spaces $C^{l/2r,l}_\varrho$, $\bm{\Theta}^{(l)}_{\varrho,[0,\delta]}$ and $\bm{\Omega}^{(l)}_{\varrho,[0,\delta]}$. Although the norms defined in \eqref{WeightedNorm1}-\eqref{WeightedNorm3} are equivalent, it is useful to distinguish them since they have their own advantages. The first one \eqref{WeightedNorm1} presents an intuitive understanding of functions with an exponential growth in the spatial argument, while the other two weighted norms \eqref{WeightedNorm2} and \eqref{WeightedNorm3} are convenient in showing some related conclusions in Theorem \ref{WellposednessWeighted} below. 

Next, we introduce a class of nonlinearities that extend the nonlinearity $\bm{F}$ in Subsection \ref{sec:nonlinear} for the study of well-posedness in the weighted spaces. 
\begin{definition} \label{Def: AppropriatePair}
A pair of $(\bm{F},\bm{g})$ is appropriate if there exist $\delta$, $R>0$ such that for any $\bm{u}\in\big\{\bm{u}\in\bm{\Omega}^{(2r+\alpha)}_{\varrho,[0,\delta]}:\bm{u}(t,0,y)=\bm{g}(t,y),\| \bm{u}-\bm{g}\|^{(2r+\alpha)}_{\varrho,[0,\delta]}\leq R\big\}$,
\begin{enumerate}[label=(\alph*)]
	\item $\bm{F}\big(t,s,y,\left(\partial_I\bm{u}\right)_{|I|\leq 2r}(t,s,y),  \left(\partial_I\bm{u}\right)_{|I|\leq 2r}(s,s,y)\big)\in\bm{\Omega}^{(\alpha)}_{\varrho,[0,\delta]}$ while $\bm{F}_t$ at $\bm{u}$ belongs to $\bm{\Theta}^{(\alpha)}_{\varrho,[0,\delta]}$;
	\item both $\partial_I\bm{F}$ and $\partial_I\overline{\bm{F}}$ at $\bm{u}$ belong to $\bm{\Omega}^{(\alpha)}_{[0,\delta]}$;
	\item $
	\left\{
	\begin{array}{lr}
		(-1)^{r-1}\sum_{a,b,|I|=2r}\partial_I \bm{F}^a_b(t,s,y,\left(\partial_I\bm{g}\right)_{|I|\leq 2r}(t,y),  \left(\partial_I\bm{g}\right)_{|I|\leq 2r}(0,y))\xi_{i_1}\cdots\xi_{i_{2r}}v^av^b \geq  \lambda|\xi|^{2r}|v|^2, \\
		(-1)^{r-1}\sum_{a,b,|I|=2r}\left(\partial_I \bm{F}^a_b+\partial_I \overline{\bm{F}}^a_b\right)(t,s,y,\left(\partial_I\bm{g}\right)_{|I|\leq 2r}(t,y),  \left(\partial_I\bm{g}\right)_{|I|\leq 2r}(0,y))\xi_{i_1}\cdots\xi_{i_{2r}}v^av^b \geq \lambda|\xi|^{2r}|v|^2; 
	\end{array}
	\right. $
	\item $\left|\Delta_{s,y}\bm{\mathcal{F}}\big(t,s,y,\left(\partial_I\bm{u}\right)_{|I|\leq 2r}(t,s,y),  \left(\partial_I\bm{u}\right)_{|I|\leq 2r}(s,s,y)\big)\right|\leq C(R)\left(|s-s^\prime|^\frac{\alpha}{2}+|y-y^\prime|^\alpha\right)$,~ $\bm{\mathcal{F}}\in\big\{\partial_I\bm{F},\partial_I\overline{\bm{F}},\partial^2_{It}\bm{F},\partial^2_{It}\overline{\bm{F}}\big\}$;
	\item and $
	\left\{
	\begin{array}{lr}
		\left|\Delta_{s,y}\big[\partial^2_{IJ}\overline{\bm{F}}^a_{bc}\big(t,s,y,\left(\partial_I\bm{u}\right)_{|I|\leq 2r}(t,s,y),  \left(\partial_I\bm{u}\right)_{|I|\leq 2r}(s,s,y)\big)\cdot\partial_J\bm{u}^c_t(t,s,y)\big]\right|\leq C(R)\left(|s-s^\prime|^\frac{\alpha}{2}+|y-y^\prime|^\alpha\right), \\
		\left|\Delta_{s,y}\big[\partial^2_{IJ}\overline{\bm{F}}^a_{bc}\big(t,s,y,\left(\partial_I\bm{u}\right)_{|I|\leq 2r}(t,s,y),  \left(\partial_I\bm{u}\right)_{|I|\leq 2r}(s,s,y)\big)\cdot\partial_J\bm{u}^c_t(t,s,y)\big]\right|\leq C(R)\left(|s-s^\prime|^\frac{\alpha}{2}+|y-y^\prime|^\alpha\right),
	\end{array}
	\right.$
\end{enumerate}
where $\Delta_{s,y}\varphi(s,y):=|\varphi(s^\prime,y^\prime)-\varphi(s,y)|$.
\end{definition}

The conditions (a)-(e) in Definition \ref{Def: AppropriatePair} allow us to utilize the methodologies in Section \ref{sec:linear}-\ref{sec:nonlinear}, including the linearization method and the fixed-point argument, to study nonlocal fully nonlinear systems in a weighted space. The first three conditions (a)-(c) guarantee that the mapping $\bm{u}\longmapsto\bm{U}$, defined by $\bm{U}_s=\bm{L}_0\bm{U}+\bm{F}(\bm{u})-\bm{L}_0\bm{u}$, is well-defined. Moreover, with (d) and (e), we can prove that it is contractive.

Now, we are ready to show the extension of the well-posedness results for nonlocal systems in a weighted space. 
\begin{theorem} \label{WellposednessWeighted}
All well-posedness results for nonlocal systems in Subsections \ref{sec:linear}-\ref{sec:nonlinear} can be extended to the setting with weighted spaces defined in this subsection. Specifically, we have 
\begin{enumerate}
	\item If all coefficients of $\bm{L}$ defined in \eqref{eq:Loperator} belong to $\bm{\Omega}^{(\alpha)}_{[0,T]}$, $\bm{f}\in\bm{\Omega}^{{(\alpha)}}_{\varrho,[0,T]}$, and $\bm{g}\in\bm{\Omega}^{{(2r+\alpha)}}_{\varrho,[0,T]}$, then the nonlocal linear system \eqref{Nonlocal linear system} admits a unique solution $\bm{u}\in\bm{\Omega}^{{(2r+\alpha)}}_{\varrho,[0,T]}$ in $\Delta[0,T]\times\mathbb{R}^d$. Moreover,   
	\begin{equation} \label{Weighted: Estimates of solutions of nonlocal system} 
		\| \bm{u}\|^{(2r+\alpha)}_{\varrho,[0,T]}\leq C\left(\| \bm{f}\|^{(\alpha)}_{\varrho,[0,T]}+\| \bm{g}\|^{(2r+\alpha)}_{\varrho,[0,T]}\right). 
	\end{equation}  
	\item Suppose that the pair of $(\bm{F},\bm{g})$ is appropriate in the sense of Definition \ref{Def: AppropriatePair}. Then, there exist $\tau>0$ and a unique maximally-defined solution $\bm{u}\in\bm{\Omega}^{{(2r+\alpha)}}_{\varrho,[0,\tau]}$ satisfying \eqref{Nonlocal fully nonlinear system} in $\Delta[0,\tau]\times\mathbb{R}^d$. 
	\item Assume further that $\| \bm{u}\|^{(2r+\alpha^\prime)}_{\varrho,[0,\sigma]}\leq M$ for some finite constant $M>0$ across all $\sigma\in[0,\tau)$, then either the pair of $(\bm{F},\lim_{s\to\tau}\bm{u}(\cdot,s,\cdot))$ is not appropriate or $\tau=T$. Consequently, the nonlocal quasilinear system \eqref{Nonlocal quasilinear system} is globally solvable.  
\end{enumerate}
\end{theorem}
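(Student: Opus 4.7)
The plan is to reduce everything to the unweighted results of Subsections \ref{sec:linear}--\ref{sec:nonlinear} via the substitution $\bm{v}(t,s,y):=\bm{u}(t,s,y)/\varrho(y)$ and then rebuild, in parallel, each of the three well-posedness statements. By the first weighted norm in \eqref{WeightedNorm1} and Lemma~\ref{EquivalentNorms}, $\bm{u}\in\bm{\Omega}^{(l)}_{\varrho,[0,\delta]}$ if and only if $\bm{v}\in\bm{\Omega}^{(l)}_{[0,\delta]}$, with the norms equivalent up to a multiplicative constant depending only on $S,l$. The key analytic input is that $\partial^j_y\varrho/\varrho$ is bounded and H\"older continuous on $\mathbb{R}^d$ for all $j\leq 2r$ (since $\langle Sy,y\rangle^{1/2}$ is smooth away from the origin with bounded derivatives, and the singularity at $y=0$ is absorbed by $\exp$); this is exactly what is needed to transfer the Leibniz-expanded coefficients into the function space $\bm{\Omega}^{(\alpha)}_{[0,T]}$.

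\textbf{Step 1 (linear case).} Write $\bm{u}=\varrho\bm{v}$ and expand each $\partial_I\bm{u}^b$ by Leibniz. The nonlocal linear system \eqref{Nonlocal linear system} becomes
\begin{equation*}
\bm{v}_s=(\widetilde{\bm{L}}\bm{v})(t,s,y)+\bm{f}/\varrho,\qquad \bm{v}(t,0,y)=\bm{g}(t,y)/\varrho(y),
\end{equation*}
where the new coefficients $\widetilde{A}^{aI}_b,\widetilde{B}^{aI}_b$ agree with $A^{aI}_b,B^{aI}_b$ when $|I|=2r$ and otherwise are linear combinations of the original coefficients with $\partial^j_y\varrho/\varrho$ ($j\leq 2r$) as multipliers. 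Hence $\widetilde{A}^{aI}_b,\widetilde{B}^{aI}_b\in\bm{\Omega}^{(\alpha)}_{[0,T]}$, the strong ellipticity conditions \eqref{Ellipticity 1}--\eqref{Ellipticity 2} transfer verbatim to $\widetilde{\bm{L}}$, and $\bm{f}/\varrho\in\bm{\Omega}^{(\alpha)}_{[0,T]}$, $\bm{g}/\varrho\in\bm{\Omega}^{(2r+\alpha)}_{[0,T]}$ by the equivalence of norms. Applying Theorem~\ref{Schauder estimates} to $\bm{v}$ and multiplying back by $\varrho$ yields \eqref{Weighted: Estimates of solutions of nonlocal system}.

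\textbf{Step 2 (fully nonlinear, small time).} I follow the linearization strategy of Theorem~\ref{Well-posedness of fully nonlinear system} but inside $\bm{\Omega}^{(2r+\alpha)}_{\varrho,[0,\delta]}$. Define the candidate ball $\bm{\mathcal{U}}:=\{\bm{u}:\bm{u}(t,0,\cdot)=\bm{g}(t,\cdot),\ \|\bm{u}-\bm{g}\|^{(2r+\alpha)}_{\varrho,[0,\delta]}\leq R\}$ and the map $\bm{u}\mapsto\bm{U}=\bm{\Lambda}(\bm{u})$ determined by $\bm{U}_s=\bm{L}_0\bm{U}+\bm{F}(\bm{u})-\bm{L}_0\bm{u}$, where $\bm{L}_0$ is the linearization of $\bm{F}$ along the boundary data. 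Condition (c) of Definition~\ref{Def: AppropriatePair} guarantees that $\bm{L}_0$ is strongly elliptic, conditions (a)--(b) ensure that $\bm{F}(\bm{u})-\bm{L}_0\bm{u}$ and its $t$-derivative lie in $\bm{\Omega}^{(\alpha)}_{\varrho,[0,\delta]}$ for every $\bm{u}\in\bm{\mathcal{U}}$, so Step~1 makes $\bm{\Lambda}$ well-defined on $\bm{\mathcal{U}}$. Conditions (d)--(e) provide H\"older control of the relevant first- and second-order derivatives of $\bm{F}$ evaluated along two inputs $\bm{u},\widehat{\bm{u}}\in\bm{\mathcal{U}}$, and a Taylor-expansion argument identical to the one used in the unweighted case produces an estimate $\|\bm{\Lambda}(\bm{u})-\bm{\Lambda}(\widehat{\bm{u}})\|^{(2r+\alpha)}_{\varrho,[0,\delta]}\leq C(R)\delta^{\alpha/2r}\|\bm{u}-\widehat{\bm{u}}\|^{(2r+\alpha)}_{\varrho,[0,\delta]}$. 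Choosing $R$ and then $\delta$ small so that $C(R)\delta^{\alpha/2r}<1/2$ and $\bm{\Lambda}$ maps $\bm{\mathcal{U}}$ to itself, the Banach fixed-point theorem yields a unique local solution, which then admits the maximally-defined extension exactly as in Remark~\ref{Maximal interval}.

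\textbf{Step 3 (global extension and quasilinear case).} Assuming the a priori bound in $\bm{\Omega}^{(2r+\alpha')}_{\varrho,[0,\sigma]}$ for all $\sigma<\tau$, the proof of Theorem~\ref{Global existence of fully nonlineaity} transfers because the only ingredients used are the Schauder estimate (available in weighted form by Step~1), compactness of the embedding $\bm{\Omega}^{(2r+\alpha')}\hookrightarrow\bm{\Omega}^{(2r+\alpha)}$, and uniform continuity of $s\mapsto\bm{u}(\cdot,s,\cdot)$; all three go through in the weighted setting via the equivalence of norms, giving the dichotomy $\lim_{s\to\tau}\bm{u}(\cdot,s,\cdot)$ exits the appropriateness set or $\tau=T$. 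For the quasilinear system \eqref{Nonlocal quasilinear system}, the linear growth $|\bm{Q}^a|\leq K(1+|w|)$ combined with the weighted Schauder estimate of Step~1 applied to the frozen-coefficient linearization yields, by a standard Gronwall bootstrap in the weighted norm, an a priori bound independent of the interval length; the appropriateness set is all of $\bm{\Omega}^{(2r+\alpha')}_{\varrho,[0,\tau]}$ because the highest-order nonlocal term is absent, so the first alternative cannot occur and $\tau=T$ follows. The main obstacle I anticipate is Step~3: producing a Gronwall-style weighted a priori estimate for the quasilinear system requires carefully tracking how $\bm{Q}$'s dependence on $\partial_I\bm{u}(s,s,y)$ interacts with the weight $\varrho(y)$ when passing to $\bm{v}=\bm{u}/\varrho$; the linear growth assumption compensates, but the precise accounting of constants (and their independence of $T$) is the delicate part.
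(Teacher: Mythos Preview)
Your substitution $\bm{v}=\bm{u}/\varrho$ is natural, but the key claim that $\partial^j_y\varrho/\varrho$ is bounded and H\"older continuous for all $j\leq 2r$ fails for the paper's weight $\varrho(y)=\exp\{1+\langle Sy,y\rangle^{1/2}\}$ at the origin: already $\partial_{y_i}\varrho/\varrho=(Sy)_i/\langle Sy,y\rangle^{1/2}$ has no limit as $y\to 0$, and $\partial^2_{y_iy_j}\varrho/\varrho$ contains the term $S_{ij}/\langle Sy,y\rangle^{1/2}$, which blows up there. The singularity is not ``absorbed by $\exp$'' since $\varrho(0)=e>0$. Consequently the transformed lower-order coefficients $\widetilde{A}^{aI}_b,\widetilde{B}^{aI}_b$ are not in $\bm{\Omega}^{(\alpha)}_{[0,T]}$, and the unweighted Theorem~\ref{Schauder estimates} cannot be invoked for the $\bm{v}$-equation. (Your approach would go through verbatim if the weight were $\exp\{(1+\langle Sy,y\rangle)^{1/2}\}$, which is smooth everywhere.)

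The paper sidesteps this by never transforming the equation. For Step~1 it derives the weighted Schauder estimate directly from the fundamental-solution representation $\bm{u}(s,y)=\int_0^s\!\int_{\mathbb{R}^d} Z(s,\tau,y,\xi)\bm{f}(\tau,\xi)\,d\xi\,d\tau$ (first for a simplified local-in-$t$ system, then bootstrapped). The point is that $|Z|$ and $|\partial_IZ|$ carry a decay factor $\exp\{-c|y-\xi|^{2r/(2r-1)}\}$, and the only property of $\varrho$ needed is $\varrho(\xi)/\varrho(y)\leq\exp\{\overline{\lambda}^{1/2}|y-\xi|\}$, i.e.\ global Lipschitz continuity of $\log\varrho$---no higher regularity of $\varrho$ is required. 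Writing $\bm{f}(\tau,\xi)=(\bm{f}/\varrho)(\tau,\xi)\cdot\varrho(\xi)$ and absorbing $\varrho(\xi)/\varrho(y)$ into the Gaussian-type kernel yields $|\partial_I\bm{u}/\varrho|\leq C|\bm{f}|^{(\alpha)}_\varrho$ and the matching H\"older bounds, hence \eqref{Weighted: Estimates of solutions of nonlocal system}. Once this weighted Schauder estimate is in place, your Steps~2 and~3 are essentially correct: Definition~\ref{Def: AppropriatePair} supplies exactly the regularity the linearization/contraction argument needs, and the global-extension and quasilinear parts follow the unweighted proofs with the weighted estimate substituted. The gap is confined to Step~1.
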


In this refined framework, although we allow the nonhomogeneous term $\bm{f}$ and the initial data $\bm{g}$ to increase exponentially in the spatial variable (more specifically, $\bm{f}\in\bm{\Omega}^{{(\alpha)}}_{\varrho,[0,T]}$ and $\bm{g}\in\bm{\Omega}^{{(2r+\alpha)}}_{\varrho,[0,T]}$), it is still required that all coefficients of $\bm{L}$ defined in \eqref{eq:Loperator} belong to $\bm{\Omega}^{(\alpha)}_{[0,T]}$. 
This also induces the condition (b) of Definition \ref{Def: AppropriatePair} that requires that $\partial_I\bm{F}$ and $\partial_I\overline{\bm{F}}$ at $\bm{u}$ belong to ordinary normed spaces instead of the weighted ones. Nevertheless, these conditions are necessary for our analyses, including the linearization method adopted in the later analysis of solvability of nonlocal nonlinear systems in the weighted spaces, and satisfied by our first financial example in Section \ref{Analysis of equilbirum HJB systems}.}

\begin{remark}\label{remark:relax}
[\textbf{Potential relaxations}] Echoing our discussion at the end of Section \ref{sec:relation}, we establish the first analytical framework for such emerging type of nonlocal PDEs/systems with a two-time-variable structure. Hence, we have to require some additional conditions and regularities for the nonlocal setting to support our proofs. From this perspective, one may work towards lifting the restrictions so as to embrace a larger class of nonlocal systems. 
{\color{black}It should be noticed that the main restrictions originate from the limitations on coefficients of the nonlocal linear operator $\bm{L}$ defined in \eqref{eq:Loperator}. 
}
Here, we list some promising future extensions. We first note that this paper adopts a concept of strongly ellipticity conditions \eqref{Ellipticity 1}-\eqref{Ellipticity 2}, \eqref{Uniform ellipticity condition of F 1}-\eqref{Uniform ellipticity condition of F 2}, {\color{black}and (c) in Definition \ref{Def: AppropriatePair}}. Following \cite{Eidelman1969,Ladyzhanskaya1968,Friedman1964}, the conditions can be substituted so as to contain a larger class of parabolic systems (in the sense of Petrowski-type). Moreover, we may substantially improve the results by modifying the underlying norms and spaces {\color{black} to obtain a more general PDE theory which allows degenerate coefficients.}  
\end{remark}  


\section{Well-posedness of Equilibrium HJB Systems and Examples} \label{Analysis of equilbirum HJB systems}
The previous two sections have established the linkage between the TIC SDGs and equilibrium HJB systems and the well-posedness of nonlocal parabolic systems that nest the equilibrium HJB systems. This section intends to summarize our results in the context of TIC SDGs and provide {\color{black}two examples} (in finance), the induced nonlocal fully nonlinear {\color{black}systems} of which {\color{black}are} globally solvable under some technical conditions in  Proposition{\color{black}s \ref{Exp1Propostion} and} \ref{Solvability of the financial example}. 


Let us denote $\dot{\bm{H}}(t,s,y,z)=\bm{H}(T-t,T-s,y,z)$ defined in \eqref{Equilibrium HJB system}, where $z=(u,p,q,l,m,n)$, and $\dot{\bm{g}}(t,y)=\bm{g}(T-t,y)$. Then we have the following theorem, which follows directly Proposition \ref{Equivalance between forward probelms and backward problems} and Theorems \ref{Well-posedness of fully nonlinear system} and \ref{Wellposedness of Quasilinear systems}. 
\begin{theorem} \label{Solvability of HJB system}
For any fixed $T>0$, suppose that $\dot{\bm{g}}\in\bm{\Omega}^{(2+\alpha)}_{[0,T]}$ and assume that $\dot{\bm{H}}$ and $\dot{\bm{g}}$ are regular enough in the sense that $\dot{\bm{H}}$ satisfies the conditions of \eqref{Uniform ellipticity condition of F 1}-\eqref{Lipschitz continuity of F} with the open ball $B\big(\dot{\bm{g}},R_0\big)$ containing the range of $\big(\left(\dot{\bm{g}}\right)_{|I|\leq 2}(t,y),\left(\dot{\bm{g}}\right)_{|I|\leq 2}(s,y)\big)$ for any $(t,s)\in\Delta[0,T]$ and some radius $R_0>0$. Then, for such a TIC SDG problem \eqref{State equation}-\eqref{Backward SDE} with \eqref{Cost functional}, we have that
\begin{enumerate}
\item in the case of that both the drift and the diffusion of \eqref{State equation} are controlled, there exist $\tau\in(0,T]$ and a unique maximally defined solution $\bm{u}\in\bm{\Omega}^{{(2+\alpha)}}_{[T-\tau,T]}$ satisfying the equilibrium HJB system \eqref{Equilibrium HJB system} in $\nabla[T-\tau,T]\times\mathbb{R}^d$. Moreover, whenever the domain of $\dot{\bm{H}}$ is large enough and \eqref{Upper bounded for global existence} holds, we have $\tau=T$; 
\item in the case of that only the drift is controlled while the diffusion of \eqref{State equation} is uncontrolled, i.e. $\sigma(s,y,a)=\sigma(s,y)$, the equilibrium HJB system \eqref{Equilibrium HJB system} admits a unique global solution $\bm{u}\in\bm{\Omega}^{{(2+\alpha)}}_{[0,T]}$ in $\nabla[0,T]\times\mathbb{R}^d$.
\end{enumerate}
\end{theorem}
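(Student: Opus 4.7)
The plan is to regard Theorem~\ref{Solvability of HJB system} as a translation result: it merely re-packages the PDE theorems of Section~\ref{sec:mainresults} in the backward-in-$s$ form \eqref{Equilibrium HJB system} that is natural for the SDG. First I would apply Proposition~\ref{Equivalance between forward probelms and backward problems} with the time reversal $(t,s)\mapsto(T-t,T-s)$, which sends the backward problem \eqref{Equilibrium HJB system} on $\nabla[0,T]\times\mathbb{R}^d$ to a forward problem of the type \eqref{Nonlocal fully nonlinear system} on $\Delta[0,T]\times\mathbb{R}^d$, with nonlinearity $\dot{\bm{H}}$ and initial data $\dot{\bm{g}}$ and with $r=1$. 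The hypothesis $\dot{\bm{g}}\in\bm{\Omega}^{(2+\alpha)}_{[0,T]}$ and the assumed validity of \eqref{Uniform ellipticity condition of F 1}--\eqref{Lipschitz continuity of F} for $\dot{\bm{H}}$ on the ball $B(\dot{\bm{g}},R_0)$ supply precisely the input hypotheses required by the well-posedness theory developed in Subsection~\ref{sec:nonlinear}, once one observes that the range of $(\dot{\bm{g}})_{|I|\leq 2}$ automatically lies within $B(\dot{\bm{g}},R_0/2)$ after shrinking $R_0$ if necessary.

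For Part~1 (both drift and diffusion controlled), I would invoke Theorem~\ref{Well-posedness of fully nonlinear system} on the translated forward problem. This yields $\delta>0$ and a unique $\bm{u}\in\bm{\Omega}^{(2+\alpha)}_{[0,\delta]}$ solving the forward version on $\Delta[0,\delta]\times\mathbb{R}^d$. Reversing time and applying Proposition~\ref{Equivalance between forward probelms and backward problems} again gives a unique solution to \eqref{Equilibrium HJB system} on $\nabla[T-\delta,T]\times\mathbb{R}^d$. To obtain the \emph{maximally-defined} solution on $\nabla[T-\tau,T]\times\mathbb{R}^d$ for some $\tau\in(0,T]$, I would follow the extension procedure described in Remark~\ref{Maximal interval}: iterate the local construction by taking the current endpoint as a new ``initial" time, using the fact that once the diagonal $(\partial_I\bm{u})_{|I|\leq 2}(s,s,y)$ is known on $[0,\delta]$ the equation becomes a classical local fully nonlinear system parameterized by $t$. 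Finally, when the domain of $\dot{\bm{H}}$ is sufficiently large and the a priori estimate \eqref{Upper bounded for global existence} holds, Theorem~\ref{Global existence of fully nonlineaity} rules out the alternative $\lim_{s\to\tau}\bm{u}(\cdot,s,\cdot)\in\partial\bm{\mathcal{O}}$ and forces $\tau=T$.

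For Part~2 (uncontrolled diffusion $\sigma(s,y,\alpha)=\sigma(s,y)$), I would first check the structural simplification: tracing through the Hamiltonian \eqref{Hamiltonian} and the minimiser $\bm\phi$ in Assumption~\ref{assumption}, one sees that when $\sigma$ does not depend on $\alpha$ the coefficient of the highest-order local term in \eqref{Equilibrium HJB system} is $\tfrac12\operatorname{tr}[\sigma\sigma^\top(s,y)\,\cdot\,]$, which is independent of $\bm{u}$ and of the diagonal terms, while the nonlocal second-order term $\bm{u}_{yy}(s,s,y)$ drops out entirely from the equation. Consequently the time-reversed equation is of the nonlocal quasilinear form \eqref{Nonlocal quasilinear system} with $r=1$. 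The regularity, ellipticity, linear growth, and boundedness of first-order partials required by Theorem~\ref{Wellposedness of Quasilinear systems} are inherited from the assumptions on $\dot{\bm{H}}$ and from the structural expressions for $b,\sigma,\bm{h}$; applying that theorem yields a unique global solution $\bm{u}\in\bm{\Omega}^{(2+\alpha)}_{[0,T]}$ of the forward problem, and the time-reversal identification in Proposition~\ref{Equivalance between forward probelms and backward problems} transports this to \eqref{Equilibrium HJB system} on $\nabla[0,T]\times\mathbb{R}^d$.

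The main obstacle, in my view, is not the citation chain but the bookkeeping step of verifying that the regularity of $\dot{\bm{H}}$ expressed at the SDG level (in terms of $b,\sigma,\bm h,\bm\phi$) actually implies the full list of hypotheses \eqref{Uniform ellipticity condition of F 1}--\eqref{Lipschitz continuity of F} together with the higher-derivative requirements tabulated in Tables~\ref{tab:table1} and~\ref{tab:table2}. In particular, the two-sided ellipticity \eqref{Uniform ellipticity condition of F 2}, which involves the sum $\partial_I\bm{F}+\partial_I\overline{\bm{F}}$ of local and diagonal derivatives, is the subtle condition: it corresponds, after some calculation, to an ellipticity condition on the diffusion evaluated at the equilibrium control in \eqref{Equilibrium strategy}, and should be stated as an implicit assumption absorbed into the phrase ``$\dot{\bm{H}}$ and $\dot{\bm{g}}$ are regular enough''. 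In the quasilinear case of Part~2 this issue disappears automatically, which explains why the global conclusion there is unconditional once the data are regular.
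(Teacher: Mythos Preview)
Your proposal is correct and matches the paper's own approach: the paper states that Theorem~\ref{Solvability of HJB system} ``follows directly'' from Proposition~\ref{Equivalance between forward probelms and backward problems} together with Theorems~\ref{Well-posedness of fully nonlinear system} and~\ref{Wellposedness of Quasilinear systems}, and your argument is precisely this citation chain (with the natural additions of Remark~\ref{Maximal interval} and Theorem~\ref{Global existence of fully nonlineaity} to handle the maximal extension and the $\tau=T$ clause). Your closing remarks on the bookkeeping for \eqref{Uniform ellipticity condition of F 2} and the automatic simplification in the quasilinear case are also consistent with the paper's discussion immediately following the theorem.
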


The regularity requirements of $\dot{\bm{H}}$ and $\dot{\bm{g}}$ in Theorem \ref{Solvability of HJB system} characterize the ``needed regularity" of $\phi$ in Assumption \ref{assumption}. By the implied existence of the NE point at each time point from Assumption \ref{assumption},
it is clear that the Hamiltonian $\dot{\bm{H}}$ satisfies \eqref{Uniform ellipticity condition of F 1}, \eqref{Holder continuity of F}, and \eqref{Lipschitz continuity of F}. We only need to check if $\dot{\bm{H}}$ and $\dot{\bm{g}}$ jointly satisfy the condition \eqref{Uniform ellipticity condition of F 2}. Especially for a nonlocal quasilinear system (i.e. a TIC SDG with controls on drift only), it is easy to see that conditions \eqref{Uniform ellipticity condition of F 1}-\eqref{Uniform ellipticity condition of F 2} hold. Hence, with smooth enough coefficients in \eqref{State equation}-\eqref{Cost functional}, the corresponding equilibrium HJB system satisfies the requirements in Theorem \ref{Solvability of HJB system}. Finally, based on the conjectures in Section \ref{sec:relation}, we may conclude that the associated TIC SDG admits a TC-NE point $\overline{\bm{\alpha}}=\bm{\phi}\big(s,s,y,\left(\bm{u}\right)_{|I|\leq 2}(s,s,y)\big)$ and the TC-NE value function $\bm{V}(s,y)=\bm{u}(s,s,y)$ for $(s,y)\in[T-\tau,T]\times\mathbb{R}^d$. {\color{black}Analogously, Theorem \ref{WellposednessWeighted} supports that all results in Theorem \ref{Solvability of HJB system} still hold in a weighted space.} 


{\color{black}
\subsection{Financial Examples}
In this subsection, we provide two examples of TIC SDGs among $m$ players on $[0,T]$ for an arbitrary positive integer $m\geq 1$ and an arbitrary large time $T>0$, the first one of which studies how the investors (players) choose their own optimal investment strategies to increase their own exponential utility and the second one of which studies the optimal investment and consumption strategy pairs of investors that optimize their own power utility. These two examples showcase two different uses of our well-posedness results in Subsection \ref{Sec:WeightedNormsSpaces}. The first example satisfies all the conditions in our refined (weighted-norm) framework and thus the corresponding TIC problem is globally solvable over the whole time horizon $\nabla[0,T]$. The second example is, however, not covered by our framework due to its degeneracy property, but this interesting and relevant example is still considered here while it shows the necessity of extending our analytic framework from the non-degenerate setting to a degenerate one; see also the discussion in Remark \ref{remark:relax}. Though the second example is beyond our general framework, we make another problem-specific attempt in the similar spirit of the proof of Theorem \ref{Global existence of fully nonlineaity}. Specifically, with some suitable ansatzs of solutions, both examples admit explicit expressions of \eqref{Equilibrium strategy} and \eqref{Equilibrium HJB system} while the latter can be further reduced to ordinary differential equation (ODE) systems. We can then show the global solvability of these ODE systems and thus we obtain the global solvability of the two listed examples.


We first introduce the general setup for the two examples.} Suppose that the $m$ players have similar interests, e.g., a risk investment fund managed by $m$ investors (managers). To maintain and increase their own utility, each investor needs to study their own optimal investment and consumption strategy. Consider a market model in which there are one bond with the riskless interest rate $r>0$ and some risky assets while the $a$-th investor has estimated the appreciation rate of his/her return on investment (ROI) by $\mu_a>r$ and its volatility by $\sigma_a>0$. Further assume that the $m$ investors' ROIs are uncorrelated, the yield rate vector $\bm{P}\in\mathbb{R}^m$ of the $m$ investors is characterized by 
\begin{equation*}
d\bm{P}(s)=\mathrm{diag}\{\bm{P}(s)\}(\bm{\mu} ds+\bm{\sigma} d\bm{W}(s))
\end{equation*} 
where $\mathrm{diag}\{\bm{P}(s)\}$ is a diagonal matrix with main diagonal elements of $\bm{P}(s)$, $\bm{\mu}=(\mu_1,\mu_2,\cdots,\mu_m)^\top$, and $\bm{\sigma}=\mathrm{diag}\{\sigma_1,\sigma_2,\cdots,\sigma_m\}$. Denoted by $\bm{\alpha}^a(\cdot)$ the dollar amounts managed by the $a$-th investor and $\{X(t)\}_{t\in[0,T]}$ the aggregated wealth process (from this perspective, we are similarly considering a problem by a fund of funds), we can obtain the following FSDE for $\{X(t)\}_{t\in[0,T]}$: 
\begin{equation} \label{Example: Forward process} 
\left\{
\begin{array}{lr}
dX(s)=\big[rX(s)+(\bm{\mu}-r\mathbb{1})^\top\bm{\alpha}(s)-\mathbb{1}^\top \bm{c}(s)\big]ds+\bm{\alpha}^\top(s)\bm{\sigma} d\bm{W}(s), \quad t\leq s\leq T, \\
X(t)=y,\quad 0\leq t \leq T,\quad {\color{black}y\in\mathbb{X}}, 
\end{array}
\right. 
\end{equation} 
where $\mathbb{1}=(1,\ldots,1)^\top\in\mathbb{R}^m$ and $\bm{c}=(\bm{c}^1,\ldots,\bm{c}^m)^\top$ with $\bm{c}^{a}(\cdot)$ the consumption rate of the $a$-th investor {\color{black}valued in $\mathbb{C}$, $\bm{\alpha}^a$ is valued in $\mathbb{A}$, and $(\mathbb{A},\mathbb{C},\mathbb{X})$ will be specified for our examples. For any fixed $t\in[0,T]$, the admissible set of investment-consumption strategy pairs is then defined as the set of progressively measurable processes $\{(\bm{\alpha}(s),\bm{c}(s))\}_{s\ge t}$ such that $\bm{\alpha}^a(s)\in \mathbb{A}$ and $\bm{c}^a(s)\in \mathbb{C}$ for $a=1,\ldots,m$ and that the FSDE \eqref{Example: Forward process} has a strong solution $\{X(s)\}_{s\ge t}$} with $X(s)\in\mathbb{X}$, $\mathbb{P}$-a.s., for $s\ge t$.

Next, to characterize the investors' preferences, let $(\bm{Y}(\cdot),\bm{Z}(\cdot))$ be the adapted solution to the following BSDE:  
{\color{black}\begin{equation} \label{ExampleBackprocess}
\left\{
\begin{array}{lr}
	d\bm{Y}(s)=-\mathbb{h}(t,s,X(s),\bm{\alpha}(s),\bm{c}(s),\bm{Y}(s))ds+\bm{Z}(s)d\bm{W}(s), \quad t\leq s\leq T, \\
	\bm{Y}(T)=\mathbb{g}(t,X(T)),\quad 0\leq t \leq T, 
\end{array}
\right. 
\end{equation} 
where the generator $\mathbb{h}$ and terminal condition $\mathbb{g}$ are both deterministic $\mathbb{R}^m$-valued functions, and they will be specified in the study of different utility problems.}  
Then, we define the recursive utility functional of the $a$-th investor for $a=1,\ldots,m$ as follows: 
\begin{equation*}
\bm{J}^{a}(t,y;\bm{\alpha},\bm{c}):=\bm{Y}^{a}(t;t,y,\bm{\alpha},\bm{c}). 
\end{equation*} 
Consequently, the problem of maximizing $\bm{J}^{a}(t,y;\bm{\alpha},\bm{c})$ for $a=1,\ldots,m$ is a TIC SDG since {\color{black}$\mathbb{h}$ and $\mathbb{g}$ both} depend on the initial time point $t$. Note that in Section \ref{sec:sdg}, we illustrate with an SDG with minimization while it is equivalent to considering maximization. It is noteworthy that the $a$-th functional $\bm{J}^a$ is a Uzawa-type differential utility being not only recursive (in the sense that it depends on $\bm{Y}^a$ itself) but also dependent on other investors' utility functionals $\bm{Y}^{-a}$. It is sensible because enormous experiments in behavioral economics/finance show that people's assessment on their wellbeing is relative rather than absolute. Moreover, the controlled FBSDEs of the $m$ investors are coupled together through {\color{black}$(\bm{\alpha},\bm{c})$} and $\bm{Y}$ in the BSDE and $(\bm{\alpha},\bm{c})$ in the FSDE. Furthermore, compared to the existing literature on the well-posedness results, we allow the diffusion of the wealth process $X$ to be controlled.


{\color{black}
\subsubsection{TIC Merton Problem with Exponential Utility and Zero Consumption} \label{Sec: ExpU}
In the first example, we assume that 
\begin{equation} \label{ExpBSDE}
\left\{
\begin{array}{lr}
	\mathbb{A}=\mathbb{R}, \quad \mathbb{C}=\{0\}, \quad \mathbb{X}=\mathbb{R}, \\
	\mathbb{h}(t,s,X(s),\bm{\alpha}(s),\bm{c}(s),\bm{Y}(s))=-\bm{R}(t,s)\bm{Y}(s), \\
	\mathbb{g}(t,X(T))=-\bm{T}(t)\exp\{-\eta X(T)\}, \quad \eta>0, 
\end{array}
\right. 
\end{equation} 
where $\bm{R}$ and $\bm{T}$ are $\mathbb{R}^{m\times m}$- and $\mathbb{R}^m$-valued continuous and positive functions, respectively. Next, we will show that this example can be analyzed within our framework in Subsection \ref{Sec:WeightedNormsSpaces}. Specifically, in order to show the well-posedness of solutions to the TIC SDG \eqref{Example: Forward process}-\eqref{ExpBSDE}, the main steps are listed as follows: 
\begin{enumerate}
\item embed the original TIC SDG problem \eqref{Example: Forward process}-\eqref{ExpBSDE} ($P$) into a family of problems $P_\gamma$ parameterized by $\gamma\geq 0$ such that $P_0=P$; 
\item prove the global well-posedness of solutions of $P_\gamma$ in the case of $\gamma>0$ such that the mapping from $\gamma\in(0,\infty)$ to the solution $\bm{U}_\gamma(\cdot,\cdot,\cdot)\in\bm{\Omega}^{{(2+\alpha)}}_{\varrho,[0,T]}$ is well-defined; 
\item show that $\gamma\longmapsto\bm{U}_\gamma(\cdot,\cdot,\cdot)$ admits a unique analytic continuation at $\gamma=0$ such that the problem $P$ (i.e. $P_0$) has global existence and uniqueness of solutions as well.  
\end{enumerate}  
These three steps not only show the global well-posedness of solutions of nonlocal HJB system and the TIC SDG but also give explicit representations for equilibrium strategies \eqref{Equilibrium strategy} and equilibrium value functions of \eqref{Equilibrium HJB system}. We cannot directly analyze $P_0$ as its nonlinearity is not regular enough and thus we parametrize the problem such that the nonlinearity of $P_\gamma$ with $\gamma>0$ satisfies the regularity conditions in our framework. 

First of all, it is more convenient to consider a transformed state $X(s)\exp\{r(T-s)\}$ for the dynamics $X(s)$ of \eqref{Example: Forward process} before our analyses. By Corollary 5.6 of \cite{Yong1999}, it is clear that 
\begin{equation*} 
\left\{
\begin{array}{lr}
	dX(s)\exp\{r(T-s)\}=(\bm{\mu}-r\mathbb{1})^\top\exp\{r(T-s)\}\bm{\alpha}(s)ds+\bm{\alpha}^\top(s)\bm{\sigma}\exp\{r(T-s)\} d\bm{W}(s), \quad t\leq s\leq T, \\
	X(t)\exp\{r(T-t)\}=y\exp\{r(T-t)\},\quad 0\leq t \leq T,\quad y\in\mathbb{R}. 
\end{array}
\right. 
\end{equation*}
Consequently, without loss of generality, let us consider a modified version of \eqref{Example: Forward process} with the following form:
\begin{equation} \label{Example: Modified forward process} 
\left\{
\begin{array}{lr}
	dX(s)=\widehat{\bm{\mu}}^\top(s)\bm{\alpha}(s)ds+\bm{\alpha}^\top(s)\widehat{\bm{\sigma}}(s) d\bm{W}(s), \quad t\leq s\leq T, \\
	X(t)=y,\quad 0\leq t \leq T,\quad y\in\mathbb{R}, 
\end{array}
\right. 
\end{equation} 
where $\widehat{\bm{\mu}}(s)=(\bm{\mu}-r\mathbb{1})^\top\exp\{r(T-s)\}$ and $\widehat{\bm{\sigma}}(s)=\bm{\sigma}\exp\{r(T-s)\}$. 

\ \ 

\underline{\textbf{Step 1: A family of parameterized problems $\bm{P_\gamma}$.}} Next, let us consider a family of problems (BSDEs) parameterized by an external parameter $\gamma\geq 0$, 
\begin{equation} \label{ParaExpBSDE}
\left\{
\begin{array}{lr}
	\bm{c}(s)=0, \\
	\mathbb{h}_\gamma(t,s,X(s),\bm{\alpha}(s),\bm{c}(s),\bm{Y}(s))=\gamma(\bm{w}^\top_1(t,s,X(s))\otimes\mathbb{1})\bm{\alpha}(s)-(\bm{w}^\top_2(t,s,X(s))\otimes\mathbb{1})\left(\bm{\alpha}(s)\odot\bm{\alpha}(s)\right)-\bm{w}_3(t,s)\bm{Y}(s), \\
	\mathbb{g}_\gamma(t,X(T))=\gamma\bm{g}_1(t)\exp\{\eta X(T)\}-\bm{g}_2(t)\exp\{-\eta X(T)\}, \quad \eta>0,  
\end{array}
\right. 
\end{equation} 
where $\otimes$ denotes the Kronecker product, $\odot$ denotes the Hadamard product, $\bm{w}_3(t,s)=\bm{R}(t,s)$, $\bm{g}_2(t)=\bm{T}(t)$, and $\bm{w}_1(t,s,y)$, $\bm{w}_2(t,s,y)$, $\bm{g}_1(t)$, and $\bm{g}_2(t)$ are all $\mathbb{R}^m$-valued continuous functions that will be specified later. It is clear that \eqref{ParaExpBSDE} reduces to \eqref{ExpBSDE} when $\gamma=0$ and $\bm{w}_2=0$. Our later specification will also parametrize $\bm{w}_1$ and $\bm{w}_2$ with $\gamma$ and $\bm{w}_2\equiv 0$ when $\gamma=0$. Thus in this case, \eqref{ParaExpBSDE} is actually parameterized by a single parameter $\gamma$.

It is noteworthy that there are multiple embedding schemes
while any of them can work out the well-posedness of solutions of the problem \eqref{Example: Forward process}-\eqref{ExpBSDE} as long as the mapping from $\gamma$ to the solution of $P_\gamma$ is well-defined and is at least Cauchy-continuous at the point that reduces the parametrized problem to $P$. Moreover, the fact about whether the problem is well-posed is free of the choice of the embedding scheme. We will show that the embedding \eqref{ParaExpBSDE} with \eqref{Expassumptions} facilitate \textbf{Step 2} and \textbf{Step 3}. The relationship between parameterized data and solutions was discussed in the earlier stability analysis of nonlocal systems; see Remark \ref{NonlinearSA}.

\ \ 

\underline{\textbf{Step 2. The well-definedness of $\bm{\gamma\longmapsto\bm{U}_\gamma(\cdot,\cdot,\cdot)}$ with $\bm{\gamma>0}$.}} According to the definitions of $(X(\cdot),\bm{Y}(\cdot),\bm{Z}(\cdot))$ formulated by controlled FBSDEs \eqref{Example: Modified forward process}-\eqref{ParaExpBSDE}, the Hamiltonian system of the $m$ players has the form: for $a=1,\ldots,m$,
\begin{equation*}
\begin{split}
	\bm{\mathcal{H}}^a_\gamma(t,s,y,\alpha,u,p,q) & =\frac{1}{2}\left(\sum\limits_{1\leq b\leq m}(\widehat{\sigma}_b(s)\alpha^b)^2\right) q^a+\left(\sum\limits_{1\leq b\leq m}\widehat{\mu}_b(s)\alpha^b\right)p^a \\
	&  
	\quad
	+\sum\limits_{1\leq b\leq m}\gamma\bm{w}^{b}_1(t,s,y)\alpha^b 
	-\sum\limits_{1\leq b\leq m}\bm{w}^{b}_2(t,s,y)(\alpha^b)^2-\sum\limits_{1\leq b\leq m}\bm{w}^{ab}_3(t,s) u^b,
\end{split}
\end{equation*}
Maximizing the above with respect to $\alpha^a$ with fixed $\alpha^{-a}$, $p>0$, and $q<0$ yields 
\begin{equation*}
\overline{\bm{\alpha}}^a=\frac{\gamma\bm{w}^{a}_1(t,s,y)+\widehat{\mu}_a(s) p^a}{2\bm{w}^{a}_2(t,s,y)-\widehat{\sigma}_a^2(s) q^a}:=\frac{\widehat{\bm{w}}^{a}_1(t,s,y)+(\mu_a-r) p^a}{\widehat{\bm{w}}^{a}_2(t,s,y)-\sigma_a^2 q^a}\exp\{-r(T-s)\}, \qquad a=1,\ldots,m.   
\end{equation*} 
where $\widehat{\bm{w}}^{a}_1(t,s,y):=\gamma\bm{w}^{a}_1(t,s,y)\exp\{-r(T-s)\}$ and $\widehat{\bm{w}}^{a}_2(t,s,y):=2\bm{w}^{a}_2(t,s,y)\exp\{-2r(T-s)\}$. Thus, eventually, the equilibrium strategy will be given by
\begin{equation} \label{Example 1: Closed loop strategy} 
\overline{\bm{\alpha}}^a(s,y)=\frac{\widehat{\bm{w}}^{a}_1(s,s,y)+(\mu_a-r) \bm{U}^a_y(s,s,y)}{\widehat{\bm{w}}^{a}_2(s,s,y)-\sigma_a^2 \bm{U}^a_{yy}(s,s,y)}\exp\{-r(T-s)\}
\end{equation}
with $\bm{U}(t,s,y)=(U^1(t,s,y),\cdots,U^m(t,s,y))$ ($\gamma$ is suppressed) being the solution to an equilibrium HJB system of the form: 
\begin{equation} \label{Backward HJB Exponential utility}
\left\{
\begin{array}{l}
	\bm{U}^a_s(t,s,y)+\frac{1}{2}\sum\limits_{1\leq b\leq m}\left(\frac{\sigma_b\widehat{\bm{w}}^{b}_1(s,s,y)+\sigma_b(\mu_b-r) \bm{U}^b_y(s,s,y)}{\widehat{\bm{w}}^{b}_2(s,s,y)-\sigma_b^2 \bm{U}^b_{yy}(s,s,y)}\right)^2 \bm{U}^a_{yy}(t,s,y) +\sum\limits_{1\leq b\leq m}\left(\frac{(\mu_b-r)\widehat{\bm{w}}^{b}_1(s,s,y)+(\mu_b-r)^2 \bm{U}^b_y(s,s,y)}{\widehat{\bm{w}}^{b}_2(s,s,y)-\sigma_b^2 \bm{U}^b_{yy}(s,s,y)}\right)\bm{U}^a_y(t,s,y) \\ \qquad     
	+\exp\{-r(T-s)\}\sum\limits_{1\leq b\leq m}\gamma\bm{w}^{b}_1(t,s,y)\left(\frac{\widehat{\bm{w}}^{b}_1(s,s,y)+(\mu_b-r) \bm{U}^b_y(s,s,y)}{\widehat{\bm{w}}^{b}_2(s,s,y)-\sigma_b^2 \bm{U}^b_{yy}(s,s,y)}\right)-\exp\{-2r(T-s)\}\sum\limits_{1\leq b\leq m}\bm{w}^{b}_2(t,s,y)\left(\frac{\widehat{\bm{w}}^{b}_1(s,s,y)+(\mu_b-r) \bm{U}^b_y(s,s,y)}{\widehat{\bm{w}}^{b}_2(s,s,y)-\sigma_b^2 \bm{U}^b_{yy}(s,s,y)}\right)^2 \\
	\qquad 
	-\sum\limits_{1\leq b\leq m}\bm{w}^{ab}_3(t,s) \bm{U}^b(t,s,y) = 0, \\
	\bm{U}(t,T,y)=\gamma\bm{g}_1(t)\exp\{\eta y\}-\bm{g}_2(t)\exp\{-\eta y\},\quad 0\leq t\leq s \leq T, \quad y\in\mathbb{R}, \quad a=1,\ldots,m. 
\end{array}
\right. 
\end{equation}  
It is equivalent to solving the following forward problem:  
\begin{equation} \label{HJB Exponential utility}
\left\{
\begin{array}{l}
	\bm{U}^a_s(t,s,y)=\frac{1}{2}\sum\limits_{1\leq b\leq m}\left(\frac{\sigma_b\widehat{\bm{w}}^{b}_1(T-s,T-s,y)+\sigma_b(\mu_b-r) \bm{U}^b_y(s,s,y)}{\widehat{\bm{w}}^{b}_2(T-s,T-s,y)-\sigma_b^2 \bm{U}^b_{yy}(s,s,y)}\right)^2 \bm{U}^a_{yy}(t,s,y) +\sum\limits_{1\leq b\leq m}\left(\frac{(\mu_b-r)\widehat{\bm{w}}^{b}_1(T-s,T-s,y)+(\mu_b-r)^2 \bm{U}^b_y(s,s,y)}{\widehat{\bm{w}}^{b}_2(T-s,T-s,y)-\sigma_b^2 \bm{U}^b_{yy}(s,s,y)}\right)\bm{U}^a_y(t,s,y) \\ \qquad\qquad\qquad    
	+\exp\{-rs\}\sum\limits_{1\leq b\leq m}\gamma\bm{w}^{b}_1(T-t,T-s,y)\left(\frac{\widehat{\bm{w}}^{b}_1(T-s,T-s,y)+(\mu_b-r) \bm{U}^b_y(s,s,y)}{\widehat{\bm{w}}^{b}_2(T-s,T-s,y)-\sigma_b^2 \bm{U}^b_{yy}(s,s,y)}\right) \\
	\qquad\qquad\qquad  
	-\exp\{-2rs\}\sum\limits_{1\leq b\leq m}\bm{w}^{b}_2(T-t,T-s,y)\left(\frac{\widehat{\bm{w}}^{b}_1(T-s,T-s,y)+(\mu_b-r) \bm{U}^b_y(s,s,y)}{\widehat{\bm{w}}^{b}_2(T-s,T-s,y)-\sigma_b^2 \bm{U}^b_{yy}(s,s,y)}\right)^2 \\
	\qquad\qquad\qquad  
	-\sum\limits_{1\leq b\leq m}\bm{w}^{ab}_3(T-t,T-s) \bm{U}^b(t,s,y) = 0, \\
	\bm{U}(t,0,y)=\gamma\bm{g}_1(T-t)\exp\{\eta y\}-\bm{g}_2(T-t)\exp\{-\eta y\},\quad 0\leq s\leq t \leq T, \quad y\in\mathbb{R}, \quad a=1,\ldots,m. 
\end{array}
\right. 
\end{equation}  

Next, let us consider the partial derivatives of the nonlinearity of \eqref{HJB Exponential utility} with respect to its arguments in order to verify its regularities. The nonlinearity of \eqref{HJB Exponential utility} is denoted by $\mathbb{H}:=\mathbb{H}_\gamma(t,s,y,z)$. After a rather lenghty but staightforward calculation, we can obtain all partial derivatives in Table \ref{tab:table1} and \ref{tab:table2}, which are all listed in Appendix \ref{PDNonLinearity}. Consequently, for the initial condition $\bm{U}(t,0,y)=\gamma\bm{g}_1(T-t)\exp\{\eta y\}-\bm{g}_2(T-t)\exp\{-\eta y\}$, one can verify that the pair of $(\mathbb{H},\bm{U}(t,0,y))$ is appropriate in the sense of Definition \ref{Def: AppropriatePair} for some suitable $\widehat{\bm{w}}_1$ and $\widehat{\bm{w}}_2$; see \eqref{Expassumptions}. Hence, our well-posedness results in Subsection \ref{Sec:WeightedNormsSpaces} promise that there exist $\delta\in(0,T]$ and a unique solution satisfying \eqref{HJB Exponential utility} in $\Delta[0,\delta]\times\mathbb{R}$. Equivalently, the backward problem \eqref{Backward HJB Exponential utility} is solvable as well in $\nabla[T-\delta,T]\times\mathbb{R}$.

In order to find an explicit solution to \eqref{Backward HJB Exponential utility} and show its global solvability in the whole time horizon $\nabla[0,T]$, we consider the following ansatz: 
\begin{equation} \label{Expansatz}
\bm{U}(t,s,y)=\bm{\varphi}_1(t,s)\exp\{\eta y\}-\bm{\varphi}_2(t,s)\exp\{-\eta y\}, \quad (t,s)\in\nabla[T-\delta,T], 
\end{equation}
for some suitable $\bm{\varphi}_1(\cdot,\cdot)$, and $\bm{\varphi}_2(\cdot,\cdot)$. Then we have $\bm{\varphi}_1(t,T)=\gamma\bm{g}_1(t)$ and $\bm{\varphi}_2(t,T)=\bm{g}_2(t)$. Furthermore, let us assume that 
\begin{equation} \label{Expassumptions}
\left\{
\begin{array}{rcl}
	\widehat{\bm{w}}^a_1(t,s,y)=\gamma\bm{w}^{a}_1(t,s,y)\exp\{-r(T-s)\}=\gamma \bm{W}^a_1(t,s)\exp\{\eta y\}\exp\{-r(T-s)\}, \\
	\widehat{\bm{w}}^a_2(t,s,y)=2\bm{w}^a_2(t,s,y)\exp\{-2r(T-s)\}=\frac{\sigma^2_a\eta}{\mu_a-r}\widehat{\bm{w}}^a_1(t,s,y)+2\sigma^2_a\eta^2\bm{\varphi}^a_1(t,s)\exp\{\eta y\},   
\end{array}
\right. 
\end{equation}
where $\bm{W}^a_1(t,s)$ is a given continuously differentiable and positive function.
Under the assumptions of \eqref{Expassumptions}, we have $\overline{\bm{\alpha}}^a(s,y)=\frac{1}{\eta}\frac{\mu_a-r}{\sigma^2_a}\exp\{-r(T-s)\}$ for $a=1,2,\cdots,m$. Subsequently, by simple calculation, $\bm{\varphi}_1(\cdot,\cdot)$ and $\bm{\varphi}_2(\cdot,\cdot)$ solve the following ODE systems, respectively, 
\begin{equation} \label{Exp2ODESys}
\left\{
\begin{array}{l}
	(\bm{\varphi}_1)_s(t,s)+\bm{N}_1(t,s)\bm{\varphi}_1(t,s)+\bm{M}_1(t,s) = 0, \\
	\bm{\varphi}_1(t,T)=\gamma\bm{g}_1(t),\quad (t,s)\in\nabla[T-\delta,T], 
\end{array} 
\right. \qquad \qquad 
\left\{
\begin{array}{l}
	(\bm{\varphi}_2)_s(t,s)+\bm{N}_2(t,s)\bm{\varphi}_2(t,s) = 0, \\
	\bm{\varphi}_2(t,T)=\bm{g}_2(t),\quad (t,s)\in\nabla[T-\delta,T], 
\end{array} 
\right. 
\end{equation}
where $\bm{N}_1(t,s)=\mathrm{diag}\left\{\sum_b\frac{3(\mu_b-r)^2}{2\sigma^2_b},\cdots,\sum_b\frac{3(\mu_b-r)^2}{2\sigma^2_b}\right\}-\left(\frac{(\mu_1-r)^2}{\sigma^2_1},\cdots,\frac{(\mu_m-r)^2}{\sigma^2_m}\right)\otimes\mathbb{1}-\bm{w}_3(t,s)$ and $\bm{M}_1(t,s):=\gamma\overline{\bm{M}}_1(t,s)=\gamma\left[\left(\frac{(\mu_1-r)\exp\{-r(T-s)\}}{2\sigma^2_1\eta},\cdots,\frac{(\mu_m-r)\exp\{-r(T-s)\}}{2\sigma^2_m\eta}\right)\otimes\mathbb{1}\right]\bm{W}_1$. Moreover, $\bm{N}_2(t,s)=-~\mathrm{diag}\left\{\sum_b\frac{(\mu_b-r)^2}{2\sigma^2_b},\cdots,\sum_b\frac{(\mu_b-r)^2}{2\sigma^2_b}\right\}-\bm{w}_3(t,s)$. By the classical theory of ODE systems, systems \eqref{Exp2ODESys} admit a unique solution for $(t,s)\in\nabla[T-\delta,T]$ since $\sup_{t,s}|\bm{N}_i(t,s)|$ ($i=1,2$) are both bounded. Furthermore, the ansatz solution \eqref{Expansatz} of \eqref{Backward HJB Exponential utility} can be represented by  
\begin{equation} \label{ExpSolODE}
\bm{U}(t,s,y)=  \left[\gamma\bm{\digamma}_1(t,s)\bm{\digamma}^{-1}_1(t,T)\bm{g}_1(t)+\gamma\int^T_s\bm{\digamma}_1(t,s)\bm{\digamma}^{-1}_1(t,\tau)\overline{\bm{M}}_1(t,\tau)d\tau\right]\exp\{\eta y\} -\bm{\digamma}_2(t,s)\bm{\digamma}^{-1}_2(t,T)\bm{g}_2(t)\exp\{-\eta y\},
\end{equation}
where $\bm{\digamma}_i(t,s)$ is the fundamental matrix of the $i$-th ODE system of \eqref{Exp2ODESys}, $\bm{\digamma}^{-1}_i(t,s)$ the associated inverse matrix, and 
\begin{equation} \label{FMF}
\bm{\digamma}_i(t,s)=\bm{I}+\int^T_s\bm{N}_i(t,\tau)d\tau+\int^T_s\bm{N}_i(t,\tau)\int^T_\tau \bm{N}_i(t,\sigma)d\sigma d\tau + \cdots, \quad i=1,2,  
\end{equation}
in which $\bm{I}$ is $m\times m$ identity matrix. Note that \eqref{FMF} converges absolutely for every $s\in[T-t,T]$ and uniformly on every compact interval in $[T-t,T]$. In particular for $i=1,2$, if the matrix $\bm{N}_i(t,s)$ satisfies the Lappo--Danilevskii condition (see Remark \ref{LD condition}), then $\bm{\digamma}_i(t,s)=\exp\left\{\int^T_s\bm{N}_i(t,\tau)d\tau\right\}$.

Note that $\bm{U}(t,s,y)$ of \eqref{ExpSolODE} does not blow-up at $s=T-\delta$ for any $t\in[0,T-\delta]$ such that we can update a new terminal condition at $s=T-\delta$. Furthermore, thanks to \eqref{Expassumptions}, the uniformly elliptic conditions, the locally Lipschitz and H\"{o}lder continuity still hold within a small open ball centered at the range of the updated data. Consequently, one can repeat indefinitely the solving procedure up to a global solution for \eqref{Backward HJB Exponential utility} over $\nabla[0,T]$. With our well-posedness results of nonlocal systems in Subsection \ref{Sec:WeightedNormsSpaces}, we show that the mapping from the parameter $\gamma>0$ into the solution of \eqref{Backward HJB Exponential utility}, i.e. $\bm{U}_\gamma(t,s,y):=\bm{U}(t,s,y)$, is well-defined. 

\ \ 

\underline{\textbf{Step 3. Analytic continuation of $\bm{\gamma\longmapsto\bm{U}_\gamma(\cdot,\cdot,\cdot)}$ at $\bm{\gamma=0}$.}} For the original problem $P_0$, i.e. $\gamma=0$, our well-posedness results of nonlocal systems are not feasible even in a small time interval since the locally Lipschitz and H\"{o}lder continuity conditions are violated for some derivatives of the nonlinearity $\mathbb{H}$. However, for any fixed $\gamma>0$, we have shown that the mapping $\gamma\longmapsto\bm{U}_\gamma(\cdot,\cdot,\cdot)$ is well-defined and has an explicit formula \eqref{ExpSolODE}. From which, we can easily see that the mapping $\gamma\longmapsto\bm{U}_\gamma(\cdot,\cdot,\cdot)$ is at least uniformly continuous in $\gamma$ and thus we can extend it at $\gamma=0$ uniquely. Consequently, we can obtain the unique solution for $P_0$ in $\nabla[0,T]$, $\bm{U}(t,s,y)=-\bm{\digamma}_2(t,s)\bm{g}_2(t)\exp\{-\eta y\}$. Furthermore, the closed-loop TC-NE point and the corresponding TC-NE value function of the TIC SDG \eqref{Example: Forward process}-\eqref{ExpBSDE} have the following explicit representations: 
\begin{equation} \label{ExpSol}
\overline{\bm{\alpha}}^a(s,y)=\frac{1}{\eta}\frac{(\mu_a-r)}{\sigma_a^2}\exp\{-r(T-s)\}, \quad \bm{V}(s,y)=-\bm{\digamma}_2(s,s)\bm{T}(s)\exp\{-\eta y\exp\{r(T-s)\}\}, \quad (s,y)\in[0,T]\times\mathbb{R}, 
\end{equation}
by noting also the relationship between \eqref{Example: Forward process} and \eqref{Example: Modified forward process}.

Indeed, by directly making an ansatz $\bm{U}(t,s,y)=-\bm{\varphi}_2(t,s)\exp\{-\eta y\}$ for the solution of $P_0$ (i.e. \eqref{Example: Modified forward process}-\eqref{ExpBSDE}), we can still obtain the same explicit solution \eqref{ExpSol}. However, as we stressed before, our well-posedness results do not cover the problem $P_0$ since \eqref{Expassumptions} and \eqref{Exp2ODESys} induce $\widehat{\bm{w}}_1=\widehat{\bm{w}}_2=\bm{\varphi}_1=0$ when $\gamma=0$. Hence, it is necessary to embed $P_0$ into a family of problem $P_\gamma$ ($\gamma\geq 0$). 

Finally, let us summarize our results in the following proposition. 
\begin{proposition} \label{Exp1Propostion}
Suppose that $\bm{R}$ and $\bm{T}$ are continuously differentiable, then the TIC SDG \eqref{Example: Forward process}-\eqref{ExpBSDE} admits a unique solution in $\nabla[0,T]$, and the closed-loop TC-NE point and the TC-NE value function are given in \eqref{ExpSol}.   
\end{proposition}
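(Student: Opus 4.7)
The plan is to execute the three-step embedding strategy that has already been sketched in the preceding discussion, rather than attacking $P_0$ directly. Direct application of Theorem \ref{WellposednessWeighted} fails for $\gamma=0$ because the nonlinearity $\mathbb{H}_0$ loses the locally Lipschitz/H\"{o}lder regularity required by Definition \ref{Def: AppropriatePair}. The key idea is to embed $P_0$ into the continuum $\{P_\gamma\}_{\gamma\ge 0}$ defined by \eqref{ParaExpBSDE}--\eqref{Expassumptions}, apply the weighted-space theory for $\gamma>0$, and then pass to the limit.

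\textbf{Step 1 (Local well-posedness of $P_\gamma$ for $\gamma>0$).} For a fixed $\gamma>0$, I would derive the Hamiltonian, compute the minimizer, and obtain the forward equilibrium HJB system \eqref{HJB Exponential utility}. The bulk of the work here is the verification that $(\mathbb{H}_\gamma, \gamma\bm{g}_1(T-t)e^{\eta y} - \bm{g}_2(T-t)e^{-\eta y})$ is an appropriate pair in the sense of Definition \ref{Def: AppropriatePair}. This requires: (a) checking that $\mathbb{H}_\gamma$ evaluated along functions near the initial data lies in the exponentially weighted space $\bm{\Omega}^{(\alpha)}_{\varrho,[0,\delta]}$ (this uses the exponential structure of $\widehat{\bm{w}}_1,\widehat{\bm{w}}_2$ chosen in \eqref{Expassumptions}, which makes the denominator $\widehat{\bm{w}}^b_2 - \sigma_b^2\bm{U}^b_{yy}$ bounded away from zero when $\bm{U}$ is close to the exponential ansatz); (b) checking the strong ellipticity conditions (c) of Definition \ref{Def: AppropriatePair} evaluated along $\bm{g}$; (c) verifying the H\"{o}lder/Lipschitz estimates (d) and (e) on the relevant first- and mixed-second-order derivatives of $\mathbb{H}_\gamma$, which are tabulated in Appendix \ref{PDNonLinearity}. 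Once this is done, Theorem \ref{WellposednessWeighted}(2) yields a unique maximally-defined solution $\bm{U}_\gamma \in \bm{\Omega}^{(2+\alpha)}_{\varrho,[T-\tau,T]}$.

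\textbf{Step 2 (Global solution for $\gamma>0$ via ansatz reduction).} With the ansatz \eqref{Expansatz}, plugging into \eqref{Backward HJB Exponential utility} under \eqref{Expassumptions} reduces the PDE system to the two linear ODE systems \eqref{Exp2ODESys} for $\bm{\varphi}_1,\bm{\varphi}_2$ with coefficient matrices $\bm{N}_1,\bm{N}_2$ that are continuous and bounded on $[0,T]$. Classical linear ODE theory then gives the closed-form \eqref{ExpSolODE}, which is defined on the entire $\nabla[0,T]$ and never blows up. Combining this with the uniqueness in Step 1, I would use a continuation argument: on any maximal interval $[T-\tau,T]$ produced by the fixed-point scheme the solution agrees with \eqref{ExpSolODE}; since the latter remains in $\bm{\mathcal{O}}$ at $s=T-\tau$, one can restart the local scheme with the updated data and repeat, eventually covering $[0,T]$.

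\textbf{Step 3 (Analytic continuation at $\gamma=0$).} The representation \eqref{ExpSolODE} makes the dependence $\gamma \longmapsto \bm{U}_\gamma$ affine in $\gamma$, hence uniformly continuous (indeed analytic) on $[0,\infty)$. I would conclude that the unique continuous extension at $\gamma=0$ is $\bm{U}_0(t,s,y) = -\bm{\digamma}_2(t,s)\bm{\digamma}_2^{-1}(t,T)\bm{g}_2(t)e^{-\eta y}$. To promote this to a solution of $P_0$ one checks by direct substitution into \eqref{Backward HJB Exponential utility} with $\gamma=0$ and $\bm{w}_2\equiv 0$ that this formula is a classical solution; uniqueness at $\gamma=0$ then follows separately from a standard linear comparison argument for the resulting (now linear) nonlocal equation, since the $\bm{U}_{yy}$ nonlinearity has disappeared. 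The expression of $\overline{\bm\alpha}^a$ and $\bm{V}$ in \eqref{ExpSol} then follows by substituting into \eqref{Example 1: Closed loop strategy} with $\widehat{\bm{w}}_1=0$, undoing the discounting transformation from \eqref{Example: Modified forward process} back to \eqref{Example: Forward process}, and invoking the sufficiency conjecture in Section \ref{sec:relation}. The main obstacle is the bookkeeping in Step 1: ensuring that every derivative listed in Tables \ref{tab:table1}--\ref{tab:table2} indeed satisfies the weighted H\"{o}lder bounds, which relies crucially on the special algebraic form of $\widehat{\bm{w}}_2$ in \eqref{Expassumptions} preventing the denominator from degenerating near the ansatz.
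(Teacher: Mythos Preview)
Your proposal follows essentially the same three-step embedding argument that the paper itself carries out in Section~\ref{Sec: ExpU}: parametrize by $\gamma$, apply Theorem~\ref{WellposednessWeighted} together with the ansatz \eqref{Expansatz}--\eqref{Expassumptions} to obtain the global closed form \eqref{ExpSolODE} for $\gamma>0$, and then continue to $\gamma=0$ via the explicit affine dependence on $\gamma$. One correction in your Step~3: when $\gamma=0$ and $\widehat{\bm{w}}_1=\widehat{\bm{w}}_2=0$, the system \eqref{Backward HJB Exponential utility} does \emph{not} become linear---the diagonal ratios $\bm{U}^b_y(s,s,y)/\bm{U}^b_{yy}(s,s,y)$ remain in the coefficients---so your ``linear comparison'' claim is unfounded; the paper instead simply invokes the uniqueness of the uniformly continuous extension of $\gamma\mapsto\bm{U}_\gamma$ (which, strictly speaking, gives uniqueness within the embedded family rather than full PDE uniqueness at $\gamma=0$).
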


\begin{remark} \label{LD condition}
The matrix $\bm{N}_i(t,s)$ satisfies the Lappo-Danilevskii condition, which means that it commutes with its integral, i.e.  $\bm{N}_i(t,s)\cdot\int^T_s\bm{N}_i(t,\tau)d\tau=\int^T_s\bm{N}_i(t,\tau)d\tau\cdot \bm{N}_i(t,s)$. Let us list four cases in which the condition holds, (1) $m=1$; (2) $\bm{w}(t,s)=\bm{w}(t)$; (3) $\bm{w}(t,s)$ is a diagonal matrix; (4) $\bm{N}(t,s)$ and $\bm{N}(t,\tau)$ commute for all $s$, $\tau$, and $t$. 
\end{remark}
}


{\color{black}
\subsubsection{TIC Merton Investment-Consumption Problem with Power Utility} \label{Sec: PowerU}
In our second example, we assume that 
\begin{equation} \label{PowerBSDE}
\left\{
\begin{array}{lr}
	\mathbb{A}=\mathbb{R}, \quad \mathbb{C}=[0,\infty), \quad \mathbb{X}=(0,\infty), \\
	\mathbb{h}(t,s,X(s),\bm{\alpha}(s),\bm{c}(s),\bm{Y}(s))=\bm{v}(t,s)\bm{c}^\beta(s)-\bm{w}(t,s)\bm{Y}(s), \\
	\mathbb{g}(t,X(T))=\bm{g}(t)X^\beta(T), \quad \beta>0, 
\end{array}
\right. 
\end{equation} 
where $\bm{v}$ and $\bm{w}$ are both $\mathbb{R}^{m\times m}$-valued functions and $\bm{g}$ is $\mathbb{R}^m$-valued continuous and positive function. In this case, each player $a$ ($a=1,2,\cdots,m$) needs to choose an investment and consumption strategy pair $(\bm{\alpha}^a(s),\bm{c}^a(s))$ valued in $\mathbb{R}\times[0,\infty)$ to optimize their own power utility. With a specific model, we can obtain explicit expressions of \eqref{Equilibrium strategy} and \eqref{Equilibrium HJB system} while the latter can be further reduced to an ordinary differential equation (ODE) system with an ansatz. In the similar spirit of the proof of Theorem \ref{Global existence of fully nonlineaity}, we can show the global solvability of the ODE system and thus we obtain the global well-posedness of \eqref{Equilibrium HJB system}. However, although our results are applicable to the state process with controlled drift and volatility, which is the case of this example, they do not cover this example due to its degeneracy property. Moreover, since the power utility function is defined over $(0,\infty)$, we also need the constraint that the solution $\{X(s)\}_{t,T}$ of \eqref{Example: Forward process} is almost surely nonnegative, i.e. $\mathbb{X}_s=(0,\infty)$. Such a constraint is not necessary for our first example since the domain of exponential utility function is $\mathbb{R}$.}

According to the definitions of $(X(\cdot),\bm{Y}(\cdot),\bm{Z}(\cdot))$ formulated by controlled FBSDEs \eqref{Example: Forward process}-\eqref{ExampleBackprocess} with \eqref{PowerBSDE}, the Hamiltonian system of the $m$ players has the form: for $a=1,\ldots,m$,
\begin{equation*}
\begin{split}
\bm{\mathcal{H}}^a(t,s,y,\alpha,c,u,p,q) & =\frac{1}{2}\left(\sum\limits_{1\leq b\leq m}(\sigma_b\alpha^b)^2\right) q^a+\left[ry+\sum\limits_{1\leq b\leq m}\left((\mu_b-r)\alpha^b-c^b\right)\right]p^a+\sum\limits_{1\leq b\leq m}\left(\bm{v}^{ab}(t,s) (c^b)^\beta-\bm{w}^{ab}(t,s) u^b\right),
\end{split}
\end{equation*}
where $\bm{v}^{ab}$ and $\bm{w}^{ab}$ represent the $(a,b)$-entry of matrices $\bm{v}$ and $\bm{w}$, respectively. Maximizing the above with respect to $\alpha^a$ and $c^a$ with fixed $\alpha^{-a}$, $c^{-a}$, $p>0$, and $q<0$ yields 
\begin{equation*}
\overline{\bm{\alpha}}^a=-\frac{(\mu_a-r)p^a}{\sigma_a^2 q^a}, \qquad \overline{\bm{c}}^a=\left(\frac{p^a}{\beta \bm{v}^{aa}(t,s)}\right)^\frac{1}{\beta-1}, \qquad a=1,\ldots,m.   
\end{equation*} 
Thus, eventually, the equilibrium strategy will be given by
\begin{equation} \label{Example: Closed loop strategy} 
\overline{\bm{\alpha}}^a(s,y)=-\frac{(\mu_a-r) \bm{U}^a_y(s,s,y)}{\sigma_a^2 \bm{U}^a_{yy}(s,s,y)}, \quad \overline{\bm{c}}^a(s,y)=\left(\frac{\bm{U}^a_y(s,s,y)}{\beta \bm{v}^{aa}(s,s)}\right)^\frac{1}{\beta-1}   
\end{equation}
for $(s,y)\in[0,T]\times(0,\infty)$ with $\bm{U}(t,s,y)=(U^1(t,s,y),\cdots,U^m(t,s,y))$ being the solution to an equilibrium HJB system:
\begin{equation} \label{Example: Hamiltonian system} 
\left\{
\begin{array}{lr}
\bm{U}^a_s(t,s,y)+\bm{\mathcal{H}}^a(t,s,y,\overline{\bm{\alpha}}(s,y),\overline{\bm{c}} (s,y),\bm{U}(t,s,y),\bm{U}^a_y(t,s,y),\bm{U}^a_{yy}(t,s,y))=0, \\
\bm{U}(t,T,y)=\bm{g}(t)y^\beta,\quad 0\leq t\leq s \leq T,\quad y\in(0,\infty), \quad a=1,\ldots,m. 
\end{array}
\right. 
\end{equation} 
The $m$ HJB equations system of \eqref{Example: Hamiltonian system} are coupled with each other via the equilibrium strategy $(\overline{\bm{\alpha}},\overline{\bm{c}})$ and the recursive dependence on $\bm{Y}$ in the generators of \eqref{PowerBSDE}, i.e. the terms of $\bm{U}(t,s,y)$ of \eqref{Example: Hamiltonian system}. By substituting \eqref{Example: Closed loop strategy} into the Hamiltonian, \eqref{Example: Hamiltonian system} becomes
\begin{equation} \label{PowerHJB}
\left\{
\begin{array}{l}
\bm{U}^a_s(t,s,y)+\frac{1}{2}\left(\sum\limits_{1\leq b\leq m}\left(\frac{(\mu_b-r) \bm{U}^b_y(s,s,y)}{\sigma_b \bm{U}^b_{yy}(s,s,y)}\right)^2\right) \bm{U}^a_{yy}(t,s,y)+\left[ry-\sum\limits_{1\leq b\leq m}\left(\frac{(\mu_b-r)^2 \bm{U}^b_y(s,s,y)}{\sigma_b^2 \bm{U}^b_{yy}(s,s,y)}-\left(\frac{\bm{U}^b_y(s,s,y)}{\beta \bm{v}^{bb}(s,s)}\right)^\frac{1}{\beta-1}\right)\right]\bm{U}^a_y(t,s,y) \qquad ~~{} \\
\hfill +\sum\limits_{1\leq b\leq m}\left(\bm{v}^{ab}(t,s) \left(\frac{\bm{U}^b_y(s,s,y)}{\beta \bm{v}^{bb}(s,s)}\right)^\frac{\beta}{\beta-1}-\bm{w}^{ab}(t,s) \bm{U}^b(t,s,y)\right) = 0, \\
\bm{U}(t,T,y)=\bm{g}(t)y^\beta,\quad 0\leq t\leq s \leq T,\quad y\in(0,\infty), \quad a=1,\ldots,m. 
\end{array}
\right. 
\end{equation}  

{\color{black}It is clear that the first order derivative of the nonlinearity of \eqref{PowerHJB} with respect to $\bm{U}^a_{yy}(t,s,y)$ at $\bm{U}(t,T,y)$ would be degenerate. Consequently, our well-posedness results are not applicable to analyze its solvability. Thus, this second example is more of an inspiration and serves as an indication of the validity of the general (degenerate) case.} To facilitate our analysis of \eqref{Example: Hamiltonian system}, we need a more explicit form of $\bm{U}^a(t,s,y)$ and thus inspired by its terminal condition, we consider the following ansatz: 
\begin{equation} \label{ansatz}
\bm{U}^a(t,s,y)=\bm{\varphi}^a(t,s)y^\beta, \quad 0\leq t\leq s\leq T, 
\end{equation}
for some suitable $\bm{\varphi}^a(\cdot,\cdot)$, $a=1,\ldots,m$. Then we have $\bm{\varphi}^a(t,T)=\bm{g}^a(t)$ and 
by simple calculation, $\bm{\varphi}(t,s)$ satisfies the following system of ODEs: 
\begin{equation*} 
\left\{
\begin{array}{lr}
\bm{\varphi}^a_s(t,s)        
+\left[r\beta+\sum\limits_{1\leq b\leq m}\left(\frac{(\mu_b-r)^2\beta}{2\sigma_b^2 (1-\beta)}\right)-\sum\limits_{1\leq b\leq m}\beta\left(\frac{\bm{\varphi}^b(s,s)}{\bm{v}^{bb}(s,s)}\right)^\frac{1}{\beta-1}\right]\bm{\varphi}^a(t,s)+\sum\limits_{1\leq b\leq m}\bm{v}^{ab}(t,s) \left(\frac{\bm{\varphi}^b(s,s)}{\bm{v}^{bb}(s,s)}\right)^\frac{\beta}{\beta-1}-\sum\limits_{1\leq b\leq m}\bm{w}^{ab}(t,s) \bm{\varphi}^b(t,s) = 0, \\
\bm{\varphi}(t,T)=\bm{g}(t),\quad 0\leq t\leq s \leq T, \quad a=1,\ldots,m. 
\end{array}
\right. 
\end{equation*}  
Denoting by $k = r\beta+\sum\limits_{1\leq b\leq m}\left(\frac{((\mu_b-r)^2\beta}{2\sigma_b^2 (1-\beta)}\right)$, the ODE system above becomes 
\begin{equation} \label{ODE system in a matrix form} 
\left\{
\begin{array}{lr}
\bm{\varphi}_s(t,s)+\bm{A}(t,s,\bm{\varphi}(s,s))\cdot\bm{\varphi}(t,s)+\bm{f}(t,s,\bm{\varphi}(s,s)) = 0, \\
\bm{\varphi}(t,T)=\bm{g}(t),\quad 0\leq t\leq s \leq T. 
\end{array}
\right. 
\end{equation} 
where $$\bm{A}(t,s,\bm{\varphi}(s,s))=-\bm{w}(t,s)+\mathrm{diag}\left\{k-\sum_b\beta\left(\frac{\bm{\varphi}^b(s,s)}{\bm{v}^{bb}(s,s)}\right)^\frac{1}{\beta-1},\cdots,k-\sum_b\beta\left(\frac{\bm{\varphi}^b(s,s)}{\bm{v}^{bb}(s,s)}\right)^\frac{1}{\beta-1}\right\}$$  
$$\bm{f}(t,s,\bm{\varphi}(s,s))=\left(\sum_b \bm{v}^{1b}(t,s) \left(\frac{\bm{\varphi}^b(s,s)}{\bm{v}^{bb}(s,s)}\right)^\frac{\beta}{\beta-1},\cdots,\sum_b \bm{v}^{mb}(t,s) \left(\frac{\bm{\varphi}^b(s,s)}{\bm{v}^{bb}(s,s)}\right)^\frac{\beta}{\beta-1}\right)^\top$$. 


According to the classical theory of system of ODEs, the fundamental matrix $\bm{\chi}$ makes it possible to write every solution of the inhomogeneous system \eqref{ODE system in a matrix form} in the form of Cauchy's formula 
\begin{equation} \label{Integral equation of psi} 
\bm{\psi}(t,s) = \bm{\chi}(t,s,\bm{\psi}(s,s))\bm{\chi}^{-1}(t,T,\bm{\psi}(T,T))\bm{g}(t)+\int^T_s \bm{\chi}(t,s,\bm{\psi}(s,s))\bm{\chi}^{-1}(t,\tau,\bm{\psi}(\tau,\tau))\bm{f}(t,\tau,\bm{\psi}(\tau,\tau))d\tau,
\end{equation}
where $\bm{\chi}^{-1}$ is the inverse matrix of $\bm{\chi}$, and 
\begin{equation} \label{ODE fundamental solution}
\bm{\chi}(t,s,\bm{\psi}(s,s)) = \bm{I}+\int^T_s \bm{A}(t,\tau,\bm{\psi}(\tau,\tau))d\tau+\int^T_s \bm{A}(t,\tau,\bm{\psi}(\tau,\tau))\int^T_\tau \bm{A}(t,\sigma,\bm{\psi}(\sigma,\sigma))d\sigma d\tau + \cdots,
\end{equation}
in which $\bm{I}$ is $m\times m$ identity matrix. Note that \eqref{ODE fundamental solution} converges absolutely for every $s\in[0,t]$ and uniformly on every compact interval in $[0,t]$. Taking $t=s$ gives us that 
\begin{equation*}  
\bm{\psi}(s,s) = \bm{\chi}(s,s,\bm{\psi}(s,s))\bm{\chi}^{-1}(s,T,\bm{\psi}(T,T))\bm{g}(t)+\int^T_s \bm{\chi}(s,s,\bm{\psi}(s,s))\bm{\chi}^{-1}(s,\tau,\bm{\psi}(\tau,\tau))\bm{f}(s,\tau,\bm{\psi}(\tau,\tau))d\tau,
\end{equation*}
By introducing $\overline{\bm{\psi}}(s)=\bm{\psi}(s,s)$, we have 
\begin{equation} \label{General integral equation} 
\overline{\bm{\psi}}(s) = \bm{\chi}(s,s,\overline{\bm{\psi}}(s))\bm{\chi}^{-1}(s,T,\overline{\bm{\psi}}(T))\bm{g}(s)+\int^T_s \bm{\chi}(s,s,\overline{\bm{\psi}}(s))\bm{\chi}^{-1}(s,\tau,\overline{\bm{\psi}}(\tau))\bm{f}(s,\tau,\overline{\bm{\psi}}(\tau))d\tau,
\end{equation}
which is a nonlinear integral system for the unknown function $s\longmapsto\overline{\bm{\psi}}(s)$. Once the diagonal value $\overline{\bm{\psi}}(s)=\bm{\psi}(s,s)$ can be determined uniquely, there exists a unique solution $\bm{\psi}(t,s)$ from the integral equation \eqref{Integral equation of psi} of $\bm{\psi}(t,s)$. {\color{black}By \eqref{Example: Closed loop strategy} and \eqref{ansatz}, the equilibrium investment-consumption strategy and equilibrium value functions can be represented with $\overline{\bm{\psi}}(s)$ in \eqref{General integral equation} as follows: 
\begin{equation} \label{Example 2: Closed loop strategy} 
	\overline{\bm{\alpha}}^a(s,y)=\frac{(\mu_a-r) }{\sigma_a^2 (1-\beta)}y, \quad \overline{\bm{c}}^a(s,y)=\left(\frac{\bm{v}^{aa}(s,s)}{\overline{\bm{\psi}}(s)}\right)^\frac{1}{1-\beta}y, \quad \bm{V}(s,y)=\overline{\bm{\psi}}(s)y^\beta, \quad (s,y)\in[0,T]\times(0,\infty),    
\end{equation}

}
The preliminary analyses above provide us the analytical form of the TC-NE value function, while the key is to make use of the ansatz \eqref{ansatz} to transform the nonlocal PDE system \eqref{Example: Hamiltonian system} into a classical (local) ODE system \eqref{ODE system in a matrix form} (and equivalently, an conventional integral system \eqref{General integral equation}). We can then again use the contraction mapping arguments to establish the local well-posedness of \eqref{General integral equation}.
Moreover, to prove its solvability in an arbitrary large time interval, we shall show the boundedness of the solution of \eqref{General integral equation} such that the extension procedure can be completed.
The following proposition supplements the mathematical details of the above.
\begin{proposition} \label{Solvability of the financial example} 
Suppose that $\bm{v}$, $\bm{w}$, and $\bm{g}$ are continuously differentiable, then there exists $\delta\in(0,T]$ such that the TIC SDG problem \eqref{Example: Forward process}-\eqref{ExampleBackprocess} with \eqref{PowerBSDE} admits a closed-loop TC-NE point $(\overline{\bm{\alpha}},\overline{\bm{c}})(s,y)$ given by \eqref{Example: Closed loop strategy} and the corresponding TC-NE value function $\bm{V}(s,y)={\color{black}\overline{\bm{\psi}}(s)}y^\beta$ over $s\in [T-\delta,T]$. Moreover, if $\bm{w}$ is a diagonal matrix and $\bm{v}$ and $\bm{g}$ satisfy \eqref{Condition: g_0}-\eqref{Condition: gamma}, then $\delta=T$, which implies that the TIC SDG problem \eqref{Example: Forward process}-\eqref{ExampleBackprocess} with \eqref{PowerBSDE} is globally solvable.
\end{proposition}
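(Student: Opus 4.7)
The plan is to exploit the ansatz \eqref{ansatz}, which collapses the nonlocal HJB system \eqref{Example: Hamiltonian system} into the scalar-time nonlinear ODE system \eqref{ODE system in a matrix form}, and then into the integral equation \eqref{General integral equation} for the diagonal trace $\overline{\bm{\psi}}(s) := \bm{\psi}(s,s)$. Once $\overline{\bm{\psi}}(\cdot)$ is determined, the full $\bm{\psi}(t,s)$ is recovered directly from Cauchy's formula \eqref{Integral equation of psi}, and then $\bm{U},(\overline{\bm{\alpha}},\overline{\bm{c}}),\bm{V}$ follow by \eqref{ansatz} and \eqref{Example: Closed loop strategy}. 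So everything reduces to showing that \eqref{General integral equation} admits a unique continuous solution $\overline{\bm{\psi}}:[T-\delta,T]\to\mathbb{R}^m$ that stays in a region where all the fractional powers $(\bm{\varphi}^b/\bm{v}^{bb})^{1/(\beta-1)}$ and $(\bm{\varphi}^b/\bm{v}^{bb})^{\beta/(\beta-1)}$ are smoothly defined (in particular, with $\bm{\varphi}^b$ bounded away from $0$).

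For the local part, I would define the operator $\mathcal{T}$ as the right-hand side of \eqref{General integral equation}, acting on the closed ball $\mathcal{B}_\rho := \{\overline{\bm{\psi}}\in C([T-\delta,T];\mathbb{R}^m):\ |\overline{\bm{\psi}}(s)-\bm{g}(T)|\le \rho\ \forall s\}$ with $\rho>0$ chosen small enough that $\overline{\bm{\psi}}(s)/\bm{v}^{bb}(s,s)$ stays in a compact subset of $(0,\infty)$ (this is possible because $\bm{g}$ is continuously differentiable and can be assumed componentwise positive, whence so is $\bm{v}^{bb}$ near the terminal time via continuity). On such a compact subset, both $\bm{A}(\cdot,\cdot,\cdot)$ and $\bm{f}(\cdot,\cdot,\cdot)$, as well as the fundamental matrix $\bm{\chi}$ and its inverse $\bm{\chi}^{-1}$ coming from the Neumann series \eqref{ODE fundamental solution}, are bounded and Lipschitz in their third argument. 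A standard estimate then gives $|\mathcal{T}\overline{\bm{\psi}}_1 - \mathcal{T}\overline{\bm{\psi}}_2|_\infty \le C\delta\,|\overline{\bm{\psi}}_1-\overline{\bm{\psi}}_2|_\infty$, so choosing $\delta$ sufficiently small renders $\mathcal{T}$ a $\tfrac12$-contraction and a self-map of $\mathcal{B}_\rho$. Banach's theorem yields a unique fixed point $\overline{\bm{\psi}}\in C([T-\delta,T];\mathbb{R}^m)$, which is precisely the first claim of the proposition.

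For global solvability under the diagonal $\bm{w}$ assumption and conditions \eqref{Condition: g_0}--\eqref{Condition: gamma}, I would proceed in the spirit of Theorem \ref{Global existence of fully nonlineaity}: let $[T-\tau,T]$ be the maximal interval of existence and aim to show $\tau = T$. When $\bm{w}$ is diagonal, $\bm{A}(t,s,\bm{\varphi}(s,s))$ is itself diagonal, so $\bm{\chi}$ decouples and the ODE \eqref{ODE system in a matrix form} reduces componentwise to a scalar equation for each $\bm{\varphi}^a(t,s)$ whose coefficients are explicit functionals of $\overline{\bm{\psi}}$. This allows me to derive a scalar integral inequality for each $|\overline{\bm{\psi}}^a(s)|$ in which the exponential factor from $\bm{\chi}$ and the nonlinear source $\bm{f}$ can be controlled by a Grönwall-type argument. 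The role of the (unlisted) conditions \eqref{Condition: g_0}--\eqref{Condition: gamma} is precisely to guarantee that (i) the lower bound $\overline{\bm{\psi}}^b(s)/\bm{v}^{bb}(s,s)\ge c_0>0$ survives throughout $[T-\tau,T]$, preventing the fractional powers from blowing up, and (ii) an upper bound $|\overline{\bm{\psi}}(s)|\le M$ holds uniformly in $s\in[T-\tau,T]$. With these two a priori bounds in hand, the local step above can be restarted at $s=T-\tau$ with initial datum $\overline{\bm{\psi}}(T-\tau)$ and uniform contraction constant independent of $\tau$, contradicting maximality unless $\tau = T$.

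The principal obstacle is the a priori control (i)--(ii) above: the non-Lipschitz and possibly singular character of $(\overline{\bm{\psi}}/\bm{v}^{bb})^{1/(\beta-1)}$ near the boundary $\overline{\bm{\psi}}=0$ (arising when $\beta\in(0,1)$) means that Grönwall alone is insufficient, and one has to combine a lower-comparison argument using positivity of $\bm{v}^{ab}$ and $\bm{g}^a$ to keep $\overline{\bm{\psi}}^a$ positive, with a Lyapunov-type upper estimate exploiting the sign of the linear drift coefficient $k-\bm{w}^{aa}$ to dominate the nonlinear contribution of $\bm{f}$. The diagonal structure of $\bm{w}$ is what turns this otherwise coupled problem into $m$ scalar comparisons, and this is where conditions \eqref{Condition: g_0}--\eqref{Condition: gamma} must enter decisively to close the estimates.
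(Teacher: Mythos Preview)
Your proposal is correct and follows essentially the same route as the paper: a contraction-mapping argument on the integral equation \eqref{General integral equation} for local existence, then (under diagonal $\bm{w}$) decoupling into scalar equations and deriving two-sided a priori bounds $0<c\le \overline{\bm{\psi}}^a(s)\le C$ that allow a continuation argument in the spirit of Theorem \ref{Global existence of fully nonlineaity}. The only substantive point you leave vague is the mechanism for the \emph{lower} bound: the paper introduces the integrating-factor change of variables $\widehat{\bm{\varphi}}^a(s)=\overline{\bm{\varphi}}^a(s)\prod_b\exp\{\beta\int_s^T\overline{\bm{\varphi}}^b(\tau)^{1/(\beta-1)}d\tau\}$, which converts the coupled nonlinear system into an inequality for a scalar auxiliary function $\omega(s)$ amenable to a direct differential-inequality argument yielding $\overline{\bm{\varphi}}^a(s)\ge g_0e^{-\gamma(T-s)}$; once this is in hand, the upper bound follows by a straightforward estimate (not really a Lyapunov argument or a sign condition on $k-\bm{w}^{aa}$, just boundedness of the data and the lower bound just obtained).
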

\begin{remark}
For the condition \eqref{Condition: gamma}, \cite{Yong2012,Wei2017} have investigated the case where $\bm{v}^{ab}(t,s)=\bm{v}^{aa}(t,s)$ for $b\neq a$ and $a=1,\ldots,m$. As they showed, the continuous differentiability of $\bm{v}$ in $s$ can guarantee \eqref{Condition: gamma} for this special case. {\color{black}Moreover, in contrast to Proposition \ref{Exp1Propostion} that provides the existence and uniqueness of solutions of equilibrium HJB system \eqref{Backward HJB Exponential utility}, Proposition \ref{Solvability of the financial example} only promises the existence of solutions for the TIC SDG problem \eqref{Example: Forward process}-\eqref{ExampleBackprocess} with \eqref{PowerBSDE}. Since the equilibrium HJB system \eqref{PowerHJB} cannot be covered by the current framework, we merely constructed one solution for the power utility model via the ansatz \eqref{ansatz}.} 
\end{remark}

Our TIC SDG examples and results generalize the ones in the existing literature. Specifically, in the case of $m=1$, the TIC SDG is reduced to the TIC stochastic control problem in \cite{Wei2017} with recursive utility functional and in \cite{Yong2012} with non-recursive one. Noteworthy is that the well-posedness results in \cite{Yong2012,Wei2017} do not allow the diffusion to be controlled. Moreover, when $m=1$, $\bm{v}$, $\bm{w}$, and $\bm{g}$ are all independent of $t$, the problem is reduced to the TC case with recursive utility functional studied in \cite{Karoui2001}. Based on these restrictions, if $\bm{v}$ is constant and $\bm{w}=0$, the examples are further reduced to the classical Merton problem in \cite{Merton1971}.


\section{Feynman--Kac Formula for Nonlocal Parabolic Systems} \label{sec:stochrep}
In this section, we provide a nonlocal version of the Feynman--Kac formula, which establishes a closed link between the solutions to a flow of FBSDEs in the multidimensional case and nonlocal second-order parabolic systems. All the proofs are deferred to \ref{app:pf}.

Before we present the formula, we reveal more properties of the solution to \eqref{Nonlocal fully nonlinear system}. Like the classical theory of parabolic systems, the stronger conditions imposed to the nonlinearity $\bm{F}$ and the given data $\bm{g}$ suggest the higher regularity of the corresponding solutions of nonlocal higher-order systems. 
\begin{lemma} \label{More regularities of solutions}
Let $k$ and $K$ be both non-negative integers satisfying $k\leq K$. Suppose that $\bm{F}$ is smooth and regular enough and $\bm{g}\in\bm{\Omega}^{2r+K+\alpha}_{[0,T]}$, then there exist $\delta>0$ and a unique $\bm{u}$ in $\Delta[0,\delta]\times\mathbb{R}^d$ satisfying \eqref{Nonlocal fully nonlinear system} with $D^k_y\bm{u}\in\bm{\Omega}^{2r+\alpha}_{[0,\delta]}$ for all $k\leq K$.  
\end{lemma}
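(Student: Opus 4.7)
My approach is induction on $k$, bootstrapping the solution from Theorem~\ref{Well-posedness of fully nonlinear system} by successively applying the Schauder estimate of Theorem~\ref{Schauder estimates} to the nonlocal linear systems satisfied by higher spatial derivatives of $\bm{u}$. The base case $k=0$ is exactly Theorem~\ref{Well-posedness of fully nonlinear system}: since $\bm{g}\in\bm{\Omega}^{(2r+K+\alpha)}_{[0,T]}\subseteq\bm{\Omega}^{(2r+\alpha)}_{[0,T]}$, it furnishes some $\delta>0$ and a unique $\bm{u}\in\bm{\Omega}^{(2r+\alpha)}_{[0,\delta]}$ solving \eqref{Nonlocal fully nonlinear system}; this $\delta$ is fixed for the remainder of the argument.

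For the inductive step, assume $D^j_y\bm{u}\in\bm{\Omega}^{(2r+\alpha)}_{[0,\delta]}$ for every $j<k\le K$. Differentiating \eqref{Nonlocal fully nonlinear system} formally $k$ times in $y$ and grouping the highest-order linear terms via Fa\`a di Bruno, the function $\bm{v}:=D^k_y\bm{u}$ should satisfy
\begin{equation*}
\bm{v}^a_s(t,s,y)=\sum_{|I|\le 2r,\,b\le m}\partial_I\bm{F}^a_b\,\partial_I\bm{v}^b(t,s,y)+\sum_{|I|\le 2r,\,b\le m}\partial_I\overline{\bm{F}}^a_b\,\partial_I\bm{v}^b(s,s,y)+\bm{f}^a(t,s,y),
\end{equation*}
with $\bm{v}(t,0,y)=D^k_y\bm{g}(t,y)$, where $\partial_I\bm{F}^a_b$ and $\partial_I\overline{\bm{F}}^a_b$ are evaluated along $\bm{u}$ and $\bm{f}$ collects the remaining products of higher mixed partials of $\bm{F}$ with spatial derivatives $D^j_y\bm{u}$, $j<k$. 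This is exactly a nonlocal linear system of the form \eqref{Nonlocal linear system}. The ellipticity conditions \eqref{Ellipticity 1}--\eqref{Ellipticity 2} for the induced $\bm{L}$ transfer directly from \eqref{Uniform ellipticity condition of F 1}--\eqref{Uniform ellipticity condition of F 2}; smoothness of $\bm{F}$ combined with $\bm{u}\in\bm{\Omega}^{(2r+\alpha)}_{[0,\delta]}$ places the coefficients in $\bm{\Omega}^{(\alpha)}_{[0,\delta]}$; the inductive hypothesis puts $\bm{f}\in\bm{\Omega}^{(\alpha)}_{[0,\delta]}$; and $\bm{g}\in\bm{\Omega}^{(2r+K+\alpha)}_{[0,T]}$ with $k\le K$ gives $D^k_y\bm{g}\in\bm{\Omega}^{(2r+\alpha)}_{[0,T]}$. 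Theorem~\ref{Schauder estimates} then produces a unique $\widetilde{\bm{v}}\in\bm{\Omega}^{(2r+\alpha)}_{[0,\delta]}$ solving this linear system, and a Nirenberg-type difference-quotient argument identifies $\widetilde{\bm{v}}=D^k_y\bm{u}$, upgrading the regularity as required and closing the induction.

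The main obstacle is the joint bookkeeping for $\bm{f}$ and the difference-quotient identification. Each Fa\`a di Bruno summand is a product of a mixed partial $\partial^{\gamma}\bm{F}$ with a tensor of spatial derivatives of $\bm{u}$ of total order at most $k+2r-1$, some evaluated at the diagonal $(s,s,y)$; placing this product in $\bm{\Omega}^{(\alpha)}_{[0,\delta]}$ requires transferring the H\"{o}lder and time seminorms through the composition $s\mapsto(s,s,y)$, which is exactly what the Banach-space architecture and the norm $[\cdot]^{(l)}_{[0,\delta]}$ of Subsection~\ref{sec:normandBspace} are engineered to absorb. Moreover, for $k>2r$ the classical derivative $D^k_y\bm{u}$ is not a priori well-defined, so the identification $\widetilde{\bm{v}}=D^k_y\bm{u}$ must be obtained by applying the Schauder estimate \eqref{Estimates of solutions of nonlocal system} uniformly to the linear systems solved by the translational difference quotients of $D^{k-1}_y\bm{u}$ and passing to the limit in $\bm{\Omega}^{(2r+\alpha)}_{[0,\delta]}$. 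Once this is handled once, the same scheme recurs at each step, and the induction terminates after $K$ iterations.
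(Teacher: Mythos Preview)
Your proposal is correct and follows the same inductive scheme as the paper: differentiate \eqref{Nonlocal fully nonlinear system} in $y$, recognise the resulting equation for $D^k_y\bm{u}$ as a nonlocal linear system with coefficients $\partial_I\bm{F}^a_b(\bm{u})$, $\partial_I\overline{\bm{F}}^a_b(\bm{u})$ evaluated along the solution, and invoke Theorem~\ref{Schauder estimates}. The paper's own proof is a two-paragraph sketch that writes this out for $k=1$ only and then says ``similarly, we can also show iteratively the cases for $k\le K$.''

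The one place where you go further than the paper is the identification step. The paper simply asserts that the well-posedness of the linear system ``promises $D_y\bm{u}\in\bm{\Omega}^{(2r+\alpha)}_{[0,\delta]}$,'' without addressing why the unique $\bm{\Omega}^{(2r+\alpha)}$-solution of the differentiated system actually coincides with $D_y\bm{u}$, which a priori lies only in $\bm{\Omega}^{(2r-1+\alpha)}$ and so cannot be fed directly into the uniqueness statement of Theorem~\ref{Schauder estimates}. Your Nirenberg-type difference-quotient argument is exactly the standard device that closes this gap, and it is the right thing to do here. So your proof is the same in spirit but more rigorous on this point; the Fa\`a di Bruno bookkeeping you flag is real but, as you note, is absorbed by the inductive hypothesis and the algebra of the $\bm{\Omega}^{(\alpha)}$-norms.
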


Next, to connect parabolic systems with the theory of FBSDEs, we consider a second-order backward nonlocal fully nonlinear system with $r=1$ of the form:
\begin{equation} \label{Backward nonlocal fully nonlinear equation}  
\left\{
\begin{array}{lr}
\bm{u}_s(t,s,y) + \bm{F}\big(t,s,y,\bm{u}(t,s,y),\bm{u}_y(t,s,y),\bm{u}_{yy}(t,s,y),\bm{u}(s,s,y),\bm{u}_y(s,s,y),\bm{u}_{yy}(s,s,y)\big)=0, \\
\bm{u}(t,T,y)=\bm{g}(t,y),\hfill t_0\leq t\leq s\leq T,\quad y\in\mathbb{R}^d,
\end{array}
\right.
\end{equation} 
where $\bm{F}$ has enough regularities and $t_0$ is suitable in the sense that $[t_0,T]$ is a subset of the time interval for the maximally defined solution of \eqref{Backward nonlocal fully nonlinear equation}. The following theorem reveals the relationship between the solutions to a nonlocal fully nonlinear second-order system and to a flow of 2FBSDEs \eqref{Flow of 2FBSDEs}. 

\begin{theorem} \label{F-K formula}
Suppose that $\bm{F}$ has enough regularities, $\sigma(s,y)\in C^{1,2}([t_0,T]\times\mathbb{R}^d)$, and $\bm{g}\in\bm{\Omega}^{{(3+\alpha)}}_{[t_0,T]}$. Then, \eqref{Backward nonlocal fully nonlinear equation} admits a unique solution $\bm{u}(t,s,y)$ that is first-order continuously differentiable in $s$ and third-order continuously differentiable with respect to $y$ in $\nabla[t_0,T]\times\mathbb{R}^d$. Moreover, for any $a=1,\ldots,m$, let
\begin{eqnarray} \label{F-K formula for 2FBSVIE}
& \bm{Y}^a(t,s) := \bm{u}^a(t,s,\bm{X}(s)), \qquad & \bm{Z}^a(t,s) :=  \left(\sigma^\top \bm{u}^a_y\right)(t,s,\bm{X}(s)), \\
&\bm{\Gamma}^a(t,s) :=\left(\sigma^\top\left(\sigma^\top \bm{u}^a_y\right)_y\right)(t,s,\bm{X}(s)), \qquad & \bm{A}^a(t,s) := \mathcal{D}\left(\sigma^\top \bm{u}^a_y\right)(t,s,\bm{X}(s)), \nonumber 
\end{eqnarray} 
where $\left(\sigma^\top \bm{u}^a_y\right)(t,s,y)=\sigma^\top(s,y)\bm{u}^a_y(t,s,y)$ and the operator $\mathcal{D}$ is defined by 
\begin{equation*}
\mathcal{D}\bm{\varphi}^a=\bm{\varphi}_s+\frac{1}{2}\sum^d_{i,j=1}\left(\sigma\sigma^\top\right)_{ij}\frac{\partial^2\bm{\varphi}^a}{\partial y_i\partial y_j}+\sum^d_{i=1}b_i\frac{\partial \bm{\varphi}^a}{\partial y_i},
\end{equation*}
then the family of random fields $\left(\bm{X}(\cdot),\bm{Y}(\cdot,\cdot),\bm{Z}(\cdot,\cdot),\bm{\Gamma}(\cdot,\cdot),\bm{A}(\cdot,\cdot)\right)$ is an adapted solution of the following flow of 2FBSDEs: 
\begin{align} \label{Flow of 2FBSDEs}
\bm{X}(s) & = y+\int^s_{t_0}b(\tau,\bm{X}(\tau))d\tau+\int^s_{t_0}\sigma(\tau,\bm{X}(\tau))d\bm{W}(\tau), \\
\bm{Y}^a(t,s) & = \bm{g}^a(t,\bm{X}(T))+\int^T_s\mathbb{F}^a(t,\tau,\bm{X}(\tau),\bm{Y}(t,\tau),\bm{Y}(\tau,\tau),\bm{Z}(t,\tau),\bm{Z}(\tau,\tau),\bm{\Gamma}(t,\tau),\bm{\Gamma}(\tau,\tau))d \tau-\int^T_s\left(\bm{Z}^a\right)^\top(t,\tau)d\bm{W}(\tau), \nonumber \\
\bm{Z}^a(t,s) & = \bm{Z}^a(t,t_0)+\int^s_{t_0}\bm{A}^a(t,\tau)d\tau+\int^s_{t_0}\bm{\Gamma}^a(t,\tau)d\bm{W}(\tau), \quad t_0\leq t\leq s\leq T,\quad y\in\mathbb{R}^d, \nonumber   
\end{align} 
where $\mathbb{F}^a$ is defined by 
\begin{equation}
\begin{split}
	& \mathbb{F}^a\big(t,\tau,\bm{X}(\tau),\bm{Y}(t,\tau),\bm{Y}(\tau,\tau),\bm{Z}(t,\tau),\bm{Z}(\tau,\tau),\Gamma(t,\tau),\Gamma(\tau,\tau)\big) \\
	=& \overline{\bm{F}}^a\big(t,\tau,\bm{X}(\tau),\bm{u}(t,\tau,\bm{X}(\tau)),\bm{u}_y(t,\tau,\bm{X}(\tau)),\bm{u}_{yy}(t,\tau,\bm{X}(\tau)),\bm{u}(\tau,\tau,\bm{X}(\tau)),\bm{u}_y(\tau,\tau,\bm{X}(\tau)),\bm{u}_{yy}(\tau,\tau,\bm{X}(\tau))\big)
\end{split}
\end{equation}
with the definition of $\overline{\bm{F}}^a$
\begin{equation*}
\begin{split}
	& \overline{\bm{F}}^a\big(t,\tau,y,\bm{u}(t,\tau,y),\bm{u}_y(t,\tau,y),\bm{u}_{yy}(t,\tau,y),\bm{u}(\tau,\tau,y),\bm{u}_y(\tau,\tau,y),\bm{u}_{yy}(\tau,\tau,y)\big) \\
	:=& \bm{F}^a\big(t,\tau,y,\bm{u}(t,\tau,y),\bm{u}_y(t,\tau,y),\bm{u}_{yy}(t,\tau,y),\bm{u}(\tau,\tau,y),\bm{u}_y(\tau,\tau,y),\bm{u}_{yy}(\tau,\tau,y)\big) \\
	& -\frac{1}{2}\sum^d_{i,j=1}\left(\sigma\sigma^\top\right)_{ij}(\tau,y)\frac{\partial^2 \bm{u}^a}{\partial y_i\partial y_j}(t,\tau,y)-\sum^d_{i=1}b_i(\tau,y)\frac{\partial \bm{u}^a}{\partial y_i}(t,\tau,y). 
\end{split}
\end{equation*}
\end{theorem}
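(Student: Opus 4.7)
The plan is to split the argument into three conceptual blocks: first, the existence, uniqueness, and enhanced regularity of $\bm{u}$; second, the identification of the BSDE for $\bm{Y}^a$ by applying It\^o's formula to $\bm{u}^a(t,s,\bm{X}(s))$; and third, the identification of the forward SDE for $\bm{Z}^a$ by applying It\^o's formula to the composite $(\sigma^{\top}\bm{u}_y^a)(t,s,\bm{X}(s))$.

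For the first block, I would pass from \eqref{Backward nonlocal fully nonlinear equation} to its forward counterpart via Proposition~\ref{Equivalance between forward probelms and backward problems} and invoke Theorem~\ref{Well-posedness of fully nonlinear system} to obtain a unique maximally defined solution in $\bm{\Omega}^{(2+\alpha)}$; the hypothesis that $[t_0,T]$ lies inside that maximality interval is absorbed into the statement. To lift smoothness by one spatial order, I would then apply Lemma~\ref{More regularities of solutions} with $r=1$ and $K=1$: the hypothesis $\bm{g}\in\bm{\Omega}^{(3+\alpha)}_{[t_0,T]}$ matches $\bm{g}\in\bm{\Omega}^{(2r+K+\alpha)}_{[t_0,T]}$ exactly and yields $D^k_y\bm{u}\in\bm{\Omega}^{(2+\alpha)}_{[t_0,T]}$ for $k\le 1$. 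This upgrades $\bm{u}$ to be $C^{1,3}$ in $(s,y)$; combined with $\sigma\in C^{1,2}$, one gets $\sigma^{\top}\bm{u}_y^a\in C^{1,2}$ in $(s,y)$, which is exactly the regularity demanded by the two successive It\^o steps.

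For the second block, I would apply It\^o's formula to $\bm{Y}^a(t,s):=\bm{u}^a(t,s,\bm{X}(s))$ with $t$ treated as a frozen parameter. The martingale term is $(\sigma^{\top}\bm{u}_y^a)^{\top}(t,s,\bm{X}(s))\,d\bm{W}(s)=(\bm{Z}^a)^{\top}(t,s)\,d\bm{W}(s)$, while the drift equals $(\mathcal{D}\bm{u}^a)(t,s,\bm{X}(s))$. Substituting $\bm{u}_s^a=-\bm{F}^a$ from \eqref{Backward nonlocal fully nonlinear equation} and noting that the remaining first- and second-order spatial pieces of $\mathcal{D}\bm{u}^a$ are exactly those subtracted in the definition of $\overline{\bm{F}}^a$, the drift collapses to $-\overline{\bm{F}}^a=-\mathbb{F}^a$. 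Integrating on $[s,T]$ and applying the terminal condition $\bm{u}(t,T,y)=\bm{g}(t,y)$ delivers the second equation of \eqref{Flow of 2FBSDEs}.

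For the third block, I would apply It\^o's formula to the $C^{1,2}$ map $(s,y)\mapsto(\sigma^{\top}\bm{u}_y^a)(t,s,y)$ along $\bm{X}$. By direct reading the drift is $\mathcal{D}(\sigma^{\top}\bm{u}_y^a)=\bm{A}^a$ and the diffusion coefficient is $\sigma^{\top}(\sigma^{\top}\bm{u}_y^a)_y=\bm{\Gamma}^a$, matching \eqref{F-K formula for 2FBSVIE} exactly; integrating on $[t_0,s]$ yields the third equation of \eqref{Flow of 2FBSDEs}. I expect the main obstacle to be propagating the extra spatial regularity furnished by Lemma~\ref{More regularities of solutions} from a small-time window to the whole of $[t_0,T]$ while controlling the diagonal evaluations $(t,s,y)\mapsto(s,s,y)$ inside $\bm{F}$: this will require re-running the extension argument already employed in the global parts of Section~\ref{sec:mainresults} with one extra derivative carried along at each step, and verifying that the chain-rule identity $\mathcal{D}\bm{u}^a=-\overline{\bm{F}}^a$ remains pointwise valid once the PDE is evaluated along the random path $\bm{X}(\cdot)$, since the diagonal argument $(s,s,\bm{X}(s))$ is now path-dependent and must be handled consistently with the off-diagonal argument $(t,s,\bm{X}(s))$.
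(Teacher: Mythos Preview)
Your proposal is correct and follows essentially the same route as the paper: invoke Proposition~\ref{Equivalance between forward probelms and backward problems} and Lemma~\ref{More regularities of solutions} (with $r=1$, $K=1$) for existence, uniqueness, and $C^{1,3}$ regularity of $\bm{u}$, then apply It\^o's formula first to $\tau\mapsto\bm{u}^a(t,\tau,\bm{X}(\tau))$ on $[s,T]$ and second to $\tau\mapsto(\sigma^\top\bm{u}^a_y)(t,\tau,\bm{X}(\tau))$ on $[t_0,s]$. The obstacle you flag about extending the extra spatial regularity beyond a small-time window is not treated as a separate difficulty in the paper; it is absorbed into the standing hypothesis that $[t_0,T]$ already lies inside the maximal interval together with the continuation argument of Remark~\ref{Maximal interval}, so no additional work is needed there.
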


We make three important observations about the stochastic system \eqref{Flow of 2FBSDEs}: (I) When the generator $\mathbb{F}$ is independent of diagonal terms, i.e. $\bm{Y}(\tau,\tau)$, $\bm{Z}(\tau,\tau)$, and $\bm{\Gamma}(\tau,\tau)$, the flow of FBSDEs \eqref{Flow of 2FBSDEs} is reduced to a family of 2FBSDEs parameterized by $t$, which is exactly the 2FBSDEs in \cite{Kong2015} and equivalent to the ones in \cite{Cheridito2007} for any fixed $t$; (II) \eqref{Flow of 2FBSDEs} is more general than the systems in \cite{Wang2019,Wang2020,Hamaguchi2020,Lei2023} since it allows for a nonlinearity of $(\bm{Y}(t,\tau),\bm{Z}(t,\tau),\bm{\Gamma}(t,\tau))$ by increasing the dimensions and/or introducing an additional SDE of $(\bm{\Gamma},\bm{A})$ as well as diagonal terms $(\bm{Y}(\tau,\tau),\bm{Z}(\tau,\tau),\bm{\Gamma}(\tau,\tau))$ in almost arbitrary way; (III) Theorem \ref{F-K formula} shows how to solve the flow of multidimensional 2FBSDEs \eqref{Flow of 2FBSDEs} from the perspective of nonlocal systems. Inspired by \cite{Cheridito2007,Soner2011}, the opposite implication of solutions (from 2FBSDEs to PDE) is likely valid by establishing the well-posedness of \eqref{Flow of 2FBSDEs} in the theoretical framework of SDEs. However, it is beyond the scope of this paper, while we will prove the existence and uniqueness of \eqref{Flow of 2FBSDEs} in our future works.


\section{Conclusions} \label{sec:conclusion}
We provided the conditions on the nonlocal higher-order systems, under which the global well-posedness of the linear, quasilinear, fully nonlinear systems can be proved. The results are significant for a general class of nonzero-sum TIC SDGs that we formulated and discussed in Section \ref{sec:sdg}. Moreover, we present a nonlocal multidimensional version of a Feynman--Kac formula.
It provides new insights into the studies of a flow of 2FBSDEs or 2BSVIEs.

We presented two immediate applications (SDG and SDE) drawing upon our main results from the PDE perspective. In fact,
the study of systems of differential equations is crucial for developing other mathematical tools (in PDE), such as quasilinearization among many others, in a new environment (here, with nonlocality).
The quasilinearization is a common technique in the classical (fully nonlinear) PDE problems. Specifically, under suitable regularity assumptions, we can differentiate the fully nonlinear equation of an unknown function $\varphi$ with respect to each state variable $y_i$ and yield an induced quasilinear system for $\left(\varphi,\frac{\partial\varphi}{\partial y_1},\cdots,\frac{\partial\varphi}{\partial y_d}\right)$. Before taking advantage of the mathematical results of quasilinear systems, it is crucial to verify the equivalence between the original fully nonlinear equation for $\varphi$ and the induced quasilinear system for $\left(\varphi,\frac{\partial\varphi}{\partial y_1},\cdots,\frac{\partial\varphi}{\partial y_d}\right)$. The verification, however, requires the existence and uniqueness of (nonlocal) linear systems. Hence, our study for nonlocal linear systems can serve as a prerequisite for one to develop quasilinearization methods for nonlocal differential equations. 



\section*{Acknowledgements}
The authors are thankful to two anonymous reviewers for constructive comments that helped improve the paper. Chi Seng Pun gratefully acknowledges Ministry of Education (MOE), AcRF Tier 2 grant (Reference No: MOE-T2EP20220-0013) for the funding of this research. Data sharing is not applicable to this paper as no dataset was generated or analyzed during this study.


\bibliography{NonlocalityRef}%



\appendix
\section{Proofs of Statements} \label{app:pf}
\begin{proof}[Proof of Proposition \ref{Equivalance between forward probelms and backward problems}]
Let $\bm{u}(t,s,y)=\bm{v}(T-t,T-s,y)$, then \eqref{Backward system} can be written as   
\begin{equation} \label{Backward system 1}
\left\{
\begin{array}{lr}
	\bm{v}_{T-s}(T-t,T-s,y)+\bm{F}\big(t,s,y,\left(\partial_I\bm{v}\right)_{|I|\leq 2r}(T-t,T-s,y),\left(\partial_I\bm{v}\right)_{|I|\leq 2r}(T-s,T-s,y)\big)=0, \\
	\bm{v}(T-t,0,y)=\bm{g}(t,y),\hfill 0\leq t\leq s\leq T,\quad y\in\mathbb{R}^d.  
\end{array}
\right. 
\end{equation}
Next, we introduce $t^\prime=T-t$, $s^\prime=T-s$ and $y^\prime=y$. Then \eqref{Backward system 1} is equivalent to 
\begin{equation*} 
\left\{
\begin{array}{lr}
	\bm{v}_{s^\prime}(t^\prime,s^\prime,y^\prime)=\bm{F}\big(T-t^\prime,T-s^\prime,y^\prime,\left(\partial_I\bm{v}\right)_{|I|\leq 2r}(t^\prime,s^\prime,y^\prime),  \left(\partial_I\bm{v}\right)_{|I|\leq 2r}(s^\prime,s^\prime,y^\prime)\big), \\
	\bm{v}(t^\prime,0,y^\prime)=\bm{g}(T-t^\prime,y^\prime),\hfill 0\leq s^\prime\leq t^\prime\leq T,\quad y^\prime\in\mathbb{R}^d.
\end{array}
\right. 
\end{equation*}
By modifying the nonlinearity $\bm{F}$ and the data $\bm{g}$ but not really affecting their properties with respect to the time variables, the problem above can be further reformulated as a forward problem:
\begin{equation*} 
\left\{
\begin{array}{lr}
	\bm{v}_{s^\prime}(t^\prime,s^\prime,y^\prime)=\bm{F}^\prime\big(t^\prime,s^\prime,y^\prime,\left(\partial_I\bm{v}\right)_{|I|\leq 2r}(t^\prime,s^\prime,y^\prime),  \left(\partial_I\bm{v}\right)_{|I|\leq 2r}(s^\prime,s^\prime,y^\prime)\big), \\
	\bm{v}(t^\prime,0,y^\prime)=\bm{g}^\prime(t^\prime,y^\prime),\hfill 0\leq s^\prime\leq t^\prime\leq T,\quad y^\prime\in\mathbb{R}^d, 
\end{array}
\right. 
\end{equation*} 
which completes the proof. 
\end{proof}

\begin{proof}[Proof of Lemma \ref{Equivalence between systems}]
The first claim is obvious. We prove the second one below. According to \eqref{Induced nonlocal linear PDE system}, it is obvious that $\frac{\partial\bm{u}}{\partial t}$ satisfies 
\begin{equation}
\left\{
\begin{array}{rcl}
	\left(\frac{\partial\bm{u}}{\partial t}\right)^a_s(t,s,y) & = & \sum\limits_{|I|\leq 2r,b\leq m}\left(A+B\right)^{aI}_b(\cdot)\partial_I \left(\frac{\partial\bm{u}}{\partial t}\right)^b(t,s,y)+\sum\limits_{|I|\leq 2r,b\leq m}\left(\frac{\partial A}{\partial t}+\frac{\partial B}{\partial t
	}\right)^{aI}_b(\cdot)\partial_I \bm{u}^b(t,s,y), \\
	&&-\sum\limits_{|I|\leq 2r,b\leq m}\left(\frac{\partial B}{\partial t}\right)^{aI}_b(\cdot)\int^t_s \partial_I\bm{v}^b(\theta,s,y)d\theta-\sum\limits_{|I|\leq 2r,b\leq m}B^{aI}_b(\cdot)\partial_I \bm{v}^b(t,s,y)+\bm{f}^a_t(\cdot), \qquad \hfill a=1,\ldots,m, \\
	\left(\frac{\partial\bm{u}}{\partial t}\right)(t,0,y) & = & \bm{g}_t(t,y), \hfill 0\leq s\leq t\leq T, \quad y\in\mathbb{R}^d.  
\end{array}
\right.
\end{equation}
Hence, the difference between $\bm{v}$ and $\frac{\partial\bm{u}}{\partial t}$ satisfies
\begin{equation*}
\left\{
\begin{array}{rcl}
	\left(\frac{\partial\bm{u}}{\partial t}-\bm{v}\right)^a_s(t,s,y) & = & \sum\limits_{|I|\leq 2r,b\leq m}\left(A+B\right)^{aI}_b(\cdot)\partial_I \left(\frac{\partial\bm{u}}{\partial t}-\bm{v}\right)^b(t,s,y), \qquad \hfill a=1,\ldots,m, \\ 
	\left(\frac{\partial\bm{u}}{\partial t}-\bm{v}\right)(t,0,y) & = & \bm{0}, \hfill 0\leq s\leq t\leq T,~y\in\mathbb{R}^d,
\end{array}
\right.
\end{equation*} 
which implies the unique trivial solution $\frac{\partial\bm{u}}{\partial t}-\bm{v}=\bm{0}$ for any $0\leq s\leq t\leq T$ and $y\in\mathbb{R}^d$, thanks to the classical theory of higher-order parabolic system; see \cite{Ladyzhanskaya1968,Eidelman1969,Solonnikov1965}. 

By replacing $\bm{v}$ with $\frac{\partial\bm{u}}{\partial t}$ in the first equation of \eqref{Induced nonlocal linear PDE system}, we have 
\begin{eqnarray*}
\bm{u}^a_s(t,s,y) &=&\sum\limits_{|I|\leq 2r,b\leq m}\left(A+B\right)^{aI}_b(\cdot)\partial_I\bm{u}^b(t,s,y)-\sum\limits_{|I|\leq 2r,b\leq m}B^{aI}_b(\cdot)\int^t_s\partial_I \left(\frac{\partial\bm{u}}{\partial t}\right)^b(\theta,s,y)d\theta+\bm{f}^a(\cdot) \\
&=&\sum\limits_{|I|\leq 2r,b\leq m}A^{aI}_b(\cdot)\partial_I\bm{u}^b(t,s,y)+\sum\limits_{|I|\leq 2r,b\leq m}B^{aI}_b(\cdot)\partial_I\bm{u}^b(s,s,y)+\bm{f}^a(\cdot),   
\end{eqnarray*}
which completes the proof.
\end{proof}

\begin{proof}[Proof of Theorem \ref{Well-posedness of u and v}]
We first adopt the Banach fixed point arguments to prove the local well-posedness of \eqref{Induced nonlocal linear PDE system} and then extend the local solution to the whole triangular time region $\Delta[0,T]$.   

\vspace{0.3cm} 
\noindent(\textbf{Method of
contraction mapping}) According to \eqref{Induced nonlocal linear PDE system}, we first construct a mapping $\bm{\Gamma}$ from $\bm{v}$ to $\bm{V}$, where $\bm{V}$ is part of the solution $(\bm{u},\bm{V})$ to 
\begin{equation} \label{Contraction of linear system} 
\left\{
\begin{array}{rcl}
	\bm{u}^a_s(t,s,y) & = & \sum\limits_{|I|\leq 2r,b\leq m}\left[\left(A+B\right)^{aI}_b(\cdot)\partial_I\bm{u}^b(t,s,y)-B^{aI}_b(\cdot)\int^t_s\partial_I \bm{v}^b(\theta,s,y)d\theta\right]+\bm{f}^a(\cdot), \\
	\bm{V}^a_s(t,s,y) & = & \sum\limits_{|I|\leq 2r,b\leq m}\left[A^{aI}_b(\cdot)\partial_I \bm{V}^b(t,s,y)+\left(\frac{\partial A}{\partial t}+\frac{\partial B}{\partial t
	}\right)^{aI}_b(\cdot)\partial_I \bm{u}^b(t,s,y)-\left(\frac{\partial B}{\partial t}\right)^{aI}_b(\cdot)\int^t_s \partial_I\bm{v}^b(\theta,s,y)d\theta\right]+\bm{f}^a_t(\cdot), \\
	\left(\bm{u},\bm{V}\right)(t,0,y) & = & \left(\bm{g},\bm{g}_t\right)(t,y), \hfill a=1,\ldots,m,~0\leq s\leq t\leq \delta,~y\in\mathbb{R}^d.   
\end{array}
\right.
\end{equation} 
The operator $\bm{\Gamma}(\bm{v})=\bm{V}$ is defined in the set 
\begin{equation*}
\bm{\mathcal{V}}=\left\{\bm{v}(\cdot,\cdot,\cdot)\in C(\Delta[0,\delta]\times\mathbb{R}^d;\mathbb{R}^m):~[\bm{v}]^{(2r+\alpha)}_{[0,\delta]}<\infty\right\}.
\end{equation*}  
Thanks to the theory of classical parabolic system parameterized by $t$, the operator $\bm{V}=\bm{\Gamma}(\bm{v})$ is well-defined. Next, we are to prove that this mapping is a contraction, i.e. for any $\bm{v}$, $\widehat{\bm{v}}\in\bm{\mathcal{V}}$, it holds that 
\begin{equation}
\left[\bm{\Gamma}\left(\bm{v}\right)-\bm{\Gamma}\left(\widehat{\bm{v}}\right)\right]^{(2r+\alpha)}_{[0,\delta]}\leq\frac{1}{2}\left[\bm{v}-\widehat{\bm{v}}\right]^{(2r+\alpha)}_{[0,\delta]}. 
\end{equation} 

It is clear that 
\begin{equation} 
\left\{
\begin{array}{rcl}
	\left(\bm{u}-\widehat{\bm{u}}\right)^a_s(t,s,y) & = & \sum\limits_{|I|\leq 2r,b\leq m}\left[\left(A+B\right)^{aI}_b(\cdot)\partial_I\left(\bm{u}-\widehat{\bm{u}}\right)^b(t,s,y)-B^{aI}_b(\cdot)\int^t_s\partial_I \left(\bm{v}-\widehat{\bm{v}}\right)^b(\theta,s,y)d\theta\right], \\
	\left(\bm{V}-\widehat{\bm{V}}\right)^a_s(t,s,y) & = &\sum\limits_{|I|\leq 2r,b\leq m}\left[A^{aI}_b(\cdot)\partial_I \left(\bm{V}-\widehat{\bm{V}}\right)^b(t,s,y)-\left(\frac{\partial B}{\partial t}\right)^{aI}_b(\cdot)\int^t_s \partial_I\left(\bm{v}-\widehat{\bm{v}}\right)^b(\theta,s,y)d\theta \right. \\
	& & \left.\qquad\qquad\qquad+\left(\frac{\partial A}{\partial t}+\frac{\partial B}{\partial t
	}\right)^{aI}_b(\cdot)\partial_I \left(\bm{u}-\widehat{\bm{u}}\right)^b(t,s,y)\right], \\
	\left(\bm{u}-\widehat{\bm{u}},\bm{V}-\widehat{\bm{V}}\right)(t,0,y) & = & \left(\bm{0},\bm{0}\right), \hfill a=1,\ldots,m,~0\leq s\leq t\leq \delta,~y\in\mathbb{R}^d.  
\end{array}
\right.
\end{equation} 

By the classical theory of parabolic system (see \cite[Theorem 4.10]{Solonnikov1965} or \cite[Theorem 10.2]{Ladyzhanskaya1968}), we have that for any fixed $t\in[0,\delta]$,
\begin{equation} \label{Contraction 1}
\left|\left(\bm{u}-\widehat{\bm{u}}\right)(t,\cdot,\cdot)\right|^{(2r+\alpha)}_{[0,t]\times\mathbb{R}^d}\leq C\sum\limits_{|I|\leq 2r,b\leq m}\left|\int^t_\cdot\partial_I\left(\bm{v}-\widehat{\bm{v}}\right)^b(\theta,\cdot,\cdot)d\theta\right|^{(\alpha)}_{[0,t]\times\mathbb{R}^d}
\end{equation}
for some constant $C$, which is generic in this paper and its value could vary in different inequalities below. Then, similar to \cite[Section 2]{Lei2023}, it holds that
\begin{equation} \label{Contraction 2} 
\left|\left(\bm{u}-\widehat{\bm{u}}\right)(t,\cdot,\cdot)\right|^{(2r+\alpha)}_{[0,t]\times\mathbb{R}^d}\leq C\delta^{1-\frac{\alpha}{2r}}\cdot\sup\limits_{t\in[0,\delta]}\left\{\left|\left(\bm{v}-\widehat{\bm{v}}\right)(t,\cdot,\cdot)\right|^{(2r+\alpha)}_{[0,t]\times\mathbb{R}^d}\right\}. 
\end{equation}
Similarly, we also have a priori estimate
\begin{equation}
\left|\left(\bm{V}-\widehat{\bm{V}}\right)(t,\cdot,\cdot)\right|^{(2r+\alpha)}_{[0,t]\times\mathbb{R}^d}\leq C\delta^{1-\frac{\alpha}{2r}}\cdot\sup\limits_{t\in[0,\delta]}\left\{\left|\left(\bm{v}-\widehat{\bm{v}}\right)(t,\cdot,\cdot)\right|^{(2r+\alpha)}_{[0,t]\times\mathbb{R}^d}\right\}. 
\end{equation}
Here, the constant $C$ is independent of $\delta$, $\bm{g}$, $\bm{v}$, and $\widehat{\bm{v}}$. Consequently, under the norm $[\cdot]^{(2r+\alpha)}_{[0,\delta]}:=\sup_{t\in[0,\delta]}|\cdot|^{(2r+\alpha)}_{[0,t]\times\mathbb{R}^d}$, we achieve a contraction 
\begin{equation}
\left[\bm{\Gamma}(\bm{v})-\bm{\Gamma}(\widehat{\bm{v}})\right]^{(2r+\alpha)}_{[0,\delta]}\leq \frac{1}{2}\left[ \bm{v}-\widehat{\bm{v}}\right]^{(2r+\alpha)}_{[0,\delta]}. 
\end{equation}
with a suitably small $\delta$. 

\ \ 

\noindent (\textbf{A contraction $\bm{\Gamma}$ mapping $\bm{\mathcal{V}}$ into itself.}) On the other hand, we also have 
\begin{equation}  
\left[\bm{V}\right]^{(2r+\alpha)}_{[0,\delta]}=\left[\bm{\Gamma}(\bm{v})\right]^{(2r+\alpha)}_{[0,\delta]} \leq C\left([\bm{v}]^{(2r+\alpha)}_{[0,\delta]}+[\bm{f}]^{(\alpha)}_{[0,\delta]}+[\bm{g}]^{(2r+\alpha)}_{[0,\delta]}\right)<\infty. 
\end{equation} 
Here, $C$ is a constant depending on $A$ and $B$. Therefore, it follows that $\bm{V}\in\bm{\Theta}^{(2r+\alpha)}_{[0,\delta]}$. Consequently, $\bm{\Gamma}$ is a contraction, mapping $\bm{\mathcal{V}}$ into itself, and thus it has a unique fixed point $\bm{v}$ in $\bm{\Theta}^{(2r+\alpha)}_{[0,\delta]}$ such that $\bm{\Gamma}(\bm{v})=\bm{v}$. Finally, the unique fixed point $\bm{v}(t,s,y)$ uniquely determines a function $\bm{u}(t,s,y)$ via \eqref{Contraction of linear system}. Moreover, given $\bm{v}\in\bm{\Theta}^{{(2r+\alpha)}}_{[0,\delta]}$, it is clear that $[\bm{u}]^{(2r+\alpha)}_{[0,\delta]}\leq\infty$ as well. Therefore, there exists a unique solution pair $(\bm{u},\bm{v})\in\bm{\Theta}^{{(2r+\alpha)}}_{[0,\delta]}\times\bm{\Theta}^{{(2r+\alpha)}}_{[0,\delta]}$ to \eqref{Nonlocal linear system} in $\Delta[0,\delta]\times\mathbb{R}^d$. 

\ \

\noindent (\textbf{Extension of solutions to the whole time (triangular) region})
We have proven that there exists a time horizon $\delta_1\in(0,T]$ such that the local well-posedness of $(\bm{u},\bm{v})$ holds in $R_1=\{0\le s\le t\le \delta_1\}$ in Figure \ref{fig:extension}:
\begin{figure}[!ht]
\centering
\includegraphics[width=0.3\textwidth]{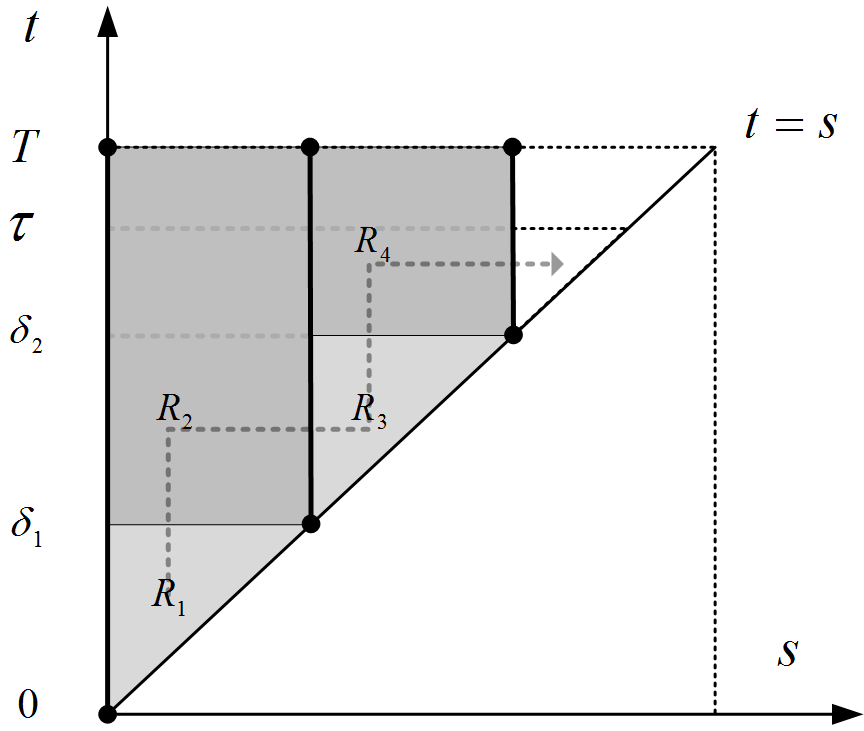}
\caption{Extension from $\Delta[0,\delta]$ to a larger time (triangular) region}
\label{fig:extension}
\end{figure}

\noindent If $\delta_1=T$, the proof is completed. Otherwise, we begin the extension procedure of solutions of \eqref{Induced nonlocal linear PDE system}. Due to the local well-posedness of \eqref{Induced nonlocal linear PDE system} over $R_1$, we can determine the diagonal condition for $s\in[0,\delta_1]$. Then the nonlocal equations \eqref{Nonlocal linear system} and \eqref{Differenate nonlocal linear system with respect to t} reduce to classical systems with a parameter $t$. Therefore, we can extend uniquely our solution $(\bm{u},\bm{v})$ from $R_1$ to $R_1\cup R_2=\{0\le s\le \delta_1,~s\le t\le T\}$ in Figure \ref{fig:extension}. Subsequently, we acquire a new initial condition at $s=\delta_1$ for $t\in[\delta_1,T]$. Taking $\delta_1$ as an initial time and $(\bm{u}(t,\delta_1,y),\bm{v}(t,\delta_1,y))$ as initial datum, one can extend the solution to a larger time intervals $R_1\cup R_2\cup R_3$ as illustrated in Figure \ref{fig:extension}. Hence, we can extend uniquely the solution from $\Delta[0,\delta_1]$ to $\Delta[0,\delta_2]$, and then $R_1\cup R_2\cup R_3\cup R_4$. Considering the facts that $\bm{\Gamma}$ is defined in the whole space $\bm{\Theta}^{(2r+\alpha)}_{[a,b]}$ and the constant $C$ in front of \eqref{Contraction 1} and \eqref{Contraction 2} only depend on $A$ and $B$ instead of the norms of $\bm{g}$, $\bm{v}$, and $\widehat{\bm{v}}$, we can always construct a $\frac{1}{2}$-contraction mapping to find the solution in a larger time region. Consequently, the extension procedure could be repeated up to a global solution pair $(\bm{u},\bm{v})\in\bm{\Theta}^{(2+\alpha)}_{[0,T]}\times\bm{\Theta}^{(2+\alpha)}_{[0,T]}$.  
\end{proof}

\begin{proof}[Proof of Theorem \ref{Schauder estimates}]
As Lemma \ref{Equivalence between systems} shows, the first component $\bm{u}$ of the unique solution $(\bm{u},\bm{v})$ of \eqref{Induced nonlocal linear PDE system} solves the nonlocal linear equation \eqref{Nonlocal linear system} in $\Delta[0,T]\times\mathbb{R}^d$. By noting of $\bm{v}=\bm{u}_t$ in \eqref{Induced nonlocal linear PDE system}, it is clear that $\bm{u}\in\bm{\Omega}^{(2r+\alpha)}_{[0,T]}$ thanks to $(\bm{u},\bm{u}_t)\in\bm{\Theta}^{{(2r+\alpha)}}_{[0,T]}\times\bm{\Theta}^{{(2r+\alpha)}}_{[0,T]}$. Moreover, by \eqref{Induced nonlocal linear PDE system}, we have 
\begin{equation} \label{Estimate 1}
\sum\limits_{a\leq m}\left|\bm{u}^a(t,\cdot,\cdot)\right|^{(2r+\alpha)}_{[0,t]\times\mathbb{R}^d}\leq C\left(\delta^{1-\frac{\alpha}{2r}}\sup\limits_{t\in[0,\delta]}\sum\limits_{a\leq m}\left|\bm{u}^a_t(t,\cdot,\cdot)\right|^{(2r+\alpha)}_{[0,t]\times\mathbb{R}^d}+\sup\limits_{t\in[0,\delta]}\sum\limits_{a\leq m}\left|\bm{f}^a(t,\cdot,\cdot)\right|^{(\alpha)}_{[0,t]\times\mathbb{R}^d}+\sup\limits_{t\in[0,\delta]}\sum\limits_{a\leq m}\left|\bm{g}^a(t,\cdot)\right|^{(2r+\alpha)}_{\mathbb{R}^d}\right)
\end{equation}
and 
\begin{equation} \label{Estimate 2} 
\begin{split}
	\sum\limits_{a\leq m}\left|\bm{u}^a_t(t,\cdot,\cdot)\right|^{(2r+\alpha)}_{[0,t]\times\mathbb{R}^d}&\leq C\left(\sum\limits_{a\leq m}\left|\bm{u}^a(t,\cdot,\cdot)\right|^{(2r+\alpha)}_{[0,t]\times\mathbb{R}^d}+\delta^{1-\frac{\alpha}{2r}}\sup\limits_{t\in[0,\delta]}\sum\limits_{a\leq m}\left|\bm{u}^a_t(t,\cdot,\cdot)\right|^{(2r+\alpha)}_{[0,t]\times\mathbb{R}^d}\right. \\
	& \qquad \qquad\left.+\sup\limits_{t\in[0,\delta]}\sum\limits_{a\leq m}\left|\bm{f}^a_t(t,\cdot,\cdot)\right|^{(\alpha)}_{[0,t]\times\mathbb{R}^d}+\sup\limits_{t\in[0,\delta]}\sum\limits_{a\leq m}\left|\bm{g}^a_t(t,\cdot)\right|^{(2r+\alpha)}_{\mathbb{R}^d}\right).
\end{split}
\end{equation}

Consequently, for a small enough $\delta$, it holds 
\begin{equation*}
\begin{split}
	& \sum\limits_{a\leq m}\left\{\left|\bm{u}^a(t,\cdot,\cdot)\right|^{(2r+\alpha)}_{[0,t]\times\mathbb{R}^d}+\left|\bm{u}^a_t(t,\cdot,\cdot)\right|^{(2r+\alpha)}_{[0,t]\times\mathbb{R}^d}\right\} \leq \sup\limits_{t\in[0,\delta]}\sum\limits_{a\leq m}\left|\bm{u}^a(t,\cdot,\cdot)\right|^{(2r+\alpha)}_{[0,t]\times\mathbb{R}^d}+\sup\limits_{t\in[0,\delta]}\sum\limits_{a\leq m}\left|\bm{u}^a_t(t,\cdot,\cdot)\right|^{(2r+\alpha)}_{[0,t]\times\mathbb{R}^d} \\
	\leq &~ C\left(\sup\limits_{t\in[0,\delta]}\sum\limits_{a\leq m}\left\{\left|\bm{f}^a(t,\cdot,\cdot)\right|^{(\alpha)}_{[0,t]\times\mathbb{R}^d}+\left|\bm{f}^a_t(t,\cdot,\cdot)\right|^{(\alpha)}_{[0,t]\times\mathbb{R}^d}\right\}+\sup\limits_{t\in[0,\delta]}\sum\limits_{a\leq m}\left\{\left|\bm{g}^a(t,\cdot)\right|^{(2r+\alpha)}_{\mathbb{R}^d}+\left|\bm{g}^a_t(t,\cdot)\right|^{(2r+\alpha)}_{\mathbb{R}^d}\right\}\right),
\end{split}
\end{equation*} 
which leads to a Schauder prior estimate of local solutions of \eqref{Nonlocal linear system} in $\Delta[0,\delta]\times\mathbb{R}^d$
\begin{equation} \label{Local Schauder estimate} 
\| \bm{u}\|^{(2r+\alpha)}_{[0,\delta]}\leq C\left(\| \bm{f}\|^{(\alpha)}_{[0,\delta]}+\| \bm{g}\|^{(2r+\alpha)}_{[0,\delta]}\right). 
\end{equation} 

Now, we have established a Schauder prior estimate \eqref{Local Schauder estimate} of solutions of \eqref{Nonlocal linear system} in a short-time region $\Delta[0,\delta]$. Next, we will show that it still holds for the case $\delta=T$.

First, given the well-posedness of \eqref{Nonlocal linear system} over $R_1$, we consider a classical PDE 
\begin{equation} \label{Reduced local system}  
\left\{
\begin{array}{rcl}
	\bm{u}^a_s(t,s,y) & = & \sum\limits_{|I|\leq 2r,b\leq m}A^{aI}_b(t,s,y)\partial_I\bm{u}^b(t,s,y)+\bm{\eta}^a(t,s,y), \\
	\bm{u}(t,0,y) & = & \bm{g}(t,y),\qquad \qquad \hfill 0\leq s\leq t\leq \delta_1,\quad y\in\mathbb{R}^d,
\end{array}
\right. 
\end{equation}
where $\bm{\eta}^a(t,s,y):=\sum_{|I|\leq 2r,b\leq m}B^{aI}_b(t,s,y)\partial_I\bm{u}^b(s,s,y)+\bm{f}(t,s,y)$. By the classical theory of parabolic system (see \cite[Theorem 4.10]{Solonnikov1965} or \cite[Theorem 10.2]{Ladyzhanskaya1968}), \eqref{Reduced local system} admits a unique solution in $(t,s,y)\in R_2\times\mathbb{R}^d$. Moreover, for any $t\in[\delta_1,T]$, we have 
\begin{equation*} 
\left|\bm{u}(t,\cdot,\cdot)\right|^{(2r+\alpha)}_{[0,\delta_1]\times\mathbb{R}^d}\leq C\left(\left|\bm{\eta}(t,\cdot,\cdot)\right|^{(\alpha)}_{[0,\delta_1]\times\mathbb{R}^d}+\left|\bm{g}(t,\cdot)\right|^{(2r+\alpha)}_{\mathbb{R}^d}\right)\leq  C\left(\| \bm{f}\|^{(\alpha)}_{[0,T]}+\| \bm{g}\|^{(2r+\alpha)}_{[0,T]}\right),
\end{equation*}
the second inequality of which comes from \eqref{Local Schauder estimate}. After taking supremum with respect to $t$, it implies directly that $\| \bm{u}\|^{(2r+\alpha)}_{R_1 \cup R_2}$ (i.e. $\| \bm{u}\|^{(2r+\alpha)}_{[0,T]\times[0,t\wedge\delta_1]}$) is bounded by $\| \bm{f}\|^{(\alpha)}_{[0,T]}$ and $\|\bm{g}\|^{(2r+\alpha)}_{[0,T]}$. The notation of $\| \cdot\|^{(2r+\alpha)}_{R_1 \cup R_2}$ means that the supremum with respect to time pair $(t,s)$ is taken over $R_1 \cup R_2$. 

Next, if we update the initial condition with $\bm{g}^\prime(t,y):=\bm{u}(t,\delta_1,y)$, then it is clear that \begin{equation*}
\|\bm{g}^\prime\|^{(2r+\alpha)}_{[\delta,T]}\leq C\left(\| \bm{f}\|^{(\alpha)}_{[0,T]}+\| \bm{g}\|^{(2r+\alpha)}_{[0,T]}\right)<\infty. 
\end{equation*}
Hence, Theorem \ref{Well-posedness of u and v} tells us that the problem 
\begin{equation} \label{Updated linear system}  
\left\{
\begin{array}{rcl}
	\bm{u}_s(t,s,y) & = & \left(\bm{L}\bm{u}\right)(t,s,y)+\bm{f}(t,s,y), \\
	\bm{u}(t,\delta,y) & = & \bm{g}^\prime(t,y),\qquad \qquad \hfill \delta_1\leq s\leq t\leq T,\quad y\in\mathbb{R}^d.
\end{array}
\right. 
\end{equation}
admits a unique solution $\bm{u}\in\bm{\Omega}^{(2r+\alpha)}_{[\delta_1,\delta_2]}$ and there exists a constant $\delta_2\in(\delta_1,T]$ such that 
\begin{equation}  
\| \bm{u}\|^{(2r+\alpha)}_{[\delta_1,\delta_2]}\leq C\left(\| \bm{f}\|^{(\alpha)}_{[\delta_1,\delta_2]}+\| \bm{g}^\prime\|^{(2r+\alpha)}_{[\delta_1,\delta_2]}\right) \leq C\left(\| \bm{f}\|^{(\alpha)}_{[0,T]}+\| \bm{g}\|^{(2r+\alpha)}_{[0,T]}\right). 
\end{equation} 
Similarly, we can argue that $\| \bm{u}\|^{(2r+\alpha)}_{[\delta_1,T]\times[\delta_1,t\wedge\delta_2]}=\| \bm{u}\|^{(2r+\alpha)}_{R_3 \cup R_4}\leq C\left(\| \bm{f}\|^{(\alpha)}_{[0,T]}+\| \bm{g}\|^{(2r+\alpha)}_{[0,T]}\right)$. Hence, for each extension from $(t,s)\in[\delta_n,T]\times[\delta_n,t\wedge\delta_{n+1}]$ into $(t,s)\in[\delta_{n+1},T]\times[\delta_{n+1},t\wedge\delta_{n+2}]$, it holds that  
\begin{equation}  \label{piecewise estimates}
\| \bm{u}\|^{(2r+\alpha)}_{[\delta_n,T]\times[\delta_n,t\wedge\delta_{n+1}]}\leq C\left(\| \bm{f}\|^{(\alpha)}_{[0,T]}+\| \bm{g}\|^{(2r+\alpha)}_{[0,T]}\right), \quad n=0,1,2,\cdot,N,
\end{equation} 
where $\delta_0=0$ and $\delta_{N+1}=T$. Furthermore, it is clear that $N$ is finite and determined only by $A$ and $B$ according to \eqref{Contraction 1}, \eqref{Contraction 2}, \eqref{Estimate 1}, and \eqref{Estimate 2}. 

Subsequently, for any $t\in[0,T]$, $0\leq s\leq s^\prime\leq t\leq T$, and $y\in\mathbb{R}^d$, we assume that $s\in[\delta_n,\delta_{n+1}]$ and $s^\prime\in[\delta_m,\delta_{m+1}]$ for $0\leq n< m \leq N$ without loss of generality. Then, it follows that 
\begin{equation*}
\begin{split}
	& \sup\limits_{\begin{subarray}{c} 0\leq s< s^\prime\leq t \\ y\in\mathbb{R}^d\end{subarray}}\frac{|D^i_sD^j_y\bm{u}(t,s,y)-D^i_sD^j_y\bm{u}(t,s^\prime,y)|}{|s-s^\prime|^{\frac{l-2ri-j}{2r}}} \\
	\leq  & \sup\limits_{\begin{subarray}{c} 0\leq s< s^\prime\leq t \\ y\in\mathbb{R}^d\end{subarray}}\Bigg\{\frac{|D^i_sD^j_y\bm{u}(t,s,y)-D^i_sD^j_y\bm{u}(t,\delta_{n+1},y)|}{|s-\delta_{n+1}|^{\frac{l-2ri-j}{2r}}}+\sum_{i< k< j}\frac{|D^i_sD^j_y\bm{u}(t,\delta_k,y)-D^i_sD^j_y\bm{u}(t,\delta_{k+1},y)|}{|\delta_k-\delta_{k+1}|^{\frac{l-2ri-j}{2r}}}+\frac{|D^i_sD^j_y\bm{u}(t,\delta_m,y)-D^i_sD^j_y\bm{u}(t,s^\prime,y)|}{|\delta_m-s^\prime|^{\frac{l-2ri-j}{2r}}} \Bigg\}   \\
	\leq & \sum_{n\leq k\leq m}\| \bm{u}\|^{(2r+\alpha)}_{[\delta_k,T]\times[\delta_k,t\wedge\delta_j]}\leq C\left(\| \bm{f}\|^{(\alpha)}_{[0,T]}+\| \bm{g}\|^{(2r+\alpha)}_{[0,T]}\right)  
\end{split}
\end{equation*}
holds for $0<l-2ri-j<2r$. Here, the constant $C$ only depends on $\alpha$, $T$, $A$ and $B$. In addition, whenever $(t,s)$, $(t,s^\prime)\in[\delta_n,T]\times[\delta_n,t\wedge\delta_{n+1}]$, the inequality is obvious from \eqref{piecewise estimates}. Consequently, according to the definition of $\|\cdot\|^{(l)}_{[a,b]}$, we have 
\begin{equation*} 
\| \bm{u}\|^{(2r+\alpha)}_{[0,T]}\leq C\left(\| \bm{f}\|^{(\alpha)}_{[0,T]}+\| \bm{g}\|^{(2r+\alpha)}_{[0,T]}\right). 
\end{equation*}  

Finally, by considering the nonlocal system satisfied by the difference between $\bm{u}$ and $\widehat{\bm{u}}$, we can similarly derive the stability analysis \eqref{Stability analysis of linear system}.
\end{proof} 

\begin{proof}[Proof of Theorem \ref{Well-posedness of fully nonlinear system}]
Overall speaking, we search for the solution of nonlocal fully nonlinear system as a fixed point of the operator $\bm{\Lambda}$, defined in
\begin{equation*}
\bm{\mathcal{U}}=\left\{\bm{u}\in\bm{\Omega}^{(2r+\alpha)}_{[0,\delta]}:\bm{u}(t,0,y)=\bm{g}(t,y),\| \bm{u}-\bm{g}\|^{(2r+\alpha)}_{[0,\delta]}\leq R\right\}  
\end{equation*} 
for a constant $R$, by $\bm{\Lambda}(\bm{u})=\bm{U}$, where $\bm{U}$ is the solution of 
\begin{equation} \label{Definition of Lamda}
\left\{
\begin{array}{lr}
	\bm{U}_s(t,s,y)=\bm{L}_0\bm{U}+\bm{F}\big(t,s,y,\left(\partial_I\bm{u}\right)_{|I|\leq 2r}(t,s,y),  \left(\partial_I\bm{u}\right)_{|I|\leq 2r}(s,s,y)\big)-\bm{L}_0\bm{u}, \\
	\bm{U}(t,0,y)=\bm{g}(t,y),\quad 0\leq s\leq t\leq \delta,\quad y\in\mathbb{R}^d
\end{array}
\right. 
\end{equation}
in which
\begin{equation} \label{Nonlocal linear operator L0}
\begin{split}
	\left(\bm{L}_0\bm{u}\right)^a(t,s,y)
	&=\sum\limits_{|I|\leq 2r,b\leq m}\partial_I \bm{F}^a_b\big(t,0,y,\bm{\theta}_0(t,y)\big)\cdot\partial_I\bm{u}^b(t,s,y)+\sum\limits_{|I|\leq 2r,b\leq m}\partial_I \overline{\bm{F}}^a_b\big(t,0,y,\bm{\theta}_0(t,y)\big)\cdot\partial_I\bm{u}^b(s,s,y)
\end{split}
\end{equation}
with $\bm{\theta}_0(t,y):=\big(\left(\partial_I\bm{g}\right)_{|I|\leq 2r}(t,y),  \left(\partial_I\bm{g}\right)_{|I|\leq 2r}(0,y)\big)$. Note that $\partial_I \bm{F}^a_b\big(t,0,y,\bm{\theta}_0(t,y)\big)$
is meant to be evaluated at $(t,0,y,\bm{\theta}_0(t,y))$, i.e. $\big(t,0,y,\left(\partial_I\bm{g}\right)_{|I|\leq 2r}(t,y),  \left(\partial_I\bm{g}\right)_{|I|\leq 2r}(0,y)\big)$.
Similarly, the same convention applies to $\partial_I \overline{\bm{F}}^a_b(t,0,y,\bm{\theta}_0(t,y))$.
Generally speaking, there are three conditions imposed to $\delta$ and $R$:
\begin{enumerate}
\item To validate the form $\bm{F}\big(t,s,y,\left(\partial_I\bm{u}\right)_{|I|\leq 2r}(t,s,y),  \left(\partial_I\bm{u}\right)_{|I|\leq 2r}(s,s,y)\big)$, it requires that the range of various derivatives of $\bm{u}$ in $\bm{\mathcal{U}}$ is contained in $B(\overline{z},R_0)$. Specifically, since
\begin{equation} \label{Range of u}
	\sup\limits_{\Delta[0,\delta]\times\mathbb{R}^d}\sum_{|I|\leq 2r,b\leq m}\left(\left|\partial_I\bm{u}^b(t,s,y)-\partial_I\bm{g}^b(t,y)\right|+\left|\partial_I\bm{u}^b(s,s,y)-\partial_I\bm{g}^b(s,y)\right|\right)\leq C\delta^\frac{\alpha}{2r}R,
\end{equation}
it should hold that $C\delta^\frac{\alpha}{2r}R\leq R_0/2$; 
\item To ensure that $\bm{\Lambda}$ is a $\frac{1}{2}$-contraction, we need to show that
\begin{equation} \label{eq:Lambdaineq}
	\|\bm{\Lambda}(\bm{u})-\bm{\Lambda}(\widehat{\bm{u}})\|^{(2r+\alpha)}_{[0,\delta]}\leq C(R)\delta^\frac{\alpha}{2r}\|\bm{u}-\widehat{\bm{u}}\|^{(2r+\alpha)}_{[0,\delta]},
\end{equation} 
which requires $C(R)\delta^\frac{\alpha}{2r}\leq\frac{1}{2}$; 
\item Before applying the Banach fixed point theorem, we need to prove that $\bm{\Lambda}$ maps $\bm{\mathcal{U}}$ into itself, i.e. $\|\bm{\Lambda}(\bm{u})-\bm{g}\|^{(2r+\alpha)}_{[0,\delta]}\leq R$. Hence, $R$ should be suitably large such that $\| \bm{\Lambda}(\bm{g})-\bm{g}\|^{(2r+\alpha)}_{[0,\delta]}\leq R/2$. 
\end{enumerate}

First, it is clear that the range of the derivatives of $\bm{u}$ in $\bm{\mathcal{U}}$ is contained in $B(\overline{z},R_0)$ because of \eqref{Range of u} as well as the fact that the range of $\bm{g}$ is contained in $B(\overline{z},R_0/2)$.

Next, we are to show that the operator $\bm{\Lambda}(\bm{u})$ is a contraction defined in $\bm{\mathcal{U}}$, i.e. for any $\bm{u}$, $\widehat{\bm{u}}\in\bm{\mathcal{U}}$, \eqref{eq:Lambdaineq} holds.
Let us consider the equation for $\bm{U}-\widehat{\bm{U}}:=\Lambda(\bm{u})-\Lambda(\widehat{\bm{u}})$, satisfying 
\begin{equation} 
\left\{
\begin{array}{rcl}
	\big(\bm{U}-\widehat{\bm{U}}\big)_s(t,s,y) & = & \bm{L}_0\big(\bm{U}-\widehat{\bm{U}}\big)+\bm{F}\big(t,s,y,\left(\partial_I\bm{u}\right)_{|I|\leq 2r}(t,s,y),  \left(\partial_I\bm{u}\right)_{|I|\leq 2r}(s,s,y)\big) \\
	&& \quad -\bm{F}\big(t,s,y,\left(\partial_I\widehat{\bm{u}}\right)_{|I|\leq 2r}(t,s,y),  \left(\partial_I\widehat{\bm{u}}\right)_{|I|\leq 2r}(s,s,y)\big)-\bm{L}_0\left(\bm{u}-\widehat{\bm{u}}\right), \\
	\big(\bm{U}-\widehat{\bm{U}}\big)(t,0,y) & = & \bm{0}, \hfill 0\leq s\leq t\leq \delta,\quad y\in\mathbb{R}^d.
\end{array}
\right. 
\end{equation}

According to the prior estimates \eqref{Estimates of solutions of nonlocal system} and \eqref{Stability analysis of linear system} of nonlocal linear system, we have 
\begin{equation}
\|\bm{U}-\widehat{\bm{U}}\|^{(2r+\alpha)}_{[0,\delta]}\leq C\|\bm{\varphi}\|^{(\alpha)}_{[0,\delta]},
\end{equation}
where the constant $C$ is independent of $\delta$ and the inhomogeneous term $\bm{\varphi}$ is given by
\begin{equation*}
\begin{split}
	\bm{\varphi}(t,s,y)&=\bm{F}\big(t,s,y,\left(\partial_I\bm{u}\right)_{|I|\leq 2r}(t,s,y),  \left(\partial_I\bm{u}\right)_{|I|\leq 2r}(s,s,y)\big)-\bm{F}\big(t,s,y,\left(\partial_I\widehat{\bm{u}}\right)_{|I|\leq 2r}(t,s,y),  \left(\partial_I\widehat{\bm{u}}\right)_{|I|\leq 2r}(s,s,y)\big)-\bm{L}_0\left(\bm{u}-\widehat{\bm{u}}\right), 
\end{split}
\end{equation*}
whose $a$-th entry, $\bm{\varphi}^a(t,s,y)$ for $a=1,\ldots,m$, admits an integral representation: 
\begin{equation} \label{Integral represnetation of varphi} 
\begin{split}
	&\int^1_0\frac{d}{d\sigma}\bm{F}^a\big(t,s,y,\bm{\theta}_\sigma(t,s,y)\big)d\sigma-\left(\bm{L}_0\left(\bm{u}-\widehat{\bm{u}}\right)\right)^a \\
	=&\int^1_0\sum\limits_{|I|\leq 2r,b\leq m}\partial_I \bm{F}^a_b\big(t,s,y,\bm{\theta}_\sigma(t,s,y)\big)\cdot\left(\partial_I\bm{u}^b(t,s,y)-\partial_I\widehat{\bm{u}}^b(t,s,y)\right)d\sigma \\
	&\quad +\int^1_0\sum\limits_{|I|\leq 2r,b\leq m}\partial_I \overline{\bm{F}}^a_b\big(t,s,y,\bm{\theta}_\sigma(t,s,y)\big)\cdot\left(\partial_I\bm{u}^b(s,s,y)-\partial_I\widehat{\bm{u}}^b(s,s,y)\right)d\sigma-\left(\bm{L}_0\left(\bm{u}-\widehat{\bm{u}}\right)\right)^a \\
	=&\int^1_0\sum\limits_{|I|\leq 2r,b\leq m}\Big(\partial_I \bm{F}^a_b\big(t,s,y,\bm{\theta}_\sigma(t,s,y)\big)-\partial_I \bm{F}^a_b\big(t,0,y,\bm{\theta}_0(t,y)\big)\Big)\cdot\partial_I\left(\bm{u}-\widehat{\bm{u}}\right)^b(t,s,y)d\sigma \\
	&\quad +\int^1_0\sum\limits_{|I|\leq 2r,b\leq m}\Big(\partial_I \overline{\bm{F}}^a_b\big(t,s,y,\bm{\theta}_\sigma(t,s,y)\big)-\partial_I \overline{\bm{F}}^a_b\big(t,0,y,\bm{\theta}_0(t,y)\big)\Big)\cdot\partial_I\left(\bm{u}-\widehat{\bm{u}}\right)^b(s,s,y)d\sigma,  
\end{split}
\end{equation} 
in which $\bm{\theta}_\sigma(t,s,y):=\sigma\big(\left(\partial_I\bm{u}\right)_{|I|\leq 2r}(t,s,y),  \left(\partial_I\bm{u}\right)_{|I|\leq 2r}(s,s,y)\big)+(1-\sigma)\big(\left(\partial_I\widehat{\bm{u}}\right)_{|I|\leq 2r}(t,s,y),  \left(\partial_I\widehat{\bm{u}}\right)_{|I|\leq 2r}(s,s,y)\big)$.

In order to obtain $\|\bm{\varphi}\|^{(\alpha)}_{[0,\delta]}$, we need to estimate $|\bm{\varphi}^a(t,\cdot,\cdot)|^{(\alpha)}_{[0,t]\times\mathbb{R}^d}$ and $|\bm{\varphi}^a_t(t,\cdot,\cdot)|^{(\alpha)}_{[0,t]\times\mathbb{R}^d}$ for any $t\in[0,\delta]$ and $a=1,\ldots,m$. 

\ \ 

\noindent \textbf{(Estimates of $|\bm{\varphi}^a(t,\cdot,\cdot)|^{(\alpha)}_{[0,t]\times\mathbb{R}^d}$).} In the investigation of the difference $|\bm{\varphi}^a(t,s,y)-\bm{\varphi}^a(t,s^\prime,y)|$ for any $0\leq s\leq s^\prime\leq t\leq \delta\leq T$ and $y\in\mathbb{R}^d$, it is convenient to add and subtract 
\begin{equation*}
\int^1_0\sum\limits_{|I|\leq 2r,b\leq m}\partial_I \bm{F}^a_b\big(t,s^\prime,y,\bm{\theta}_\sigma(t,s^\prime,y)\big)\cdot\partial_I\left(\bm{u}-\widehat{\bm{u}}\right)^b(t,s,y)d\sigma
+\int^1_0\sum\limits_{|I|\leq 2r,b\leq m}\partial_I \overline{\bm{F}}^a_b\big(t,s^\prime,y,\bm{\theta}_\sigma(t,s^\prime,y)\big)\cdot\partial_I\left(\bm{u}-\widehat{\bm{u}}\right)^b(s,s,y)d\sigma.
\end{equation*}
Subsequently, we ought to estimate 
\begin{align*}
\big|\partial_I\bm{F}^a_b\big(t,s,y,\bm{\theta}_\sigma(t,s,y)\big)-\partial_I\bm{F}^a_b\big(t,s^\prime,y,\bm{\theta}_\sigma(t,s^\prime,y)\big)\big|,~\big|\partial_I\overline{\bm{F}}^a_b\big(t,s,y,\bm{\theta}_\sigma(t,s,y)\big)-\partial_I\overline{\bm{F}}^a_b\big(t,s^\prime,y,\bm{\theta}_\sigma(t,s^\prime,y)\big)\big|, \\
\big|\partial_I\bm{F}^a_b\big(t,s^\prime,y,\bm{\theta}_\sigma(t,s^\prime,y)\big)-\partial_I\bm{F}^a_b\big(t,0,y,\bm{\theta}_0(t,y)\big)\big|, \text{ and } \big|\partial_I\overline{\bm{F}}^a_b\big(t,s^\prime,y,\bm{\theta}_\sigma(t,s^\prime,y)\big)-\partial_I\overline{\bm{F}}^a_b\big(t,0,y,\bm{\theta}_0(t,y)\big)\big|,  
\end{align*} 
where $|I|\leq 2r$ and $a,b=1,\ldots,m$. Note that
\begin{equation*}
\begin{split}
	&\big|\partial_I\bm{F}^a_b\big(t,s,y,\bm{\theta}_\sigma(t,s,y)\big)-\partial_I\bm{F}^a_b\big(t,s^\prime,y,\bm{\theta}_\sigma(t,s^\prime,y)\big)\big| \\
	\leq &~K(s^\prime-s)^\frac{\alpha}{2r}+L\sum\limits_{b\leq m}\left(|\bm{u}^b(t,\cdot,\cdot)|^{(2r+\alpha)}_{[0,t]\times\mathbb{R}^d}(s^\prime-s)^\frac{\alpha}{2r}+\sup\limits_{\overline{s}\in(s,s^\prime)}|\bm{u}^b_t(\overline{s},\cdot,\cdot)|^{(2r+\alpha)}_{[0,\overline{s}]\times\mathbb{R}^d}(s^\prime-s)^1\right. \\
	&\qquad\qquad\qquad\qquad\qquad+|\bm{u}^b(s^\prime,\cdot,\cdot)|^{(2r+\alpha)}_{[0,s^\prime]\times\mathbb{R}^d}(s^\prime-s)^\frac{\alpha}{2r} +|\widehat{\bm{u}}^b(t,\cdot,\cdot)|^{(2r+\alpha)}_{[0,t]\times\mathbb{R}^d}(s^\prime-s)^\frac{\alpha}{2r} \\
	&\left.\qquad\qquad\qquad\qquad\qquad+\sup\limits_{\overline{s}\in(s,s^\prime)}|\widehat{\bm{u}}^b_t(\overline{s},\cdot,\cdot)|^{(2r+\alpha)}_{[0,\overline{s}]\times\mathbb{R}^d}(s^\prime-s)^1+|\widehat{\bm{u}}^b(s^\prime,\cdot,\cdot)|^{(2r+\alpha)}_{[0,s^\prime]\times\mathbb{R}^d}(s^\prime-s)^\frac{\alpha}{2r}\right)\\
	\leq &\left(K+L\left(\| \bm{u}\|^{(2r+\alpha)}_{[0,\delta]}+\| \widehat{\bm{u}}\|^{(2r+\alpha)}_{[0,\delta]}\right)\right)(s^\prime-s)^\frac{\alpha}{2r} \leq C_1(R)(s^\prime-s)^\frac{\alpha}{2r}
\end{split} 
\end{equation*}
and
\begin{equation*}
\begin{split}
	&\big|\partial_I\bm{F}^a_b\big(t,s^\prime,y,\bm{\theta}_\sigma(t,s^\prime,y)\big)-\partial_I\bm{F}^a_b\big(t,0,y,\bm{\theta}_0(t,y)\big)\big| \\
	\leq &~K(s^\prime-0)^\frac{\alpha}{2r}+L\sum\limits_{b\leq m}\left(|\left(\bm{u}-\bm{g}\right)^b(t,\cdot,\cdot)|^{(2r+\alpha)}_{[0,t]\times\mathbb{R}^d}(s^\prime-0)^\frac{\alpha}{2r}+\sup\limits_{\overline{s}\in(0,s^\prime)}|\bm{g}^b_t(\overline{s},\cdot)|^{(2r+\alpha)}_{\mathbb{R}^d}(s^\prime-0)^1+|\bm{u}^b(s^\prime,\cdot,\cdot)|^{(2r+\alpha)}_{[0,s^\prime]\times\mathbb{R}^d}(s^\prime-0)^\frac{\alpha}{2r}\right. \\
	&\left.\qquad\qquad\qquad\qquad\qquad +|\left(\widehat{\bm{u}}-\bm{g}\right)^b(t,\cdot,\cdot)|^{(2r+\alpha)}_{[0,t]\times\mathbb{R}^d}(s^\prime-0)^\frac{\alpha}{2r}+\sup\limits_{\overline{s}\in(0,s^\prime)}|\bm{g}^b_t(\overline{s},\cdot)|^{(2r+\alpha)}_{\mathbb{R}^d}(s^\prime-0)^1+|\widehat{\bm{u}}^b(s^\prime,\cdot,\cdot)|^{(2r+\alpha)}_{[0,s^\prime]\times\mathbb{R}^d}(s^\prime-0)^\frac{\alpha}{2r}\right) \\
	\leq & \left(K+L\left(\| \bm{u}-\bm{g}\|^{(2r+\alpha)}_{[0,\delta]}+\| \widehat{\bm{u}}-\bm{g}\|^{(2r+\alpha)}_{[0,\delta]}+\| \bm{g}\|^{(2r+\alpha)}_{[0,\delta]}\right)\right)(s^\prime-0)^\frac{\alpha}{2r} \\
	\leq & ~C_2(R)\delta^\frac{\alpha}{2r} 
\end{split}
\end{equation*}
where $L>0$ is a constant which can be different from line to line and the subscripts of $C$ are to represent different constant values within the derivation. In a similar manner, we can obtain
\begin{equation*}
\begin{split}
	\big|\partial_I\overline{\bm{F}}^a_b\big(t,s,y,\bm{\theta}_\sigma(t,s,y)\big)-\partial_I\overline{\bm{F}}^a_b\big(t,s^\prime,y,\bm{\theta}_\sigma(t,s^\prime,y)\big)\big|&\leq C_3(R)(s^\prime-s)^\frac{\alpha}{2r}, \\
	\big|\partial_I\overline{\bm{F}}^a_b\big(t,s^\prime,y,\bm{\theta}_\sigma(t,s^\prime,y)\big)-\partial_I\overline{\bm{F}}^a_b\big(t,0,y,\bm{\theta}_0(t,y)\big)\big|&\leq C_4(R)\delta^\frac{\alpha}{2r}. 
\end{split} 
\end{equation*}
Consequently, it holds that 
\begin{equation} \label{Holder continuity of varphi with respect to s}
\begin{split}
	&\big|\bm{\varphi}^a(t,s,y)-\bm{\varphi}^a(t,s^\prime,y)\big| \\
	\leq & \int^1_0\sum\limits_{|I|\leq 2r,b\leq m}\Big|\partial_I\bm{F}^a_b\big(t,s,y,\bm{\theta}_\sigma(t,s,y)\big)-\partial_I\bm{F}^a_b\big(t,s^\prime,y,\bm{\theta}_\sigma(t,s^\prime,y)\big)\Big|\cdot\Big|\partial_I\left(\bm{u}-\widehat{\bm{u}}\right)^b(t,s,y)\Big|d\sigma \\
	& + \int^1_0\sum\limits_{|I|\leq 2r,b\leq m}\Big|\partial_I\overline{\bm{F}}^a_b\big(t,s,y,\bm{\theta}_\sigma(t,s,y)\big)-\partial_I\overline{\bm{F}}^a_b\big(t,s^\prime,y,\bm{\theta}_\sigma(t,s^\prime,y)\big)\Big|\cdot\Big|\partial_I\left(\bm{u}-\widehat{\bm{u}}\right)^b(s,s,y)\Big|d\sigma \\
	& + \int^1_0\sum\limits_{|I|\leq 2r,b\leq m}\Big|\partial_I\bm{F}^a_b\big(t,s^\prime,y,\bm{\theta}_\sigma(t,s^\prime,y)\big)-\partial_I\bm{F}^a_b\big(t,0,y,\bm{\theta}_0(t,y)\big)\Big|\Big|\partial_I\left(\bm{u}-\widehat{\bm{u}}\right)^b(t,s,y)-\partial_I\left(\bm{u}-\widehat{\bm{u}}\right)^b(t,s^\prime,y)\Big|d\sigma \\
	&+\int^1_0\sum\limits_{|I|\leq 2r,b\leq m}\Big|\partial_I\overline{\bm{F}}^a_b\big(t,s^\prime,y,\bm{\theta}_\sigma(t,s^\prime,y)\big)-\partial_I\overline{\bm{F}}^a_b\big(t,0,y,\bm{\theta}_0(t,y)\big)\Big|\Big|\partial_I\left(\bm{u}-\widehat{\bm{u}}\right)^b(s,s,y)-\partial_I\left(\bm{u}-\widehat{\bm{u}}\right)^b(s^\prime,s^\prime,y)\Big|d\sigma \\
	\leq &~C_1(R)(s^\prime-s)^\frac{\alpha}{2r}\delta^\frac{\alpha}{2r}\sum\limits_{b\leq m}\left|\partial_I\left(\bm{u}-\widehat{\bm{u}}\right)^b(t,\cdot,\cdot)\right|^{(\alpha)}_{[0,t]\times\mathbb{R}^d}+C_2(R)\delta^\frac{\alpha}{2r}(s^\prime-s)^\frac{\alpha}{2r}\sum\limits_{b\leq m}\left|\partial_I\left(\bm{u}-\widehat{\bm{u}}\right)^b(t,\cdot,\cdot)\right|^{(\alpha)}_{[0,t]\times\mathbb{R}^d} \\
	&+C_3(R)(s^\prime-s)^\frac{\alpha}{2r}\delta^\frac{\alpha}{2r}\sum\limits_{b\leq m}\left|\partial_I\left(\bm{u}-\widehat{\bm{u}}\right)^b(s,\cdot,\cdot)\right|^{(\alpha)}_{[0,s]\times\mathbb{R}^d} \\
	&+C_4(R)\delta^\frac{\alpha}{2r}(s^\prime-s)^\frac{\alpha}{2r}\sum\limits_{b\leq m}\left(\sup\limits_{\overline{s}\in(s,s^\prime)}\left|\partial_I\left(\bm{u}-\widehat{\bm{u}}\right)^b_t(\overline{s},\cdot,\cdot)\right|^{(\alpha)}_{[0,\overline{s}]\times\mathbb{R}^d}+\left|\partial_I\left(\bm{u}-\widehat{\bm{u}}\right)^b(s^\prime,\cdot,\cdot)\right|^{(\alpha)}_{[0,s^\prime]\times\mathbb{R}^d}\right)       \\
	\leq &~ C_5(R)\delta^{\frac{\alpha}{2r}}(s^\prime-s)^{\frac{\alpha}{2r}}\| \bm{u}-\widehat{\bm{u}}\|^{(2r+\alpha)}_{[0,\delta]}, 
\end{split}
\end{equation}
which implies the following by noting that $\bm{\varphi}(t,0,y)\equiv 0$,
\begin{equation} \label{Boundness of varphi} 
|\bm{\varphi}^a(t,\cdot,\cdot)|^{\infty}_{[0,t]\times\mathbb{R}^d}\leq C_5(R)\delta^\frac{\alpha}{r}\| \bm{u}-\widehat{\bm{u}}\|^{(2r+\alpha)}_{[0,\delta]}.
\end{equation}

To estimate $|\bm{\varphi}^a(t,s,y)-\bm{\varphi}^a(t,s,y^\prime)|$, it is convenient to add and subtract 
\begin{equation*}
\begin{split}
	&\int^1_0\sum\limits_{|I|\leq 2r,b\leq m}\Big(\partial_I \bm{F}^a_b\big(t,s,y^\prime,\bm{\theta}_\sigma(t,s,y^\prime)\big)-\partial_I\bm{F}^a_b\big(t,0,y^\prime,\bm{\theta}_0(t,y^\prime)\big)\Big)\cdot\partial_I\left(\bm{u}-\widehat{\bm{u}}\right)^b(t,s,y)d\sigma \\
	+&\int^1_0\sum\limits_{|I|\leq 2r,b\leq m}\Big(\partial_I \overline{\bm{F}}^a_b\big(t,s,y^\prime,\bm{\theta}_\sigma(t,s,y^\prime)\big)-\partial_I\overline{\bm{F}}^a_b\big(t,0,y^\prime,\bm{\theta}_0(t,y^\prime)\big)\Big)\cdot\partial_I\left(\bm{u}-\widehat{\bm{u}}\right)^b(s,s,y)d\sigma.
\end{split}
\end{equation*}
Note that
\begin{equation*}
\begin{split}
	&\left|\partial_I \bm{F}^a_b\big(t,s,y,\bm{\theta}_\sigma(t,s,y)\big)-\partial_I \bm{F}^a_b\big(t,s,y^\prime,\bm{\theta}_\sigma(t,s,y^\prime)\big)\right|+\left|\partial_I\bm{F}^a_b\big(t,0,y,\bm{\theta}_0(t,y)\big)-\partial_I\bm{F}^a_b\big(t,0,y^\prime,\bm{\theta}_0(t,y^\prime)\big)\right| \\ 
	\leq &~2K|y-y^\prime|^\alpha+L\left|\bm{\theta}_\sigma(t,s,y)-\bm{\theta}_\sigma(t,s,y^\prime)\right|+L\left|\bm{\theta}_0(t,y)-\bm{\theta}_0(t,y^\prime)\right| \\
	\leq & 2K|y-y^\prime|^\alpha+L|y-y^\prime|^\alpha\sum\limits_{b\leq m}\Bigg(|\bm{u}^b(t,\cdot,\cdot)|^{(2r+\alpha)}_{[0,t]\times\mathbb{R}^d}+\left|\bm{u}^b(s,\cdot,\cdot)\right|^{(2r+\alpha)}_{[0,s]\times\mathbb{R}^d}+|\widehat{\bm{u}}^b(t,\cdot,\cdot)|^{(2r+\alpha)}_{[0,t]\times\mathbb{R}^d}\\
	&\qquad\qquad\qquad\qquad\qquad\qquad\quad +\left|\widehat{\bm{u}}^b(s,\cdot,\cdot)\right|^{(2r+\alpha)}_{[0,s]\times\mathbb{R}^d}+|\bm{g}^b(t,\cdot)|^{(2r+\alpha)}_{\mathbb{R}^d}+\left|\bm{g}^b(0,\cdot)\right|^{(2r+\alpha)}_{\mathbb{R}^d}\Bigg) \\
	\leq &\left(2K+L\left(\| \bm{u}\|^{(2r+\alpha)}_{[0,\delta]}+\| \widehat{\bm{u}}\|^{(2r+\alpha)}_{[0,\delta]}+\| \bm{g}\|^{(2r+\alpha)}_{[0,\delta]}\right)\right)|y-y^\prime|^\alpha \leq C_6(R)|y-y^\prime|^\alpha, 
\end{split}
\end{equation*}	
and for every $y\in\mathbb{R}^d$, 
\begin{equation*}
\begin{split}
	&\big|\partial_I \bm{F}^a_b\big(t,s,y,\bm{\theta}_\sigma(t,s,y)\big)-\partial_I \bm{F}^a_b\big(t,0,y,\bm{\theta}_0(t,y)\big)\big| \leq K(s-0)^\frac{\alpha}{2}+L\left|\bm{\theta}_\sigma(t,s,y)-\bm{\theta}_0(t,y)\right| \\
	\leq &~K(s-0)^\frac{\alpha}{2}+L\sum\limits_{b\leq m}\left(|\left(\bm{u}-\bm{g}\right)^b(t,\cdot,\cdot)|^{(2r+\alpha)}_{[0,t]\times\mathbb{R}^d}(s-0)^\frac{\alpha}{2r}+\sup\limits_{\overline{s}\in(0,s)}\bm{g}^b_t(\overline{s},\cdot)|^{(2r+\alpha)}_{\mathbb{R}^d}(s-0)^1+|\bm{u}^b(s,\cdot,\cdot)|^{(2r+\alpha)}_{[0,s]\times\mathbb{R}^d}(s-0)^\frac{\alpha}{2r}\right. \\
	&\qquad\qquad\qquad\qquad\quad \left.+|\left(\widehat{\bm{u}}-\bm{g}\right)^b(t,\cdot,\cdot)|^{(2r+\alpha)}_{[0,t]\times\mathbb{R}^d}(s-0)^\frac{\alpha}{2r}+\sup\limits_{\overline{s}\in(0,s)}|\bm{g}^b_t(\overline{s},\cdot)|^{(2r+\alpha)}_{\mathbb{R}^d}(s-0)^1+|\widehat{\bm{u}}^b(s,\cdot,\cdot)|^{(2r+\alpha)}_{[0,s]\times\mathbb{R}^d}(s-0)^\frac{\alpha}{2r}\right) \\
	\leq & \left(K+L\left(\| \bm{u}-\bm{g}\|^{(2r+\alpha)}_{[0,\delta]}+\| \widehat{\bm{u}}-\bm{g}\|^{(2r+\alpha)}_{[0,\delta]}+\| \bm{g}\|^{(2r+\alpha)}_{[0,\delta]}\right)\right)(s-0)^\frac{\alpha}{2r} \leq C_7(R)\delta^\frac{\alpha}{2r}.
\end{split}
\end{equation*} 
Similarly, we also have 
\begin{eqnarray*}
\left|\partial_I \overline{\bm{F}}^a_b\big(t,s,y,\bm{\theta}_\sigma(t,s,y)\big)-\partial_I \overline{\bm{F}}^a_b\big(t,s,y^\prime,\bm{\theta}_\sigma(t,s,y^\prime)\big)\right|+\left|\partial_I\overline{\bm{F}}^a_b\big(t,0,y,\bm{\theta}_0(t,y)\big)-\partial_I\overline{\bm{F}}^a_b\big(t,0,y^\prime,\bm{\theta}_0(t,y^\prime)\big)\right| & \leq & C_8(R)|y-y^\prime|^\alpha, \\
\big|\partial_I \overline{\bm{F}}^a_b\big(t,s,y,\bm{\theta}_\sigma(t,s,y)\big)-\partial_I \overline{\bm{F}}^a_b\big(t,0,y,\bm{\theta}_0(t,y)\big)\big| & \leq & C_{9}(R)\delta^\frac{\alpha}{2r}.
\end{eqnarray*}
Hence, we have 
\begin{equation} \label{Holder continuity of varphi with respect to y}
\begin{split}
	&\big|\bm{\varphi}^a(t,s,y)-\bm{\varphi}^a(t,s,y^\prime)\big| \\
	\leq &~ C_6(R)|y-y^\prime|^\alpha\delta^\frac{\alpha}{2r}\sum\limits_{b\leq m}\left|\partial_I\left(\bm{u}-\widehat{\bm{u}}\right)^b(t,\cdot,\cdot)\right|^{(\alpha)}_{[0,t]\times\mathbb{R}^d}+C_7(R)\delta^\frac{\alpha}{2r}|y-y^\prime|^\alpha\sum\limits_{b\leq m}\left|\partial_I\left(\bm{u}-\widehat{\bm{u}}\right)^b(t,\cdot,\cdot)\right|^{(\alpha)}_{[0,t]\times\mathbb{R}^d} \\
	&+C_8(R)|y-y^\prime|^\alpha\delta^\frac{\alpha}{2r}\sum\limits_{b\le m}\left|\partial_I\left(\bm{u}-\widehat{\bm{u}}\right)^b(s,\cdot,\cdot)\right|^{(\alpha)}_{[0,t]\times\mathbb{R}^d}+C_{9}(R)\delta^\frac{\alpha}{2r}|y-y^\prime|^\alpha\sum\limits_{b\leq m}\left|\partial_I\left(\bm{u}-\widehat{\bm{u}}\right)^b(s,\cdot,\cdot)\right|^{(\alpha)}_{[0,t]\times\mathbb{R}^d} \\
	\leq &~C_{10}(R)\delta^\frac{\alpha}{2r}|y-y^\prime|^\alpha\| \bm{u}-\widehat{\bm{u}}\|^{(2r+\alpha)}_{[0,\delta]}.  
\end{split}
\end{equation}

From \eqref{Holder continuity of varphi with respect to s}, \eqref{Boundness of varphi} and \eqref{Holder continuity of varphi with respect to y}, for any $t\in[0,\delta]$ and $a=1,\ldots,m$, we obtain
\begin{equation} \label{Estimate of varphi}
|\bm{\varphi}^a(t,\cdot,\cdot)|^{(\alpha)}_{[0,t]\times\mathbb{R}^d}\leq C_{11}(R)\delta^{\frac{\alpha}{2r}}\| \bm{u}-\widehat{\bm{u}}\|^{(2r+\alpha)}_{[0,\delta]}.
\end{equation}

\ \ 

\noindent (\textbf{Estimates of $|\bm{\varphi}^a_t(t,\cdot,\cdot)|^{(\alpha)}_{[0,t]\times\mathbb{R}^d}$.}) We now analyze the H\"{o}lder continuity of $\bm{\varphi}^a_t(t,\cdot,\cdot)$ with respect to $s$ and $y$ in $[0,t]\times\mathbb{R}^d$. According to the integral representation  of $\bm{\varphi}^a(t,s,y)$ \eqref{Integral represnetation of varphi}, we have
\begin{equation*}
\begin{split}
	\bm{\varphi}^a_t(t,s,y) = & \int^1_0\sum\limits_{|I|\leq 2r,b\leq m}\Bigg[\frac{\partial\Big(\partial_I \bm{F}^a_b\big(t,s,y,\bm{\theta}_\sigma(t,s,y)\big)-\partial_I \bm{F}^a_b\big(t,0,y,\bm{\theta}_0(t,y)\big)\Big)}{\partial t}\cdot\partial_I\left(\bm{u}-\widehat{\bm{u}}\right)^b(t,s,y) \\
	&~
	+\Big(\partial_I \bm{F}^a_b\big(t,s,y,\bm{\theta}_\sigma(t,s,y)\big)-\partial_I \bm{F}^a_b\big(t,0,y,\bm{\theta}_0(t,y)\big)\Big)\cdot\partial_I\left(\bm{u}_t-\widehat{\bm{u}}_t\right)^b(t,s,y)\Bigg]d\sigma \\
	&~+\int^1_0\sum\limits_{|I|\leq 2r,b\leq m}\frac{\partial\Big(\partial_I \overline{\bm{F}}^a_b\big(t,s,y,\bm{\theta}_\sigma(t,s,y)\big)-\partial_I \overline{\bm{F}}^a_b\big(t,0,y,\bm{\theta}_0(t,y)\big)\Big)}{\partial t}\cdot\partial_I\left(\bm{u}-\widehat{\bm{u}}\right)^b(s,s,y)d\sigma \\
	=: & \Big\{M_1+M_2+M_3\Big\}+\Big\{M_4+M_5\Big\},
\end{split}
\end{equation*}
where
\begin{equation*}
\begin{split}
	M_1= & \int^1_0\sum\limits_{|I|\leq 2r,b\leq m}\Big(\partial^2_{tI} \bm{F}^a_b\big(t,s,y,\bm{\theta}_\sigma(t,s,y)\big)-\partial^2_{tI} \bm{F}^a_b\big(t,0,y,\bm{\theta}_0(t,y)\big)\Big)\cdot\partial_I\left(\bm{u}-\widehat{\bm{u}}\right)^b(t,s,y)d\sigma \\
	M_2= & \int^1_0\sum\limits_{|I|\leq 2r,b\leq m}\Bigg[\sum\limits_{|J|\leq 2r,c\leq m}\Bigg(\partial^2_{IJ}\bm{F}^a_{bc}\big(t,s,y,\bm{\theta}_\sigma(t,s,y)\big)\cdot\Big(\sigma\partial_J\bm{u}^c_t(t,s,y)+(1-\sigma)\partial_J\widehat{\bm{u}}^c_t(t,s,y)\Big) \\
	&\qquad\qquad\qquad\qquad\qquad\quad
	-\partial^2_{IJ}\bm{F}^a_{bc}\big(t,0,y,\bm{\theta}_0(t,y)\big)\cdot\partial_J\bm{g}^c_t(t,y)\Bigg)\Bigg]\cdot\partial_I\left(\bm{u}-\widehat{\bm{u}}\right)^b(t,s,y)d\sigma \\
	M_3= & \int^1_0\sum\limits_{|I|\leq 2r,b\leq m}\Big(\partial_I \bm{F}^a_b\big(t,s,y,\bm{\theta}_\sigma(t,s,y)\big)-\partial_I \bm{F}^a_b\big(t,0,y,\bm{\theta}_0(t,y)\big)\Big)\cdot\partial_I\left(\bm{u}_t-\widehat{\bm{u}}_t\right)^b(t,s,y)d\sigma \\
	M_4= & \int^1_0\sum\limits_{|I|\leq 2r,b\leq m}\Big(\partial^2_{tI} \overline{\bm{F}}^a_b\big(t,s,y,\bm{\theta}_\sigma(t,s,y)\big)-\partial^2_{tI} \overline{\bm{F}}^a_b\big(t,0,y,\bm{\theta}_0(t,y)\big)\Big)\cdot\partial_I\left(\bm{u}-\widehat{\bm{u}}\right)^b(s,s,y)d\sigma \\
	M_5= & \int^1_0\sum\limits_{|I|\leq 2r,b\leq m}\Bigg[\sum\limits_{|J|\leq 2r,c\leq m}\Bigg(\partial^2_{IJ}\overline{\bm{F}}^a_{bc}\big(t,s,y,\bm{\theta}_\sigma(t,s,y)\big)\cdot\Big(\sigma\partial_J\bm{u}^c_t(t,s,y)+(1-\sigma)\partial_J\widehat{\bm{u}}^c_t(t,s,y)\Big) \\
	&\qquad\qquad\qquad\qquad\qquad\quad
	-\partial^2_{IJ}\overline{\bm{F}}^a_{bc}\big(t,0,y,\bm{\theta}_0(t,y)\big)\cdot\partial_J\bm{g}^c_t(t,y)\Bigg)\Bigg]\cdot\partial_I\left(\bm{u}-\widehat{\bm{u}}\right)^b(s,s,y)d\sigma.
\end{split}
\end{equation*} 
It is easy to see that the estimates of $M_1$, $M_3$, and $M_4$ are similar to the terms of $|\bm{\varphi}^a(t,\cdot,\cdot)|^{(\alpha)}_{[0,t]\times\mathbb{R}^d}$. Hence, we focus on the remaining two terms $M_2$ and $M_5$. We denote $\bm{\eta}^a(t,s,y)=M_2+M_5$.

In order to estimate $|\bm{\eta}^a(t,s,y)-\bm{\eta}^a(t,s^\prime,y)|$ for $0\leq s\leq s^\prime\leq t\leq \delta\leq T$ and any $y\in\mathbb{R}^d$, it is convenient to add and subtract  
\begin{equation*}
\begin{split}
	&\int^1_0\sum\limits_{|I|\leq 2r,b\leq m}\Bigg[\sum\limits_{|J|\leq 2r,c\leq m}\Bigg(\partial^2_{IJ}\bm{F}^a_{bc}\big(t,s^\prime,y,\bm{\theta}_\sigma(t,s^\prime,y)\big)\cdot\Big(\sigma\partial_J\bm{u}^c_t(t,s^\prime,y)+(1-\sigma)\partial_J\widehat{\bm{u}}^c_t(t,s^\prime,y)\Big)\Bigg)\Bigg]\partial_I\left(\bm{u}-\widehat{\bm{u}}\right)^b(t,s,y)d\sigma \\
	+ & \int^1_0\sum\limits_{|I|\leq 2r,b\leq m}\Bigg[\sum\limits_{|J|\leq 2r,c\leq m}\Bigg(\partial^2_{IJ}\overline{\bm{F}}^a_{bc}\big(t,s^\prime,y,\bm{\theta}_\sigma(t,s^\prime,y)\big)\cdot\Big(\sigma\partial_J\bm{u}^c_t(t,s^\prime,y)+(1-\sigma)\partial_J\widehat{\bm{u}}^c_t(t,s^\prime,y)\Big)\Bigg)\Bigg]\partial_I\left(\bm{u}-\widehat{\bm{u}}\right)^b(s,s,y)d\sigma. 
\end{split}
\end{equation*}
Subsequently, we need to estimate
\begin{align}
& \bigg|\partial^2_{IJ}\bm{F}^a_{bc}\big(t,s,y,\bm{\theta}_\sigma(t,s,y)\big)\cdot\Big(\sigma\partial_J\bm{u}^c_t(t,s,y)+(1-\sigma)\partial_J\widehat{\bm{u}}^c_t(t,s,y)\Big) \nonumber \\
& \qquad\qquad\qquad -\partial^2_{IJ}\bm{F}^a_{bc}\big(t,s^\prime,y,\bm{\theta}_\sigma(t,s^\prime,y)\big)\cdot\Big(\sigma\partial_J\bm{u}^c_t(t,s^\prime,y)+(1-\sigma)\partial_J\widehat{\bm{u}}^c_t(t,s^\prime,y)\Big)\bigg|, \label{eq:1stF-1stterm} \\
&\bigg|\partial^2_{IJ}\overline{\bm{F}}^a_{bc}\big(t,s,y,\bm{\theta}_\sigma(t,s,y)\big)\cdot\Big(\sigma\partial_J\bm{u}^c_t(t,s,y)+(1-\sigma)\partial_J\widehat{\bm{u}}^c_t(t,s,y)\Big) \nonumber \\
&\qquad\qquad\qquad-\partial^2_{IJ}\overline{\bm{F}}^a_{bc}\big(t,s^\prime,y,\bm{\theta}_\sigma(t,s^\prime,y)\big)\cdot\Big(\sigma\partial_J\bm{u}^c_t(t,s^\prime,y)+(1-\sigma)\partial_J\widehat{\bm{u}}^c_t(t,s^\prime,y)\Big)\bigg|, \label{eq:1stbarF-1stterm} \\
&\bigg|\partial^2_{IJ}\bm{F}^a_{bc}\big(t,s^\prime,y,\bm{\theta}_\sigma(t,s^\prime,y)\big)\cdot\Big(\sigma\partial_J\bm{u}^c_t(t,s^\prime,y)+(1-\sigma)\partial_J\widehat{\bm{u}}^c_t(t,s^\prime,y)\Big)-\partial^2_{IJ}\bm{F}^a_{bc}\big(t,0,y,\bm{\theta}_0(t,y)\big)\cdot\partial_J\bm{g}^c_t(t,y)\bigg|, \label{eq:1stF-2ndterm} \\
&\bigg|\partial^2_{IJ}\overline{\bm{F}}^a_{bc}\big(t,s^\prime,y,\bm{\theta}_\sigma(t,s^\prime,y)\big)\cdot\Big(\sigma\partial_J\bm{u}^c_t(t,s^\prime,y)+(1-\sigma)\partial_J\widehat{\bm{u}}^c_t(t,s^\prime,y)\Big) -\partial^2_{IJ}\overline{\bm{F}}^a_{bc}\big(t,0,y,\bm{\theta}_0(t,y)\big)\cdot\partial_J\bm{g}^c_t(t,y)\bigg|. \label{eq:1stbarF-2ndterm}
\end{align}
Note that
\begin{equation*}
\begin{split}
	\eqref{eq:1stF-1stterm} \leq &~ \bigg|\partial^2_{IJ}\bm{F}^a_{bc}\big(t,s,y,\bm{\theta}_\sigma(t,s,y)\big)\cdot\Big(\sigma\partial_J\bm{u}^c_t(t,s,y)+(1-\sigma)\partial_J\widehat{\bm{u}}^c_t(t,s,y)\Big) \\
	&\qquad\qquad\qquad
	-\partial^2_{IJ}\bm{F}^a_{bc}\big(t,s^\prime,y,\bm{\theta}_\sigma(t,s^\prime,y)\big)\cdot\Big(\sigma\partial_J\bm{u}^c_t(t,s,y)+(1-\sigma)\partial_J\widehat{\bm{u}}^c_t(t,s,y)\Big)\bigg| \\
	& + \bigg|\partial^2_{IJ}\bm{F}^a_{bc}\big(t,s^\prime,y,\bm{\theta}_\sigma(t,s^\prime,y)\big)\cdot\Big(\sigma\partial_J\bm{u}^c_t(t,s,y)+(1-\sigma)\partial_J\widehat{\bm{u}}^c_t(t,s,y)\Big) \\
	&\qquad\qquad\quad
	-\partial^2_{IJ}\bm{F}^a_{bc}\big(t,s^\prime,y,\bm{\theta}_\sigma(t,s^\prime,y)\big)\cdot\Big(\sigma\partial_J\bm{u}^c_t(t,s^\prime,y)+(1-\sigma)\partial_J\widehat{\bm{u}}^c_t(t,s^\prime,y)\Big)\bigg| \leq C_{12}(R)(s^\prime-s)^\frac{\alpha}{2r}, \\
	\eqref{eq:1stF-2ndterm} \leq & ~ \bigg|\partial^2_{IJ}\bm{F}^a_{bc}\big(t,s^\prime,y,\bm{\theta}_\sigma(t,s^\prime,y)\big)\cdot\Big(\sigma\partial_J\bm{u}^c_t(t,s^\prime,y)+(1-\sigma)\partial_J\widehat{\bm{u}}^c_t(t,s^\prime,y)\Big) -\partial^2_{IJ}\bm{F}^a_{bc}\big(t,s^\prime,y,\bm{\theta}_\sigma(t,s^\prime,y)\big)\cdot\partial_J\bm{g}^c_t(t,y)\bigg| \\
	& \qquad\qquad\qquad\qquad\qquad + \bigg|\partial^2_{IJ}\bm{F}^a_{bc}\big(t,s^\prime,y,\bm{\theta}_\sigma(t,s^\prime,y)\big)\cdot\partial_J\bm{g}^c_t(t,y)-\partial^2_{IJ}\bm{F}^a_{bc}\big(t,0,y,\bm{\theta}_0(t,y)\big)\cdot\partial_J\bm{g}^c_t(t,y)\bigg| \leq C_{13}(R)\delta^\frac{\alpha}{2r}. 
\end{split}
\end{equation*}
Similarly, we also have $\eqref{eq:1stbarF-1stterm} \leq C_{14}(R)(s^\prime-s)^\frac{\alpha}{2r}$ and $\eqref{eq:1stbarF-2ndterm} \leq C_{15}(R)\delta^\frac{\alpha}{2r}$.
Hence, we obtain that 
\begin{equation}
\begin{split}
	\big|\bm{\eta}^a(t,s,y)-\bm{\eta}^a(t,s^\prime,y)\big| \leq &~ C_{12}(R)(s^\prime-s)^\frac{\alpha}{2r}\delta^\frac{\alpha}{2r}\sum\limits_{b\leq m}\left|\partial_I\left(\bm{u}-\widehat{\bm{u}}\right)^b(t,\cdot,\cdot)\right|^{(\alpha)}_{[0,t]\times\mathbb{R}^d}+
	C_{13}(R)\delta^\frac{\alpha}{2r}(s^\prime-s)^\frac{\alpha}{2r}\sum\limits_{b\leq m}\left|\partial_I\left(\bm{u}-\widehat{\bm{u}}\right)^b(t,\cdot,\cdot)\right|^{(\alpha)}_{[0,t]\times\mathbb{R}^d} \\
	& ~+C_{14}(R)(s^\prime-s)^\frac{\alpha}{2r}\delta^\frac{\alpha}{2r}\sum\limits_{b\leq m}\left|\partial_I\left(\bm{u}-\widehat{\bm{u}}\right)^b(s,\cdot,\cdot)\right|^{(\alpha)}_{[0,s]\times\mathbb{R}^d} \\
	& ~+C_{15}(R)\delta^\frac{\alpha}{2r}(s^\prime-s)^\frac{\alpha}{2r}\sum\limits_{b\leq m}\left(\sup\limits_{\overline{s}\in(s,s^\prime)}\left|\partial_I\left(\bm{u}-\widehat{\bm{u}}\right)^b_t(\overline{s},\cdot,\cdot)\right|^{(\alpha)}_{[0,\overline{s}]\times\mathbb{R}^d}+\left|\partial_I\left(\bm{u}-\widehat{\bm{u}}\right)^b(s^\prime,\cdot,\cdot)\right|^{(\alpha)}_{[0,s^\prime]\times\mathbb{R}^d}\right)       \\
	\leq &~ C_{16}(R)\delta^{\frac{\alpha}{2r}}(s^\prime-s)^{\frac{\alpha}{2r}}\| \bm{u}-\widehat{\bm{u}}\|^{(2r+\alpha)}_{[0,\delta]},  
\end{split}
\end{equation}
which implies the following by noting that $\bm{\eta}(t,0,y)\equiv 0$,
\begin{equation} \label{Boundness of eta}  
|\bm{\eta}^a(t,\cdot,\cdot)|^{\infty}_{[0,t]\times\mathbb{R}^d}\leq C_{16}(R)\delta^\frac{\alpha}{r}\| \bm{u}-\widehat{\bm{u}}\|^{(2r+\alpha)}_{[0,\delta]}.
\end{equation}

In order to estimate $|\bm{\eta}^a(t,s,y)-\bm{\eta}^a(t,s,y^\prime)|$ for $0\leq s\leq t\leq \delta\leq T$ and any $y$, $y^\prime\in\mathbb{R}^d$, it is convenient to add and subtract 
\begin{equation*}
\begin{split}
	&\int^1_0\sum\limits_{|I|\leq 2r,b\leq m}\Bigg[\sum\limits_{|J|\leq 2r,c\leq m}\Bigg(\partial^2_{IJ}\bm{F}^a_{bc}\big(t,s,y^\prime,\bm{\theta}_\sigma(t,s,y^\prime)\big)\cdot\Big(\sigma\partial_J\bm{u}^c_t(t,s,y^\prime)+(1-\sigma)\partial_J\widehat{\bm{u}}^c_t(t,s,y^\prime)\Big) \\
	&\qquad\qquad\qquad\qquad\qquad\qquad
	-\partial^2_{IJ}\bm{F}^a_{bc}\big(t,0,y^\prime,\bm{\theta}_0(t,y^\prime)\big)\cdot\partial_J\bm{g}^c_t(t,y^\prime)\Bigg)\Bigg]\cdot\partial_I\left(\bm{u}-\widehat{\bm{u}}\right)^b(t,s,y)d\sigma \\
	+ & \int^1_0\sum\limits_{|I|\leq 2r,b\leq m}\Bigg[\sum\limits_{|J|\leq 2r,c\leq m}\Bigg(\partial^2_{IJ}\overline{\bm{F}}^a_{bc}\big(t,s,y^\prime,\bm{\theta}_\sigma(t,s,y^\prime)\big)\cdot\Big(\sigma\partial_J\bm{u}^c_t(t,s,y^\prime)+(1-\sigma)\partial_J\widehat{\bm{u}}^c_t(t,s,y^\prime)\Big) \\
	&\qquad\qquad\qquad\qquad\qquad\qquad
	-\partial^2_{IJ}\overline{\bm{F}}^a_{bc}\big(t,0,y^\prime,\bm{\theta}_0(t,y^\prime)\big)\cdot\partial_J\bm{g}^c_t(t,y^\prime)\Bigg)\Bigg]\cdot\partial_I\left(\bm{u}-\widehat{\bm{u}}\right)^b(s,s,y)d\sigma.
\end{split}
\end{equation*}
Then we need to estimate the error (for $\bm{F}$)
\begin{equation} \label{eq:2ndF-1stterm}
\begin{split}
	&\bigg|\partial^2_{IJ}\bm{F}^a_{bc}\big(t,s,y,\bm{\theta}_\sigma(t,s,y)\big)\cdot\Big(\sigma\partial_J\bm{u}^c_t(t,s,y)+(1-\sigma)\partial_J\widehat{\bm{u}}^c_t(t,s,y)\Big)-\partial^2_{IJ}\bm{F}^a_{bc}\big(t,0,y,\bm{\theta}_0(t,y)\big)\cdot\partial_J\bm{g}^c_t(t,y) \\
	&\quad -\partial^2_{IJ}\bm{F}^a_{bc}\big(t,s,y^\prime,\bm{\theta}_\sigma(t,s,y^\prime)\big)\cdot\Big(\sigma\partial_J\bm{u}^c_t(t,s,y^\prime)+(1-\sigma)\partial_J\widehat{\bm{u}}^c_t(t,s,y^\prime)\Big)+\partial^2_{IJ}\bm{F}^a_{bc}\big(t,0,y^\prime,\bm{\theta}_0(t,y^\prime)\big)\cdot\partial_J\bm{g}^c_t(t,y^\prime)\bigg|
\end{split}
\end{equation}
as well as the error (for $\overline{\bm{F}}$)
\begin{equation} \label{eq:2ndbarF-1stterm}
\begin{split}
	&\bigg|\partial^2_{IJ}\overline{\bm{F}}^a_{bc}\big(t,s,y,\bm{\theta}_\sigma(t,s,y)\big)\cdot\Big(\sigma\partial_J\bm{u}^c_t(t,s,y)+(1-\sigma)\partial_J\widehat{\bm{u}}^c_t(t,s,y)\Big)-\partial^2_{IJ}\overline{\bm{F}}^a_{bc}\big(t,0,y,\bm{\theta}_0(t,y)\big)\cdot\partial_J\bm{g}^c_t(t,y) \\
	&\quad -\partial^2_{IJ}\overline{\bm{F}}^a_{bc}\big(t,s,y^\prime,\bm{\theta}_\sigma(t,s,y^\prime)\big)\cdot\Big(\sigma\partial_J\bm{u}^c_t(t,s,y^\prime)+(1-\sigma)\partial_J\widehat{\bm{u}}^c_t(t,s,y^\prime)\Big) +\partial^2_{IJ}\overline{\bm{F}}^a_{bc}\big(t,0,y^\prime,\bm{\theta}_0(t,y^\prime)\big)\cdot\partial_J\bm{g}^c_t(t,y^\prime)\bigg|.
\end{split}
\end{equation}
Moreover, we also need to estimate 
\begin{equation} \label{eq:2ndF-2ndterm}
\begin{split}
	&\bigg|\partial^2_{IJ}\bm{F}^a_{bc}\big(t,s,y,\bm{\theta}_\sigma(t,s,y)\big)\cdot\Big(\sigma\partial_J\bm{u}^c_t(t,s,y)+(1-\sigma)\partial_J\widehat{\bm{u}}^c_t(t,s,y)\Big)-\partial^2_{IJ}\bm{F}^a_{bc}\big(t,0,y,\bm{\theta}_0(t,y)\big)\cdot\partial_J\bm{g}^c_t(t,y)\bigg|
\end{split}
\end{equation}
and 
\begin{equation} \label{eq:2ndbarF-2ndterm}
\begin{split}
	&\bigg|\partial^2_{IJ}\overline{\bm{F}}^a_{bc}\big(t,s,y,\bm{\theta}_\sigma(t,s,y)\big)\cdot\Big(\sigma\partial_J\bm{u}^c_t(t,s,y)+(1-\sigma)\partial_J\widehat{\bm{u}}^c_t(t,s,y)\Big)-\partial^2_{IJ}\overline{\bm{F}}^a_{bc}\big(t,0,y,\bm{\theta}_0(t,y)\big)\cdot\partial_J\bm{g}^c_t(t,y)\bigg|.
\end{split}
\end{equation}
Note that
\begin{equation*}
\begin{split}
	\eqref{eq:2ndF-1stterm} \leq &~ \bigg|\partial^2_{IJ}\bm{F}^a_{bc}\big(t,s,y,\bm{\theta}_\sigma(t,s,y)\big)\cdot\Big(\sigma\partial_J\bm{u}^c_t(t,s,y)+(1-\sigma)\partial_J\widehat{\bm{u}}^c_t(t,s,y)\Big) \\
	&\qquad
	-\partial^2_{IJ}\bm{F}^a_{bc}\big(t,s,y^\prime,\bm{\theta}_\sigma(t,s,y^\prime)\big)\cdot\Big(\sigma\partial_J\bm{u}^c_t(t,s,y^\prime)+(1-\sigma)\partial_J\widehat{\bm{u}}^c_t(t,s,y^\prime)\Big)\bigg| \\
	& + \bigg|\partial^2_{IJ}\bm{F}^a_{bc}\big(t,0,y^\prime,\bm{\theta}_0(t,y^\prime)\big)\cdot\partial_J\bm{g}^c_t(t,y^\prime)-\partial^2_{IJ}\bm{F}^a_{bc}\big(t,0,y,\bm{\theta}_0(t,y)\big)\cdot\partial_J\bm{g}^c_t(t,y)\bigg| =: N_1+N_2. 
\end{split}
\end{equation*}
For $N_1$, it holds that 
\begin{equation*}
\begin{split}
	N_1 \leq &~ \bigg|\partial^2_{IJ}\bm{F}^a_{bc}\big(t,s,y,\bm{\theta}_\sigma(t,s,y)\big)\cdot\Big(\sigma\partial_J\bm{u}^c_t(t,s,y)+(1-\sigma)\partial_J\widehat{\bm{u}}^c_t(t,s,y)\Big) \\
	&\qquad\qquad\qquad
	-\partial^2_{IJ}\bm{F}^a_{bc}\big(t,s,y,\bm{\theta}_\sigma(t,s,y)\big)\cdot\Big(\sigma\partial_J\bm{u}^c_t(t,s,y^\prime)+(1-\sigma)\partial_J\widehat{\bm{u}}^c_t(t,s,y^\prime)\Big)\bigg| \\
	& + \bigg|\partial^2_{IJ}\bm{F}^a_{bc}\big(t,s,y,\bm{\theta}_\sigma(t,s,y)\big)\cdot\Big(\sigma\partial_J\bm{u}^c_t(t,s,y^\prime)+(1-\sigma)\partial_J\widehat{\bm{u}}^c_t(t,s,y^\prime)\Big) \\
	&\qquad\qquad\qquad
	-\partial^2_{IJ}\bm{F}^a_{bc}\big(t,s,y^\prime,\bm{\theta}_\sigma(t,s,y^\prime)\big)\cdot\Big(\sigma\partial_J\bm{u}^c_t(t,s,y^\prime)+(1-\sigma)\partial_J\widehat{\bm{u}}^c_t(t,s,y^\prime)\Big)\bigg| \leq C_{17}(R)|y-y^\prime|^\alpha.
\end{split}
\end{equation*}
For $N_2$, 
\begin{equation*}
\begin{split}
	N_2 \leq &~ \bigg|\partial^2_{IJ}\bm{F}^a_{bc}\big(t,0,y^\prime,\bm{\theta}_0(t,y^\prime)\big)\cdot\partial_J\bm{g}^c_t(t,y^\prime)-\partial^2_{IJ}\bm{F}^a_{bc}\big(t,0,y,\bm{\theta}_0(t,y)\big)\cdot\partial_J\bm{g}^c_t(t,y^\prime)\bigg| \\
	& + \bigg|\partial^2_{IJ}\bm{F}^a_{bc}\big(t,0,y,\bm{\theta}_0(t,y)\big)\cdot\partial_J\bm{g}^c_t(t,y^\prime)-\partial^2_{IJ}\bm{F}^a_{bc}\big(t,0,y,\bm{\theta}_0(t,y)\big)\cdot\partial_J\bm{g}^c_t(t,y)\bigg| \leq C_{18}(R)|y-y^\prime|^\alpha. 
\end{split}
\end{equation*}
From the estimates of $N_1$ and $N_2$, we have $\eqref{eq:2ndF-1stterm} \leq  C_{19}(R)|y-y^\prime|^\alpha$.
Moroever, we have 
\begin{equation*}
\begin{split}
	\eqref{eq:2ndF-2ndterm} \leq &~ \bigg|\partial^2_{IJ}\bm{F}^a_{bc}\big(t,s,y,\bm{\theta}_\sigma(t,s,y)\big)\cdot\Big(\sigma\partial_J\bm{u}^c_t(t,s,y)+(1-\sigma)\partial_J\widehat{\bm{u}}^c_t(t,s,y)\Big)-\partial^2_{IJ}\bm{F}^a_{bc}\big(t,s,y,\bm{\theta}_\sigma(t,s,y)\big)\cdot\partial_J\bm{g}^c_t(t,y)\bigg| \\
	& \qquad\qquad\qquad\qquad\qquad\quad+\bigg|\partial^2_{IJ}\bm{F}^a_{bc}\big(t,s,y,\bm{\theta}_\sigma(t,s,y)\big)\cdot\partial_J\bm{g}^c_t(t,y)-\partial^2_{IJ}\bm{F}^a_{bc}\big(t,0,y,\bm{\theta}_0(t,y)\big)\cdot\partial_J\bm{g}^c_t(t,y)\bigg| \leq C_{20}(R)\delta^\frac{\alpha}{2r}. 
\end{split}
\end{equation*}
Similarly, for $\overline{\bm{F}}$, we have $\eqref{eq:2ndbarF-1stterm} \leq C_{21}(R)|y-y^\prime|^\alpha$ and $\eqref{eq:2ndbarF-2ndterm} \leq C_{22}(R)\delta^\frac{\alpha}{2r}$.
Hence, we have 
\begin{equation}
\begin{split}
	\big|\bm{\eta}^a(t,s,y)-\bm{\eta}^a(t,s,y^\prime)\big|\leq &~ C_{19}(R)|y-y^\prime|^\alpha\delta^\frac{\alpha}{2r}\sum\limits_{b\leq m}\left|\partial_I\left(\bm{u}-\widehat{\bm{u}}\right)^b(t,\cdot,\cdot)\right|^{(\alpha)}_{[0,t]\times\mathbb{R}^d}+C_{20}(R)\delta^\frac{\alpha}{2r}|y-y^\prime|^\alpha\sum\limits_{b\leq m}\left|\partial_I\left(\bm{u}-\widehat{\bm{u}}\right)^b(t,\cdot,\cdot)\right|^{(\alpha)}_{[0,t]\times\mathbb{R}^d} \\
	&+C_{21}(R)|y-y^\prime|^\alpha\delta^\frac{\alpha}{2r}\sum\limits_{b\leq m}\left|\partial_I\left(\bm{u}-\widehat{\bm{u}}\right)^b(s,\cdot,\cdot)\right|^{(\alpha)}_{[0,t]\times\mathbb{R}^d}+C_{22}(R)\delta^\frac{\alpha}{2r}|y-y^\prime|^\alpha\sum\limits_{b\leq m}\left|\partial_I\left(\bm{u}-\widehat{\bm{u}}\right)^b(s,\cdot,\cdot)\right|^{(\alpha)}_{[0,t]\times\mathbb{R}^d} \\
	\leq &~C_{23}(R)\delta^\frac{\alpha}{2r}|y-y^\prime|^\alpha\| \bm{u}-\widehat{\bm{u}}\|^{(2r+\alpha)}_{[0,\delta]}.  
\end{split}
\end{equation}

Therefore, together with \eqref{Boundness of eta}, we have 
\begin{equation}
|\bm{\eta}^a(t,\cdot,\cdot)|^{(\alpha)}_{[0,t]\times\mathbb{R}^d}\leq C_{24}(R)\delta^{\frac{\alpha}{2r}}\| \bm{u}-\widehat{\bm{u}}\|^{(2r+\alpha)}_{[0,\delta]}.
\end{equation}

Since $M_1$, $M_3$ and $M_4$ satisfy the same estimates, it holds that 
\begin{equation}
|\bm{\varphi}^a_t(t,\cdot,\cdot)|^{(\alpha)}_{[0,t]\times\mathbb{R}^d}\leq C_{25}(R)\delta^{\frac{\alpha}{2r}}\| \bm{u}-\widehat{\bm{u}}\|^{(2r+\alpha)}_{[0,\delta]}.
\end{equation}

Finally, we have a contraction
\begin{equation}
\|\bm{\Lambda}(\bm{u})-\bm{\Lambda}(\widehat{\bm{u}})\|^{(2r+\alpha)}_{[0,\delta]}\leq C\|\bm{\varphi}\|^{(\alpha)}_{[0,\delta]}=C\sup\limits_{t\in[0,\delta]}\sum_{a\leq m}\left\{\bm{\varphi}^a(t,\cdot,\cdot)|^{(\alpha)}_{[0,t]\times\mathbb{R}^d}+\bm{\varphi}^a_t(t,\cdot,\cdot)|^{(\alpha)}_{[0,t]\times\mathbb{R}^d}\right\}\leq C(R)\delta^\frac{\alpha}{2r}\|\bm{u}-\widehat{\bm{u}}\|^{(2r+\alpha)}_{[0,\delta]}.
\end{equation}  

\ \ 

\noindent (\textbf{A contraction $\bm{\Lambda}$ mapping $\bm{\mathcal{U}}$ into itself.}) To show the contraction, we need to choose a suitably large $R$ such that $\bm{\Lambda}$ maps $\bm{\mathcal{U}}$ into itself. If $\delta$ and $R$ satisfy 
\begin{equation*}
C(R)\delta^{\frac{\alpha}{2r}}\leq \frac{1}{2},
\end{equation*} 
then $\bm{\Lambda}$ is a $\frac{1}{2}$-contraction and for any $\bm{u}\in\bm{\mathcal{U}}$, we have
\begin{equation*}
\|\bm{\Lambda}(\bm{u})-\bm{g}\|^{(2r+\alpha)}_{[0,\delta]}\leq \frac{R}{2}+\| \bm{\Lambda}(\bm{g})-\bm{g}\|^{(2r+\alpha)}_{[0,\delta]}. 
\end{equation*} 
Define the function $\bm{G}:=\bm{\Lambda}(\bm{g})-\bm{g}$ as the solution of the equation 
\begin{equation*}
\left\{
\begin{array}{lr}
	\bm{G}_s(t,s,y)=\bm{L}_0\bm{G}+\bm{F}\big(t,s,y,\left(\partial_I\bm{g}\right)_{|I|\leq 2r}(t,y),  \left(\partial_I\bm{g}\right)_{|I|\leq 2r}(s,y)\big), \\
	\bm{G}(t,0,y)=\bm{0},\hfill 0\leq s\leq t\leq \delta,\quad y\in\mathbb{R}^d.
\end{array}
\right. 
\end{equation*}
By \eqref{Estimates of solutions of nonlocal system}, there is $C>0$, independent of $\delta$, such that 
\begin{equation*}
\| \bm{G}\|^{(2r+\alpha)}_{[0,\delta]}\leq C\sup\limits_{t\in[0,\delta]}\sum\limits_{a\leq m}\left\{\left|\bm{\psi}^a(t,\cdot,\cdot)\right|^{(\alpha)}_{[0,t]\times\mathbb{R}^d}+\left|\bm{\psi}^a_t(t,\cdot,\cdot)\right|^{(\alpha)}_{[0,t]\times\mathbb{R}^d}\right\}=:C^\prime, 
\end{equation*}
where $\bm{\psi}^a(t,s,y)=\bm{F}^a\big(t,s,y,\left(\partial_I\bm{g}\right)_{|I|\leq 2r}(t,y),  \left(\partial_I\bm{g}\right)_{|I|\leq 2r}(s,y)\big)$. 

To conclude, we have 
\begin{equation*}
\|\bm{\Lambda}(\bm{u})-\bm{g}\|^{(2r+\alpha)}_{[0,\delta]}\leq \frac{R}{2}+C^\prime.
\end{equation*}
Therefore for a suitably large $R$, $\bm{\Lambda}$ is a contraction mapping $\bm{\mathcal{U}}$ into itself and it has a unique fixed point $\bm{u}$ in $\bm{\mathcal{U}}$ satisfying  
\begin{equation} \label{Nonlocal fully nonlinear system from 0 to delta} 
\left\{
\begin{array}{lr}
	\bm{u}_s(t,s,y)=\bm{F}\big(t,s,y,\left(\partial_I\bm{u}\right)_{|I|\leq 2r}(t,s,y),  \left(\partial_I\bm{u}\right)_{|I|\leq 2r}(s,s,y)\big), \\
	\bm{u}(t,0,y)=\bm{g}(t,y),\hfill 0\leq s\leq t\leq \delta,\quad y\in\mathbb{R}^d.  
\end{array}
\right. 
\end{equation} 

\ \ 

\noindent (\textbf{Uniqueness}) To complete the proof, we have to show that $\bm{u}$ is the unique solution of \eqref{Nonlocal fully nonlinear system} in $\bm{\Omega}^{(2r+\alpha)}_{[0,\delta]}$. It follows the Schauder-type estimate (Theorem \ref{Schauder estimates}) for the nonlocal, homogeneous,
linear, and strongly parabolic system with initial value zero, which is satisfied
by the difference of any two solutions $\bm{u}$, and $\overline{\bm{u}}$ in $\bm{\Omega}^{(2r+\alpha)}_{[0,\delta]}$ to the system \eqref{Nonlocal fully nonlinear system from 0 to delta}. On the other hand, since $\bm{\Lambda}$ is a contraction, it can be done with some standard arguments.

If \eqref{Nonlocal fully nonlinear system from 0 to delta} admits two fixed points $\bm{u}$ and $\overline{\bm{u}}$, let
$$
\overline{t}=\sup\left\{t\in[0,\delta]:~\bm{u}(t,s,y)=\overline{\bm{u}}(t,s,y),~(t,s,y)\in\Delta[0,t]\times\mathbb{R}^d\right\}.
$$
We shall focus only on the case when $\overline{t}<\delta$ because if $\overline{t}=\delta$, then $\bm{u}=\overline{\bm{u}}$ in the whole $\Delta[0,\delta]\times\mathbb{R}^d$ and the proof is completed. According to the definition of $\overline{t}(<\delta)$, we know that $\bm{u}(t,s,y)=\overline{\bm{u}}(t,s,y)$ in $R_1\times\mathbb{R}^d$ in Figure \ref{fig:uniqueness}. Hence, we obtain diagonal conditions, namely $\left(\partial_I\bm{u}\right)_{|I|\leq 2r}(s,s,y)=\left(\partial_I\overline{\bm{u}}\right)_{|I|\leq 2r}(s,s,y)$ for any $s\in[0,\overline{t}]$ and $y\in\mathbb{R}^d$. By observing \eqref{Nonlocal fully nonlinear system from 0 to delta} provided that the same initial and diagonal conditions (i.e., the initial condition 1 and the diagonal condition in Figure \ref{fig:uniqueness}), the classical PDE theory promises that $\bm{u}$ and $\overline{\bm{u}}$ coincide in $\left(R_1\cup R_2\right)\times\mathbb{R}^d$.
\begin{figure}[!ht]
\centering
\includegraphics[width=0.3\textwidth]{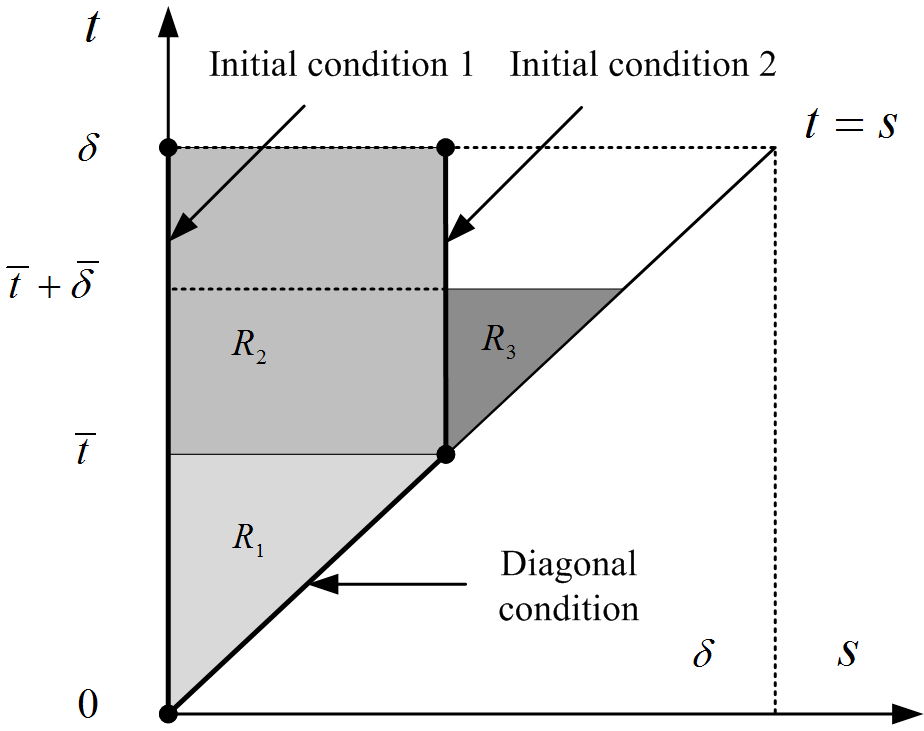}
\caption{Uniqueness of the solution in $\Theta[0,\delta]$}
\label{fig:uniqueness}
\end{figure}

Next, let $\bm{u}(t,\overline{t},y)=\overline{\bm{u}}(t,\overline{t},y)=\overline{\bm{g}}(t,y)$ for $(t,y)\in[\overline{t},T]\times\mathbb{R}^d$. Based on the new initial condition (i.e., the initial condition 2 in Figure \ref{fig:uniqueness}), we consider the following initial value problem: 
\begin{equation} \label{Nonlocal fully nonlinear system with a new initial value condition} 
\left\{
\begin{array}{lr}
	\bm{u}_s(t,s,y)=\bm{F}\big(t,s,y,\left(\partial_I\bm{u}\right)_{|I|\leq 2r}(t,s,y),  \left(\partial_I\bm{u}\right)_{|I|\leq 2r}(s,s,y)\big), \\
	\bm{u}(t,\overline{t},y)=\overline{\bm{g}}(t,y),\hfill \overline{t}\leq s\leq t\leq T,\quad y\in\mathbb{R}^d.  
\end{array}
\right. 
\end{equation} 
Our previous proof shows that \eqref{Nonlocal fully nonlinear system with a new initial value condition} admits a unique solution in the set $$\overline{\bm{\mathcal{U}}}=\left\{\bm{u}\in\bm{\Omega}^{(2r+\alpha)}_{[\overline{t},\overline{t}+\overline{\delta}]}:\bm{u}(t,\overline{t},y)=\overline{\bm{g}}(t,y),\|\bm{u}-\overline{\bm{g}}]^{(2r+\alpha)}_{[\overline{t},\overline{t}+\overline{\delta}]}\leq \overline{R}\right\}$$ 
provided that $\overline{R}$ is large enough and $\overline{\delta}$ is small enough. Considering $\overline{R}$ larger than $[\bm{u}-\overline{\bm{g}}]^{(2r+\alpha)}_{[\overline{t},\overline{t}+\overline{\delta}]}$ and $[\overline{\bm{u}}-\overline{\bm{g}}]^{(2r+\alpha)}_{[\overline{t},\overline{t}+\overline{\delta}]}$, we have $\bm{u}=\overline{\bm{u}}$ in $R_3\times\mathbb{R}^d$. Hence, for any $y\in\mathbb{R}^d$, $\bm{u}$ equals to $\overline{\bm{u}}$ in $\{(t,s):\overline{t}\leq t\leq \overline{t}+\overline{\delta},0\leq s\leq \overline{t}\}\cup R_3$, which contradicts the definition of $\overline{t}$. Consequently, $\overline{t}=\delta$ and $\bm{u}=\overline{\bm{u}}$. This completes the proof.   	
\end{proof}

\begin{proof}[Proof of Theorem \ref{Global existence of fully nonlineaity}]
Assume that $\lim_{s\to\tau}\bm{u}(t,s,y)\in\bm{\mathcal{O}}$. To obtain the global solvability, the maximally defined solution in $[0,\tau)$ has to be extended continuously to a closed interval $[0,\tau]$ such that we can update the initial data with $\bm{u}(\cdot,\tau,\cdot)\in\bm{\Omega}^{(2r+\alpha)}_{[\tau,T]}$.

According to the definition of $\bm{\Omega}^{(2r+\alpha)}_{[a,b]\times[c,d]}$ and the extension procedure in Figure \ref{fig:extension}, for each fixed $t\in[\tau,T]$, it requires that the mapping $\bm{u}:s\mapsto \bm{u}(t,s,y)$ from $[0,\tau)$ to $C^{2r+\alpha}(\mathbb{R}^d;\mathbb{R}^m)$ is at least uniformly continuous such that the limit value $\lim_{s\to\tau}\bm{u}(t,s,y)$ exists. Similar to the proof of Theorem \ref{Schauder estimates}, by the estimate \eqref{Upper bounded for global existence} and extension arguments for the triangle area $(t,s)\in\Delta[0,\sigma]$ to the corresponding trapezoid region $(t,s)\in[0,T]\times[0,t\wedge\sigma]$ for any fixed $t\in[\tau,T]$, we have 
\begin{equation*}
\bm{u}(t,\cdot,\cdot)\in B\big([0,\tau);C^{2r+\alpha^\prime}(\mathbb{R}^d;\mathbb{R}^m)\big), \quad \bm{u}_s(t,\cdot,\cdot)\in B\big([0,\tau);C^{\alpha^\prime}(\mathbb{R}^d;\mathbb{R}^m)\big), 
\end{equation*}
where $B([a,b);X)$ denotes the space of bounded function defined in $[a,b)$ and valued in the Banach space $X$. By an interpolation result (see \cite[Proposition 2.7]{Sinestrari1985} {\color{black}or \cite[Chapter 1]{Lunardi1995}}), it follows that $\bm{u}(t,\cdot,\cdot)\in C^{1-\theta}\big([0,\sigma];C^{\alpha^\prime+2r\theta}(\mathbb{R}^d;\mathbb{R}^m)\big)$ for every $\sigma\in(0,\tau)$ with H\"{o}lder constant independent of $\sigma$. By choosing $\theta=1-\frac{\alpha^\prime-\alpha}{2r}$, it follows 
\begin{equation*}
\bm{u}(t,\cdot,\cdot)\in C^{\frac{\alpha^\prime-\alpha}{2r}}\big([0,\sigma];C^{2r+\alpha}(\mathbb{R}^d;\mathbb{R}^m)\big). 
\end{equation*}
Consequently, for each $t\in[\tau,T]$, $\bm{u}(t,\cdot,\cdot)$ can be continued at $s=\tau$ in such a way that the extension $\bm{u}(\cdot,\tau,\cdot)$ belongs to $\bm{\Omega}^{(2r+\alpha)}_{[\tau,T]}$. After updating with $\bm{u}(\cdot,\tau,\cdot)$ as a new initial condition at $s=\tau$, by Theorem \ref{Well-posedness of fully nonlinear system}, the nonlocal system \eqref{Nonlocal fully nonlinear system} restricted in $(t,s,y)\in[\tau,T]\times[\tau,t]\times\mathbb{R}^d$ admits a unique solution $\bm{u}\in\bm{\Omega}^{(2r+\alpha)}_{[\tau,\tau+\tau_1]}$ for some $\tau_1>0$, which contradicts the definition of $\tau$. Therefore, we have $\tau=T$ or $\lim_{s\to\tau}\bm{u}(\cdot,s,\cdot)\in\partial\bm{\mathcal{O}}$.  
\end{proof}

\begin{proof}[Proof of Theorem \ref{Wellposedness of Quasilinear systems}]
Thanks to Theorem \ref{Well-posedness of fully nonlinear system} and Remark \ref{Maximal interval}, \eqref{Nonlocal quasilinear system} admits a unique maximally defined solution $\bm{u}\in\bm{\Omega}^{{(2r+\alpha)}}_{[0,\tau)}$ in the maximal interval $\Delta[0,\tau)$. We need to prove that the solution can be extended uniquely into $\Delta[0,T]$. 

According to the proof of Theorem \ref{Well-posedness of fully nonlinear system} and the formulation of $\bm{L}_0$, we replace the nonlinearity $\bm{F}$ of \eqref{Definition of Lamda} with the right side of \eqref{Nonlocal quasilinear system}, i.e. $\sum A\partial\bm{u}+\bm{Q}(\bm{u})$.
In the quasilinear case of \eqref{Nonlocal quasilinear system}, it is clear that the radius $R$ of $C(R)\delta^\frac{\alpha}{2r}$ in the proof of Theorem \ref{Well-posedness of fully nonlinear system} only depends on $\|\bm{g}\|^{(2r-1+\alpha)}_{[0,\delta]}$ and $\|\bm{u}\|^{(2r-1+\alpha)}_{[0,\delta]}$ instead of $\|\bm{g}\|^{(2r+\alpha)}_{[0,\delta]}$ and $\|\bm{u}\|^{(2r+\alpha)}_{[0,\delta]}$ as in the fully nonlinear case. Consequently, in order to establish the existence in the large time interval, we only need to investigate and control the solutions of \eqref{Nonlocal quasilinear system} under the norm $\|\cdot\|^{(2r-1+\alpha)}_{[0,\delta]}$. It suffices to show that the mapping $\bm{u}:s\mapsto \bm{u}(t,s,y)$ from $[0,\tau)$ to $C^{2r-1+\alpha}(\mathbb{R}^d;\mathbb{R}^m)$ is uniformly continuous under the conditions in Theorem \ref{Wellposedness of Quasilinear systems}.


To this end, we note from the nonlocal quasilinear system of \eqref{Nonlocal quasilinear system} that 
\begin{equation}  \label{Nonlocal quasilinear system 1}
\left\{
\begin{array}{rcl}
	\left(\frac{\partial\bm{u}}{\partial t}\right)^a_s(t,s,y) & = & \sum\limits_{|I|= 2r,b\leq m}A^{aI}_b(s,y)\partial_I\left(\frac{\partial\bm{u}}{\partial t}\right)^b(t,s,y)+\sum\limits_{|I|\leq 2r-1,b\leq m}\partial_I\bm{Q}^a_b\big(\bm{u}\big)\partial_I\left(\frac{\partial \bm{u}}{\partial t}\right)^b(t,s,y)+\bm{Q}^a_t(\bm{u}), \\
	\frac{\partial\bm{u}}{\partial t}(t,0,y) & = & \bm{g}_t(t,y),\qquad \hfill 0\leq s\leq t<\tau,\quad y\in\mathbb{R}^d, \qquad a=1,\ldots,m. 
\end{array}
\right. 
\end{equation}
where $\partial_I\bm{Q}^a_b\big(\bm{u}\big)$ and $\bm{Q}^a_t(\bm{u})$ represent the first-order partial derivatives of the nonlinearity $\bm{Q}^a$ with respect to its argument $\partial_I\bm{u}^b(t,s,y)$ and $t$, respectively, while both of them are evaluated at $\big(t,s,y,\left(\partial_I\bm{u}\right)_{|I|\leq 2r-1}(t,s,y),  \left(\partial_I\bm{u}\right)_{|I|\leq 2r-1}(s,s,y)\big)$. According to the linear growth condition and bounds of $\bm{Q}$ and the Gr\"{o}nwall--Bellman inequality, it is clear from \eqref{Nonlocal quasilinear system} and \eqref{Nonlocal quasilinear system 1} that there exists a constant $K$ depending only on the given coefficients and data of \eqref{Nonlocal quasilinear system} such that $\|\bm{u}\|^{(2r-1)}_{[0,\tau)}\leq K$. By the classical theory of PDEs, it further implies that $\bm{u}\in\bm{\Omega}^{(2r-1+\alpha)}_{[0,\tau)}$ and $\|\bm{u}\|^{(2r-1+\alpha)}_{[0,\tau)}\leq K$. Consequently, the nonlocal and nonlinear term $\bm{Q}^a(\bm{u})$ of \eqref{Nonlocal quasilinear system} belongs to $\bm{\Omega}^{(\alpha)}_{[0,\tau)}$. Thanks to Theorem \ref{Schauder estimates}, the nonlocal quasilinear system \eqref{Nonlocal quasilinear system} admits a unique solution $\bm{u}\in\bm{\Omega}^{{(2r+\alpha)}}_{[0,T]}$ in $\Delta[0,T]\times\mathbb{R}^d$ and $\|\bm{u}\|^{(2r+\alpha)}_{[0,\tau)}\leq K$, where $K$ could vary from line to line. With similar arguments as the proof of Theorem \ref{Global existence of fully nonlineaity}, for each $t\in[\tau,T]$, we can extend $\bm{u}:s\mapsto \bm{u}(t,s,y)$ from $[0,\tau)$ to $C^{2r-1+\alpha}(\mathbb{R}^d;\mathbb{R}^m)$ at $s=\tau$. With the achieved limit point $\bm{u}(\cdot,\tau,\cdot)\in\bm{\Omega}^{{(2r-1+\alpha)}}_{[\tau,T]}$, by Lemma 8.5.5 in \cite{Lunardi1995}, it follows that $\bm{u}(\cdot,\tau,\cdot)\in\bm{\Omega}^{{(2r+\alpha)}}_{[\tau,T]}$ and $\|\bm{u}\|^{(2r+\alpha)}_{[0,\tau]}\leq K$. Hence, by updating the initial condition, we can extend the maximally defined solution up to the whole time region $\Delta[0,T]$. 
\end{proof} 

{\color{black}
\begin{proof}[Proof of Lemma \ref{EquivalentNorms}]
For the equivalence between \eqref{WeightedNorm1} and \eqref{WeightedNorm2}, we refer the readers to Lemma 2.5 of \cite{Lorenzi2000}. It comes directly from the basic properties of the exponential weight $\varrho(y)$. Next, we will show the equivalence between \eqref{WeightedNorm2} and \eqref{WeightedNorm3}. Let us consider $f(y)\in C^{\alpha}(\mathbb{R}^d;\mathbb{R})$ and $0<|y-y^\prime|\leq 1$. Without loss of generality, we assume that $\langle Sy,y\rangle^{1/2}<\langle Sy^\prime,y^\prime\rangle^{1/2}$. Then,
\begin{equation} \label{equivalenceN3N2}
	\begin{split}
		\left|\frac{f(y)}{\varrho(y^\prime)}-\frac{f(y^\prime)}{\varrho(y^\prime)}\right| & =\left|\frac{f(y)}{\varrho(y^\prime)}-\frac{f(y)}{\varrho({y})}+\frac{f(y)}{\varrho(y)}-\frac{f(y^\prime)}{\varrho(y^\prime)}\right| \leq \left|\frac{f(y)}{\varrho(y)}\right|\left|\langle Sy,y\rangle^\frac{1}{2}-\langle Sy^\prime,y^\prime\rangle^\frac{1}{2}\right|+\left|\frac{f(y)}{\varrho(y)}-\frac{f(y^\prime)}{\varrho(y^\prime)}\right| \\
		& \leq C\Big|\frac{f}{\varrho}\Big|^\infty|y-y^\prime|^\alpha+\Big\langle \frac{f}{\varrho}\Big\rangle^{(\alpha)}_y|y-y^\prime|^\alpha,
	\end{split}
\end{equation}
where $C$ depends on the maximum eigenvalue $\overline{\lambda}$ of $S$. Similarly, it holds that 
\begin{equation} \label{equivalenceN2N3}
	\left|\frac{f(y)}{\varrho(y)}-\frac{f(y^\prime)}{\varrho(y^\prime)}\right| =\left|\frac{f(y)}{\varrho(y)}-\frac{f(y)}{\varrho({y^\prime})}+\frac{f(y)}{\varrho(y^\prime)}-\frac{f(y^\prime)}{\varrho(y^\prime)}\right|\leq C\Big|\frac{f}{\varrho}\Big|^\infty|y-y^\prime|^\alpha+\langle f\rangle^{(\alpha)}_y|y-y^\prime|^\alpha \min\left\{\varrho^{-1}(y),\varrho^{-1}(y^\prime)\right\}.
\end{equation}
Consequently, thanks to \eqref{equivalenceN3N2} and \eqref{equivalenceN2N3}, it is easy to see the equivalence between \eqref{WeightedNorm2} and \eqref{WeightedNorm3}.    
\end{proof}
}

{\color{black}
\begin{proof}[Proof of Theorem \ref{WellposednessWeighted}]
In order to show the well-posedness result and the estimate \eqref{Weighted: Estimates of solutions of nonlocal system} of solutions of the nonlocal linear systems \eqref{Nonlocal linear system} in the weighted space $\bm{\Omega}^{{(l)}}_{\varrho,[0,T]}$, we firstly consider a simplified case of \eqref{Nonlocal linear system} where $A^{aI}_b(t,s,y)=A^{aI}_b(s,y)$, $B^{aI}_b(t,s,y)=0$, $\bm{f}(t,s,y)=\bm{f}(s,y)$, and $\bm{g}(t,s,y)=0$. Then, according to the classical theory of PDE systems (see \cite[Chapter 9]{Friedman1964} or \cite[Chapter 1,3]{Eidelman1969}), \eqref{Nonlocal linear system} admits a unique solution $\bm{u}(s,y)$ of the form 
\begin{equation*}
	\bm{u}(s,y)=\int^s_0d\tau\int_{\mathbb{R}^d}Z(s,\tau,y,\xi)\bm{f}(\tau,\xi)d\xi
\end{equation*}
and for $|I|\leq 2r$, its derivatives are expressed as
\begin{equation*} 
	\partial_I\bm{u}(s,y)= \int_0^sd\tau\int_{\mathbb{R}^d}\partial_IZ(s,\tau,y,\xi)\Big[\bm{f}(\tau,\xi)-\bm{f}(\tau,y)\Big]d\xi+\int^s_0\Big(\partial_I\int_{\mathbb{R}^d}Z(s,\tau,y,\xi)d\xi\Big)\bm{f}(\tau,y)d\xi,
\end{equation*}
where $Z(s,\tau,y,\xi)$ is the fundamental solution of \eqref{Nonlocal linear system} with $A^{aI}_b(t,s,y)=A^{aI}_b(s,y)$ and $B^{aI}_b(t,s,y)=0$. With the upper bound of $Z$ (see \cite[Chapter 9]{Friedman1964} or \cite[Chapter 1,3]{Eidelman1969}), it is clear that
\begin{equation} \label{WeightedB1}
	\left|\frac{\bm{u}(s,y)}{\varrho(y)}\right|\leq\int^s_0d\tau\int_{\mathbb{R}^d}\big|Z(s,\tau,y,\xi)\big|\left|\frac{\bm{f}(\tau,\xi)}{\varrho(\xi)}\right|\left|\frac{\varrho(\xi)}{\varrho(y)}\right| d\xi\leq Cs| \bm{f}|^{(\alpha)}_{\varrho,[0,T]\times\mathbb{R}^d}, 
\end{equation}
where $C$ only depends on the coefficients $A^{aI}_b$ and $S$. The inequality \eqref{WeightedB1} makes full use of the facts that the upper bound of $|Z|$ contains $\exp\{-c|y-\xi|^{\frac{2r}{2r-1}}\}$ while $|\varrho(\xi)/\varrho(y)|$ is bounded by $\exp\{\overline{\lambda}|y-\xi|\}$. Similarly, we also have 
\begin{equation} \label{WeightedB2}
	\begin{split}
		|\partial_I\bm{u}(s,y)| & \leq \int_0^sd\tau\int_{\mathbb{R}^d}\big|\partial_IZ(s,\tau,y,\xi)\big|\big|\bm{f}(\tau,\xi)-\bm{f}(\tau,y)\big|d\xi+\int^s_0\Big|\partial_I\int_{\mathbb{R}^d}Z(s,\tau,y,\xi)d\xi\Big|\Big|\bm{f}(\tau,y)\Big|d\xi \\
		& \leq \int^s_0d\tau\int_{\mathbb{R}^d}\big(|\partial_IZ(s,\tau,y,\xi)|| \bm{f}|^{(\alpha)}_{\varrho,[0,T]\times\mathbb{R}^d}\big(\varrho(\xi)+\varrho(y)\big)|y-\xi|^\alpha\big) d\xi + \int^s_0\Big|\partial_I\int_{\mathbb{R}^d}Z(s,\tau,y,\xi)d\xi\Big|| \bm{f}|^{(\alpha)}_{\varrho,[0,T]\times\mathbb{R}^d}\varrho(y) d\xi \\
		& \leq Cs^\frac{2r-|I|+\alpha}{2r}| \bm{f}|^{(\alpha)}_{\varrho,[0,T]\times\mathbb{R}^d}\varrho(y), 
	\end{split}
\end{equation}
the second inequality of which holds thanks to $\bm{f}\in C^{\frac{\alpha}{2r},\alpha}_\varrho([0,T]\times\mathbb{R}^d;\mathbb{R}^m)$ and the last one is shown by a similar argument of \eqref{WeightedB1} as well as the upper bound of $\partial_IZ$ illustrated in \cite{Friedman1964,Eidelman1969}. 

Furthermore, in order to estimate the H\"{o}lder continuity of $\partial_I\bm{u}$ in $y$, we need to estimate the difference between $\partial_I\bm{u}(s,y)$ and $\partial_I\bm{u}(s,y^\prime)$, denoted by $\Delta\partial_I\bm{u}(s,y)$. First, we consider the case where $s\leq|y-y^\prime|^{2r}$. From \eqref{WeightedB2}, we have   
\begin{equation*}
	|\Delta\partial_I\bm{u}(s,y)|\leq C|y-y^\prime|^{2r-|I|+\alpha}| \bm{f}|^{(\alpha)}_{\varrho,[0,T]\times\mathbb{R}^d}\big(\varrho(y)+\varrho(y^\prime)\big).
\end{equation*}
In particular for $|I|=2r$, we obtain $\frac{|\partial_I\bm{u}(s,y)-\partial_I\bm{u}(s,y^\prime)|}{|y-y^\prime|^\alpha}\min\left\{\varrho^{-1}(y),\varrho^{-1}(y^\prime)\right\}\leq C| \bm{f}|^{(\alpha)}_{\varrho,[0,T]\times\mathbb{R}^d}$. Next, for the case where $|y-y^\prime|^{2r}<s$, we take advantage of the representation of $\Delta\partial_I\bm{u}(s,y)$ (see also page 110 of \cite{Eidelman1969}): 
\begin{equation*}
	\begin{split}
		\Delta\partial_I\bm{u}(s,y)= & \int^{s-\eta}_0\int_{\mathbb{R}^d}\Delta\partial_IZ(s,\tau,y,\xi)\big[\bm{f}(\tau,\xi)-\bm{f}(\tau,y)\big]d\xi+\int^s_{s-\eta}d\tau\int_{\mathbb{R}^d}\partial_IZ\big[\bm{f}(\tau,\xi)-\bm{f}(\tau,y)\big]d\xi \\
		& + \int^{s-\eta}_0\Big(\Delta\partial_I\int_{\mathbb{R}^d}Zd\xi\Big)\bm{f}(\tau,y)d\tau+\int^s_{s-\eta}\partial_I\int_{\mathbb{R}^d}Zd\xi\bm{f}(\tau,y)d\tau \\
		& -\int^s_{s-\eta}d\tau\int_{\mathbb{R}^d}\partial_{I,y^\prime}Z(s,\tau,y^\prime,\xi)\big[\bm{f}(\tau,\xi)-\bm{f}(\tau,y^\prime)\big]d\xi-\int^s_{s-\eta}\partial_{I,y^\prime}\int_{\mathbb{R}^d}Z(s,\tau,y^\prime,\xi)d\xi\bm{f}(\tau,y^\prime)d\tau,
	\end{split}
\end{equation*}
where $\partial_{I,y^\prime}$ is the differential operator in $y^\prime$. With similar arguments in \eqref{WeightedB1} and \eqref{WeightedB2}, we also have $|\Delta\partial_I\bm{u}(s,y)|\leq C|y-y^\prime|^{\alpha}| \bm{f}|^{(\alpha)}_{\varrho,[0,T]\times\mathbb{R}^d}\big(\varrho(y)+\varrho(y^\prime)\big)$ for $|I|=2r$. Consequently, we have a prior estimate of the solution of the simplified system 
\begin{equation}
	\sup\limits_{s\in[0,T]}|\bm{u}(s,\cdot)|^{(2r+\alpha)}_{\varrho,\mathbb{R}^d}\leq C|\bm{f}|^{(\alpha)}_{\varrho,[0,T]\times\mathbb{R}^d}.  
\end{equation}
Furthermore, thanks to the regularities of $A^{aI}_b$ and $\bm{f}$, we have $\sup\limits_{s\in[0,T]}|\bm{u}_s(s,\cdot)|^{(\alpha)}_{\varrho,\mathbb{R}^d}\leq C|\bm{f}|^{(\alpha)}_{\varrho,[0,T]\times\mathbb{R}^d} $ as well. According to the interpolation theory (see \cite[Proposition 1.1.4 or Lemma 5.1.1]{Lunardi1995} or \cite[Proposition 2.7]{Sinestrari1985}), it holds that 
\begin{equation} \label{WeightedSchauderEst}
	|\bm{u}|^{(2r+\alpha)}_{\varrho,[0,T]\times\mathbb{R}^d}\leq C|\bm{f}|^{(\alpha)}_{\varrho,[0,T]\times\mathbb{R}^d}. 
\end{equation}

For the general setting of nonlocal linear system \eqref{Nonlocal linear system}, its global well-posedness and the Schauder's estimate \eqref{Weighted: Estimates of solutions of nonlocal system} can be both proven with the same arguments in Theorem \ref{Well-posedness of u and v} and Theorem \ref{Schauder estimates} by replacing the estimate used in \eqref{Contraction 1} with the weighted one \eqref{WeightedSchauderEst}. After verifying the claims for the nonlocal linear system, it is clear that the conditions of Definition \ref{Def: AppropriatePair} and the updated Schauder's estimate \eqref{Weighted: Estimates of solutions of nonlocal system} suffice to guarantee the local solvability of fully nonlinear systems in the weighted space. The proof is the same as that of Theorem \ref{Well-posedness of fully nonlinear system}. In the same spirit of Subsection \ref{Large Well-posedness of Nonlocal Nonlinear Systems}, the last claim of Theorem \ref{WellposednessWeighted} can be proven as well.  
\end{proof}
}

\begin{proof}[Proof of Lemma \ref{More regularities of solutions}]
It is clear that the claim holds when $k=0$. Let us consider $k=1$. Suppose that $\bm{u}$ is the solution of \eqref{Nonlocal fully nonlinear system} in $\bm{\Omega}^{(2r+\alpha)}_{[0,\delta]}$, then the family of partial derivatives $\frac{\partial\bm{u}}{\partial y_i}$ ($i=1,\ldots,d$) satisfy (for $a=1,\ldots,m$)   
\begin{equation} 
\left\{
\begin{array}{lr}
	\left(\frac{\partial\bm{u}}{\partial y_i}\right)^a_s(t,s,y)=\sum\limits_{|I|\leq 2r,b\leq m}\left(\partial_I\bm{F}^a_b\big(\bm{u}\big)\cdot\partial_I\left(\frac{\partial\bm{u}}{\partial y_i}\right)^b(t,s,y)+\partial_I\overline{\bm{F}}^a_b\big(\bm{u}\big)\cdot\partial_I\left(\frac{\partial\bm{u}}{\partial y_i}\right)^b(s,s,y)\right)+\partial_{y_i}\bm{F}^a\big(\bm{u}\big), \\
	\left(\frac{\partial\bm{u}}{\partial y_i}\right)(t,0,y)=\bm{g}_{y_i}(t,y),\hfill 0\leq s\leq t\leq \delta,\quad y\in\mathbb{R}^d,  
\end{array}
\right. 
\end{equation} 
where $\partial_I\bm{F}^a_b(\bm{u})$, $\partial_I\overline{\bm{F}}^a_b(\bm{u})$ and $\partial_{y_i}\bm{F}^a(\bm{u})$ are the derivatives of $\bm{F}$ all evaluated at
$$\big(t,s,y,\left(\partial_I\bm{u}\right)_{|I|\leq 2r}(t,s,y),  \left(\partial_I\bm{u}\right)_{|I|\leq 2r}(s,s,y)\big).$$
Given $\bm{u}\in\bm{\Omega}^{(2r+\alpha)}_{[0,\delta]}$, all coefficients ($\partial_I\bm{F}^a_b(\bm{u})$ and $\partial_I\overline{\bm{F}}^a_b(\bm{u})$) and the inhomogeneous term $\partial_{y_i}\bm{F}^a(\bm{u})$ are all in $\bm{\Omega}^{(\alpha)}_{[0,\delta]}$. Moreover, the regularity of $\bm{g}\in\bm{\Omega}^{2r+K+\alpha}_{[0,T]}$ ensures $\bm{g}_{y_i}\in\bm{\Omega}^{(2r+\alpha)}_{[0,\delta]}$. Therefore, the well-posedness of nonlocal linear higher-order systems promises $D_y\bm{u}\in\bm{\Omega}^{(2r+\alpha)}_{[0,\delta]}$. Similarly, we can also show iteratively the cases for $k\leq K$. 
\end{proof}

\begin{proof}[Proof of Theorem \ref{F-K formula}]
First, under the regularity assumptions of $\bm{F}$ and $\bm{g}$, Proposition \ref{Equivalance between forward probelms and backward problems} and Lemma \ref{More regularities of solutions} guarantee that there exists a unique solution $\bm{u}(t,s,y)$ of \eqref{Backward nonlocal fully nonlinear equation}, which is first-order continuously differentiable in $s$ and third-order continuously differentiable with respect to $y$. Consequently, the family of random fields $\left(X,Y,Z,\Gamma,A\right)$ defined by \eqref{F-K formula for 2FBSVIE} is well-defined (adapted).

Next, we show that the random field solves the flow of 2FBSDEs, i.e. \eqref{Flow of 2FBSDEs}. For any fixed $(t,s)\in \nabla[t_0,T]$, we apply the It\^{o}'s formula to the map $\tau\to \bm{u}^a(t,\tau,\bm{X}(\tau))$ on $[s,T]$. Then we have 
\begin{equation*}
\begin{split}
	d\bm{u}^a(t,\tau,\bm{X}(\tau))=& \Big[\bm{u}^a_s(t,\tau,\bm{X}(\tau))+\sum^d_{i=1}b_i(\tau,\bm{X}(\tau))\left(\frac{\partial \bm{u}}{\partial y_i}\right)^a(t,\tau,\bm{X}(\tau))+\frac{1}{2}\sum^d_{i,j=1}\left(\sigma\sigma^\top\right)_{ij}(\tau,\bm{X}(\tau))\left(\frac{\partial^2 \bm{u}}{\partial y_i\partial y_j}\right)^a(t,\tau,\bm{X}(\tau))\Big]d\tau \\
	& ~~+\left(\bm{u}^a_y\right)^\top(t,\tau,\bm{X}(\tau))\sigma(\tau,\bm{X}(\tau))d\bm{W}(\tau) \\
	=& \Big[-\bm{F}^a\big(t,\tau,\bm{X}(\tau),\bm{u}(t,\tau,\bm{X}(\tau)),\bm{u}_y(t,\tau,\bm{X}(\tau)),\bm{u}_{yy}(t,\tau,\bm{X}(\tau)),\bm{u}(\tau,\tau,\bm{X}(\tau)),\bm{u}_y(\tau,\tau,\bm{X}(\tau)),\bm{u}_{yy}(\tau,\tau,\bm{X}(\tau)\big) \\
	& ~~ +\frac{1}{2}\sum^d_{i,j=1}\left(\sigma\sigma^\top\right)_{ij}(\tau,\bm{X}(\tau))\left(\frac{\partial^2 \bm{u}}{\partial y_i\partial y_j}\right)^a(t,\tau,\bm{X}(\tau))-\sum^d_{i=1}b_i(\tau,\bm{X}(\tau))\left(\frac{\partial \bm{u}}{\partial y_i}\right)^a(t,\tau,\bm{X}(\tau))\Big]d\tau \\
	& ~~+\left(\bm{u}^a_y\right)^\top(t,\tau,\bm{X}(\tau))\sigma(\tau,\bm{X}(\tau))d\bm{W}(\tau) \\
	=& -\mathbb{F}^a\big(t,\tau,\bm{X}(\tau),\bm{Y}(t,\tau),\bm{Y}(\tau,\tau),\bm{Z}(t,\tau),\bm{Z}(\tau,\tau),\bm{\Gamma}(t,\tau),\bm{\Gamma}(\tau,\tau)\big)d\tau+\left(\bm{Z}^a\right)^\top(t,\tau)d\bm{W}(\tau),
\end{split}
\end{equation*}
which indicates $d\bm{Y}(t,\tau)=-\mathbb{F}ds+\bm{Z}^\top(t,\tau)d\bm{W}(\tau)$. Similarly, for any fixed $(t,s)\in \nabla[t_0,T]$, by applying the It\^{o}'s formula to $\tau\to \left(\sigma^\top \bm{u}^a_y\right)(t,\tau,\bm{X}(\tau))$ on $[t_0,s]$, we can also verify that $d\bm{Z}^a(t,\tau)=\bm{A}^a(t,\tau)d\tau+\bm{\Gamma}^a(t,\tau)d\bm{W}(\tau)$ for $a=1,\ldots,m$. Hence, \eqref{F-K formula for 2FBSVIE} is an adapted solution of \eqref{Flow of 2FBSDEs}.
\end{proof}

\begin{proof}[Proof of Proposition \ref{Solvability of the financial example}]
By the classical theory of ODE systems, we could use the conventional contraction mapping arguments to obtain the local well-posedness of \eqref{General integral equation}. Next, we consider a special case of \eqref{ODE system in a matrix form}, where $\bm{w}$ is a diagonal matrix, i.e. $\bm{w}=\mathrm{diag}\{w^{11},w^{22},\cdots,w^{mm}\}$. Under this condition, the system of ODEs \eqref{ODE system in a matrix form} has the following form  
\begin{equation} \label{Special ODE system} 
\left\{
\begin{array}{lr}
	\bm{\varphi}^a_s(t,s)        
	+\left[\bm{k}^a(t,s)-\sum\limits_{1\leq b\leq m}\beta\left(\frac{\bm{\varphi}^b(s,s)}{\bm{v}^{bb}(s,s)}\right)^\frac{1}{\beta-1}\right]\bm{\varphi}^a(t,s) +\sum\limits_{1\leq b\leq m}\bm{v}^{ab}(t,s) \left(\frac{\bm{\varphi}^b(s,s)}{\bm{v}^{bb}(s,s)}\right)^\frac{\beta}{\beta-1} = 0, \\
	\bm{\varphi}(t,T)=\bm{g}(t),\quad 0\leq t\leq s \leq T, \quad a=1,\ldots,m,
\end{array}
\right. 
\end{equation} 
where $\bm{k}^a(t,s)=k-\bm{w}^{aa}(t,s)$. In this special case, for $a=1,\ldots,m$, we have 
\begin{equation*}
\begin{split}
	\bm{\varphi}^a(t,s) & = \exp\bigg\{\int^T_s\bigg[\bm{k}^a(t,\tau)-\sum\limits_{1\leq b\leq m}\beta\left(\frac{\bm{\varphi}^b(\tau,\tau)}{\bm{v}^{bb}(\tau,\tau)}\right)^\frac{1}{\beta-1}\bigg] d\tau \bigg\}\bm{g}^a(t) \\
	& \qquad\qquad  
	+ \int^T_s \exp\bigg\{\int^\sigma_s\bigg[\bm{k}^a(t,\tau)-\sum\limits_{1\leq b\leq m}\beta\left(\frac{\bm{\varphi}^b(\tau,\tau)}{\bm{v}^{bb}(\tau,\tau)}\right)^\frac{1}{\beta-1}\bigg]d\tau\bigg\}\left(\sum\limits_{1\leq b\leq m}\bm{v}^{ab}(t,\sigma) \left(\frac{\bm{\varphi}^b(\sigma,\sigma)}{\bm{v}^{bb}(\sigma,\sigma)}\right)^\frac{\beta}{\beta-1} \right) d\sigma.
\end{split}
\end{equation*} 
Denoting by $\overline{\bm{\varphi}}^a(s)=\frac{\bm{\varphi}^a(s,s)}{\bm{v}^{aa}(s,s)}$, $\overline{\bm{g}}^a(t)=\frac{\bm{g}^a(t)}{v^{aa}(t,t)}$, and $\overline{\bm{v}}^{ab}(t,s)=\frac{\bm{v}^{ab}(t,s)}{\bm{v}^{aa}(t,t)}$, we obtain 
\begin{equation}
\begin{split}
	\overline{\bm{\varphi}}^a(s) & = \exp\bigg\{\int^T_s\bigg[\bm{k}^a(s,\tau)-\sum\limits_{1\leq b\leq m}\beta \overline{\bm{\varphi}}^b(\tau)^\frac{1}{\beta-1}\bigg] d\tau \bigg\}\overline{\bm{g}}^a(s) \\
	& \qquad\qquad  
	+ \int^T_s \exp\bigg\{\int^\sigma_s\bigg[\bm{k}^a(s,\tau)-\sum\limits_{1\leq b\leq m}\beta \overline{\bm{\varphi}}^b(\tau)^\frac{1}{\beta-1}\bigg]d\tau\bigg\}\left(\sum\limits_{1\leq b\leq m}\overline{\bm{v}}^{ab}(s,\sigma)  \overline{\bm{\varphi}}^b(\sigma)^\frac{\beta}{\beta-1} \right) d\sigma  
\end{split}
\end{equation}

Let
\begin{eqnarray*}
\widehat{\bm{\varphi}}^a(s) & = & \overline{\bm{\varphi}}^a(s)\prod\limits_{1\leq b\leq m}\exp\bigg\{\beta \int^T_s \overline{\bm{\varphi}}^b(\tau)^\frac{1}{\beta-1} d\tau \bigg\}, \quad
\widehat{\bm{g}}^a(s) = \overline{\bm{g}}^a(s)\exp\bigg\{\int^T_s \bm{k}^a(s,\tau) d\tau \bigg\}, \\
\widehat{\bm{v}}^{ab}(s,\sigma) & = & \overline{\bm{v}}^{ab}(s,\sigma)\exp\bigg\{\int^\sigma_s \bm{k}^a(s,\tau) d\tau \bigg\}.
\end{eqnarray*}
Then, we have 
\begin{equation}
\begin{split}
	\widehat{\bm{\varphi}}^a(s) & = \widehat{\bm{g}}^a(s) + \int^T_s \left(\sum\limits_{1\leq b\leq m}\widehat{\bm{v}}^{ab}(s,\sigma)  \widehat{\bm{\varphi}}^b(\sigma)\overline{\bm{\varphi}}^b(\sigma)^\frac{1}{\beta-1} \right) d\sigma.
\end{split}
\end{equation}
We impose the following conditions: there exist some constants $g_0>0$ and $\gamma>0$ such that 
\begin{equation} \label{Condition: g_0}
\widehat{\bm{g}}^a(s)=\overline{\bm{g}}^a(s)\exp\bigg\{\int^T_s \bm{k}^a(s,\tau) d\tau \bigg\}=\frac{\bm{g}^a(s)}{\bm{v}^{aa}(s,s)}\exp\bigg\{\int^T_s \bm{k}^a(s,\tau) d\tau \bigg\}\geq g_0 
\end{equation} 
and 
\begin{equation} \label{Condition: gamma}
\widehat{\bm{v}}^{ab}(s,\sigma)=\overline{\bm{v}}^{ab}(s,\sigma)\exp\bigg\{\int^\sigma_s \bm{k}^a(s,\tau) d\tau \bigg\}=\frac{\bm{v}^{ab}(s,\sigma)}{\bm{v}^{aa}(s,s)}\exp\bigg\{\int^\sigma_s \bm{k}^a(s,\tau) d\tau \bigg\}\geq e^{-\gamma(\sigma-s)} 
\end{equation} 
hold for $a,b=1,\ldots,m$. 
With conditions \eqref{Condition: g_0} and \eqref{Condition: gamma}, we have 
\begin{equation*}
\begin{split}
	\widehat{\bm{\varphi}}^a(s)e^{-\gamma s} & \geq g_0e^{-\gamma s} + \sum\limits_{1\leq b\leq m} \int^T_s   \Big[\widehat{\bm{\varphi}}^b(\sigma)e^{-\gamma\sigma}\Big]\overline{\bm{\varphi}}^b(\sigma)^\frac{1}{\beta-1}d\sigma =: \omega(s), \quad a=1,\ldots,m.   
\end{split}
\end{equation*} 
Note that 
\begin{equation} \label{System for overline varphi} 
\frac{d\omega(s)}{ds} =-\gamma g_0e^{-\gamma s}-\sum\limits_{1\leq b\leq m}\Big[\widehat{\bm{\varphi}}^b(s)e^{-\gamma s}\Big]\overline{\bm{\varphi}}^b(s)^\frac{1}{\beta-1}\leq -\gamma g_0e^{-\gamma s}-\sum\limits_{1\leq b\leq m}\bm{\xi}^b(s)\overline{\bm{\varphi}}^b(s)^\frac{1}{\beta-1},
\end{equation}
\begin{equation*}
\frac{d\left(\omega(s)\prod\limits_{1\leq b\leq m}\exp\bigg\{- \int^T_s \overline{\bm{\varphi}}^b(\tau)^\frac{1}{\beta-1} d\tau \bigg\}\right)}{ds}\leq -\gamma g_0e^{-\gamma s}\prod\limits_{1\leq b\leq m}\exp\bigg\{- \int^T_s \overline{\bm{\varphi}}^b(\tau)^\frac{1}{\beta-1} d\tau \bigg\}.
\end{equation*} 
By integrating both sides above over $[s,T]$, it follows that  
\begin{equation*}
g_0e^{-\gamma T}-\omega(s)\prod\limits_{1\leq b\leq m}\exp\bigg\{- \int^T_s \overline{\bm{\varphi}}^b(\tau)^\frac{1}{\beta-1} d\tau \bigg\}\leq -\gamma g_0\int^T_se^{-\gamma \sigma}\prod\limits_{1\leq b\leq m}\exp\bigg\{- \int^T_\sigma \overline{\bm{\varphi}}^b(\tau)^\frac{1}{\beta-1} d\tau \bigg\} d\sigma.
\end{equation*} 
Hence, for $a=1,\ldots,m$, 
\begin{equation*}
\omega(s) \geq \prod\limits_{1\leq b\leq m}\exp\bigg\{\int^T_s \overline{\bm{\varphi}}^b(\tau)^\frac{1}{\beta-1} d\tau \bigg\}\left(g_0e^{-\gamma T}+\gamma g_0\int^T_se^{-\gamma \sigma}\prod\limits_{1\leq b\leq m}\exp\bigg\{- \int^T_\sigma \overline{\bm{\varphi}}^b(\tau)^\frac{1}{\beta-1} d\tau \bigg\} d\sigma\right).
\end{equation*} 
Thus, for $a=1,\ldots,m$, we have 
\begin{eqnarray}
\overline{\bm{\varphi}}^a(s) & = &\widehat{\bm{\varphi}}^a(s)\prod\limits_{1\leq b\leq m}\exp\bigg\{-\beta \int^T_s \overline{\bm{\varphi}}^b(\tau)^\frac{1}{\beta-1} d\tau \bigg\}\geq e^{\gamma s} \omega(s)\prod\limits_{1\leq b\leq m}\exp\bigg\{-\beta \int^T_s \overline{\bm{\varphi}}^b(\tau)^\frac{1}{\beta-1} d\tau \bigg\} \cr
& \geq & e^{\gamma s}\prod\limits_{1\leq b\leq m}\exp\bigg\{(1-\beta) \int^T_s \overline{\bm{\varphi}}^b(\tau)^\frac{1}{\beta-1} d\tau \bigg\}\left(g_0e^{-\gamma T}+\gamma g_0\int^T_se^{-\gamma \sigma}\prod\limits_{1\leq b\leq m}\exp\bigg\{- \int^T_\sigma \overline{\bm{\varphi}}^b(\tau)^\frac{1}{\beta-1} d\tau \bigg\} d\sigma\right) \cr
& \geq & g_0 e^{-\gamma(T-s)}\geq c>0. \nonumber
\end{eqnarray}
Moreover, we can also obtain the upper bounds:
\begin{eqnarray}
\overline{\bm{\varphi}}^a(s) & = & \exp\bigg\{\int^T_s\bigg[\bm{k}^a(s,\tau)-\sum\limits_{1\leq b\leq m}\beta \overline{\bm{\varphi}}^b(\tau)^\frac{1}{\beta-1}\bigg] d\tau \bigg\}\overline{\bm{g}}^a(s) \cr
& & + \int^T_s \exp\bigg\{\int^\sigma_s\bigg[\bm{k}^a(s,\tau)-\sum\limits_{1\leq b\leq m}\beta \overline{\bm{\varphi}}^b(\tau)^\frac{1}{\beta-1}\bigg]d\tau\bigg\} \left(\sum\limits_{1\leq b\leq m}\overline{\bm{v}}^{ab}(s,\sigma)  \overline{\bm{\varphi}}^b(\sigma)^\frac{\beta}{\beta-1} \right) d\sigma \cr
& \leq & \exp\bigg\{\int^T_s \bm{k}^a(s,\tau) d\tau \bigg\}\overline{\bm{g}}^a(s) + \int^T_s \exp\bigg\{\int^\sigma_s \bm{k}^a(s,\tau) d\tau\bigg\} \left(\sum\limits_{1\leq b\leq m}\overline{\bm{v}}^{ab}(s,\sigma)  \frac{1}{c^\frac{\beta}{1-\beta}} \right) d\sigma \leq C. \nonumber
\end{eqnarray}

After showing $\overline{\bm{\varphi}}^a(s)\in[c,C]$ for any $s\in[0,T]$ and $a=1,\ldots,m$, we are ready to prove the global well-posedness of \eqref{System for overline varphi}. Specifically, by choosing a suitably small $s\in[0,T]$, we can first obtain a small-time solvability of \eqref{System for overline varphi} with the Banach fixed-point arguments. Next, the bounds of $\overline{\bm{\varphi}}(s)$ guarantee the extension from the local solution to an arbitrary large time interval by standard continuation arguments.
\end{proof} 

{\color{black}
\section{Partial derivatives of the nonlinearity} \label{PDNonLinearity}
This appendix presents the partial derivatives of the nonlinearity of \eqref{HJB Exponential utility} $\mathbb{H}:=\mathbb{H}_\gamma(t,s,y,z)$ with respect to its arguments. For its first-order derivative, we have 
\begin{equation} \label{PDNonlinearity1}
\partial_{I}\mathbb{H}^a_b=\left\{
\begin{array}{ll}
	-\bm{w}^{ab}_3(T-t,T-s), & \hbox{if } |I|=0, \\
	\sum\limits_{1\leq b\leq m}\left(\frac{(\mu_b-r)\widehat{\bm{w}}^{b}_1(T-s,T-s,y)+(\mu_b-r)^2 \bm{U}^b_y(s,s,y)}{\widehat{\bm{w}}^{b}_2(T-s,T-s,y)-\sigma_b^2 \bm{U}^b_{yy}(s,s,y)}\right), & \hbox{if } a=b,~|I|=1, \\
	\frac{1}{2}\sum\limits_{1\leq b\leq m}\left(\frac{\sigma_b\widehat{\bm{w}}^{b}_1(T-s,T-s,y)+\sigma_b(\mu_b-r) \bm{U}^b_y(s,s,y)}{\widehat{\bm{w}}^{b}_2(T-s,T-s,y)-\sigma_b^2 \bm{U}^b_{yy}(s,s,y)}\right)^2, & \hbox{if } a=b,~|I|=2, \\
	0, & \hbox{if } a\neq b,~|I|=1,2,
\end{array}
\right. 
\end{equation}
\begin{equation} \label{PDNonlinearity4}
\partial_{I}\overline{\mathbb{H}}^a_b=\left\{
\begin{array}{ll}
	0, & \hbox{if } |I|=0, \\
	\frac{\sigma^2_b(\mu_b-r)\widehat{\bm{w}}^{b}_1(T-s,T-s,y)+\sigma^2_b(\mu_b-r)^2 \bm{U}^b_y(s,s,y)}{\left(\widehat{\bm{w}}^{b}_2(T-s,T-s,y)-\sigma_b^2 \bm{U}^b_{yy}(s,s,y)\right)^2} \bm{U}^a_{yy}(t,s,y) +\frac{(\mu_b-r)^2 }{\widehat{\bm{w}}^{b}_2(T-s,T-s,y)-\sigma_b^2 \bm{U}^b_{yy}(s,s,y)}\bm{U}^a_y(t,s,y), & \\
	+\frac{\gamma\exp\{-rs\}\bm{w}^{b}_1(T-t,T-s,y)(\mu_b-r)}{\widehat{\bm{w}}^{b}_2(T-s,T-s,y)-\sigma_b^2 \bm{U}^b_{yy}(s,s,y)}-\frac{\exp\{-2rs\}\bm{w}^{b}_2(T-t,T-s,y)[2(\mu_b-r)\widehat{\bm{w}}^{b}_1(T-s,T-s,y)+2(\mu_b-r)^2 \bm{U}^b_y(s,s,y)]}{\left(\widehat{\bm{w}}^{b}_2(T-s,T-s,y)-\sigma_b^2 \bm{U}^b_{yy}(s,s,y)\right)^2}, & \hbox{if } |I|=1,
	\\
	\frac{\left(\sigma^2_b\widehat{\bm{w}}^{b}_1(T-s,T-s,y)+\sigma^2_b(\mu_b-r) \bm{U}^b_y(s,s,y)\right)^2}{\left(\widehat{\bm{w}}^{b}_2(T-s,T-s,y)-\sigma_b^2 \bm{U}^b_{yy}(s,s,y)\right)^3} \bm{U}^a_{yy}(t,s,y) + \frac{\sigma_b^2(\mu_b-r)\widehat{\bm{w}}^{b}_1(T-s,T-s,y)+\sigma_b^2(\mu_b-r)^2 \bm{U}^b_y(s,s,y)}{\left(\widehat{\bm{w}}^{b}_2(T-s,T-s,y)-\sigma_b^2 \bm{U}^b_{yy}(s,s,y)\right)^2} \bm{U}^a_y(t,s,y) & \\
	+\frac{\gamma\exp\{-rs\}\bm{w}^{b}_1(T-t,T-s,y)[\sigma_b^2\widehat{\bm{w}}^{b}_1(T-s,T-s,y)+\sigma_b^2(\mu_b-r) \bm{U}^b_y(s,s,y)]}{\left(\widehat{\bm{w}}^{b}_2(T-s,T-s,y)-\sigma_b^2 \bm{U}^b_{yy}(s,s,y)\right)^2}-\frac{\exp\{-2rs\}\bm{w}^{b}_2(T-t,T-s,y)\left(\sigma_b\widehat{\bm{w}}^{b}_1(T-s,T-s,y)+\sigma_b(\mu_b-r) \bm{U}^b_y(s,s,y)\right)^2}{\left(\widehat{\bm{w}}^{b}_2(T-s,T-s,y)-\sigma_b^2 \bm{U}^b_{yy}(s,s,y)\right)^3}, & \hbox{if } |I|=2,
\end{array}
\right. 
\end{equation} 
Furthermore, the second-order derivatives can be derived from \eqref{PDNonlinearity1}-\eqref{PDNonlinearity4}:
$$\partial^2_{It}\mathbb{H}^a_b=\left\{
\begin{array}{lll}
(\bm{w}^{ab}_3)_{T-t}(T-t,T-s), & \hbox{if } |I|=0, \\ 
0, & \hbox{if } |I|=1,2, 
\end{array}
\right. \qquad \partial^2_{IJ}\mathbb{H}^a_{bc}=0 \text{\text{~~for all~~}}|I|=0,1,2,~~ |J|=0,1,2,$$  
$$
\partial^2_{It}\overline{\mathbb{H}}^a_b=\left\{
\begin{array}{ll}
0, & \hbox{if } |I|=0, \\
\frac{-\gamma\exp\{-rs\}(\bm{w}^{b}_1)_{T-t}(T-t,T-s,y)(\mu_b-r)}{\widehat{\bm{w}}^{b}_2(T-s,T-s,y)-\sigma_b^2 \bm{U}^b_{yy}(s,s,y)} & \\
+\frac{\exp\{-2rs\}(\bm{w}^{b}_2)_{T-t}(T-t,T-s,y)[2(\mu_b-r)\widehat{\bm{w}}^{b}_1(T-s,T-s,y)+2(\mu_b-r)^2 \bm{U}^b_y(s,s,y)]}{\left(\widehat{\bm{w}}^{b}_2(T-s,T-s,y)-\sigma_b^2 \bm{U}^b_{yy}(s,s,y)\right)^2}, & \hbox{if } |I|=1, \\
\frac{-\gamma\exp\{-rs\}(\bm{w}^{b}_1)_{T-t}(T-t,T-s,y)[\sigma_b^2\widehat{\bm{w}}^{b}_1(T-s,T-s,y)+\sigma_b^2(\mu_b-r) \bm{U}^b_y(s,s,y)]}{\left(\widehat{\bm{w}}^{b}_2(T-s,T-s,y)-\sigma_b^2 \bm{U}^b_{yy}(s,s,y)\right)^2} & \\
+\frac{\exp\{-2rs\}(\bm{w}^{b}_2)_{T-t}(T-t,T-s,y)\left(\sigma_b\widehat{\bm{w}}^{b}_1(T-s,T-s,y)+\sigma_b(\mu_b-r) \bm{U}^b_y(s,s,y)\right)^2}{\left(\widehat{\bm{w}}^{b}_2(T-s,T-s,y)-\sigma_b^2 \bm{U}^b_{yy}(s,s,y)\right)^3}, & \hbox{if } |I|=2, 
\end{array}
\right.
$$
$$
\partial^2_{IJ}\overline{\mathbb{H}}^a_{bc}=\left\{
\begin{array}{ll}
0, & \hbox{if } |I||J|=0 \hbox{ or }a\not=c, \\
\frac{(\mu_b-r)^2 }{\widehat{\bm{w}}^{b}_2(T-s,T-s,y)-\sigma_b^2 \bm{U}^b_{yy}(s,s,y)} & \hbox{if } |I|=|J|=1,~a=c, \\
\frac{\sigma_b^2(\mu_b-r)\widehat{\bm{w}}^{b}_1(T-s,T-s,y)+\sigma_b^2(\mu_b-r)^2 \bm{U}^b_y(s,s,y)}{\left(\widehat{\bm{w}}^{b}_2(T-s,T-s,y)-\sigma_b^2 \bm{U}^b_{yy}(s,s,y)\right)^2}, & \hbox{if } |I|=2,~|J|=1,~a=c, \\
\frac{\sigma^2_b(\mu_b-r)\widehat{\bm{w}}^{b}_1(T-s,T-s,y)+\sigma^2_b(\mu_b-r)^2 \bm{U}^b_y(s,s,y)}{\left(\widehat{\bm{w}}^{b}_2(T-s,T-s,y)-\sigma_b^2 \bm{U}^b_{yy}(s,s,y)\right)^2} & \hbox{if } |I|=1,~|J|=2,~a=c, \\
\frac{\left(\sigma^2_b\widehat{\bm{w}}^{b}_1(T-s,T-s,y)+\sigma^2_b(\mu_b-r) \bm{U}^b_y(s,s,y)\right)^2}{\left(\widehat{\bm{w}}^{b}_2(T-s,T-s,y)-\sigma_b^2 \bm{U}^b_{yy}(s,s,y)\right)^3}, & \hbox{if } |I|=|J|=2, ~a=c. 
\end{array}
\right.
$$
}

\end{document}